\theoremstyle:=definition,remark,plain\do{%
        \expandafter\g@addto@macro\csname th@\theoremstyle\endcsname{%
            \addtolength\thm@preskip\parskip
            }%
        }
\newtheorem{thm*}{Theorem}
\newtheorem{thm}{Theorem}[section]
\newtheorem{lem}[thm]{Lemma}
\newtheorem{cor}[thm]{Corollary}
\newtheorem{cor*}{Corollary}
\newenvironment{defn}[1][Definition]{\begin{trivlist}
\item[\hskip \labelsep {\bfseries #1}]}{\end{trivlist}}
\newenvironment{pf}[1]{\begin{trivlist}
\item[\hskip \labelsep {\bfseries #1}]}{\end{trivlist}}
\newcommand{\mfra}[1]{\genfrac{}{}{0pt}{0}{}{#1}}
\title{Simple Irreducible Subgroups of Exceptional Algebraic Groups}
\author{Adam R. Thomas}
\begin{document}

\maketitle

\begin{abstract}
A closed subgroup of a semisimple algebraic group is called irreducible if it lies in no proper parabolic subgroup. In this paper we classify all irreducible subgroups of exceptional algebraic groups $G$ which are connected, closed and simple of rank at least $2$. Consequences are given concerning the representations of such subgroups on various $G$-modules: for example, with one exception, the conjugacy classes of irreducible simple connected subgroups of rank at least $2$ are determined by their composition factors on the adjoint module of $G$.  
\end{abstract}
\vspace{-0.4cm}
\section{Introduction} \label{intro}
\vspace{-0.4cm}
Let $G$ be a reductive connected algebraic group. A subgroup $X$ of $G$ is called \emph{$G$-irreducible} (or just \emph{irreducible} if $G$ is clear from the context) if it is closed and not contained in any proper parabolic subgroup of $G$. This definition, due to Serre in \cite{ser04}, generalises the standard notion of an irreducible subgroup of $GL(V)$. Indeed, if $G = GL(V)$, a subgroup $X$ is $G$-irreducible if and only if $X$ acts irreducibly on $V$. Similarly, the notion of complete reducibility can be generalised (see \cite{ser04}): a subgroup $X$ of $G$ is said to be \emph{$G$-completely reducible} (or \emph{$G$-cr} for short) if, whenever it is contained in a parabolic subgroup of $G$, it is contained in a Levi subgroup of that parabolic.      

Now let $G$ be a connected semisimple group. In \cite{LT}, Liebeck and Testerman studied connected $G$-irreducible subgroups for the first time, showing amongst other things that they are semisimple and only have a finite number of overgroups in $G$. Connected $G$-irreducible subgroups play an important role in determining both the $G$-cr and non-$G$-cr connected subgroups of $G$. The $G$-cr subgroups of $G$ are simply the $L'$-irreducible subgroups of $L'$ for each Levi subgroup $L$ of $G$ (noting that $G$ is a Levi subgroup of itself). To determine the non-$G$-cr subgroups of $G$ one strategy is as follows. Let $P$ be a proper parabolic subgroup with unipotent radical $Q$ and Levi complement $L$. Then for each $L'$-irreducible subgroup $X$, determine the complements to $Q$  in $X Q$ that are not $Q$-conjugate to $X$ (if any exist). Any non-$G$-cr connected subgroup will be of this form for some $L'$-irreducible connected subgroup $X$.   

We now restrict our attention further by letting $G$ be a simple algebraic group of exceptional type over an algebraically closed field $K$ of characteristic $p$ (setting $p= \infty$ for characteristic 0). In this paper we classify the simple, connected $G$-irreducible subgroups of rank at least $2$. For $G = G_2$ this is a trivial consequence of \cite[Theorem~1]{LS5} and for $G = F_4$ this has already been done \linebreak \cite[Theorem~4]{dav}. We therefore only have to deal with $G= E_6,E_7$ and $E_8$, for which we prove the following three theorems. The tables referred to in the statements can be found in Section \ref{tabs} of the paper.  
\pagebreak
\begin{thm*} \label{Thm1}

Suppose $X$ is a simple, connected, irreducible subgroup of $E_6$ of rank at least $2$. Then $X$ is $\text{Aut}(E_6)$-conjugate to exactly one subgroup of Table \ref{E6tab}. 

\end{thm*}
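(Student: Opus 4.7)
Let $X$ be a simple connected $G$-irreducible subgroup of $G = E_6$ of rank at least $2$, and let $M$ be a maximal closed connected subgroup of $G$ containing $X$. Since $X$ is irreducible, $M$ cannot be parabolic and is therefore reductive. Moreover, every parabolic of $M$ extends to a parabolic of $G$ (by composing its defining cocharacter with the inclusion $M \hookrightarrow G$), so the $G$-irreducibility of $X$ forces $X$ to be $M$-irreducible. The maximal connected subgroups of $E_6$ have been classified by Liebeck and Seitz; the reductive ones form a short list, consisting of $F_4$, $C_4$ (when $p = 2$), $A_1 A_5$, $A_2^3$, together with a handful of small-rank maximal reductive subgroups. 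The plan is to work through this list and, in each case, enumerate the simple connected $M$-irreducible subgroups of rank at least $2$.

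For $M = F_4$ the classification is given by Davis \cite[Theorem~4]{dav}. For $M$ of classical type, $M$-irreducibility of $X$ amounts to irreducibility of $X$ on the natural module of $M$, combined with preservation of an appropriate bilinear or quadratic form when $M$ is of type $B$, $C$ or $D$; these subgroups are classified by listing rational irreducible $X$-modules of the relevant dimension, parametrised by their highest weights via Steinberg's tensor product theorem. For semisimple $M$ with more than one simple factor, parabolics of $M$ are products of parabolics of the factors, so $M$-irreducibility says that $X$ projects irreducibly into each simple factor; since $X$ is simple, each projection is either trivial or a central isogeny onto its image, and the diagonal embedding into $M$ is then specified by a tuple of irreducible $X$-representations of prescribed types.

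The main obstacle is to decide which candidates $X \leq M$ produced above are genuinely $G$-irreducible, rather than merely $M$-irreducible but $G$-conjugate into some proper parabolic. My approach is to compute the composition factors of the restrictions of the adjoint module $L(G)$ and the two $27$-dimensional minuscule modules $V_{27}$, $V_{27}^\ast$ to each candidate $X$. Inductively (applied to the derived subgroup $L'$ of each proper Levi $L$ of $G$, which has strictly smaller rank), the restriction patterns possible for $L'$-irreducible simple subgroups are known, so any candidate whose restrictions match one of these patterns can be placed inside a proper Levi and is discarded. The surviving candidates are then partitioned into $G$-conjugacy classes using the restriction data as invariants, and pairs interchanged by the graph automorphism of $E_6$ (which swaps $V_{27}$ with $V_{27}^\ast$) are identified to produce the $\mathrm{Aut}(E_6)$-conjugacy classes listed in Table \ref{E6tab}.
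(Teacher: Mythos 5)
Your overall skeleton is the same as the paper's: reduce to a reductive maximal connected $M$ containing $X$ irreducibly, enumerate the simple $M$-irreducible candidates, and test $G$-irreducibility by comparing composition factors on $V_{27}$ and $L(E_6)$ with those of $L'$-irreducible subgroups of proper Levis. However, there is a genuine logical gap at the heart of your irreducibility test. You write that ``any candidate whose restrictions match one of these patterns can be placed inside a proper Levi and is discarded.'' This implication is false: agreement of composition factors with some $L'$-irreducible subgroup is only a \emph{necessary} condition for $G$-reducibility (this is the contrapositive of Lemma \ref{wrongcomps}), not a sufficient one. Precisely in the cases where the factors do match, the real work begins, and it goes in both directions. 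The paper must prove directly that $A_2 \hookrightarrow A_2^3$ via $(10,10,10)$ is reducible (by exhibiting a conjugation into a $D_4$-Levi for $p \neq 3$, and into an $F_4$-reducible position for $p=3$), and that $A_2 \hookrightarrow A_2\tilde{A}_2 < A_2 G_2$ via $(10,10)$ at $p=3$ is reducible (via a socle/Ext analysis of $L(E_6)'$ producing a fixed vector, then Lemma \ref{fix} and elimination of the maximal-rank overgroup $A_2^3$). Conversely, the long-root $A_2$ of the maximal $G_2$ at $p=3$ has the \emph{same} composition factors on $L(E_6)$ as an irreducible subgroup of $A_2^3$ and is itself irreducible (indeed conjugate to it); your rule would wrongly discard it. Without these module-structure and conjugacy arguments the classification is incomplete in both directions.

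Three further points. First, your criterion for $M$-irreducibility in classical $M$ is wrong for types $B$, $C$, $D$: by \cite[Lemma 2.2]{LT} the natural module need not be irreducible, only an orthogonal sum of non-degenerate, irreducible, pairwise inequivalent summands (e.g.\ a diagonal $C_2$ in $C_2^2 < C_4$ is $C_4$-irreducible but acts reducibly on $V_8$); for $E_6$ this happens not to cost you any irreducible subgroup, since all such candidates in $C_4$ centralise a torus, but the stated criterion would not survive into $E_7$ or $E_8$. Second, your list of maximal subgroups is off: $C_4 < E_6$ is maximal for $p \neq 2$, not $p=2$, and maximal subgroups with an $A_1$ factor such as $A_1 A_5$ can and should be excluded at the outset, since a simple subgroup of rank at least $2$ cannot project non-trivially to an $A_1$ factor, while $G$-irreducibility forces every projection to be non-trivial (Lemmas \ref{semirr} and \ref{easy}). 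Third, establishing that each class appears \emph{exactly once} in the table requires positive conjugacy identifications between candidates arising in different maximal overgroups (e.g.\ $A_2\tilde{A}_2 < F_4$ sitting inside $A_2^3$, or the maximal $G_2$'s long-root $A_2$ being conjugate to a diagonal of $A_2^3$); composition factors serve only to separate classes, not to merge them, and the paper uses centraliser and fixed-vector arguments for the merging step.
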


\begin{thm*} \label{Thm2}

Suppose $X$ is a simple, connected, irreducible subgroup of $E_7$ of rank at least $2$. Then $X$ is $E_7$-conjugate to exactly one subgroup of Table \ref{E7tab}. 

\end{thm*}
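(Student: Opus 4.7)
The strategy is to identify $X$ by locating a maximal connected reductive overgroup and then working down. By the Liebeck--Testerman result recalled in the introduction, such an $X$ has only finitely many closed connected overgroups in $G = E_7$, so we may choose a maximal one, call it $M$. Because $X$ lies in no proper parabolic of $G$, neither does $M$, so either $M = G$ (and $X$ is itself a maximal connected subgroup) or $M$ is a proper $G$-irreducible maximal connected reductive subgroup of $G$. The $G$-irreducible maximal connected subgroups of $E_7$ are known from Liebeck--Seitz: the maximal-rank subsystem subgroups $A_7$, $A_1 D_6$, $A_2 A_5$, $A_1^7$, $T_1 E_6$, the non-subsystem reductive subgroups $A_1 F_4$ and $G_2 C_3$, together with the small-rank maximal subgroups $A_1$, $A_2$, $A_1 A_1$ and $A_1 G_2$ occurring in specified characteristics.

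I would first dispense with the case $M = G$, reading off the simple maximal subgroups of rank $\geq 2$ directly from the Liebeck--Seitz list, and then, for each non-trivial $M$ above, enumerate the simple closed connected subgroups $X \le M$ of rank at least $2$. For the classical factors of $M$ this uses the classification of irreducible simple subgroups of classical groups due to Seitz and Testerman, and for exceptional factors (such as $F_4$ or $E_6$ inside $T_1 E_6$, or $G_2$ inside $G_2 C_3$) it uses the corresponding known lists from \cite{LS5} and earlier sections of the same project. For each candidate $X$, I would compute the composition factors of $X$ on the adjoint module $L(E_7)$ and on the $56$-dimensional module $V_{56}$. The presence of a trivial composition factor, or of a proper $X$-submodule giving a singular flag, is the practical test for containment in a Levi subgroup of a parabolic of $G$; after checking against the Levi types $E_6$, $D_6$, $A_6$, $D_5 A_1$, $A_5 A_1$, $A_4 A_2$, $D_4 A_1^2$ and their lower-rank Levi subgroups, $G$-irreducibility is certified.

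The final bookkeeping step is to remove duplicates, since the same simple $X$ may appear inside several maximal $M$. Here the composition factors of $L(E_7)\!\downarrow\! X$ and $V_{56}\!\downarrow\! X$ serve as the primary $G$-conjugacy invariants, and when these coincide one refines using the connected centraliser $C_G(X)^\circ$ or the full module structure (extensions as well as composition factors) to separate classes. Triviality of the outer automorphism group of $E_7$ means no further identifications across $\mathrm{Aut}(E_7)$ are needed, in contrast with the $E_6$ theorem.

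The main obstacle is small-characteristic behaviour. In characteristic $0$ and in sufficiently large $p$ the restrictions involved are semisimple with predictable composition factors, and the $G$-irreducible candidates are a rigid list; but for small $p$ (in particular $p = 2,3,5,7$) modules can become reducible, embeddings that are $G$-irreducible generically can collapse into a Levi, and sporadic new $G$-irreducible embeddings can appear. Each maximal $M$ therefore has to be analysed separately at each relevant prime, with explicit weight and restriction computations to verify irreducibility and to identify conjugacy classes. This characteristic-sensitive casework, rather than any single conceptual difficulty, constitutes the bulk of the proof.
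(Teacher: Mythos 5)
Your overall architecture matches the paper's: pass to a reductive maximal connected overgroup $M$, enumerate the $M$-irreducible simple candidates of rank at least $2$, test $G$-irreducibility via composition factors on $L(E_7)$ and $V_{56}$, then resolve conjugacy. Two remarks on the set-up: since a $G$-irreducible subgroup has finite centraliser and must project irreducibly onto every simple factor of $M$, one may discard at the outset all maximal subgroups with an $A_1$ factor or a central torus ($A_1D_6$, $A_1^7$, $T_1E_6$, $A_1F_4$, $A_1G_2$, etc.), leaving only $A_7$, $A_2A_5$, $G_2C_3$ and the maximal $A_2$ ($p\geq 5$); your longer list is not wrong but forces empty casework. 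The conjugacy step also needs more than invariant-matching: identifying, say, the irreducibly embedded $A_2 < A_7$ ($p>3$) with $A_2 \hookrightarrow \bar{A}_2 A_2^{(\star)} < A_2A_5$ via $(10,10)$ is done in the paper by realising it as the fixed points of a triality element $t$ of order $3$ and computing $\dim C_{L(E_7)}(t)$ to pin down $C_{E_7}(t) = A_2A_5$.

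The genuine gap is in your irreducibility test. Absence of trivial composition factors on $L(E_7)$ certifies irreducibility, and a mismatch of composition factors with every $L'$-irreducible subgroup of every Levi certifies it too; but when a candidate $X$ has \emph{exactly the same} composition factors on both $L(E_7)$ and $V_{56}$ as some Levi-irreducible subgroup, nothing you propose decides whether $X$ actually lies in a parabolic. This is precisely where the real work of the $E_7$ theorem lies. For $X = A_2 \hookrightarrow \bar{A}_2 A_2^{(\star)} < A_2A_5$ via $(10,10)$ with $p=3$, the composition factors coincide with those of the $A_6$-irreducible $A_2$, and the paper must construct an explicit non-standard complement to the unipotent radical $Q$ of an $A_6$-parabolic: it computes $H^1$ and $H^2$ of the levels of $Q$, lifts complements level by level via non-abelian cohomology, shows every complement fixes a vector of $L(E_7)$, and then analyses the centralisers of nilpotent versus semisimple fixed vectors to force one such complement into $A_2A_5$ and identify it with $X$. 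Similarly, showing $G_2 \hookrightarrow G_2G_2 < G_2C_3$ via $(10,10)$ is reducible at $p=2$ requires the precise socle structure of $V_{56}\downarrow G_2C_3$ (not just composition factors) to exhibit a trivial submodule, and ruling out $C_4 < A_7$ uses that its connected centraliser is a one-dimensional unipotent group at $p=2$. Without these mechanisms your procedure stalls exactly at the borderline cases that determine which entries appear in Table \ref{E7tab}.
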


\begin{thm*} \label{Thm3}

Suppose $X$ is a simple, connected, irreducible subgroup of $E_8$ of rank at least $2$. Then $X$ is $E_8$-conjugate to exactly one subgroup of Table \ref{E8tab}. 

\end{thm*}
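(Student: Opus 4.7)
The plan is to follow the inductive strategy standard for classification problems of this kind. Any simple connected irreducible $X \leq E_8$ of rank at least $2$ must be contained in some maximal connected subgroup $M$ of $E_8$, and these have been classified by Liebeck--Seitz. I would enumerate them: the simple maximal subgroups of type $A_1$ (in various classes), $B_2$, and the ``large'' simples like $G_2$ and $F_4$ that arise in small characteristic, together with the non-simple maximals $D_8$, $A_1 E_7$, $A_2 E_6$, $A_8$, $A_4 A_4$, $G_2 F_4$. The key general fact, already used in \cite{LT}, is that if $X$ is $E_8$-irreducible and $X \leq M$ with $M$ reductive, then $X$ is $M$-irreducible (any parabolic of $M$ containing $X$ extends to a parabolic of $E_8$ containing $X$). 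This lets me recurse.

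In the simple maximal case, the only candidates of rank $\geq 2$ contained in $A_1$, $B_2$, or $G_2$ maximals are the maximals themselves; the $F_4$ case is handled by \cite[Theorem 4]{dav}. For each non-simple maximal $M = M_1 M_2$, I would project the simple $X$ into each factor: either some projection is trivial, in which case $X$ sits in a single factor and I apply the classification of irreducible simple subgroups of that factor (using Theorem 1 for $E_6$, Theorem 2 for $E_7$, \cite[Theorem 4]{dav} for $F_4$, \cite[Theorem 1]{LS5} for $G_2$, and the known classification of irreducible subgroups of classical groups for the $D$-, $A$-type factors); or $X$ embeds diagonally, in which case the simple types of the two factors admit a common source $X$, and the embedding is essentially determined (up to Frobenius twists on each factor) by the $X$-module structure of the two natural representations. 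Recursively working through every maximal, every factor, and every second-level maximal of those factors produces a finite list of candidate embeddings of $X$ into $E_8$.

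For each candidate I would then decide whether $X$ is actually $E_8$-irreducible. The standard test is to compute the composition factors of the restriction $L(E_8) \!\downarrow\! X$ (together with, when needed, the composition factors on low-dimensional Weyl modules of the factors of $M$), and compare with the composition factors of $L(E_8) \!\downarrow\! L'$ for each Levi $L$ of $E_8$: if the restriction to $X$ is incompatible with every such Levi restriction, then $X$ cannot lie in any proper parabolic and is therefore irreducible. The same composition-factor data simultaneously addresses the consequence mentioned in the abstract: by tabulating $L(E_8) \!\downarrow\! X$ for each $X$ in the table, one verifies that distinct conjugacy classes give distinct multisets of composition factors (with the one stated exception). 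Fusion of $E_8$-classes into $\mathrm{Aut}(E_8) = E_8$-classes is trivial here since $E_8$ has no graph automorphisms, so this is cleaner than in the $E_6$ case.

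The main obstacle will be the small-characteristic analysis. In characteristics $2$, $3$, $5$, $7$ various Weyl modules become reducible, new maximal subgroups such as the celebrated $B_2 < E_8$ for $p = 5$ and $A_1 A_2 < G_2 F_4$ for appropriate $p$ appear, and subgroups that are non-$G$-cr in characteristic zero can become $G$-cr (even $G$-irreducible) in small $p$, or vice versa. Handling these requires careful weight-by-weight analysis of restrictions to maximal tori of $X$, often together with explicit computations of $1$-cohomology to rule out (or produce) additional embeddings as complements in parabolics. A secondary subtlety is showing that embeddings arising from different maximal overgroups are actually $E_8$-conjugate when they look numerically identical, for which one typically appeals to the uniqueness results for embeddings of simple subgroups of given type inside classical and exceptional groups due to Liebeck--Seitz.
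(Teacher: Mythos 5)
Your overall architecture matches the paper's: reduce to a reductive maximal connected overgroup $M$ in which $X$ is $M$-irreducible (and note that $M$ can have no $A_1$ factor, so only $D_8$, $A_8$, $A_2E_6$, $A_4^2$, $G_2F_4$ and the maximal $B_2$ for $p\geq 5$ survive), enumerate the $M$-irreducible candidates factor by factor, identify cross-conjugacies, and test $E_8$-irreducibility by comparing composition factors on $L(E_8)$ against those of Levi subgroups. That is exactly Lemma \ref{irrstrat}, Lemma \ref{class}, Lemma \ref{wrongcomps} and Corollary \ref{notrivs}, and it disposes of the large majority of the table.

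The genuine gap is that you treat the composition-factor comparison as essentially decisive, with only a vague appeal to ``$1$-cohomology'' for the residue. In fact there are specific candidates --- $B_4(\ddagger)<D_8$ with $p=2$, the irreducibly embedded $A_2^2<A_8$ with $p=3$, and $G_2\hookrightarrow G_2G_2<G_2F_4$ via $(10,10)$ with $p=7$ --- which are irreducible in \emph{every} reductive maximal overgroup and whose composition factors on $L(E_8)$ coincide exactly with those of an $L'$-irreducible subgroup of a Levi; for these the test is inconclusive, and the theorem's ``exactly one'' claim forces you to decide reducibility by other means. The paper's resolutions are the hardest parts of the proof and are not supplied by your outline: for $B_4(\ddagger)$ one applies non-abelian cohomology to the three-level unipotent radical of an $A_7$-parabolic (lifting non-standard complements level by level using the $H^1$ and $H^2$ computations of Lemma \ref{b4cohom}, then separating complements according to whether the vector they fix in $L(E_8)$ is nilpotent or semisimple, via the centraliser tables for nilpotent elements); for the $p=3$ and $p=7$ cases one passes to a finite subgroup ($A_2(3)\times A_2(3)$, resp.\ $G_2(7)$), verifies via Lemma \ref{subspaces} that it fixes the same subspaces as $X$, and then computes (in Magma) a $14$-dimensional abelian ad-nilpotent subalgebra fixed by $X$, which exponentiates to a unipotent group normalised by $X$. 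Without arguments of this kind your table would either wrongly include these subgroups or leave their status open, so the proof as proposed does not close.
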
  

We note that Amende \cite{bon} covers the $G$-irreducible subgroups of rank $1$ in $G = G_2$, $F_4$, $E_6$ and $E_7$. The semisimple (non-simple) $G$-irreducible subgroups and the irreducible subgroups of $E_8$ of rank~1 will be covered in forthcoming work of the author. Also, under various assumptions on the characteristic ($p >7$ covers all of them), Theorems 1--3 can be deduced from the results in \cite{LS3}. Our contribution is to remove these characteristic restrictions. 

Each subgroup in Tables \ref{E6tab}--\ref{E8tab} is described by its embedding in some maximal connected subgroup, given in Theorem \ref{max}. Notation for the embeddings is given in Section \ref{nota}.    

From these results we can prove a number of representation-theoretic corollaries. For the first of these, we need the following definition. Let $G$ be a simple algebraic group (of arbitrary type), $V$ be a module for $G$ and $X$ and $Y$ be subgroups of $G$. Then we say $X$ and $Y$ share the same composition factors on $V$ if there exists a morphism from $X$ to $Y$, which is an isomorphism of abstract groups sending the composition factors of $X$ to composition factors of $Y$. 

The first of our corollaries shows that if $G$ is an exceptional algebraic group then, with one exception, conjugacy between $G$-irreducible subgroups is determined by their composition factors on $L(G)$.

\begin{cor*}
Let $G$ be a simple exceptional algebraic group and $X$ and $Y$ be simple, connected irreducible subgroups of $G$ of rank at least $2$. If $X$ and $Y$ have the same composition factors on $L(G)$ then either:

\begin{enumerate}[leftmargin=*,label=\normalfont(\arabic*)]
\item $X$ is conjugate to $Y$ in Aut$(G)$, or
\item $G = E_8$, $X \cong Y \cong A_2$, $p \neq 3$, $X \hookrightarrow A_2^2 < D_4^2$ via $(10,10^{[r]})$ and $Y \hookrightarrow A_2^2 < D_4^2$ via $(10,01^{[r]})$ (or vice versa) where $r \neq 0$ and $A_2^2$ is irreducibly embedded in $D_4^2$.  
\end{enumerate}

\end{cor*}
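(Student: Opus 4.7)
The plan is to reduce the corollary to a direct inspection of Tables \ref{E6tab}--\ref{E8tab}. By Theorems 1--3, every simple connected irreducible subgroup $X$ of rank at least $2$ of an exceptional $G$ is conjugate (in $\mathrm{Aut}(G)$ if $G = E_6$, in $G$ otherwise) to exactly one entry of those tables, and the embedding into a maximal connected subgroup $M$ of $G$ from Theorem \ref{max} is given explicitly. So it suffices to compute, for each entry, the composition factors of $L(G) \downarrow X$, and to verify that distinct classes give distinct multisets save for the stated exception.

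For each maximal $M$ appearing I would first record the decomposition of $L(G) \downarrow M$, which is standard, and then push it through the specified embedding $X \hookrightarrow M$ to obtain $L(G) \downarrow X$ up to composition factors. The tensor products involved are either semisimple (when distinct Frobenius twists occur in separate factors) or have well-understood composition series, so this reduces to routine module-theoretic bookkeeping; some care is needed in small characteristic where tensor products of $X$-modules can fragment further.

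Once the data are assembled, the remaining work is to compare multisets across classes. Non-conjugate classes lying in non-conjugate $M$ are separated immediately by composition-factor dimensions, and classes inside the same $M$ are usually distinguished by their high weights (including any Frobenius twists $[r]$). The only genuine coincidence is case (2). Here both subgroups lie inside the maximal $D_4^2 < E_8$; the embedding $A_2 < D_4$ is via the self-dual adjoint $V(11)$ (irreducible precisely when $p \neq 3$); and the graph automorphism of $A_2$ fixes $V(11)$ up to isomorphism. Consequently the tensor-product summand $V(11) \otimes V(11)^{[r]}$ of $L(E_8) \downarrow A_2^2$, and more generally every summand of $L(E_8) \downarrow D_4^2$ after restriction to $A_2^2$, takes the same form under either of the diagonal embeddings $(10, 10^{[r]})$ and $(10, 01^{[r]})$. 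Hence the two yield identical composition-factor multisets on $L(E_8)$, yet are distinct conjugacy classes by Theorem 3.

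The main obstacle is completeness: one must verify that case (2) is the \emph{only} coincidence. This requires a systematic scan across all pairs of entries in Tables \ref{E6tab}--\ref{E8tab}, with particular care in the characteristics $p \in \{2, 3, 5, 7\}$ where tensor-product decompositions acquire extra composition factors and potentially identify otherwise distinguishable classes.
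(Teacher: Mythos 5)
Your proposal matches the paper's own argument, which is precisely a direct inspection of the classification tables: the paper observes that all irreducible conjugacy class representatives have a unique set of composition factors on $L(G)$ apart from the single stated exception, and your explanation of why the $(10,10^{[r]})$ and $(10,01^{[r]})$ embeddings coincide on $L(E_8)$ is a correct elaboration of that exception. The only small omission is that the scan must also include Tables \ref{G2tab} and \ref{F4tab}, since the corollary covers $G = G_2$ and $F_4$ as well.
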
  
 
Again, this is proved in \cite[Theorem 4]{LS3} with restrictions on the characteristic $p$. The notation ``$X \hookrightarrow A_2^2$ via $(10,10^{[r]})$'' is explained in Section \ref{nota}. 

The next corollary highlights the interesting subgroups that are $M$-irreducible but not $G$-irreducible for some reductive, maximal connected subgroup $M$. Here ``interesting'' means that the $M$-irreducible subgroup is not obviously $G$-reducible, i.e. $M'$-reducible for some other reductive, maximal connected subgroup $M'$ or contained in a proper Levi subgroup. 

\begin{cor*} \label{nongcr}
Let $G$ be an exceptional algebraic group and $X$ be a simple connected subgroup of rank at least $2$ of $G$. Suppose that whenever $X$ is contained in a reductive, maximal connected subgroup $M$ it is $M$-irreducible and assume that such an overgroup $M$ exists. Assume further that $X$ is not contained in a proper Levi subgroup of $G$. Then either:

\begin{enumerate}[leftmargin=*,label=\normalfont(\arabic*)]
\item $X$ is $G$-irreducible, or
\item $X$ is Aut($G$)-conjugate to a subgroup in Table \ref{cortab} below. Such $X$ are non-$G$-cr and satisfy the hypothesis.  
\end{enumerate}
   
\end{cor*}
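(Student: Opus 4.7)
The plan is to combine Theorems 1--3 with the classification of reductive maximal connected subgroups of $G$ recalled in Theorem \ref{max}. The first observation is structural: any $G$-completely reducible subgroup that is not $G$-irreducible is, by definition of complete reducibility, contained in a Levi subgroup of some proper parabolic of $G$. The hypothesis that $X$ is not contained in a proper Levi therefore forces $X$ to be either $G$-irreducible (which is conclusion (1)) or non-$G$-cr. So the task reduces to producing the non-$G$-cr candidates and checking they match Table \ref{cortab}.

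To generate a finite list of candidates I would iterate over each reductive maximal connected subgroup $M$ of $G$ supplied by Theorem \ref{max}, and within each $M$ enumerate the simple $M$-irreducible subgroups of rank at least $2$. When $M$ is simple of exceptional type the required list is given by Theorems 1--3 applied to $M$; when $M$ is simple of classical type the simple irreducible subgroups are those acting irreducibly on the natural module and are read off from the standard classification of irreducible subgroups of classical groups; when $M$ is a product one combines irreducible embeddings into each factor. This produces an explicit, finite collection of pairs $(M, X)$ up to $G$-conjugacy.

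For each candidate I would then run three tests. First, check whether $X$ is itself $G$-irreducible by locating it in Tables \ref{E6tab}--\ref{E8tab}; if so, conclusion~(1) holds. Second, using the restrictions of $L(G)$ and of the minimal module $V_{\min}$ to $X$, decide whether the composition factors are compatible with containment in a proper Levi subgroup $L'$, and discard the candidate if it is so contained (since this violates the hypothesis). Third, determine whether $X$ lies in some other reductive maximal connected $M'$ as a non-$M'$-irreducible subgroup, by identifying the image of $X$ in each candidate $M'$ via its weights on $V_{\min}$ and on $L(G)$; any such candidate is again discarded. The candidates surviving all three tests populate Table \ref{cortab}, and by the first paragraph they are automatically non-$G$-cr.

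The main obstacle is the third test: ruling out containment in \emph{every} other maximal connected reductive $M'$ as a non-$M'$-irreducible subgroup. For each surviving $X$ one must track its weights on both $V_{\min}$ and $L(G)$ carefully enough to identify all of its overgroups, and in small characteristics additional non-generic embeddings (and additional parabolic containments) can appear; this is precisely the source of the characteristic restrictions in \cite{LS3} that the present paper aims to remove. The composition-factor bookkeeping required here is of the same flavour as that underlying Theorems 1--3, so the analysis can largely be piggy-backed onto the casework already carried out in proving them.
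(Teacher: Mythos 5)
Your overall architecture matches the paper's: enumerate the $M$-irreducible candidates that are not $G$-irreducible from the proofs of Lemma \ref{simpleg2}, Theorem \ref{simplef4} and Theorems 1--3, and for each one verify that it is not contained in a proper Levi subgroup and is irreducible in every reductive maximal connected overgroup. Your opening observation (that a subgroup satisfying the hypothesis and failing to be $G$-irreducible is automatically non-$G$-cr) is also exactly how the paper obtains that part of conclusion (2).

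The gap is in your second and third tests. You propose to decide containment in a Levi $L'$, or $M'$-reducible containment in another maximal $M'$, purely from composition factors/weights on the minimal and adjoint modules, and you assert the remaining work is ``composition-factor bookkeeping'' that can be piggy-backed on Theorems 1--3. For precisely the subgroups that end up in Table \ref{cortab} this fails: composition factors do not separate them from genuinely reducible subgroups, and finer module-theoretic invariants are indispensable. For instance, $G_2 \hookrightarrow G_2 G_2 < G_2 F_4$ ($p=7$) has the same composition factors on $L(E_8)$ as a $G_2$ lying inside a $D_7$ Levi of $D_8$; a composition-factor test would either wrongly discard it or be unable to decide, and the paper separates the two by showing the former has no trivial submodule on $L(E_8)$. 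Likewise $A_2 \hookrightarrow \bar{A}_2 A_2^{(\star)} < A_2 A_5$ via $(10,10)$ with $p=3$ matches the composition factors of an $A_2$ in an $A_6$ Levi of $E_7$ on both $V_{56}$ and $L(E_7)$ --- that coincidence is exactly what Lemma \ref{badA2} exploits to place it in a parabolic --- so no weight computation can certify that it avoids the Levi; the paper instead appeals to the non-abelian cohomology analysis of Lemma \ref{badA2}. The remaining cases are handled with socle series and dimensions of indecomposable direct summands (Lemmas \ref{weirda2} and \ref{weirdG2C3}), centraliser arguments (e.g.\ the $1$-dimensional unipotent centraliser of $C_4 < A_7$ when $p=2$), and explicit Magma computations with finite subgroups such as $Sp(4,2)$ and $Sp(4,4)$. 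None of these tools is supplied, or even identified as necessary, by your proposal, so as written the decision procedure in your third paragraph cannot be executed.
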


\begin{longtable}{p{0.08\textwidth - 2\tabcolsep}p{0.15\textwidth - 2\tabcolsep}p{0.1\textwidth - 2\tabcolsep}>{\raggedright\arraybackslash}p{0.67\textwidth-\tabcolsep}@{}}

\caption{Non-$G$-cr subgroups that are irreducible in every (and at least one) maximal, reductive overgroup \label{cortab}} \\

\hline \noalign{\smallskip} 

$G$ & Max. $M$ & $p$ & $M$-irreducible subgroup $X$ \\

\hline \noalign{\smallskip} 

$F_4$ & $A_2 \tilde{A}_2$ & $p = 3$ & $A_2 \hookrightarrow A_2 \tilde{A}_2$ via $(10,01)$ \\

\hline \noalign{\smallskip}

$E_6$ & $A_2 G_2$ & $p =3$ & $A_2 \hookrightarrow A_2 \tilde{A}_2$ via $(10,10)$ \\

\hline \noalign{\smallskip}

$E_7$ & $A_2 A_5$ & $p =3$ & $A_2 \hookrightarrow A_2 A_2^{(*)} < A_2 A_5$ via $(10,10)$ (see Lemma \ref{a2a2starnotation}) \\

& $A_7$ & $p=2$ & $C_4$ \\

& & $p=2$ & $D_4$ \\

& $G_2 C_3$ & $p=2$ & $G_2 \hookrightarrow G_2 G_2$ via $(10,10)$ \\

\hline \noalign{\smallskip}

$E_8$ & $D_8$ & $p=2$ & $B_4 (\ddagger)$ (see Lemma \ref{b4notation}) \\

& & $p=2$ & $B_2 \hookrightarrow B_2^2 (\ddagger)$ via $(10,10^{[r]})$ $(r \neq 0)$, $(10,02)$ or $(10,02^{[r]})$ $(r \neq 0)$ (see Lemma \ref{b4notation}) \\

& $A_8$ & $p=3$ & $A_2 \hookrightarrow A_2^2 < A_8$ via $(10,10^{[r]})$ $(r \neq 0)$ or $(10,01^{[r]})$ $(r \neq 0)$  \\

& $G_2 F_4$ & $p=7$ & $G_2 \hookrightarrow G_2 G_2 < G_2 F_4$ via $(10,10)$ \\

\hline
\end{longtable}
\vspace{-0.3cm}
Again, the notation in the table is explained in Section \ref{nota}.  

A natural question to ask is whether $G$-irreducible subgroups of a certain type exist, especially in small characteristics. When $G$ is a simple exceptional algebraic group \cite[Theorem 2]{LT} (corrected in \cite[Theorem 7.4]{bon}) shows that $G$-irreducible $A_1$ subgroups exist, except for $G = E_6$ when $p=2$. The following corollary shows that  $G$-irreducible $A_2$ subgroups almost always exist.

\begin{cor*} \label{A2subgroups}
Let $G$ be an exceptional algebraic group. Then $G$ contains a $G$-irreducible $A_2$ subgroup, unless $G = E_7$ and $p=2$. 
\end{cor*}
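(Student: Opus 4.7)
The corollary follows by direct inspection of the classification of simple connected irreducible subgroups of rank at least $2$ in each exceptional group. For $G = E_6, E_7, E_8$ the relevant lists are Tables \ref{E6tab}, \ref{E7tab}, \ref{E8tab} from Theorems 1--3; for $G = G_2$ and $F_4$ they are \cite[Theorem 1]{LS5} and \cite[Theorem 4]{dav} respectively. The plan is, for each pair $(G,p)$ with $(G,p) \neq (E_7, 2)$, to point to an $A_2$ row in the appropriate table whose characteristic conditions are satisfied, and for $(G,p) = (E_7,2)$ to observe that no $A_2$ row of Table \ref{E7tab} is valid at $p = 2$.

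For $G = G_2$, in every characteristic the long-root subsystem subgroup $A_2$ is a maximal connected subgroup (by Borel--de Siebenthal); since every proper parabolic of $G_2$ has Levi factor of semisimple rank $1$, this $A_2$ is not contained in any parabolic and is therefore $G$-irreducible. For $G = F_4, E_6, E_8$ the same kind of inspection yields, in each characteristic, at least one $A_2$-row of the relevant table; in a number of cases concrete embeddings can be named, for instance the diagonal $A_2 \hookrightarrow A_2 \tilde A_2 < F_4$ for $F_4$, and the $A_2 \hookrightarrow A_2^2 < A_8 < E_8$ from Table \ref{cortab} when $G = E_8, p = 3$. For $G = E_7$ with $p \neq 2$ an analogous inspection of Table \ref{E7tab} locates a suitable $A_2$-row.

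The only step requiring an actual non-existence assertion is the exclusion of $(E_7, p = 2)$, and here one simply records that in Table \ref{E7tab} every row with $X \cong A_2$ carries a characteristic restriction excluding $p = 2$; this is a direct consequence of the completeness of Theorem 2. Thus the main obstacle is not the corollary but the classification on which it depends --- once Theorems 1--3 and the corresponding $G_2$ and $F_4$ results are in hand, the proof is a routine table-check, and the genuinely interesting content is the non-existence phenomenon at $(E_7, p = 2)$, which is a real output of the classification rather than something one would expect a priori.
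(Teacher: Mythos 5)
Your overall approach is exactly the paper's: the author disposes of this corollary in one line (``Corollary 3 also follows immediately from Tables \ref{G2tab}--\ref{E8tab}''), i.e.\ a direct inspection of the classification tables, checking that for each $(G,p)\neq(E_7,2)$ some $A_2$-row survives and that for $(E_7,2)$ none does. Your reading of the $(E_7,2)$ exclusion is also correct: every $A_2$-row of Table \ref{E7tab} carries a restriction ruling out $p=2$ (the diagonal subgroups of $\bar A_2 A_2^{(\star)}$ require $p\neq 2$ since $A_2^{(\star)}$ does not exist in $A_5$ when $p=2$, and the maximal $A_2$ requires $p\geq 5$).

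However, one of your concrete witnesses is wrong, and wrong in a way that inverts the meaning of a table. For $G=E_8$, $p=3$ you cite ``the $A_2\hookrightarrow A_2^2<A_8$ from Table \ref{cortab}.'' Table \ref{cortab} is the list of subgroups that are irreducible in every reductive maximal overgroup yet are \emph{not} $G$-irreducible; the $E_8$-entry you name is there precisely because Lemma \ref{bada2a2} shows $A_2^2<A_8$ lies in a $D_7$-parabolic when $p=3$, so every diagonal $A_2$ inside it is $E_8$-reducible. Citing it as a $G$-irreducible $A_2$ is therefore the opposite of what the paper proves. The corollary is unaffected because Table \ref{E8tab} does contain $A_2$-rows valid at $p=3$: for instance the diagonal subgroups $A_2\hookrightarrow \bar A_2 A_2<\bar A_2 E_6$ (valid for $p\geq 3$, Lemma \ref{bara2y}) or the $E_8$-irreducible diagonal subgroups of $A_2^4<A_2E_6$ (Lemma \ref{A2E8}); note that Table \ref{A2overgroupstab} records $A_2E_6$ as the relevant overgroup for $p=3$, not $A_8$ or $D_8$. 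Replace the witness and the proof is complete.
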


Given the existence of irreducible $A_2$ subgroups, we study their overgroups. In many cases there exists a unique reductive maximal connected subgroup $M$ containing a representative of each conjugacy class of $G$-irreducible $A_2$ subgroups. 

\begin{cor*} \label{A2overgroups}
Let $G$ be an exceptional algebraic group. Then there exists a reductive, maximal connected subgroup $M$ containing representatives of every $\text{Aut}(G)$-conjugacy class of $G$-irreducible $A_2$ subgroups, unless $(G,p)$ is one of the following:  $(G_2,3)$, $(E_6, p \neq 2)$, $(E_7, p \geq 5)$ or $(E_8, p \neq 3)$, in which cases either two or three reductive, maximal connected subgroups are required. The following table lists such overgroups $M$.   
\end{cor*}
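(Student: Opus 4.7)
The plan is to deduce this corollary directly from Theorems 1--3 together with the analogous classifications for $G_2$ (from \cite{LS5}) and $F_4$ (from \cite{dav}), by inspecting the $A_2$ rows of Tables \ref{E6tab}--\ref{E8tab} and the list of reductive maximal connected subgroups in Theorem \ref{max}. Since those theorems enumerate \emph{every} $\text{Aut}(G)$-conjugacy class of simple connected $G$-irreducible subgroup of rank at least $2$ together with an embedding into a specified reductive maximal connected subgroup $M$, the question of finding a common maximal overgroup becomes a finite combinatorial check once the relevant list has been extracted.

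First I would, for each $G \in \{G_2, F_4, E_6, E_7, E_8\}$ and each characteristic $p$ (or $p = \infty$), list the $A_2$ entries appearing in the classification. For each such $A_2$, the table records at least one reductive maximal connected overgroup $M$; if $A_2$ is embedded via a twisted diagonal or via a particular irreducible representation into a classical maximal subgroup, all such overgroups are already visible from the table or from the containments given in Theorem \ref{max}. This produces, for each $(G,p)$, a finite list of pairs (conjugacy class of $A_2$, candidate maximal overgroup). To establish part one of the corollary — the generic case — I would then exhibit a single $M$ from Theorem \ref{max} that appears as an overgroup for every $A_2$ on the list, and tabulate these choices; in most characteristics such an $M$ exists because the irreducible $A_2$'s arise as subsystem $A_2$'s or as restrictions through a common classical maximal subgroup (for example, through $D_4$-type or $A_2$-type subsystem subgroups).

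For the listed exceptional pairs $(G_2,3)$, $(E_6, p \neq 2)$, $(E_7, p \geq 5)$ and $(E_8, p \neq 3)$, the second half of the proof is to show that no single maximal $M$ suffices and to record the minimal set of overgroups required. Here I would fix two $\text{Aut}(G)$-inequivalent irreducible $A_2$'s from the classification and verify that they do not jointly embed into a common maximal connected subgroup. The verification uses the composition factors of the candidate $A_2$'s on $L(G)$ (computable from the embedding data in the tables) and the known restrictions of $L(G)$ to each reductive maximal connected subgroup: if the composition factors are incompatible with restriction from a given $M$, that $M$ is excluded as a common overgroup. Pairing this with Frobenius twists and the action of graph automorphisms of $G$ (which is how $\text{Aut}(G)$-conjugacy enters) then pins down the exact number — two or three — of maximal subgroups needed, and produces the claimed table.

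The main obstacle is the exceptional-case analysis, specifically ruling out further joint embeddings. A naive attempt might overlook a shared overgroup hiding inside a larger classical maximal subgroup (e.g. an $A_2$ visible both in $A_2 A_2$ and in some $A_n$-subsystem via a symmetric-square or adjoint construction). To handle this cleanly, I would rely on the fact that each $A_2$ in the tables comes with an explicit action on the minimal or adjoint module, and that Theorem \ref{max} lists all reductive maximal connected subgroups up to $\text{Aut}(G)$-conjugacy, so only finitely many candidate overgroups need be checked. The small-characteristic issues (for instance, $p=3$ in $E_8$ yielding inseparable isogenies of $A_2$-type subgroups, and the extra $A_2 < A_2^2$ embeddings of the form $(10,10^{[r]})$ versus $(10,01^{[r]})$ responsible for the exception in the first corollary) are exactly the ones singled out in the statement, so the bookkeeping there matches the pattern already established in that earlier corollary.
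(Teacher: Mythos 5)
Your proposal is correct and follows essentially the same route as the paper, which disposes of this corollary in one line by asserting that it follows from the proofs of Lemma \ref{simpleg2}, Theorem \ref{simplef4} and Theorems 1--3, i.e.\ by exactly the kind of finite inspection of the tables and of the maximal overgroups recorded there that you describe. Your extra step of ruling out a common overgroup in the exceptional cases via composition factors on $L(G)$ (e.g.\ the maximal $A_2 < E_6$ for $p \geq 5$, or the $A_2^2 < D_4^2 < D_8$ classes in $E_8$ for $p \neq 3$) is the right way to make the paper's implicit check explicit.
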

\setlength\LTleft{0.05\textwidth}

\begin{longtable}{p{0.1\textwidth - 2\tabcolsep}>{\raggedright\arraybackslash}p{0.3\textwidth-2\tabcolsep}>{\raggedright\arraybackslash}p{0.25\textwidth-2\tabcolsep}>{\raggedright\arraybackslash}p{0.25\textwidth-\tabcolsep}@{}}

\caption{Maximal connected overgroups for $G$-irreducible $A_2$ subgroups. \label{A2overgroupstab}} \\

\hline \noalign{\smallskip}

$G$ & $p \geq 5$ & $p=3$ & $p=2$ \\

\hline \noalign{\smallskip}

$G_2$ & $A_2$ &  $A_2$ and $\tilde{A}_2$ & $A_2$ \\

$F_4$ & $A_2 \tilde{A}_2$ & $A_2 \tilde{A}_2$ & $A_2 \tilde{A}_2$ \\

$E_6$ & $A_2^3$ and $A_2$ & $A_2^3$, $A_2 G_2$ and $G_2$ & $A_2^3$  \\

$E_7$ & $A_2 A_5$ and $A_2$ & $A_2 A_5$ & \textbf{---}\\

$E_8$ & $A_2 E_6$ and $D_8$ & $A_2 E_6$ & $A_2 E_6$ and $D_8$ \\
 
\hline 

\end{longtable}

\setlength\LTleft{0pt}

We require the following definition before discussing the final set of corollaries. As before, let $G$ be a simple exceptional algebraic group.

\begin{defn} \textup{\cite[p. 263]{LS4}}
A simple, simply connected subgroup $X$ of $G$ is \emph{restricted} if all composition factors of $L(G) \downarrow X$ are restricted if $X \neq A_1$, and are of high weight at most $2p-2$ if $X=A_1$.    
\end{defn}

In Section \ref{vars} we prove a set of corollaries which extend \cite[Corollary 1]{LS4}. This states that when $p$ is a good prime for $G$, any simple $G$-cr subgroup $X$ is contained in a uniquely determined commuting product $Y_1 \ldots Y_k$ such that each $Y_i$ is a simple restricted subgroup of the same type as $X$ and each of the projections $X \rightarrow Y_i / Z(Y_i)$ is non-trivial and involves a different field twist. For each $G$, we extend this to all characteristics for each simple, connected $G$-irreducible subgroup of rank at least $2$. In each case, we obtain a small number of counterexamples in bad characteristic.       

We briefly describe the strategy for the proofs of Theorems 1--3 (see Section \ref{strat} for further details). Theorem \ref{max} lists all the maximal connected subgroups that are reductive and have no $A_1$ simple factor. For each of these maximal subgroups $M$, we find all simple $M$-irreducible subgroups of rank at least $2$ and call these ``candidate'' subgroups. It then remains to investigate which of the candidate subgroups are $G$-irreducible, and to solve the conjugacy problem for the candidates. 

The proofs of Theorems 2 and 3, as well as Corollary \ref{nongcr}, have some interesting features, notably when proving an $M$-irreducible subgroup is not $G$-irreducible. These include non-abelian cohomology (applied to the unipotent radicals of parabolic subgroups), finite subgroups and computations in Magma \cite{magma}. See Lemmas \ref{badA2}, \ref{badB4}, \ref{bada2a2} and \ref{badG2} for examples.

\section{Notation} \label{nota}

Let $G$ be a simple algebraic group over an algebraically closed field $K$. Let $\Phi$ be the root system of $G$ and $\Phi^+$ be the set of positive roots in $\Phi$. Write $\Pi = \{ \alpha_1, \ldots, \alpha_l \}$ for the simple roots of $G$ and $\lambda_1, \ldots, \lambda_l$ for the fundamental dominant weights of $G$, both with respect to the ordering of the Dynkin diagram as given in \cite[p. 250]{bourbaki}. We sometimes use $a_1 a_2 \ldots a_l$ to denote a dominant weight $a_1 \lambda_1 + a_2 \lambda_2 + \cdots + a_l \lambda_l$. We denote by $V_G(\lambda)$ (or just $\lambda$) the irreducible $G$-module of dominant high weight $\lambda$. The Weyl module of high weight $\lambda$ is denoted $W_G(\lambda)$ (or just $W(\lambda)$). Another module we refer to frequently is the adjoint module for $G$, which we denote $L(G)$. We let $V_7 : = W_{G_2}(10)$, $V_{26}:=W_{F_4}(0001)$, $V_{27} := V_{E_6}(\lambda_1)$ and $V_{56}:=V_{E_7}(\lambda_7)$. For a $G$-module $V$, we let $V^*$ denote the dual module of $V$. If $Y = Y_1 Y_2 \ldots Y_k$, a commuting product of simple algebraic groups, then $(V_1, \ldots, V_k)$ denotes the $Y$-module $V_1 \otimes \dots \otimes V_k$  where each $V_i$ is an irreducible $Y_i$-module. The notation $\bar{X}$ denotes a subgroup of $Y$ that is generated by long root subgroups of $Y$. If $Y$ has short root elements then $\tilde{X}$ means $\tilde{X}$ is generated by short root subgroups. 

Suppose char($K)=p < \infty$ (recalling that characteristic 0 is denoted $p = \infty$). Let $F: G \rightarrow G$ be the standard Frobenius endomorphism (acting on root groups $U_\alpha = \{ u_\alpha(c) | c \in K\}$ by $u_{\alpha}(c) \mapsto u_{\alpha}(c^p)$) and $V$ be a $G$-module afforded by a representation $\rho: G \rightarrow GL(V)$. Then $V^{[r]}$ denotes the module afforded by the representation $\rho^{[r]} : = \rho \circ F^r$. Let $M_1, \ldots, M_k$ be $G$-modules and $n_1, \ldots, n_k$ be positive integers. Then $M_1^{n_1} / \ldots / M_k^{n_k}$ denotes an $G$-module having the same composition factors as $M_1^{n_1} \oplus \dots \oplus M_k^{n_k}$. Furthermore, $V = M_1 | \ldots | M_k$ denotes an $G$-module with a socle series as follows: $\textrm{Soc}(V) \cong M_k$ and $\textrm{Soc}(V/M_i) \cong M_{i-1}$ for $k \geq i > 1$. Sometimes, to make things clearer, we will use a tower of modules $$\cfrac{M_1}{\cfrac{M_2}{M_3}}$$ to mean the same as $M_1 | M_2 | M_3$.

We need a notation for diagonal subgroups of $Y = H_1 H_2 \ldots H_k$, a commuting product, where all of the subgroups $H_i$ are simple and of the same type; call the simply connected group of this type $H$. Let $\hat{Y} = H \times H \times \cdots \times H$, the direct product of $k$ copies of $H$. Then we may regard $Y$ as $\hat{Y} / Z$ where $Z$ is a subgroup of the centre of $\hat{Y}$ and $H_i$ is the image of the $i$th projection map. A diagonal subgroup of $\hat{Y}$ is a subgroup $\hat{X} \cong H$ of the following form: $\hat{X} = \{ (\phi_1(h), \ldots, \phi_k(h)) | h \in H \}$ where each $\phi_i$ is an endomorphism of $H$. A diagonal subgroup $X$ of $Y$ is the image of a diagonal subgroup of $\hat{Y}$ under the natural map $\hat{Y} \rightarrow Y$. To describe such a subgroup it therefore suffices to give an endomorphism, $\phi_i$, of $H$ for each $i$. By \cite[Chapter 12]{stei}, $\phi_i = \alpha \theta_i  F^{r_i}$ where $\alpha$ is an inner automorphism, $\theta_i$ is a graph morphism and $F^{r_i}$ is a power of the standard Frobenius endomorphism. We only wish to distinguish these diagonal subgroups up to conjugacy and therefore assume $\alpha$ is trivial. For each $1 \leq i \leq k$ we must give a (possibly trivial) graph automorphism $\theta_i$ of $H$, and a non-negative integer $r_i$. 

Such a diagonal subgroup $X$ is denoted ``$X \hookrightarrow H_1 H_2 \ldots H_k$ via $(\lambda_1^{[\theta_1 r_1]}, \lambda_1^{[\theta_2 r_2]},  \ldots, \lambda_1^{[\theta_k r_k]})$''. We often abbreviate this to ``$X$ via $(\lambda_1^{[\theta_1 r_1]}, \ldots, \lambda_1^{[\theta_k r_k]})$'' if the group $Y$ is clear. Unless $X$ is of type $D_n$ ($n \geq 4$), a graph automorphism is uniquely determined by the image of $\lambda_1$ (including the special isogeny from $B_n$ to $C_n$ which takes $\lambda_1$ to $2\lambda_1$). In these cases, instead of writing $\lambda_1^{[\theta_i r_i]}$ we write $\mu^{[r_i]}$ where $\mu$ is the image of $\lambda_1$ under $\theta_i$. The only time we need a diagonal subgroup of a product of type $D_n$ subgroups is when dealing with $D_4^2$. We give a notation for the graph automorphisms of $D_4$: denote an order 3 automorphism by $\tau$ and an involutory automorphism by $\iota$. We usually use the letters $r, s, t, u, \dots $ to be the field twists and they are always assumed to be distinct. 
 
Let $J = \{\alpha_{j_1}, \alpha_{j_2}, \ldots, \alpha_{j_r}\} \subseteq \Pi$ and define $\Phi_J = \Phi \cap \mathbb{Z}J$. Then the standard parabolic subgroup corresponding to $J$ is the subgroup $P = \left< T, U_\alpha \, : \, \alpha \in \Phi_J \cup \Phi^+ \right>$. The Levi decomposition of $P$ is $P = Q L$ where $Q = R_{u}(P) = \left< U_\alpha \, | \, \alpha \in \Phi^+ \setminus \Phi_J \right>$, and $L = \left< U_\alpha \, | \, \alpha \in \Phi_J \right>$. For $i \geq 1$ we define
\[ Q(i) = \left< U_\alpha \, \middle| \, \alpha = \sum_{j \in \Pi} c_j \alpha_j \mathrm{ \ where \ } \sum_{j \in \Pi \setminus J} c_j \geq i \right>, \]
which is a subgroup of $Q$. The \emph{$i$-th level} of $Q$ is $Q(i)/Q(i+1)$, and this is central in $Q/Q(i+1)$. Moreover, by \cite[Theorem 2]{ABS} each level of $Q$ has the structure of a completely reducible $L$-module.

\section{Preliminaries}

Before proving Theorems 1--3 we present some of the results needed in the proofs. Let $G$ be a simple algebraic group over an algebraically closed field of characteristic $p$. The first of these results gives us a starting point for our strategy, which is described in Section \ref{strat}. When we say a reductive, maximal closed connected subgroup we mean a subgroup that is maximal among all closed connected subgroups and is reductive. 
   
\begin{thm}\textup{\cite[Corollary 2]{LS1}} \label{max}
The following tables give the reductive, maximal closed connected subgroups $M$ of $G= E_6$, $E_7$ and $E_8$ with each simple component having rank at least $2$. We also give the composition factors of the restrictions to $M$ of $V_{27}$, $V_{56}$ and $L(G)$. 
\end{thm}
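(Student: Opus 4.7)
The plan is to assemble this theorem from existing classification results together with a module-theoretic computation for each subgroup. The list of maximal subgroups is taken directly from \cite[Corollary 2]{LS1}, which classifies every reductive, maximal, closed connected subgroup of each exceptional algebraic group; one simply discards those having a simple factor of rank $1$ to produce the first column of each table. The bulk of the work is producing the composition factors on $V_{27}$, $V_{56}$ and $L(G)$, so the proof splits into cases according to whether $M$ is a subsystem subgroup or not.

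For the subsystem maximal subgroups --- such as $A_1A_5$ and $A_2^3<E_6$, $A_7$, $A_2A_5$, $A_1D_6<E_7$, and $D_8$, $A_8$, $A_4^2$, $A_2E_6<E_8$ --- the embedding is specified by a closed subsystem $\Psi\subseteq\Phi(G)$, so the composition factors of $L(G)\downarrow M$ follow by decomposing $L(G)$ as the sum of the Cartan plus the root spaces $L(G)_\alpha$ partitioned into $\Psi$-strings, and those of $V_{27}$ and $V_{56}$ by partitioning the weights of the $G$-module into $W(\Psi)$-orbits and reading off the dominant weights for $M$. For the remaining (non-subsystem) maximal subgroups --- $F_4$ and $A_2G_2$ in $E_6$, $G_2C_3$ in $E_7$, and $G_2F_4$ in $E_8$ among others --- the embedding is uniquely characterised (as recorded in \cite{LS1}) by the restriction of one distinguished module, and the remaining restrictions follow from that one via the standard tensor, symmetric and exterior square identities; the adjoint restriction $L(G)\downarrow M$ can then be pinned down by noting that it must contain $L(M)$ and by comparison of dimensions.

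The main obstacle is bookkeeping in bad characteristic, where several of the Weyl modules involved become reducible and one needs genuine composition factors rather than tilting or Weyl module summands. These cases are finite in number (with $p\in\{2,3,5,7\}$) and are handled individually using the known decomposition data for the small-rank simple factors that appear.
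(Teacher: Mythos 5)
Your proposal matches the paper's treatment: the statement is taken as a citation of \cite[Corollary 2]{LS1} for the list of reductive maximal connected subgroups (with those having a rank-one simple factor discarded), the composition factors for the non-maximal-rank subgroups are taken from the data recorded in \cite{LS1}, and those for the maximal-rank (subsystem) subgroups are, as the paper notes, ``straightforward to calculate'' by exactly the subsystem/weight-orbit method you describe. One small slip: $A_1A_5 < E_6$ and $A_1D_6 < E_7$ have $A_1$ factors and therefore do not appear in the tables, so they should not be among your worked examples, but this does not affect the argument.
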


$G = E_6$
\setlength\LTleft{0.04\textwidth}
\begin{longtable}{p{0.14\textwidth - 2\tabcolsep}>{\raggedright\arraybackslash}p{0.3\textwidth-2\tabcolsep}>{\raggedright\arraybackslash}p{0.48\textwidth-\tabcolsep}@{}}

\hline \noalign{\smallskip}

$M$ & Comp. factors of $V_{27} \downarrow M$ & Comp. factors of $L(E_6) \downarrow M$ \\

\hline \noalign{\smallskip}

 $F_4$ & $W(0001) /$ $\! 0000$ & $W(1000) /$ $\! W(0001)$ \\
 $C_4$ $(p \neq 2)$ & $0100$ & $2000 /$ $\! 0001$ \\
 $A_2^3$ & $(10,01,00) /$ $\! (00,10,01) /$ $\! (01,00,10)$ & $(W(11),00,00) /$ $\! (00,W(11),00) /$ $\! (00,00,W(11)) /$ $\! (10,10,10) /$ $\! (01,01,01)$ \\
 $A_2 G_2$ & $(10,W(10)) /$ $\! (W(02),00)$ & $(W(11),W(10)) /$ $\! (W(11),00) /$ $\! (00,W(01))$  \\
 $G_2$ $(p \neq 7)$ & $W(20)$ & $W(01) /$ $\! W(11)$ \\
 $A_2$ $(p \geq 5)$ & $W(22)$ & $11 /$ $\! 41 /$ $\! 14$ \\

\hline

\end{longtable}

$G = E_7$
\begin{longtable}{p{0.14\textwidth - 2\tabcolsep}>{\raggedright\arraybackslash}p{0.3\textwidth-2\tabcolsep}>{\raggedright\arraybackslash}p{0.48\textwidth-\tabcolsep}@{}}

\hline \noalign{\smallskip}

$M$ & Comp. factors of $V_{56} \downarrow M$ & Comp. factors of $L(E_7) \downarrow M$ \\

\hline \noalign{\smallskip}

 $A_7$ & $0100000 /$ $\! 0000010$ & $W(1000001) /$ $\! 0001000$ \\
 $A_2 A_5$ & $(10,10000) /$ $\! (01,00001) /$ $\! (00,00100)$ & $(W(11),00000) /$ $\! (00,W(10001)) /$ $\! (10,00010) /$ $\! (01,01000)$ \\
 $G_2 C_3$ & $(W(10),100) /$ $\! (00,W(001))$ & $(W(10),W(010)) /$ $\! (W(01),000) /$ $\! (00,W(200))$  \\
 $A_2$ $(p \geq 5)$ & $W(60) /$ $\! W(06)$ & $W(44) /$ $\! 11$ \\

\hline

\end{longtable}

$G = E_8$
\begin{longtable}{p{0.14\textwidth - 2\tabcolsep}>{\raggedright\arraybackslash}p{0.78\textwidth-\tabcolsep}@{}}

\hline \noalign{\smallskip}

$M$ & Comp. factors of $L(E_8) \downarrow M$ \\

\hline\noalign{\smallskip}

 $D_8$ & $W(0100000) /$ $\! 00000010$  \\
 $A_8$ & $W(1000001) /$ $\! 00100000 /$ $\! 00000100$ \\
 $A_2 E_6$  & $(W(11),000000) /$ $\!(00,W(010000) /$ $\! (10,000001) /$ $\! (01,100000)$  \\
 $A_4^2$  & $(W(1001),0000) /$ $\! (0000,W(1001)) /$ $\! (1000,0100) /$ $\! (0001,0010) /$ $\! (0100,0001) /$ $\! (0010,1000)$ \\
 $G_2 F_4$  & $(W(10),W(0001)) /$ $\! (W(01),0000) /$ $\! (00,W(1000))$ \\
 $B_2$ $(p \geq 5)$  & $02 /$ $\! W(06) /$ $\! W(32)$ \\ 

\hline
\end{longtable}

Note that in the cases in Theorem \ref{max} where $M$ is of maximal rank, the composition factors are not given in \cite{LS1} but are straightforward to calculate; moreover, for $(G,M) = (E_6,A_2^3)$, $(E_7, A_2 A_5)$, $(E_8,D_8)$ and $(E_8,A_4^2)$ we have made a choice of simple system within each factor.  

Next we state some results which allow us to find the $M$-irreducible subgroups of each $M$ in Theorem \ref{max}. If $M$ is a classical simple group we can use the following. 
\pagebreak
\begin{lem} \textup{\cite[Lemma 2.2]{LT}} \label{class}
Suppose $G$ is a classical simple algebraic group, with natural module $V = V_G(\lambda_1)$. Let $X$ be a semisimple connected closed subgroup of $G$. If $X$ is $G$-irreducible then one of the following holds:

\begin{enumerate}[leftmargin=*,label=\normalfont(\roman*),widest=iii, align=left]

\item $G = A_n$ and $X$ is irreducible on $V$; 

\item $G = B_n, C_n$ or $D_n$ and $V \downarrow X = V_1 \perp \ldots \perp V_k$ with the $V_i$ all non-degenerate, irreducible and inequivalent as $X$-modules;

\item $G = D_n$, $p=2$, $X$ fixes a non-singular vector $v \in V$, and $X$ is a $G_v$-irreducible subgroup of $G_v = B_{n-1}$.
\end{enumerate}
\end{lem}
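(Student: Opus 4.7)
The plan is to treat each Dynkin type of classical group separately, using the description of parabolic subgroups as stabilisers of certain subspaces and then translating $G$-irreducibility into a statement about the $X$-module structure of $V$.

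First I would dispose of $G = A_n = SL(V)$: every proper parabolic is (conjugate to) the stabiliser of a proper non-zero subspace, so $X$ is $G$-irreducible precisely when $X$ acts irreducibly on $V$, giving~(i). For $G$ of type $B_n$, $C_n$ or $D_n$ the proper parabolic subgroups are stabilisers of chains of totally singular subspaces, so $X$ is $G$-irreducible iff $X$ fixes no nonzero totally singular subspace of $V$.

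The core step is then: take an irreducible $X$-submodule $W \leq V$ and consider the radical $W \cap W^\perp$. This is a totally singular $X$-submodule contained in $W$, hence is $0$ or $W$ by irreducibility. The second possibility means $W$ is totally singular and so contradicts $G$-irreducibility; hence $W$ is non-degenerate and $V = W \perp W^\perp$ as $X$-modules. Iterating on $W^\perp$ (any totally singular $X$-submodule of $W^\perp$ is totally singular in $V$) delivers a decomposition $V = V_1 \perp \cdots \perp V_k$ of non-degenerate, $X$-irreducible summands. For $G = B_n$ or $C_n$ this is uniform across all characteristics. For $G = D_n$ the complication in characteristic $2$ is that the bilinear form is the polarisation of a quadratic form $Q$, so a $1$-dimensional subspace $\langle v\rangle$ always has $\langle v,v\rangle = 2Q(v) = 0$; thus $\langle v\rangle$ is totally isotropic with respect to the bilinear form even when $v$ is non-singular. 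In this situation the decomposition argument breaks, and precisely when $X$ fixes such a non-singular vector $v$ we are forced into case~(iii): $X \leq G_v = B_{n-1}$, and $X$ must be $G_v$-irreducible since any totally singular $G_v$-invariant subspace would lift to a totally singular $X$-invariant subspace of $V$.

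The final task is to establish pairwise inequivalence of the $V_i$ in case~(ii). Assuming an $X$-equivariant isomorphism $\phi \colon V_i \xrightarrow{\sim} V_j$ with $i \neq j$, a Schur-type argument identifies the pull-back form on $V_i$ with a scalar multiple of the original, and then for a suitable $c \in K$ (which exists because $K$ is algebraically closed) the diagonal subspace $\{v + c\phi(v) : v \in V_i\} \subseteq V_i \oplus V_j$ is a totally singular $X$-invariant subspace, contradicting $G$-irreducibility. The hard part will be checking that this diagonal-subspace construction really furnishes a \emph{totally singular} subspace in the characteristic~$2$ orthogonal setting, where one must verify vanishing of the quadratic form $Q$ rather than merely of the bilinear form. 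Tracking $Q$ separately from its polarisation throughout is what pins down case~(iii) as the sole exception and what distinguishes the $B_n$, $C_n$ treatment from the subtler $D_n$ one.
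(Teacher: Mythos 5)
This lemma is not proved in the paper at all: it is imported verbatim as \cite[Lemma 2.2]{LT}, so there is no in-paper argument to compare against. Your outline is essentially the standard proof from Liebeck--Testerman: identify parabolics with stabilisers of (flags of) totally singular subspaces, split off non-degenerate irreducible summands via $W \cap W^{\perp}$, isolate the characteristic-$2$ orthogonal anomaly, and kill equivalent summands with a Schur-plus-diagonal-subspace argument. The structure and all the main ideas are right, including the correct instinct that in type $D_n$, $p=2$ one must track the quadratic form $Q$ separately from its (alternating) polarisation.

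One step is asserted rather than proved and deserves to be made explicit: you say the decomposition argument breaks ``precisely when $X$ fixes a non-singular vector,'' but you have not shown that a non-singular fixed line is the \emph{only} possible failure. Concretely, if $W$ is an irreducible $X$-submodule with $W \subseteq W^{\perp}$ but $W$ not totally singular, you need to rule out $\dim W > 1$. The missing observation is that the polarisation of $Q$ vanishes on $W$, so $Q|_W$ is additive and Frobenius-semilinear; its zero set is therefore an $X$-invariant subspace of codimension at most $1$ in $W$, and irreducibility forces either $W$ totally singular (excluded) or $\dim W = 1$ with $Q$ nonvanishing, i.e.\ a non-singular fixed line (a fixed vector, since $X$ semisimple has no nontrivial characters). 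The same semilinearity trick (or an appeal to Borel--Tits applied to $R_u(P_v)$ for a parabolic $P_v$ of $G_v$) is what makes your claim that $G_v$-reducibility lifts to $G$-reducibility precise, since the naive preimage of a totally singular subspace of $v^{\perp}/\langle v\rangle$ contains $v$ and is not itself totally singular. With these points filled in, the argument is complete and matches the original.
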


The next two results give us an explicit list of $G$-irreducible subgroups of rank at least $2$, for $G_2$ and $F_4$. The first is clear: the only reductive maximal connected subgroups without rank 1 factors are isomorphic to $A_2$ and therefore there are no further subgroups to consider. The composition factors of $V_{G_2}(10)$ and $L(G)$ can be found by considering $G_2 < D_4$.  

\begin{lem} \label{simpleg2}
Suppose $X$ is a simple, connected irreducible subgroup of $G_2$ of rank at least $2$. Then $X$ is $G_2$-conjugate to exactly one subgroup of Table \ref{G2tab}. 
\end{lem}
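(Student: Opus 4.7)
The plan is to reduce immediately to considering the reductive maximal connected subgroups of $G_2$. If $X = G_2$ we are done, so suppose $X$ is a proper subgroup. Being $G_2$-irreducible, $X$ lies in no proper parabolic, so any maximal closed connected overgroup of $X$ is reductive; such an overgroup $M$ exists because $X$ is proper and chains of closed connected subgroups are bounded by dimension.

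By the classification of maximal connected subgroups of $G_2$ (\cite[Theorem~1]{LS5}), the reductive ones are $A_1 \tilde{A}_1$, the long-root $A_2$, the short-root $\tilde{A}_2$ (only when $p=3$), and a maximal $A_1$ (when $p \geq 7$). Since $X$ is simple of rank at least $2$, the only possibilities are $M = A_2$ or, in characteristic $3$, $M = \tilde{A}_2$. In either case the inclusion $X \leq M$ forces $X = M$ by equality of rank and dimension, giving exactly three candidates: $G_2$, the long-root $A_2$, and, when $p=3$, the short-root $\tilde{A}_2$.

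Each candidate is $G_2$-irreducible, being either all of $G_2$ or a maximal connected subgroup (and therefore not contained in any proper parabolic). To finish, one has to check that the listed candidates are pairwise non-conjugate in $G_2$. The only non-obvious case is that, in characteristic $3$, the long-root $A_2$ and the short-root $\tilde{A}_2$ are not $G_2$-conjugate; this I would verify by restricting $V_7$ (or $L(G_2)$) to each subgroup, using the standard embedding $G_2 < D_4$ to read off composition factors, and observing that the two restrictions differ. The table entries are filled in from these same restrictions. This composition-factor calculation via $G_2 < D_4$, while routine, is the only step with any computational content and is the main (and only) obstacle.
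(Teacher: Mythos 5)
Your proposal is correct and follows essentially the same route as the paper: the paper's (very brief) justification is precisely that the only reductive maximal connected subgroups of $G_2$ without rank~$1$ factors are of type $A_2$ (the long $A_2$, and $\tilde{A}_2$ when $p=3$), so these together with $G_2$ itself exhaust the candidates, and the composition factors in Table \ref{G2tab} are computed via the embedding $G_2 < D_4$. Your additional observations (that $X \leq M$ forces $X = M$, and that the two classes at $p=3$ are distinguished by their restrictions to $V_7$) are exactly the details the paper leaves implicit.
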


\begin{thm} \textup{\cite[Theorem 4]{dav}} \label{simplef4} 
Suppose $X$ is a simple, connected irreducible subgroup of $F_4$ of rank at least $2$. Then $X$ is $F_4$-conjugate to exactly one subgroup of Table \ref{F4tab}. 
\end{thm}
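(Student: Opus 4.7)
The strategy follows the general outline in Section \ref{strat}. Any simple, connected, $F_4$-irreducible subgroup $X$ of rank at least $2$ is contained in some reductive maximal closed connected subgroup $M$ of $F_4$, and is then automatically $M$-irreducible, since a parabolic of a reductive subgroup $M$ of $G$ is always contained in a parabolic of $G$. By \cite{LS1}, the reductive maximal connected subgroups of $F_4$ are $B_4$, $C_4$ (when $p=2$), $A_1C_3$ (when $p\neq 2$), $A_2\tilde A_2$, $A_1G_2$ (when $p\neq 2$), $G_2$ (when $p=7$), together with certain $A_1$ subgroups. A rank-$\geq 2$ simple subgroup of $A_1C_3$ or $A_1G_2$ must project trivially on the $A_1$-factor, hence lie in $C_3$ or $G_2$; but then it is contained in a reductive overgroup already on the list (for $C_3$, inside an $M$ of maximal rank; for $G_2$, inside the maximal $G_2 < A_1G_2$). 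So it suffices to run through $M \in \{B_4, C_4, A_2\tilde A_2, G_2\}$ with their respective characteristic hypotheses.

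For each such $M$, I enumerate the simple $M$-irreducible subgroups $X$ of rank at least $2$ (the ``candidates''). For classical $M = B_4$ or $C_4$, Lemma \ref{class} applies to the natural module $V$: either $X$ is $A$-type and acts irreducibly on $V$, or $X$ is of orthogonal/symplectic type and decomposes $V$ into pairwise inequivalent non-degenerate irreducible summands, plus the $p=2$, $D_n$ exception; enumerating self-dual irreducible modules of each simple type in each relevant dimension (up to field and graph twist) produces a finite list. For $M = A_2\tilde A_2$, the rank-$\geq 2$ simple $M$-irreducible subgroups are precisely the diagonal $A_2$-subgroups of the form $A_2 \hookrightarrow A_2\tilde A_2$ via $(10^{[s]},10^{[t]})$ or $(10^{[s]},01^{[t]})$ with the twists distinct; neither factor may be killed, or $X$ would lie in a proper parabolic of $M$. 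For $M = G_2$ with $p=7$, $M$ itself is the only candidate.

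Each candidate must then be tested for $F_4$-irreducibility, and the surviving candidates sorted into $F_4$-conjugacy classes. The main tool for both tasks is the composition-factor structure of $V_{26} \downarrow X$ and $L(F_4) \downarrow X$, pulled back through the restriction tables for $M$ (the $F_4$-analogue of Theorem \ref{max}). If $V_{26} \downarrow X$ has no trivial composition factor (and similarly for $L(F_4)$), then $X$ is $F_4$-irreducible: any proper parabolic containing $X$ would force an $X$-fixed vector. For conjugacy among the irreducible candidates, the composition factors on $V_{26}$ and $L(F_4)$ provide the primary invariant; when two candidates share these invariants, one appeals to the action of $N_{F_4}(M)/M$ on the parameter set of diagonal embeddings (in particular, no element of $F_4$ swaps the two factors of $A_2\tilde A_2$, so embeddings via $(10,10)$ and $(10,01)$ remain non-conjugate).

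The principal obstacle is the borderline case in which $X$ does have trivial composition factors on $V_{26}$: $M$-irreducibility no longer forces $F_4$-irreducibility, and a dedicated argument is required. The key instance is $p=3$ and $X = A_2 \hookrightarrow A_2\tilde A_2$ via $(10,01)$, already flagged as non-$G$-cr in Corollary \ref{nongcr}. Here I would construct an explicit containment in a proper parabolic: locate an $X$-fixed vector in $V_{26}$, identify the parabolic stabilising a suitable subspace through that vector, and show via a non-abelian $H^1$ computation on the unipotent radical (in the spirit of Lemma \ref{badA2}) that $X$ is $Q$-conjugate to a subgroup of the Levi complement only in the ``obvious'' complements — producing a non-$G$-cr embedding. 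In all other cases, the absence of a trivial composition factor in $V_{26}\downarrow X$ rules out reducibility, and the remaining candidates assemble into the entries of Table \ref{F4tab}.
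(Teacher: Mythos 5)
First, a point of comparison: the paper does not actually prove this statement --- it is imported wholesale from \cite[Theorem 4]{dav} --- so there is no internal proof to measure your argument against beyond the general strategy of Section \ref{strat}, which your outline correctly mirrors (reduce to reductive maximal connected $M$ without rank-one factors, enumerate the $M$-irreducible candidates via Lemma \ref{class}, then settle $F_4$-irreducibility and conjugacy using composition factors on $V_{26}$ and $L(F_4)$).

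That said, your closing paragraph contains a genuine gap. The dichotomy you assert --- either $V_{26}\downarrow X$ has no trivial composition factor, whence $X$ is irreducible, or $X$ is the $p=3$ subgroup $A_2\hookrightarrow A_2\tilde{A}_2$ via $(10,01)$ --- fails in both directions. Most of the irreducible entries of Table \ref{F4tab} \emph{do} have trivial composition factors on $V_{26}$ (for instance $B_4$, $D_4$, and $A_2\hookrightarrow A_2\tilde{A}_2$ via $(10,10)$ when $p=3$, since then $W(11)=11|00$); for these one needs the full force of Lemma \ref{wrongcomps}, comparing the entire multiset of composition factors with those of $L'$-irreducible subgroups of the same type in each Levi, not merely the presence or absence of trivial factors. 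Conversely, there is a second reducible candidate that your tests cannot dispose of: the $B_3$ embedded in $D_4<B_4$ via the $8$-dimensional spin module is $B_4$-irreducible by Lemma \ref{class}(ii), has exactly the same composition factors on $V_{26}$ and $L(F_4)$ as the $B_3$ Levi subgroup, and is in fact $F_4$-conjugate to that Levi because $N_{F_4}(D_4)$ induces triality on $D_4$ --- the same mechanism used for $B_3<A_7<E_7$ in Lemma \ref{a7e7}. Your proposal neither flags this candidate nor supplies the triality argument, and no composition-factor or cohomological test will eliminate it. Relatedly, identifying candidates that arise inside different maximal subgroups (for example the $A_2<D_4<B_4$ acting via its adjoint module, which must be shown conjugate to a diagonal subgroup of $A_2\tilde{A}_2$ by computing the centraliser of a triality element) requires positive arguments: equality of composition factors only shows that two candidates \emph{might} be conjugate, and your outline treats this step as automatic.
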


We now describe some elementary results about $G$-irreducible subgroups. 

\begin{lem} \textup{\cite[Lemma 2.1]{LT}} \label{semirr}
If $X$ is a connected $G$-irreducible subgroup of $G$, then $X$ is semisimple and $C_G(X)$ is finite.   
\end{lem}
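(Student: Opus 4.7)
The plan is to exploit the Borel--Tits theorem on unipotent subgroups together with the fact that centralizers of subtori in a semisimple group are Levi subgroups of parabolics.

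First I would show $X$ is reductive. The unipotent radical $R_u(X)$ is a connected unipotent subgroup of $G$ that is normal in $X$. If it were non-trivial, Borel--Tits would place $R_u(X)$ inside the unipotent radical of some proper parabolic $P$ of $G$, with $N_G(R_u(X)) \leq P$. Since $X \leq N_G(R_u(X))$, this would force $X \leq P$, contradicting $G$-irreducibility. Hence $R_u(X) = 1$ and $X$ is reductive.

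Next I would show $C_G(X)$ is finite. Suppose otherwise, and set $H := C_G(X)^\circ$, a positive-dimensional connected subgroup of $G$ centralized (hence normalized) by $X$. If $H$ contains a non-trivial torus $S$, then $X \leq C_G(S)$; since $G$ is semisimple we have $Z(G)^\circ = 1$, so $S$ is not central in $G$ and $C_G(S)$ is a proper Levi subgroup of some proper parabolic of $G$, contradicting irreducibility. Otherwise $H$ contains no non-trivial torus, which forces $R_u(H) \neq 1$ (a reductive group of positive dimension always contains a non-trivial torus); then $X$ normalizes the characteristic subgroup $R_u(H)$, and a second application of Borel--Tits again drops $X$ into a proper parabolic, a contradiction. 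Finally, since $X$ is now known to be reductive, its radical equals the central torus $Z(X)^\circ \leq C_G(X)$, and finiteness of $C_G(X)$ forces $Z(X)^\circ = 1$, so $X$ is semisimple.

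The main obstacle is the centralizer step, where one has to split $H = C_G(X)^\circ$ cleanly into a torus subcase (resolved via the Levi structure of $C_G(S)$, using semisimplicity of $G$ to guarantee that the chosen subtorus is non-central) and a unipotent subcase (resolved by Borel--Tits exactly as in the first step). The other parts then follow almost immediately from the structural dichotomy between a reductive group's central torus and its derived subgroup.
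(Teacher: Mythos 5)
Your proof is correct and follows essentially the same route as the proof of the cited result \cite[Lemma 2.1]{LT}, which the paper quotes without reproving: Borel--Tits applied to $R_u(X)$ gives reductivity, the dichotomy for $C_G(X)^\circ$ between containing a non-trivial torus (whence a proper Levi) and being unipotent (Borel--Tits again) gives finiteness of the centraliser, and semisimplicity follows since $Z(X)^\circ\leq C_G(X)$. No gaps.
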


\begin{lem} \label{easy}
Suppose a $G$-irreducible subgroup $X$ is contained in $K_1 K_2$, a commuting product of connected non-trivial subgroups $K_1$, $K_2$ of $G$. Then $X$ must have a non-trivial projection to both $K_1$ and $K_2$. Moreover, each projection must be a $K_i$-irreducible subgroup. 
\end{lem}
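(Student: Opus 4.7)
The lemma has two parts which I would handle separately, both by contradiction using the earlier Lemma~\ref{semirr} and the Borel--Tits theorem on unipotent subgroups.

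For the first part, suppose the projection of $X$ to (say) $K_1$ is trivial. Then $X \leq K_2$, and since $K_1$ commutes elementwise with $K_2$, the whole of $K_1$ centralises $X$, i.e.\ $K_1 \leq C_G(X)$. But $K_1$ is connected and non-trivial, hence of positive dimension, whereas $C_G(X)$ is finite by Lemma~\ref{semirr}. This contradiction shows that both projections $\pi_i(X)$ must be non-trivial.

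For the second part, write $X_i = \pi_i(X)$ and suppose for contradiction that $X_1$ is not $K_1$-irreducible, so $X_1 \leq P_1$ for some proper parabolic $P_1$ of $K_1$. (If $K_1$ fails to be reductive, the argument below applies verbatim with $R_u(K_1)$ in place of $R_u(P_1)$; in fact the same argument shows both $K_1$ and $K_2$ are automatically reductive under our hypotheses.) Let $U = R_u(P_1)$, a non-trivial connected unipotent subgroup of $G$, and observe that $X_1 \leq N_{K_1}(U)$. Then I claim $X$ itself normalises $U$: any $x \in X$ can be written as $x = k_1 k_2$ with $k_1 \in K_1$ and $k_2 \in K_2$, the component $k_1$ lying in $X_1$ modulo the finite subgroup $K_1 \cap K_2$; since $k_2$ centralises $K_1 \supseteq U$, we get
\[
x U x^{-1} = k_1 k_2 U k_2^{-1} k_1^{-1} = k_1 U k_1^{-1} = U.
\]

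Finally, by the Borel--Tits theorem, any non-trivial connected unipotent subgroup of $G$ has its normaliser contained in a proper parabolic subgroup of $G$. Therefore $X \leq N_G(U) \leq P$ for some proper parabolic $P$ of $G$, contradicting the $G$-irreducibility of $X$. Hence $X_1$ is $K_1$-irreducible, and the symmetric argument handles $X_2$. The only subtle point is ensuring that $X$ (rather than just $X_1$) normalises $U$, which is immediate from the commuting-product structure; beyond this, the proof is essentially a clean application of Lemma~\ref{semirr} and Borel--Tits.
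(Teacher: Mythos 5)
Your proof is correct and takes essentially the same route as the paper: the first assertion via the finiteness of $C_G(X)$ from Lemma~\ref{semirr}, and the second via the Borel--Tits theorem. The paper states the second step more briefly (``$X < P_1 K_2$, which is a parabolic subgroup of $K_1 K_2$, hence $X$ lies in a proper parabolic of $G$ by Borel--Tits''), and your argument simply unpacks that step by exhibiting the non-trivial connected unipotent subgroup $R_u(P_1)$ normalised by all of $X$.
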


\begin{proof}
The first assertion is clear by Lemma \ref{semirr}. For the second statement, suppose the projection to $K_1$ is contained in a parabolic, $P$, of $K_1$. Then $X < P K_2$ which is a parabolic subgroup of $K_1 K_2$ and therefore by the Borel-Tits Theorem \cite{BT}, $X$ is contained in a parabolic subgroup of $G$, a contradiction.     
\end{proof}
\vspace{-0.15cm}
For the next two results, recall the definition from the introduction of two algebraic groups having the same composition factors. 

\begin{lem} [{\cite[Lemma 3.4.3]{dav}}] \label{unip}
Let $H$ be a reductive algebraic group, $Q$ be a unipotent group on which $H$ acts on algebraically, and $X$ be a complement to $Q$ in the semidirect product $H Q$. Suppose $V$ is a rational $H Q$-module. Then the composition factors of $H$ on $V$ are the same as the composition factors of $X$ on $V$.
\end{lem}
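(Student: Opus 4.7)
The plan is to exploit the fact that both $H$ and $X$ are complements to the normal unipotent subgroup $Q$ in $HQ$. The natural projection $\pi \colon HQ \to HQ/Q$ restricts to isomorphisms of algebraic groups $\pi|_H \colon H \to HQ/Q$ and $\pi|_X \colon X \to HQ/Q$, so the composition $\psi := (\pi|_H)^{-1} \circ \pi|_X \colon X \to H$ is an isomorphism of algebraic groups, and in particular a morphism that is an isomorphism of abstract groups, as required by the definition of ``same composition factors''.

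First I would fix a composition series $0 = V_0 \subset V_1 \subset \cdots \subset V_n = V$ of $V$ as an $HQ$-module, writing $W_i := V_i / V_{i-1}$ for the irreducible factors (after restricting to finite-dimensional submodules if necessary, since $V$ is rational and hence locally finite). Because $Q$ is unipotent, closed and normal in $HQ$, the fixed-point subspace $W_i^{Q}$ is a nonzero $HQ$-submodule of $W_i$; irreducibility forces $W_i^{Q} = W_i$, so $Q$ acts trivially on each $W_i$ and the $HQ$-action factors through $HQ/Q$.

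Next I would observe that since $\pi|_H$ and $\pi|_X$ are isomorphisms, each $W_i$ is irreducible as an $H$-module and as an $X$-module whenever it is irreducible as an $HQ/Q$-module. Hence the same filtration $0 = V_0 \subset \cdots \subset V_n = V$ is simultaneously a composition series of $V \downarrow H$ and of $V \downarrow X$, with the same underlying factors $W_i$. To conclude, note that for $x \in X$ and $w \in W_i$ one has $x \cdot w = \pi(x) \cdot w = \pi(\psi(x)) \cdot w = \psi(x) \cdot w$, using $\pi \circ \psi = \pi|_X$ by construction; so $\psi$ carries the $X$-composition factor $W_i$ onto the $H$-composition factor $W_i$, matching the composition factors in the sense demanded by the definition.

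The main (indeed only) substantive step is the unipotent fixed-points argument in the second paragraph, which relies on the standard fact that a unipotent algebraic group has nonzero fixed points on every nonzero finite-dimensional rational module; everything else is formal bookkeeping with the semidirect-product structure.
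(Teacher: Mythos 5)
Your argument is correct and is essentially the standard (and the cited source's) proof of this fact: the paper itself gives no proof, quoting the result from Stewart, and the reasoning there is the same — $Q$, being unipotent and normal, acts trivially on every $HQ$-composition factor, so the factors are modules for $HQ/Q$, onto which both complements $H$ and $X$ project bijectively. The only cosmetic caveat is that in positive characteristic $\pi|_X$ is a priori only a bijective morphism rather than an isomorphism of algebraic groups, but the paper's definition of sharing composition factors requires only a morphism that is an isomorphism of abstract groups, which your $\psi$ certainly is, so nothing is lost.
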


\begin{lem} \label{wrongcomps}
Suppose $X < G$ is semisimple and $V$ is a $G$-module. Assume the composition factors of $V \downarrow X$ are not the same as those of $V \downarrow H$ for any group $H$ such that 
\begin{enumerate}[label=\normalfont(\roman*),leftmargin=*,widest=ii, align=left]

\item $H$ is of the same type as $X$, or $p=2$ and $X \cong B_n$, $H \cong C_n$, and

\item $H \leq L'$ and is $L'$-irreducible, for some Levi subgroup $L$. 

\end{enumerate}
Then $X$ is $G$-irreducible.  

\end{lem}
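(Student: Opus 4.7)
The plan is to prove the contrapositive: assume $X$ is contained in a proper parabolic subgroup of $G$ and produce an $H$ satisfying conditions (i) and (ii) with the same composition factors on $V$ as $X$, contradicting the hypothesis. Choose a minimal parabolic $P$ with $X \leq P$, write $P = QL$ for its Levi decomposition with $Q = R_u(P)$, and let $\pi : P \to L$ be the natural projection.

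The key intermediate step is the claim that $X \cap Q = 1$. Its identity component $(X \cap Q)^\circ$ is a connected normal unipotent subgroup of the semisimple group $X$, hence trivial, so $X \cap Q$ is finite. Being a finite normal subgroup of the connected group $X$, it lies in $Z(X)$; and $Z(X)$ consists of semisimple elements because it is contained in every maximal torus of $X$. Since $Q$ consists of unipotent elements, this forces $X \cap Q = 1$. Setting $H := \pi(X)$, the restriction $\pi|_X : X \to H$ is therefore an isomorphism of algebraic groups, so $H$ is of the same type as $X$, verifying (i); moreover $X$ is a complement to $Q$ in the semidirect product $XQ = HQ$.

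To verify (ii) I would argue that $H$ is $L'$-irreducible. First, $H$ is semisimple, so $H \leq L'$. If $H$ lay in a proper parabolic $P_0$ of $L'$, then $\pi^{-1}(P_0 Z(L)^\circ) = P_0 Z(L)^\circ Q$ would be a proper parabolic of $G$ strictly contained in $P$ and still containing $X$, contradicting the minimality of $P$. Hence $H \leq L'$ is $L'$-irreducible. Finally, applying Lemma \ref{unip} to $H$ acting on $Q$ with $X$ as a complement to $Q$ in $HQ$ yields that the composition factors of $V \downarrow X$ and $V \downarrow H$ coincide, contradicting the hypothesis.

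I do not foresee a serious obstacle: the argument is essentially bookkeeping once one has the (standard) observation that $X \cap Q = 1$. The $B_n/C_n$ clause in (i) plays no role in the proof, since the construction above always produces an $H$ of the same type as $X$; that clause is included in the statement only to broaden the hypothesis, making the lemma easier to apply in characteristic $2$ where a $C_n$ subgroup in a Levi may accidentally share composition factors with a $B_n$ subgroup under consideration.
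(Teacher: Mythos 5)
Your overall architecture matches the paper's proof (minimal parabolic $P=QL$ containing $X$, a subgroup $H\leq L'$ with $QH=QX$ that is $L'$-irreducible by minimality, and Lemma \ref{unip} to transfer composition factors), but there is a genuine gap at the step where you claim that $\pi|_X\colon X\to H$ is \emph{an isomorphism of algebraic groups}, so that $H$ is always of the same type as $X$. Your argument correctly shows $X\cap Q=1$, hence $\pi|_X$ is injective on points; but in positive characteristic a bijective morphism of algebraic groups need not be an isomorphism --- it is in general only a purely inseparable isogeny, and when $p=2$ this can be the special isogeny $B_n\to C_n$, which changes the type. This is not a hypothetical worry: the proof of Lemma \ref{badB4} in this very paper constructs $B_4$ complements $X$ to $Q$ in $QC_4$ inside an $A_7$-parabolic of $E_8$, i.e.\ subgroups $X\cong B_4$ with $X\cap Q=1$ and $\pi(X)=C_4\not\cong B_4$. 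For exactly this reason the paper does not argue via ``injective implies isomorphism'' but instead cites \cite[Lemma 3.6.1]{ste1}, which yields that $H$ is of the same type as $X$ \emph{or} $p=2$, $X\cong B_n$ and $H\cong C_n$.

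Consequently your closing remark --- that the $B_n/C_n$ clause in (i) ``plays no role in the proof'' and is included only to broaden the hypothesis --- is precisely backwards: that clause is forced by the proof, because the projection to the Levi can genuinely produce a $C_n$ from a $B_n$ when $p=2$. The fix is small: replace the isomorphism claim by the observation that $\pi|_X$ is an isogeny with trivial kernel on points, hence (by the classification of such isogenies, or by the cited lemma) $H$ is of the same type as $X$ except possibly for the $B_n/C_n$ switch in characteristic $2$; the rest of your argument, including the verification of $L'$-irreducibility via minimality and the application of Lemma \ref{unip}, then goes through unchanged.
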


\begin{proof}
Suppose $X < P$ for some parabolic subgroup of $G$, minimal with respect to containing $X$. Let $P = QL$ be the Levi decomposition, so $X < QL'$. Hence, there exists some subgroup $H \leq L'$, with $QH = QX$. Furthermore, $H$ is $L'$-irreducible (by minimality) and \cite[Lemma 3.6.1]{ste1} shows $H$ is of the same type as $X$, or if $p=2$, $X \cong B_n$ and $H \cong C_n$. This is a contradiction because Lemma \ref{unip} shows that the composition factors of $V \downarrow X$ and $V \downarrow H$ are the same.   
\end{proof}

\begin{cor} \label{notrivs}
Suppose $X < G$ is semisimple and $L(G) \downarrow X$ has no trivial composition factors. Then $X$ is $G$-irreducible. 
\end{cor}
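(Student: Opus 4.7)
The plan is to deduce this directly from Lemma \ref{wrongcomps} applied with $V = L(G)$. According to that lemma, to conclude $X$ is $G$-irreducible it suffices to show that no subgroup $H$ satisfying conditions (i) and (ii) of Lemma \ref{wrongcomps} shares the composition factors of $L(G) \downarrow X$. Under the hypothesis that $L(G) \downarrow X$ has no trivial composition factor, it is enough to show that every such candidate $H$ does have a trivial composition factor on $L(G)$, which forces the composition factor lists to differ.

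The key step is to produce that trivial composition factor from the centre of the Levi subgroup. Fix any proper Levi subgroup $L$ of $G$ and any $L'$-irreducible subgroup $H \leq L'$. Since $L$ is proper, the connected centre $Z(L)^\circ$ is a non-trivial torus. As $L \leq G$ is a closed connected subgroup, $L(L)$ sits inside $L(G)$ as an $L$-submodule, and since $L' \cap Z(L)^\circ$ is finite and $Z(L)^\circ$ is central in $L$, we have the $L$-module decomposition
\[ L(L) = L(L') \oplus L(Z(L)^\circ), \]
in which the second summand is a sum of trivial $L$-modules of dimension $\dim Z(L)^\circ \geq 1$. Restricting to $H \leq L'$ yields at least one trivial composition factor of $L(G) \downarrow H$.

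Combining this with the hypothesis on $X$ shows condition (ii) of Lemma \ref{wrongcomps} is never met by an $H$ with matching composition factors, so Lemma \ref{wrongcomps} delivers $G$-irreducibility of $X$. There is no serious obstacle; the argument is a two-step consequence of the preceding lemmas, and the only thing requiring any care is confirming the decomposition $L(L) = L(L') \oplus L(Z(L)^\circ)$, which is standard.
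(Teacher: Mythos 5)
Your argument is correct and is essentially the paper's own proof: apply Lemma \ref{wrongcomps} with $V = L(G)$ and observe that any candidate $H$ inside a proper Levi subgroup $L$ acquires trivial composition factors on $L(G)$ coming from $L(Z(L))$, contradicting the hypothesis on $X$. One small caution: the direct sum $L(L) = L(L') \oplus L(Z(L)^\circ)$ you invoke can fail in bad characteristic (the two Lie algebras may intersect non-trivially even though $L' \cap Z(L)^\circ$ is finite), but you do not actually need it --- it suffices that $L(Z(L)^\circ)$ is a non-zero subspace of $L(G)$ centralised by all of $L$, hence by $H$, which already produces the required trivial composition factor.
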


\begin{proof}
Suppose $X$ is $G$-reducible. Then by Lemma \ref{wrongcomps} (with $V = L(G)$) there exists a subgroup $H$ of some Levi subgroup $L$ such that the composition factors of $L(G) \downarrow H$ are the same as $L(G) \downarrow X$.  But $H < L$, so $L(G) \downarrow H$ has trivial composition factors coming from $L(Z(L))$, a contradiction. 
\end{proof}

\begin{lem} \textup{\cite[Lemma 1.3]{se2}} \label{fix}
Let $0 \neq l \in L(G)$ and $C = C_G(l)$. Then:

\begin{enumerate}[label=\normalfont(\roman*),leftmargin=*,widest=ii, align=left]
\item if $l$ is semisimple, then $C$ contains a maximal torus of $G$;

\item if $l$ is nilpotent, then $R_u(C) \neq 1$ and hence $C$ is contained in a proper parabolic subgroup of $G$.  
\end{enumerate}
\end{lem}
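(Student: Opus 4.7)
The two parts require rather different techniques. Part (i) is essentially immediate from the structure theory of Cartan subalgebras in reductive Lie algebras, while part (ii) requires the machinery of associated cocharacters for nilpotent elements in arbitrary characteristic.

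For (i), the observation is that a semisimple element $l \in L(G)$ lies in some Cartan subalgebra of $L(G)$, and every Cartan subalgebra equals $\mathrm{Lie}(T)$ for some maximal torus $T \leq G$. Since $T$ is abelian and $l \in \mathrm{Lie}(T)$, the adjoint action of $T$ fixes $l$, giving $T \subseteq C_G(l) = C$ as required.

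For (ii), the key is that $G \cdot l$ is not closed in $L(G)$ (as $0 \in \overline{G \cdot l} \setminus G \cdot l$), which permits one to exhibit a cocharacter $\lambda : \mathbb{G}_m \to G$ and an integer $m > 0$ such that $\mathrm{Ad}(\lambda(t))(l) = t^m l$ for all $t$. In good characteristic $\lambda$ comes from an $\mathfrak{sl}_2$-triple via Jacobson--Morozov; in general it is supplied by Premet's theorem, or by the Kempf--Rousseau instability formalism applied to the unstable vector $l$. The central fact (Kempf's theorem) is that the associated parabolic $P = P(\lambda) = \{g \in G : \lim_{t \to 0} \lambda(t) g \lambda(t)^{-1} \text{ exists}\}$ contains the full stabiliser $C$. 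Writing $L(G) = \bigoplus_i L(G)_i$ for the $\lambda$-weight-space decomposition, the operator $\mathrm{ad}(l)$ shifts weights up by $m$, so its restriction to $\mathrm{Lie}(R_u(P)) = \bigoplus_{i>0} L(G)_i$ lands in $\bigoplus_{i > m} L(G)_i$. A rank--nullity count then yields
\[ \dim \ker \bigl( \mathrm{ad}(l)|_{\mathrm{Lie}(R_u(P))} \bigr) \;\geq\; \sum_{i=1}^{m} \dim L(G)_i \;\geq\; \dim L(G)_m \;\geq\; 1, \]
since $l$ itself lies in $L(G)_m$. This kernel is contained in $\mathrm{Lie}(C) \cap \mathrm{Lie}(R_u(P))$, so $C \cap R_u(P)$ is a nontrivial closed unipotent subgroup of $C$, normal in $C$ because $R_u(P)$ is normal in $P$. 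Hence $R_u(C) \neq 1$, and the Borel--Tits theorem \cite{BT} applied to $R_u(C)$ then forces $C$ into a proper parabolic subgroup of $G$.

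The main obstacle is producing the cocharacter $\lambda$ and establishing the containment $C \subseteq P(\lambda)$ in bad characteristic, where Jacobson--Morozov breaks down and one must appeal to Premet's refinement or to Kempf--Rousseau optimality. Once $\lambda$ is available the dimension count is entirely routine.
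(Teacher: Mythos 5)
The paper does not actually prove this lemma: it is quoted verbatim from \cite[Lemma 1.3]{se2}, so the only comparison available is with your reconstruction. Your part (i) is correct (the essential point is just that a semisimple element lies in $\mathrm{Lie}(T)$ for some maximal torus $T$, and $\mathrm{Ad}(T)$ acts trivially on $\mathrm{Lie}(T)$; the detour through Cartan subalgebras is unnecessary and slightly delicate in positive characteristic, but harmless). Your part (ii) has the right architecture --- Kempf--Rousseau instability gives a proper parabolic $P=P(\lambda)$ with $C\leq P$ --- but the final step contains a genuine gap. You argue that $\ker\bigl(\mathrm{ad}(l)|_{\mathrm{Lie}(R_u(P))}\bigr)$ is nonzero and then assert that this kernel ``is contained in $\mathrm{Lie}(C)\cap\mathrm{Lie}(R_u(P))$''. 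The containment goes the wrong way: $\ker(\mathrm{ad}\,l)$ is the Lie algebra of the \emph{scheme-theoretic} stabiliser, so one only has $\mathrm{Lie}(C)\subseteq\ker(\mathrm{ad}\,l)$, with equality precisely when the centraliser is smooth. In positive, and especially bad, characteristic --- which is the entire point of this paper --- nilpotent centralisers are frequently non-smooth ($\dim\mathfrak{c}_{L(G)}(l)>\dim C_G(l)$ in many of the tabulated cases in the literature), so a nonzero kernel of $\mathrm{ad}(l)$ on $\mathrm{Lie}(R_u(P))$ does not produce a nontrivial subgroup of $C\cap R_u(P)$. The same smoothness issue also undercuts your reliance on an associated cocharacter with $\mathrm{Ad}(\lambda(t))l=t^m l$: Premet's theory is developed under standard hypotheses that exclude bad primes.

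The repair is to run your dimension count at the level of the group orbit rather than the Lie algebra kernel, and it then needs only the Kempf--Rousseau optimal cocharacter, for which $l=\sum_{i\geq m}l_i$ with all weights positive and $l_m\neq 0$. Writing $Q=R_u(P)$, every $q\in Q$ satisfies $\mathrm{Ad}(q)(x)\in x+\bigoplus_{j>i}L(G)_j$ for $x\in L(G)_i$, so $Q\cdot l\subseteq l_m+\bigoplus_{i>m}L(G)_i$ and hence $\dim(Q\cdot l)\leq\dim\bigl(\bigoplus_{i>m}L(G)_i\bigr)$. The orbit-dimension formula (which uses the reduced stabiliser, so no smoothness is needed) gives $\dim(C\cap Q)=\dim Q-\dim(Q\cdot l)\geq\dim L(G)_m\geq 1$. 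Thus $(C\cap Q)^{\circ}$ is a nontrivial connected unipotent subgroup, normal in $C$ because $C\leq P$ normalises $Q$, whence $R_u(C)\neq 1$; Borel--Tits (or simply $C\leq P$ from Kempf's theorem) then places $C$ in a proper parabolic. With this substitution your argument becomes a correct characteristic-free proof of (ii).
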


The next result is \cite[Prop. 1.4]{LS7} with $X$ allowed to be semisimple rather than simple; the proof is the same. 

\begin{lem} \textup{\cite[Prop. 1.4]{LS7}} \label{subspaces}
Let $X$ be a semisimple, connected algebraic group over $K$ and let $S$ be a finite subgroup of $X$. Suppose $V$ is a finite-dimensional $X$-module satisfying the following conditions:

\begin{enumerate}[leftmargin=*,label=\normalfont(\roman*),widest=iii, align=left]
\item every $X$-composition factor of $V$ is an irreducible $S$-module;

\item for any $X$-composition factors $M$, $N$ of $V$, the restriction map $\mathrm{Ext}^1_X(M,N) \rightarrow \mathrm{Ext}^1_S(M,N)$ is injective;

\item for any $X$-composition factors $M,N$ of $V$, if $M \downarrow S \cong N \downarrow S$, then $M \cong N$ as $X$-modules.  

\end{enumerate}
Then $X$ and $S$ fix precisely the same subspaces of $V$. 
\end{lem}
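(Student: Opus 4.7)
The plan is an induction on $\dim V$. Since $S\le X$, every $X$-invariant subspace is automatically $S$-invariant, so the content is the converse: every $S$-invariant subspace $W$ of $V$ is $X$-invariant. The crucial technical input is the following strengthening of hypotheses (ii) and (iii) to arbitrary $X$-modules $A$, $B$ all of whose composition factors appear as composition factors of $V$: (a) the inclusion $\mathrm{Hom}_X(A,B)\hookrightarrow\mathrm{Hom}_S(A,B)$ is an equality, and (b) the restriction map $\mathrm{Ext}^1_X(A,B)\to\mathrm{Ext}^1_S(A,B)$ is injective. For irreducible $A$, $B$ fact (a) is Schur's lemma (the $S$-side is a field by (i)) together with (iii), and (b) is precisely (ii). The extension to general $A$, $B$ is by induction on composition length: given $0\to A'\to A\to A''\to 0$ with $A''$ irreducible, one chases the long exact Hom/Ext sequences of $X$- and $S$-modules in parallel, invoking the inductive hypothesis on shorter modules together with (b) to control the $\mathrm{Ext}^1$-terms that appear.

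With (a) and (b) in hand, the main induction is straightforward. Let $M$ be an irreducible $X$-submodule of $V$. By (i), $M$ is also $S$-irreducible, so $W\cap M$ equals $0$ or $M$. If $M\subseteq W$, then $W/M$ is $S$-invariant in $V/M$, which still satisfies (i)--(iii), so by induction $W/M$ is $X$-invariant, and hence so is $W$. If $W\cap M=0$, set $U=W+M$. The image of $W$ in $V/M$ is $S$-invariant, hence $X$-invariant by induction, so $U$ itself is $X$-invariant. The short exact sequence $0\to M\to U\to U/M\to 0$ of $X$-modules splits as $S$-modules with $W$ as a complement, so its class in $\mathrm{Ext}^1_X(U/M,M)$ restricts to $0$ in $\mathrm{Ext}^1_S(U/M,M)$; by (b) the class itself is $0$, yielding an $X$-decomposition $U=M\oplus W'$. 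The subspaces $W$ and $W'$ are then two $S$-complements to $M$ in $U$, so $W=\{w+\phi(w):w\in W'\}$ for some $\phi\in\mathrm{Hom}_S(W',M)$; by (a) this $\phi$ is automatically an $X$-homomorphism, so $W$ is an $X$-submodule of $V$.

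The main obstacle is the bootstrapping of (a) and (b) from the composition-factor hypotheses to arbitrary $X$-modules in (the submodule closure of) $V$: the diagram chases must be arranged so that the injectivity hypothesis (ii) on $\mathrm{Ext}^1$, which is only assumed for irreducibles, suffices at every step to close up the arguments for the longer modules. Once (a) and (b) are established, the induction on $\dim V$ is routine bookkeeping.
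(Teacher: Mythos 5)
Your argument is correct, but note that the paper does not actually prove this lemma: it is quoted verbatim from \cite[Prop.~1.4]{LS7}, with the only remark being that the proof given there for simple $X$ works unchanged for semisimple $X$. Your reconstruction --- induction on $\dim V$ via an irreducible $X$-submodule $M$, splitting into the cases $M\subseteq W$ and $W\cap M=0$, and bootstrapping the Hom-equality and $\mathrm{Ext}^1$-injectivity from composition factors to arbitrary subquotients by parallel long exact sequences --- is essentially the standard Liebeck--Seitz argument, and the step you flag as the main obstacle (the d\'evissage of (ii) and (iii) to statements (a) and (b) for modules of higher composition length) does close up exactly as you indicate.
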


The following result is a slight generalisation of \cite[Lemma 1.2 (ii)]{LST}, proved with a small modification of the proof of \cite[Prop. 3.6(iii)]{lit}.

\begin{lem} \label{litlem} 
Let $X$ be a semisimple algebraic group and $M$ a finite-dimensional $X$-module. Let $W_1, \dots, W_r$ be the composition factors of $M$, of which $m$ are trivial, and set $n = \sum {\rm dim\ } H^{1}(X,W_i)$. Assume that $H^{1}(X,K)=0$ and for each $i$ we have
\[H^{1}(X,W_i) = \left\{0\right\} \Longleftrightarrow H^{1}(X,W_i^{*}) = \left\{0\right\}.\] If $m \geq n>0$, then $M$ has either a trivial submodule or a trivial quotient. In particular, if $M$ is self-dual then it contains a trivial submodule.
\end{lem}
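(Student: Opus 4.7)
The plan is to argue by contradiction, assuming $M$ has no trivial submodule and no trivial quotient and deriving $m < n$, contradicting $m \ge n$. I would induct on the composition length $r$; the base case $r = 1$ is immediate since a non-trivial irreducible $M$ has $m = 0$. For the inductive step, set $S = \mathrm{Soc}(M)$ (which has no trivial composition factor, else $M$ would have a trivial submodule) and $\bar M = M/S$, so $m(\bar M) = m$ and $n(\bar M) = n - h(S)$, where $h(S) := \dim H^1(X, S) = \sum_{W \in S} \dim H^1(X, W)$ satisfies $h(S) \le n$. I would then split on whether $h(S) = n$ or $h(S) < n$.

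In the equality case $h(S) = n$, every composition factor $W$ of $\bar M$ satisfies $H^1(X, W) = 0$, and by the dual hypothesis also $H^1(X, W^*) = 0$; combined with $H^1(X,K)=0$, a standard socle-filtration and $\mathrm{Ext}$-vanishing argument shows every trivial composition factor of $\bar M$ is a direct summand. Since $m \ge 1$, this produces $K$ as a quotient of $\bar M$ and hence of $M$, a contradiction. In the strict case $h(S) < n$, the inductive hypothesis applies to $\bar M$ (it inherits the cohomological hypotheses, and $m(\bar M) \ge n(\bar M) > 0$). A trivial quotient of $\bar M$ would lift to one of $M$, so $\bar M$ must contain a trivial submodule. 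Pulling back yields $N^{(1)} \supsetneq S$ in $M$ with $N^{(1)}/S \cong K$, and the extension is non-split (else $K \hookrightarrow M$). Iterate on $M/N^{(i)}$, which continues to satisfy the inductive hypothesis as long as $m - i \ge n - h(S) > 0$, producing a chain $S = N^{(0)} \subsetneq N^{(1)} \subsetneq \cdots \subsetneq N^{(t)}$ with each $N^{(i)}/N^{(i-1)} \cong K$.

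The hard part will be bounding $t$. Either the iteration eventually produces a trivial quotient of some $M/N^{(i)}$ (an immediate contradiction after lifting to $M$), or it stops only because the inductive hypothesis fails, forcing $t \ge m - n + h(S) + 1$. On the other hand, since $H^1(X,K)=0$ we have $N^{(t)}/S \cong K^t$ as a semisimple module, and the extension $0 \to S \to N^{(t)} \to K^t \to 0$ is classified by $t$ elements $c_1, \ldots, c_t \in H^1(X, S)$. As $\mathrm{Soc}(N^{(t)}) \subseteq \mathrm{Soc}(M) = S$, no copy of $K$ in $N^{(t)}/S$ splits off, so $c_1, \ldots, c_t$ must be linearly independent, giving $t \le h(S)$. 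Combining the two bounds yields $m < n$, the desired contradiction.

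Finally, the ``in particular'' statement is immediate: for self-dual $M$, a trivial quotient of $M$ dualises to a trivial submodule of $M^* \cong M$, so the main statement forces a trivial submodule.
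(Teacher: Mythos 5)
Your proof is correct. Note that the paper does not supply its own argument for this lemma: it is stated as a slight generalisation of \cite[Lemma 1.2(ii)]{LST}, ``proved with a small modification of the proof of \cite[Prop. 3.6(iii)]{lit}'', so there is no internal proof to compare against. Your argument is a complete, self-contained proof built on the same underlying mechanism as those sources: a trivial composition factor that is neither a submodule nor a quotient must be glued on by a non-zero class in $H^1$, and classes arising from distinct trivial factors must be linearly independent, which caps $m$ in terms of $n$. The details check out. In the case $h(S)=n$ every composition factor $W$ of $M/S$ has $H^1(X,W)=H^1(X,W^*)=0$, so $\mathrm{Ext}^1_X(K,W)=\mathrm{Ext}^1_X(W,K)=0$ and the standard d\'evissage does split off each trivial factor of $M/S$, giving a trivial quotient of $M$. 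In the case $h(S)<n$ the inductive hypothesis applies to $M/N^{(i)}$ for all $i\le m-n+h(S)$ (each such quotient has $m-i$ trivial factors and invariant $n-h(S)>0$, since $H^1(X,K)=0$ means the stripped-off trivial factors contribute nothing to $n$), so you reach $t=m-n+h(S)+1$; and any linear dependence among the classes $c_1,\dots,c_t\in H^1(X,S)$ would, via pullback along the corresponding injection $K\to K^t$, split a copy of $K$ into $N^{(t)}\le M$, so indeed $t\le h(S)$ and $m<n$ follows. The only cosmetic quibble is the phrase that the iteration ``stops because the inductive hypothesis fails'': you simply run it as far as the hypothesis permits, which is already one step too far for the bound $t\le h(S)$, and that is the whole contradiction.
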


\begin{lem} \label{b4cohom}

Let $X = B_4$ and $p=2$. Then the following Weyl modules for $X$ have the given socle series and are uniserial. 

\begin{enumerate}[label=\normalfont(\roman*),leftmargin=*,widest=iii, align=left]
\item $W(\lambda_1) = \lambda_1 | 0$.
\item $W(\lambda_2) = \lambda_2 | 0 | \lambda_1 | 0$.
\item $W(\lambda_3) = \lambda_3 | \lambda_2 | 0 | \lambda_1 | 0$. 

\end{enumerate}
Furthermore, letting $M$ be the uniserial module $0 | \lambda_2 | 0$, we have the following cohomology groups. 
\begin{enumerate}[label=\normalfont(\arabic*),leftmargin=*]
\item $H^1(X,\lambda_1) = K$.
\item $H^1(X,M) = H^1(X,\lambda_3) = 0$.
\item $H^2(X, \lambda_1) = H^2(X, M) = 0$. 
\end{enumerate}

\end{lem}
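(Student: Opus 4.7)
The plan is to first establish the Weyl module structures in (i)--(iii) and then to deduce the cohomology statements by iterated long exact sequences. The structure of $W(\lambda_1)$ is classical: in characteristic $2$ the defining quadratic form on the $(2n+1)$-dimensional natural module of $B_n$ polarises to a symmetric bilinear form with $1$-dimensional radical, so $\dim V(\lambda_1)=2n=8$ and $W(\lambda_1)=\lambda_1\,|\,0$. The stated socle series of $W(\lambda_2)$ and $W(\lambda_3)$ for $B_4$ in characteristic $2$ can be computed via Jantzen's sum formula (or verified in Magma); the resulting dimensions $\dim V(\lambda_2)=26$ and $\dim V(\lambda_3)=48$ combine with the trivials and $V(\lambda_1)$ to give the Weyl module dimensions $36$ and $84$. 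Since $-w_0=\mathrm{id}$ on $B_4$ and all its fundamental modules are self-dual, the dual Weyl (induced) modules $H^0(\lambda_i):=W(\lambda_i)^{*}$ have socle series obtained by reversing (i)--(iii).

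The cohomology calculations then rely on the standard vanishing $H^i(X,H^0(\mu))=0$ for all $i\geq 1$ and every dominant $\mu$, a consequence of Kempf vanishing on the flag variety; specialising $\mu=0$ yields $H^i(X,K)=0$ for $i\geq 1$. For (1) and $H^2(X,\lambda_1)=0$ in (3), apply $H^{*}(X,-)$ to the short exact sequence
\[
0\to\lambda_1\to H^0(\lambda_1)\to K\to 0
\]
obtained by dualising (i). Since $H^0(\lambda_1)$ has socle $\lambda_1$ it has no fixed points, so the long exact sequence collapses to give $H^1(X,\lambda_1)\cong H^0(X,K)=K$ and $H^2(X,\lambda_1)=0$. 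For $H^1(X,\lambda_3)=0$, run the same recipe on $0\to\lambda_3\to H^0(\lambda_3)\to H^0(\lambda_3)/\lambda_3\to 0$: the quotient $H^0(\lambda_3)/\lambda_3$ has socle $\lambda_2$, so both middle and right terms have trivial degree-zero cohomology, forcing $H^1(X,\lambda_3)=0$.

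For $M=0\,|\,\lambda_2\,|\,0$, I would first compute $H^1(X,\lambda_2)=K$ by the same dual-Weyl argument: in $0\to\lambda_2\to H^0(\lambda_2)\to 0\,|\,\lambda_1\,|\,0\to 0$ the quotient has a trivial $1$-dimensional socle, contributing exactly $K$ to $H^1(X,\lambda_2)$. I would then analyse $M$ via the two short exact sequences
\[
0\to K\to M\to (0\,|\,\lambda_2)\to 0 \quad\text{and}\quad 0\to\lambda_2\to(0\,|\,\lambda_2)\to K\to 0.
\]
The crucial point is that the connecting homomorphism $H^0(X,K)=K\to H^1(X,\lambda_2)=K$ attached to the second sequence sends $1$ to the extension class of the uniserial module $(0\,|\,\lambda_2)$ in $\mathrm{Ext}^1_X(K,\lambda_2)\cong H^1(X,\lambda_2)$; this class is nonzero by non-splitness, so the map is an isomorphism of $1$-dimensional spaces. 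Consequently $H^1(X,0\,|\,\lambda_2)=0$, whence $H^1(X,M)=0$. The assertion $H^2(X,M)=0$ then follows from one further round of the same long exact sequences, using the auxiliary vanishing $H^2(X,\lambda_2)=0$ obtained by applying the dual-Weyl argument to $H^0(\lambda_2)$ one degree higher.

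The main obstacle is establishing the socle series in parts (ii) and (iii); everything else is routine bookkeeping with long exact sequences, the sole subtlety being the identification of a single connecting homomorphism as a nonzero extension class, which is forced by the uniserial nature of the intermediate module built into the statement.
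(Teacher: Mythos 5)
Your proof is correct and follows essentially the same route as the paper: both arguments reduce every cohomology claim to long exact sequences driven by the vanishing of $H^{\geq 1}$ on induced modules (your Kempf-vanishing formulation is precisely the dimension-shifting identity \cite[II.4.14]{Jan} invoked in the paper), and you analyse $M$ via the same two short exact sequences $0 \to K \to M \to (0|\lambda_2) \to 0$ and $0 \to \lambda_2 \to (0|\lambda_2) \to K \to 0$, being if anything more explicit than the paper about the connecting homomorphism $H^0(X,K)\to H^1(X,\lambda_2)$ realising the non-split extension class. The only real divergence is in establishing (i)--(iii): the paper simply cites \cite[Lemma 7.2.2]{LS1}, whereas you appeal to Jantzen's sum formula, which yields the composition factors but not the uniseriality or the ordering of the Loewy layers, so your parenthetical Magma verification (or an outright citation) is genuinely needed at that point.
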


\begin{proof}
The structure of the Weyl modules is given by \cite[Lemma 7.2.2]{LS1}. The assertions for $H^1(X,\lambda_1)$ and $H^1(X, \lambda_3)$ follow from these. Now, consider the short exact sequence of modules $0 \rightarrow A \rightarrow B \rightarrow C \rightarrow 0$. This gives us a long exact sequence of cohomology groups: $$0 \rightarrow H^0(X,A) \rightarrow H^0(X,B) \rightarrow H^0(X,C) \rightarrow H^1(X,A) \rightarrow H^1(X,B) \rightarrow H^1(X,C) \rightarrow  \cdots$$ We can apply this to the short exact sequence $0 \rightarrow K \rightarrow M \rightarrow M/K \rightarrow 0$ (where we use $K$ for the 1-dimensional trivial module and $M / K$ is the uniserial module $0 |\lambda_2$). We obtain the long exact sequence $$0 \rightarrow K \rightarrow K \rightarrow 0 \rightarrow 0 \rightarrow H^1(X,M) \rightarrow 0 \rightarrow 0 \rightarrow H^2(X,M) \rightarrow H^2(X,M/K) \rightarrow 0 \rightarrow \cdots$$ Immediately $H^1(X,M)$ must be 0 by exactness and $H^2(X,M) \cong H^2(X,M/K)$. Similarly, considering the short exact sequence $0 \rightarrow V_{B_4}(\lambda_2) \rightarrow M/K \rightarrow K \rightarrow 0$ we find $H^2(X,M/K) \cong H^2(X,\lambda_2)$. Using the dimension-shifting identity \cite[II. 4.14]{Jan}, $H^2(X,\lambda_2) = H^1(X,0|\lambda_1|0)$ and since $0 | \lambda_1 | 0$ is tilting, it follows from \cite[II. 4.13]{Jan} that $H^1(X,0|\lambda_1|0) = 0$. Hence $H^2(X,M) \cong H^2(X,\lambda_2) = 0$. Finally, using the dimension-shifting identity again, $H^2(X,\lambda_1) = H^1(X,K) = 0$.   
\end{proof}

\begin{lem} \label{a2ext}

Let $X = A_2$ and $p=3$. Then the following hold: 
\begin{enumerate}[label=\normalfont(\roman*),leftmargin=*,widest=vi, align=left]
\item $\mathrm{Ext}^1_X(22,00) \cong \mathrm{Ext}^1_X(22,11) \cong \mathrm{Ext}^1_X(22,30) \cong \mathrm{Ext}^1_X(22,03) = 0$;
\item $\mathrm{Ext}^1_X(11^{[i]},00) \cong K$ for all $i \geq 0$;
\item $\mathrm{Ext}^1_X(30^{[i]},11) \cong \mathrm{Ext}^1_X(30^{[i]},11^{[i]}) \cong \mathrm{Ext}^1_X(03^{[i]},11) \cong \mathrm{Ext}^1_X(03,^{[i]},11^{[i]}) \cong K$ for all $i \geq 0$;   
\item $\mathrm{Ext}^1_X(30^{[i]},00) \cong \mathrm{Ext}^1_X(03^{[i]},00) = 0$ for all $i \geq 0$;
\item $\mathrm{Ext}^1_X(11 \otimes 11^{[j]},11) \cong \mathrm{Ext}^1_X(11 \otimes 11^{[j]},11^{[j]}) \cong K$ for all $j > 0$;
\item $\mathrm{Ext}^1_X(11 \otimes 11^{[j]},00) \cong  \mathrm{Ext}^1_X(11 \otimes 11^{[j]},30) \cong \mathrm{Ext}^1_X(11 \otimes 11^{[j]},30^{[j]})\cong \mathrm{Ext}^1_X(11 \otimes 11^{[j]},03) \cong \mathrm{Ext}^1_X(11 \otimes 11^{[j]},03^{[j]}) = 0$ for all $j > 0$.
\end{enumerate}
\end{lem}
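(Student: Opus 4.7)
The lemma is a list of $\mathrm{Ext}^1$ computations for $X = A_2$ in characteristic $p = 3$. My strategy combines four standard tools: the linkage principle, the explicit Weyl module structures $W(11) = V(11)\,|\,V(00)$ and $W(30) = V(30)\,|\,V(11)$ (both deducible from Weyl's dimension formula together with linkage), the Ext-vanishing $\mathrm{Ext}^i_X(W(\lambda), H^0(\mu)) = \delta_{i,0}\delta_{\lambda,\mu} K$ between Weyl and dual Weyl modules, and Steinberg's tensor product theorem together with the Lyndon--Hochschild--Serre spectral sequence for the Frobenius kernel $X_1 \lhd X$.

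Part (i) I would obtain immediately from the linkage principle: the Steinberg weight $22 = (p-1)\rho$ lies in a block by itself, since an explicit check shows $(22+\rho) - w(\mu + \rho) \notin 3Q$ for each of $\mu = 00, 11, 30, 03$ and each $w$ in the Weyl group of $A_2$, where $Q$ is the root lattice. Hence every $\mathrm{Ext}^1_X(V(22), L(\mu))$ in (i) vanishes.

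For the untwisted cases ($i = 0$) of (ii), (iii), (iv), I would feed the two Weyl module exact sequences
\[0 \to V(00) \to W(11) \to V(11) \to 0 \quad\text{and}\quad 0 \to V(11) \to W(30) \to V(30) \to 0\]
into a long exact Ext sequence, applying $\mathrm{Hom}_X(-, V(00))$ or $\mathrm{Hom}_X(-, V(11))$ and, in the latter case, unfolding $V(11)$ via the dual-Weyl sequence $0 \to V(11) \to H^0(11) \to V(00) \to 0$. The Ext-vanishing between Weyl and dual Weyl modules collapses these long sequences, yielding $\mathrm{Ext}^1_X(V(11), V(00)) = K$, $\mathrm{Ext}^1_X(V(30), V(11)) = K$ and $\mathrm{Ext}^1_X(V(30), V(00)) = 0$; the corresponding statements with $V(03)$ follow by the graph automorphism of $A_2$.

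For the Frobenius-twisted cases ($i \geq 1$ in (ii)--(iv), and (v), (vi)), Steinberg's tensor product theorem identifies $V(11) \otimes V(11)^{[j]}$ as the irreducible module $V((1+3^j)(1,1))$ for $j \geq 1$, while $V(11)^{[i]}$ and $V(30)^{[i]} = V(10)^{[i+1]}$ are already irreducible. The Lyndon--Hochschild--Serre spectral sequence for $X_1 \lhd X$ applied to any module of the form $N^{[r]}$ (on which $X_1$ acts trivially, so $H^q(X_1, N^{[r]}) = H^q(X_1, K) \otimes N^{[r]}$), combined with the known structure of $H^*(X_1, K)$ for $A_2$ in characteristic~$3$, reduces each twisted $\mathrm{Ext}^1$ group inductively in the Frobenius degree to the untwisted cases computed above. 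For (v) and (vi) one may alternatively invoke the tensor-Hom adjunction $\mathrm{Ext}^1_X(A \otimes B, M) \cong \mathrm{Ext}^1_X(A, B^* \otimes M)$ and the self-duality of $V(11)$ to reduce directly to earlier parts. The main technical obstacle is precisely this Frobenius-twisted analysis, since $p = 3$ equals the Coxeter number of $A_2$ and $H^*(X_1, K)$ is correspondingly non-trivial: careful bookkeeping is required so that the $K$-contributions in (ii) and (iii) persist at every step of the induction, while those in (iv) and the vanishing parts of (vi) are cut off by the linkage principle.
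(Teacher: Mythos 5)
The paper's own proof of this lemma is a single sentence citing \cite[Lemma 2.7]{ste3}, so any self-contained derivation such as yours is necessarily a different route. The untwisted portions of your argument are sound and complete: the linkage computation for (i) (each of $00+\rho$, $11+\rho$, $30+\rho$, $03+\rho$ fails to lie in $W(22+\rho)+3\mathbb{Z}\Phi = 3\mathbb{Z}\Phi$), and the derivation of the $i=0$ cases of (ii)--(iv) from $W(11)=11|00$ and $W(30)=30|11$ together with the vanishing of $\mathrm{Ext}^1$ from Weyl modules to dual Weyl modules, are correct.

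The genuine gap is exactly where you locate the ``main technical obstacle'', and it is not a matter of bookkeeping. One small correction first: $H^1(X_1,K)=0$ here, since $L(X)$ is a perfect restricted Lie algebra even at $p=3$; this is why the equal-twist cases such as $\mathrm{Ext}^1_X(11^{[i]},00)$ and $\mathrm{Ext}^1_X(30^{[i]},11^{[i]})$ untwist cleanly. The difficulty is confined to the mixed-twist cases, where the relevant $E_2$-term of the Lyndon--Hochschild--Serre spectral sequence involves $H^1(X_1,L(\mu))$ for non-trivial restricted $\mu$, which you never compute. Concretely, since $L(11)^{X_1}=0$, the five-term sequence gives the exact identification $\mathrm{Ext}^1_X(30^{[i]},11)\cong\mathrm{Hom}_X\bigl(L(10)^{[i+1]},H^1(X_1,L(11))\bigr)$ for every $i\geq 0$. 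Here $H^1(X_1,L(11))$ is a fixed finite-dimensional $X$-module while the $L(10)^{[i+1]}$ are pairwise non-isomorphic simple modules, so the right-hand side is non-zero for at most finitely many $i$. Hence no amount of careful bookkeeping can make ``the $K$-contribution in (iii) persist at every step'': your proposed induction cannot deliver $\mathrm{Ext}^1_X(30^{[i]},11)\cong K$ for all $i$, and this finiteness observation in fact shows that the first and third isomorphisms of (iii) must fail for all large $i$ (a computation of $H^1(X_1,L(11))$, whose socle already accounts for the $i=0$ case together with $H^1(X,L(11))\cong K$, indicates they fail already for $i=1$). The same issue undermines the proposed reduction of (v) and (vi): the adjunction $\mathrm{Ext}^1_X(11\otimes 11^{[j]},M)\cong\mathrm{Ext}^1_X(11^{[j]},11\otimes M)$ lands you in extension groups against non-semisimple tensor products whose composition factors again include untwisted simples, so knowledge of $H^*(X_1,K)$ alone does not suffice. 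To make the twisted cases rigorous you would need to compute $H^1(X_1,L(\mu))$ for the relevant restricted $\mu$ (equivalently, the second Loewy layers of the Weyl modules with non-restricted highest weights that arise), or else restrict the mixed-twist assertions of (iii) to $i=0$ and otherwise fall back on the citation of \cite[Lemma 2.7]{ste3} as the paper does.
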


\begin{proof}
All of these can be directly deduced from \cite[Lemma 2.7]{ste3}.
\end{proof}

\begin{lem} \label{repa2}
Let $X = A_2$, $p=3$ and $M$ be a self-dual $X$-module. Suppose the composition factors of $M$ are isomorphic to $11$ or $00$, with at least one trivial composition factor. Then $M$ has a trivial submodule. 

\end{lem}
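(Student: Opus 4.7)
The plan is to combine Lemma \ref{litlem} with a Loewy-filtration analysis. First I verify the hypotheses of Lemma \ref{litlem}: $H^1(X,K) = 0$ is standard for $X$ semisimple simply-connected; the biconditional $H^1(X, W_i) = 0 \iff H^1(X, W_i^*) = 0$ is automatic because both $V(11)$ and $V(00)$ are self-dual for $A_2$; and by Lemma \ref{a2ext}(ii), $\dim H^1(X, V(11)) = 1$. Thus in the notation of Lemma \ref{litlem}, $n$ equals the number of $V(11)$ composition factors of $M$ while $m \geq 1$ is the number of trivials.

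Two subcases are immediate: if $n = 0$, then $M$ is a direct sum of trivials (using $\mathrm{Ext}^1_X(V(00),V(00)) = 0$), and the conclusion holds; if $n > 0$ and $m \geq n$, Lemma \ref{litlem} applied to the self-dual $M$ yields a trivial submodule directly.

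For the remaining case $n > 0$ with $m < n$, I would suppose $M$ has no trivial submodule and derive a contradiction. Then $\mathrm{Soc}(M) \cong V(11)^k$ for some $k \geq 1$. Using the vanishings $\mathrm{Ext}^1_X(V(11), V(11)) = 0$ (from $11 \not< 11$ in the dominance order) and $\mathrm{Ext}^1_X(V(11), W(11)^*) = 0$ (which I would verify via the long exact sequence from $0 \to V(11) \to W(11)^* \to V(00) \to 0$, in effect ruling out any uniserial module $V(11) \mid V(00) \mid V(11)$), I would show by induction on $i$ that every socle layer $L_i := \mathrm{Soc}^i(M)/\mathrm{Soc}^{i-1}(M)$ with $i \geq 2$ consists only of copies of $V(00)$, and moreover $\mathrm{Soc}^i(M) \cong (W(11)^*)^{c_i} \oplus V(11)^{k - c_i}$ for some $c_i \geq 0$. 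In particular, since $m \geq 1$ forces the Loewy length $\ell$ to satisfy $\ell \geq 2$, the top socle layer $L_\ell$ is a sum of copies of $V(00)$. On the other hand, self-duality of $M$ together with self-duality of all composition factors yields $L_\ell \cong \mathrm{Rad}^{\ell - 1}(M)$ as semisimple modules; since $\mathrm{Rad}^{\ell - 1}(M)$ is a semisimple submodule of $M$, it is contained in $\mathrm{Soc}(M) = V(11)^k$, so $L_\ell$ must be a direct sum of copies of $V(11)$. These two descriptions of $L_\ell$ are incompatible unless $L_\ell = 0$, contradicting $\ell \geq 2$.

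The main obstacle is the inductive Loewy-structure step above: after the two key Ext vanishings are in place, one must carefully propagate them up the socle filtration to pin down $\mathrm{Soc}^i(M)$, so that the palindromic constraint imposed by self-duality of $M$ can deliver the final contradiction.
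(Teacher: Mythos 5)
Your proposal is correct in substance but takes a genuinely different, and considerably longer, route than the paper. The paper's proof is a short local argument: assuming no trivial submodule, the vanishing of $\mathrm{Ext}^1_X(11,11)$ produces an indecomposable submodule $N = 00|11$; self-duality of $M$ rules out $N$ being a direct summand (else $N^* = 11|00$ would also be a summand, yielding a trivial submodule); and then $M$ would have to contain a uniserial submodule $11|00|11$, which does not exist by \cite[Lemma 4.2.4(i)]{LS1}. You instead dispose of the cases $n=0$ and $m \geq n$ via Lemma \ref{litlem} and, in the remaining case, run a socle-filtration analysis that pins down $M \cong (W(11)^*)^{c} \oplus 11^{k-c}$ and contradicts self-duality. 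Both arguments ultimately rest on the same key fact --- the non-existence of a uniserial module $11|00|11$, equivalently $\mathrm{Ext}^1_X(11, W(11)^*) = 0$ --- so neither is more elementary; what your version buys is an explicit structure theorem for the hypothetical counterexample, at the cost of the Loewy bookkeeping. (Note also that your induction terminates immediately: once $\mathrm{Soc}^2(M) \cong (W(11)^*)^{c} \oplus 11^{k-c}$, the vanishing of $\mathrm{Ext}^1_X(11,W(11)^*)$, $\mathrm{Ext}^1_X(11,11)$ and $\mathrm{Ext}^1_X(00,00)$ forces $\mathrm{Soc}^2(M) = M$, so the layers $L_i$ for $i \geq 3$ are empty rather than consisting of trivials.)

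The one step that does not work as written is your verification of $\mathrm{Ext}^1_X(11, W(11)^*) = 0$. Applying $\mathrm{Hom}_X(11,-)$ to $0 \to 11 \to W(11)^* \to 00 \to 0$ gives the exact sequence
\[ 0 \to \mathrm{Ext}^1_X(11, W(11)^*) \to \mathrm{Ext}^1_X(11,00) \to \mathrm{Ext}^2_X(11,11), \]
using $\mathrm{Hom}_X(11,00) = 0$ and $\mathrm{Ext}^1_X(11,11)=0$; since $\mathrm{Ext}^1_X(11,00) \cong K$ is non-zero by Lemma \ref{a2ext}(ii), this sequence alone cannot give the vanishing --- you would need the connecting map into $\mathrm{Ext}^2_X(11,11)$ to be injective, which it does not provide. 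The fact itself is true: either cite it directly as the non-existence of a module with socle series $11|00|11$ (\cite[Lemma 4.2.4(i)]{LS1}, as the paper does), or derive it from the other short exact sequence $0 \to 00 \to W(11) \to 11 \to 0$ by applying $\mathrm{Hom}_X(-, W(11)^*)$ and using $\mathrm{Hom}_X(00, W(11)^*) = 0$ (the socle of $W(11)^*$ is $11$) together with the standard vanishing of higher $\mathrm{Ext}$ between a Weyl module and a dual Weyl module. With that step repaired, your argument goes through.
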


\begin{proof}
Suppose $M$ has no trivial submodule. Then since $\mathrm{Ext}_X^1(11,11) = 0$, there must exist an indecomposable submodule, $N$ with structure $00 | 11$. If $N$ is a direct summand of $M$ then so is $N^* = 11 | 00$ and $M$ has a trivial submodule, a contradiction. So $N$ is not a direct summand of $M$. Since $\mathrm{Ext}_X^1(11,00) \cong \mathrm{Ext}_X^1(00,11) \cong K$ (using Lemma \ref{a2ext}(ii)), it follows that $M$ must have a submodule isomorphic to $11|00|11$. But \cite[Lemma 4.2.4(i)]{LS1} shows there is no module with socle series $11 | 00 | 11$ (it must split as $(11|00) + 11$), a contradiction.   
\end{proof}

\section{Strategy for the proofs of Theorems 1--3} \label{strat}

We describe the strategy used to prove Theorems 1--3. It relies on the following lemma. 

\begin{lem} \label{irrstrat}
Let $G$ be a simple exceptional algebraic group. Suppose $X$ is a simple, connected $G$-irreducible subgroup of rank at least $2$. Then there exists a reductive, maximal connected closed subgroup $M$ of $G$ each of whose simple components has rank at least $2$, such that $X \leq M$ and $X$ is $M$-irreducible. 
\end{lem}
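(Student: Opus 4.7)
The plan is to build $M$ in three stages: produce a reductive maximal closed connected overgroup, arrange that all its simple factors have rank at least $2$, and finally arrange that $X$ is $M$-irreducible.

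For the first two stages, $X$ has rank at most $7$ and $G$ is exceptional, so $X$ is a proper subgroup of $G$ and therefore lies in some maximal closed connected subgroup $M$. This $M$ cannot be parabolic (because $X$ is $G$-irreducible), so $M$ is reductive. Any simple group of rank $\geq 2$ has only the trivial homomorphism into $A_1$ or into a torus, since $X$ is perfect and not isogenous to $A_1$; so the projection of $X$ to any rank-$1$ simple factor or central torus of $M$ is trivial, and $X$ lies in the commuting product $M^+$ of simple factors of $M$ of rank $\geq 2$. Consulting the classification from \cite[Corollary 2]{LS1} of reductive maximal closed connected subgroups of $G$ with a rank-$1$ factor, one finds case by case that $M^+$ always lies in a proper Levi of $G$ (for example, in $E_6$ the only such maximal subgroup is $A_1 A_5$, with $A_5$ a Levi of $E_6$; analogous statements hold in $E_7$ and $E_8$). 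Then if $M$ had a rank-$1$ factor, $X$ would sit in a parabolic of $G$, contradicting $G$-irreducibility, so $M$ has all rank $\geq 2$ simple factors.

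For the third stage, consider the collection $\mathcal{R}$ of reductive closed connected subgroups $R$ of $G$ with $X \leq R$, every simple factor of $R$ of rank $\geq 2$, and $X$ being $R$-irreducible. This is non-empty as it contains $X$ itself. Pick $R \in \mathcal{R}$ of maximum dimension; I would argue $R$ is a maximal closed connected subgroup of $G$, so that $R$ furnishes the desired $M$. Suppose for contradiction that $R \subsetneq N$ for some maximal closed connected $N \leq G$; by the first two stages $N$ is reductive with all rank $\geq 2$ simple factors. If $X$ is $N$-irreducible then $N \in \mathcal{R}$ properly contains $R$, contradicting maximality. Otherwise, Borel-Tits (applied to the semisimple subgroup $X$ of a parabolic of $N$) places $X$ inside a proper Levi $L$ of $N$; iterating Borel-Tits within the rank $\geq 2$ part of $L$ then produces some $R^{\ast} \in \mathcal{R}$ with $R^{\ast} \leq N$. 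The main obstacle is to derive a contradiction from this, because $R^{\ast}$ need not obviously strictly contain $R$. I expect to handle this by invoking the finiteness of overgroups of $G$-irreducible subgroups (\cite[Theorem 1]{LT}) together with the explicit list in Theorem \ref{max}, tracking case by case how $R^{\ast}$ sits relative to $R$ inside $N$ to force the iteration to yield a member of $\mathcal{R}$ strictly larger than $R$.
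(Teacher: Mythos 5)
Your first two stages are sound, though heavier than necessary: once you know the projection of $X$ to a rank-$1$ factor $K_1$ of $M$ must be trivial, you do not need to consult the classification to see that $X$ then lies in a proper Levi --- it is quicker to note that $X$ would then be centralised by the positive-dimensional subgroup $K_1$, contradicting the finiteness of $C_G(X)$ for $G$-irreducible $X$ (Lemma \ref{semirr}, packaged as Lemma \ref{easy} in the paper). That said, your route through the explicit list of maximal subgroups with an $A_1$ factor does work.

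The third stage contains a genuine gap, and you have correctly identified where it is: choosing $R \in \mathcal{R}$ of maximal dimension and running Borel--Tits inside an overgroup $N$ only produces some $R^{\ast} \in \mathcal{R}$ lying in a Levi of $N$, and nothing forces $\dim R^{\ast} > \dim R$, so no contradiction results. The finiteness of overgroups from \cite[Theorem 1]{LT} does not rescue this, and the proposed case-by-case tracking is not carried out. The entire third stage is in fact unnecessary: for \emph{any} reductive maximal connected $M$ containing $X$ (the one you already produced in stages 1--2), $X$ is automatically $M$-irreducible. Indeed, if $X$ lay in a proper parabolic $P_M$ of $M$, then $P_M$ has non-trivial unipotent radical $U$, and by the Borel--Tits theorem $N_G(U)$ --- hence $P_M$, hence $X$ --- lies in a proper parabolic subgroup of $G$, contradicting $G$-irreducibility. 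This one-line observation is how the paper concludes, and replacing your stage 3 with it closes the gap.
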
    

\begin{proof}
Let $M$ be a maximal connected subgroup of $G$ containing $X$. As $X$ is $G$-irreducible, $M$ must be reductive. Moreover, $X$ is $M$-irreducible, as any non-trivial parabolic subgroup of $M$ is contained in a parabolic of $G$ by \cite{BT}. Finally, by Lemma \ref{easy}, $M$ can have no rank 1 simple components. 
\end{proof}

Theorem \ref{max} gives us all reductive, maximal connected closed subgroups of $G$ with no rank 1 simple components for $G = E_6, E_7$ and $E_8$. For each such $M$ we must find all simple, connected $M$-irreducible subgroups of rank at least $2$. To avoid repeating the term ``simple, connected subgroup of rank at least $2$'' we introduce the following definition. 

\begin{defn}

Let $G$ be $E_6, E_7$ or $E_8$. We call a subgroup $X$ of $G$ a \textit{$G$-candidate} (or just a \textit{candidate}) if the following hold: 

\begin{enumerate}[label=\normalfont(\arabic*),leftmargin=*]

\item $X$ is connected and simple of rank at least $2$;

\item there exists a reductive, maximal connected subgroup $M$ of $G$  containing $X$ such that $X$ is $M$-irreducible.

\end{enumerate}

\end{defn} 

In Theorems 1--3 we are aiming to find the irreducible subgroups up to $G$-conjugacy. The strategy is as follows: for each reductive, maximal connected subgroup $M$ (from Theorem \ref{max}) we find all $G$-candidate subgroups, up to $M$-conjugacy, contained in $M$. To do this we use Lemma \ref{class} and \cite{lubeck} for classical simple components of $M$, and Lemma \ref{simpleg2} and Theorem \ref{simplef4} for exceptional simple components of $M$ when $G = E_6$, as well as Theorems \ref{Thm1} and \ref{Thm2} when $G = E_8$ for $ M = A_2 E_6$ and $A_1 E_7$, respectively. We then find all $G$-conjugacies between the candidate subgroups contained in the different reductive, maximal connected subgroups.

The last step is to check whether each $G$-conjugacy class of candidate subgroups is $G$-irreducible or not. To do this we  heavily use Lemma \ref{wrongcomps} and Corollary \ref{notrivs}. To apply these results we must find the composition factors of the action of the $G$-candidate on the minimal or adjoint module. These can be found by restricting the composition factors of $M$. This can be done for all $G$-candidate subgroups and the composition factors for the $G$-irreducible subgroups can be found in Section \ref{tabs}, Tables \ref{E6tab}--\ref{E8tab}. To apply Lemma \ref{wrongcomps} we also need the composition factors for the action of the Levi subgroups of $G$ on the minimal and adjoint modules. These can be found in Appendix \ref{app}, Tables \ref{levie6}--\ref{levie8}. In most cases, a $G$-candidate subgroup is $G$-irreducible. Corollary \ref{nongcr} lists the interesting examples of candidate subgroups which are irreducible in every reductive, maximal connected overgroup yet are $G$-reducible. 

To prove a candidate subgroup is $G$-reducible can be difficult. For example, in Lemmas \ref{badA2} and \ref{badB4} we use non-abelian cohomology (applied to the unipotent radical of a parabolic subgroup) to show an $A_2$ subgroup of $A_2 A_5$ is $E_7$-reducible and a $B_4$ subgroup of $D_8$ is $E_8$-reducible, respectively.   

\section{Proof of Theorem 1: $E_6$-irreducible subgroups} \label{e6proof}

 We start by considering each of the reductive, maximal connected subgroups of $E_6$ that do not have an $A_1$ factor, as listed in Theorem \ref{max}. They are $F_4$, $C_4$ $(p \neq 2)$, $A_2^3$, $A_2 G_2$, $G_2$ $(p \neq 7)$ and $A_2$ $(p\geq 5)$. For each maximal subgroup we determine all $E_6$-candidate subgroups (definition in Section \ref{strat}) up to $E_6$-conjugacy. To check we have found all such conjugacies is straightforward; no two subgroups listed in Table \ref{E6tab} have the same composition factors on $L(E_6)$. We then prove those and only those listed in Table~\ref{E6tab} are $E_6$-irreducible.

\subsection{Maximal $M = F_4$} \label{maxf4}

We use Theorem \ref{simplef4} to find all $E_6$-candidate subgroups contained in $F_4$. This maximal $F_4$ in $E_6$ can be obtained as the fixed points of the standard graph automorphism of $E_6$. This allows us to make a number of observations. Suppose $X \leq B_4 < F_4$. Then we must have that $X < D_5$ and therefore $X$ is not $E_6$-irreducible (since $D_5$ is a Levi subgroup of $E_6$). Therefore $B_4$, $\bar{D}_4$ and any $B_2 \hookrightarrow B_2^2$ $(p=2)$ are not $E_6$-irreducible. Now suppose $X < \bar{A}_2 \tilde{A}_2 < F_4$. The long $\bar{A}_2$ subgroup inside $F_4$ is generated by root subgroups of $E_6$ and therefore $X < \bar{A}_2 C_{E_6}(\bar{A}_2) = \bar{A}_2^3$. We will study the candidate subgroups of $\bar{A}_2^3$ later. We are left with the following $E_6$-candidate subgroups (from Table \ref{F4tab})  to consider: $C_4$ $(p=2)$, $\tilde{D}_4$ $(p=2)$, $G_2$ $(p=7$). 

\begin{lem}

The subgroups $C_4$ $(p=2)$, $\tilde{D}_4$ $(p=2)$ and $G_2$ $(p=7)$ are all $E_6$-irreducible. 

\end{lem}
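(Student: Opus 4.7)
The plan is to verify $E_6$-irreducibility of each of the three candidates by computing the composition factors of $L(E_6)$ restricted to the candidate, and then applying Corollary \ref{notrivs} (if there is no trivial composition factor) or Lemma \ref{wrongcomps} (to rule out that the candidate has the same composition factors as some $L'$-irreducible subgroup of a proper Levi $L$ of $E_6$, using the Levi tables in Appendix \ref{app}). In all three cases the composition factors of the candidate $X$ on $L(E_6)$ are obtained by first restricting $W_{F_4}(1000)$ and $W_{F_4}(0001)$ (the two $F_4$-composition factors from Theorem \ref{max}) to $X$, a step that can be carried out either from the embedding data in \cite{LS1} (notably \cite[Lemma~7.2.2]{LS1} and the accompanying tables) or directly via \cite{lubeck}. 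Note also that, by Lemma \ref{semirr}, each candidate is automatically semisimple, so Corollary \ref{notrivs} and Lemma \ref{wrongcomps} may indeed be applied.

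For the candidate $G_2$ with $p=7$: one restricts $W_{F_4}(1000)$ and $W_{F_4}(0001)$ to the maximal $G_2$ subgroup of $F_4$ (which exists precisely in characteristic $7$). The expected outcome is that $L(E_6)\downarrow G_2$ has no trivial composition factor, since the maximal $G_2$ in $F_4$ has finite centralizer in $F_4$ and the restriction of $V_{26}$ in characteristic $7$ is an irreducible module of high weight $(2,0)$. Corollary \ref{notrivs} then gives $G_2$-irreducibility in $E_6$ immediately.

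For the candidates $C_4$ and $\tilde{D}_4$ with $p=2$: Here the exceptional isogeny of $F_4$ in characteristic $2$ swaps long and short roots, sending $B_4$ to $C_4$ and $\bar{D}_4$ to $\tilde{D}_4$. One uses this to write down the composition factors of each candidate on $V_{26}$ and on $L(F_4)$. After summing, one obtains the composition factors on $L(E_6)$. If a trivial composition factor does appear (say from self-dual pairings forced by the isogeny or from the reducibility of $W_{F_4}(1000)$ in characteristic $2$), then Corollary \ref{notrivs} is not available; instead we appeal to Lemma \ref{wrongcomps}, comparing the composition factors with those of any $C_4$- or $B_4$-type (respectively $D_4$-type) $L'$-irreducible subgroup sitting inside a proper Levi of $E_6$. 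Because $E_6$ has no Levi factor of type $C_4$, the $C_4$ case will reduce to showing that the composition factors do not match any $L'$-irreducible $B_4$ subgroup (since in characteristic $2$ the criterion in Lemma \ref{wrongcomps} allows $X\cong B_n$ to be matched against $H\cong C_n$); this is where the main care is required. Similarly, for $\tilde{D}_4$ one must rule out $D_4$-type $L'$-irreducible subgroups, of which there is essentially only the Levi $D_4$ itself and diagonal subgroups of $D_5$ to consider, and these are easily distinguished by their composition factor patterns on $L(E_6)$.

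The main obstacle is the bookkeeping in characteristic $2$: the Weyl modules for $F_4$ can fail to be irreducible, which may introduce extra trivial composition factors for $C_4$ and $\tilde{D}_4$ on $L(E_6)$, and so one must verify carefully using the Levi composition factor tables that no $L'$-irreducible subgroup of the matching type in any proper Levi of $E_6$ has the same composition factors. Once this (finite) check is completed, Lemma \ref{wrongcomps} yields the claimed $E_6$-irreducibility and the lemma follows.
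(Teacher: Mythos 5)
Your strategy is viable but genuinely different from the paper's, and considerably heavier. The paper works with $V_{27}$ rather than $L(E_6)$: since $V_{27}\downarrow F_4 = W(0001)/0000$, each of the three candidates acquires a $26$-dimensional composition factor on $V_{27}$ (namely $V_{C_4}(0100)$, $V_{D_4}(0100)$ and $V_{G_2}(20)$ in the respective characteristics), and Table \ref{levie6} shows no Levi subgroup of $E_6$ has a composition factor of dimension $\geq 26$ on $V_{27}$ (the largest is the $16$-dimensional spin factor for $D_5$). Lemma \ref{wrongcomps} applied to $V_{27}$ then finishes all three cases in one line, with no characteristic-$2$ bookkeeping at all. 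Your route through $L(E_6)$ does work for $G_2$ $(p=7)$, where $L(E_6)\downarrow G_2 = 01/11/20$ has no trivial factors and Corollary \ref{notrivs} applies (though note that ``finite centralizer'' is not by itself a reason to expect no trivial composition factors --- the $p=2$ candidates below have finite centralizer and nevertheless have trivial factors). For $C_4$ and $\tilde D_4$ at $p=2$, however, trivial composition factors do appear ($L(E_6)\downarrow C_4 = 2000/0100^2/0001/0000^2$ and $L(E_6)\downarrow \tilde D_4 = 2000/0020/0002/0100^2/0000^2$), so you are forced into the Lemma \ref{wrongcomps} comparison you describe; the checks do go through (e.g.\ the only relevant $L'$-irreducible subgroup for $C_4$ is $B_4 < D_5$, which has four trivial factors on $L(E_6)$ against your two, and the Levi $D_4$ likewise has four against $\tilde D_4$'s two), but your write-up only describes these verifications conditionally (``if a trivial composition factor does appear\dots'', ``once this check is completed'') rather than carrying them out, so as it stands it is a plan rather than a proof. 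The lesson of the paper's argument is that choosing the minimal module, where the $F_4$-structure hands you a single composition factor too large to fit in any Levi, eliminates exactly the case analysis your version defers.
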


\begin{proof}

Each of the candidate subgroups has a 26-dimensional composition factor on $V_{27}$. Therefore we can use Lemma \ref{wrongcomps} on $V_{27}$, since Table \ref{levie6} shows that no Levi subgroup has a composition factor of dimension at least 26.
\end{proof}

\subsection{Maximal $M = C_4$ $(p \neq 2)$}

Using Lemma \ref{class} and \cite{lubeck} we see the only reductive maximal connected subgroup of $C_4$ without an $A_1$ simple factor is $C_2^2$ when $p \neq 2$. We note that this $C_4$ is generated by root subgroups of $E_6$ and therefore the $C_2^2$ is generated by root subgroups. However, $C_{E_6}(C_2) = C_2 T_1$ from \cite[Table 3]{LS6}. Therefore any subgroup of $C_2^2 < C_4$ centralises a $T_1$ and is therefore not $E_6$-irreducible. 

\subsection{Maximal $M = A_2^3$}

All $A_2$ diagonal subgroups that have a non-trivial projection onto each factor are $A_2^3$-irreducible and form all of the candidate subgroups. The conjugacy classes of these diagonal subgroups are found in \cite[Table 8.3]{LS3} (noting that finding the conjugacy classes is unaffected by the restriction imposed on the characteristic there), up to $\text{Aut}(E_6)$-conjugacy. Note that we fix a copy of $A_2^3$ in Theorem \ref{max} by giving its composition factors on $V_{27}$ and $L(E_6)$. The following lemma shows all but one candidate subgroup is $E_6$-irreducible.

\begin{lem} \label{a2e6}

The subgroup $X = A_2 \hookrightarrow A_2^3$ via $(10,10,10)$ is not $E_6$-irreducible. Every other $A_2$ candidate subgroup contained in $A_2^3$ is $E_6$-irreducible. 

\end{lem}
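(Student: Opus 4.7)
The proof splits into two parts, corresponding to the two assertions.

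For the first claim, my plan is to apply Lemma \ref{wrongcomps} with $V = L(E_6)$. The $\mathrm{Aut}(E_6)$-conjugacy classes of diagonal $A_2$ subgroups of $A_2^3$ with non-trivial projection to each factor are enumerated in \cite[Table 8.3]{LS3}. For each such class I would restrict the $A_2^3$-composition factors of $L(E_6)$ given in Theorem \ref{max} to obtain the composition factors of $L(E_6) \downarrow X$, and compare them with the composition factors of $L(E_6) \downarrow H$ for every $L'$-irreducible $A_2$ subgroup $H$ of a proper Levi $L$ of $E_6$, using Table \ref{levie6}. When the embedding of $X$ involves any non-trivial Frobenius twist $[r]$ with $r > 0$, a non-restricted irreducible appears in $L(E_6) \downarrow X$ that cannot occur in $L(E_6) \downarrow H$ for any such $H$, yielding $E_6$-irreducibility. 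For the finitely many embeddings with only graph twists (such as $(10,10,01)$), a direct comparison of restricted composition factors rules out matches; in most of these cases $L(E_6) \downarrow X$ has no trivial composition factor, so Corollary \ref{notrivs} applies immediately.

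For the second claim, the plan is to produce a non-zero $X$-fixed element of $L(E_6)$ and invoke Lemma \ref{fix}. The key observation is that under the diagonal $X$, the $A_2^3$-composition factor $(10,10,10)$ of $L(E_6)$ restricts to $V(10)^{\otimes 3}$, whose fully antisymmetric submodule $\Lambda^3 V(10) \cong K$ is trivial because $X \cong SL_3$ has trivial determinant. In characteristic $p \neq 3$, the $\mathbb{Z}/3$-grading of $L(E_6)$ induced by $Z(A_2^3)$ makes $(10,10,10)$ an actual $A_2^3$-submodule, so $\Lambda^3 V(10) \hookrightarrow L(E_6)$ yields a non-zero $l \in L(E_6)^X$. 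Writing $l = l_s + l_n$ via Jordan decomposition (with $l_s, l_n$ both $X$-fixed), Lemma \ref{fix} gives $X \leq C_{E_6}(l_s)^0$, a proper Levi of $E_6$, when $l_s \neq 0$, and otherwise $X$ lies in a proper parabolic via $l_n$. Either way, $X$ is not $E_6$-irreducible.

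The main obstacle is the characteristic $3$ case of the second part, where the $\mathbb{Z}/3$-grading degenerates, the Weyl module $W(11) = V(11) | K$ appearing in $L(E_6) \downarrow A_2^3$ is non-split, and $L(E_6) \downarrow X$ is itself no longer semisimple. To handle this I would use the self-duality of $L(E_6) \downarrow X$ together with Lemma \ref{litlem}: a direct composition-factor computation gives many trivial composition factors in $L(E_6) \downarrow X$ (both from $\Lambda^3 V(10)$, $\Lambda^3 V(01)$ and from the non-split socles of the three $W(11)$ factors), while the $\mathrm{Ext}^1$-data of Lemma \ref{a2ext} shows that $H^1(X, V(30)) = H^1(X, V(03)) = 0$ and only $H^1(X, V(11)) = K$ is non-zero. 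Verifying that the multiplicity of trivials $m$ meets the bound $m \geq n$ required by Lemma \ref{litlem} then produces a trivial submodule, giving the required non-zero fixed vector. A secondary, more routine obstacle is the case enumeration in part one for the graph-twist-only embeddings, where each row of Table \ref{levie6} must be checked.
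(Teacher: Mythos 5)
Your treatment of the second assertion (that the remaining candidates are $E_6$-irreducible) follows the paper's method in spirit: Corollary \ref{notrivs} when there are no trivial composition factors, and Lemma \ref{wrongcomps} otherwise, though the paper runs the $p=3$ comparison on $V_{27}$ rather than $L(E_6)$, which is much lighter. One caveat: your shortcut that a non-trivial Frobenius twist in the embedding produces a composition factor that \emph{cannot} occur in $L(E_6)\downarrow H$ for any Levi-contained $H$ is false as stated, since the Levi $A_2A_2$ of $E_6$ contains $L'$-irreducible diagonal subgroups embedded via $(10,10^{[r]})$, whose composition factors on $L(E_6)$ are likewise non-restricted (e.g.\ $(10\otimes 10^{[r]})^3$). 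You would have to compare the full multisets, which is still feasible but is exactly the case analysis you are trying to avoid.

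The genuine gap is in your proof that $X \hookrightarrow A_2^3$ via $(10,10,10)$ is \emph{not} $E_6$-irreducible. Producing a non-zero $X$-fixed vector $l\in L(E_6)$ and invoking Lemma \ref{fix} does not suffice: when $l_s\neq 0$ the lemma only says that $C_{E_6}(l_s)$ contains a maximal torus, so $C_{E_6}(l_s)^0$ is a reductive subgroup of maximal rank, \emph{not} necessarily a proper Levi as you claim --- it can be a subsystem subgroup such as $A_2^3$ itself, and can even be all of $E_6$ if $l_s$ lies in the centre of $L(E_6)$ (non-zero when $p=3$). Since $X$ genuinely does lie in the maximal-rank subgroup $A_2^3$, the semisimple branch of your argument produces neither a contradiction nor a parabolic overgroup. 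To make this strategy work you would need to show that the fixed vector you construct is nilpotent, which you do not address and which is not automatic for elements of the graded piece $(10,10,10)$ (non-zero graded pieces of such gradings contain semisimple elements in general). The paper avoids the issue entirely by a conjugacy argument: for $p\neq 3$ it shows $X$ is conjugate to the $A_2$ of the Levi $D_4$ acting via $V_{A_2}(11)$, using \cite[Table 8.3]{LS3} and extending the argument there to $p=2$; and for $p=3$ it shows $X$ is conjugate to $A_2\hookrightarrow A_2\tilde{A}_2<F_4$ via $(10,01)$, which is $F_4$-reducible by Theorem \ref{simplef4}. Some argument of this kind (or a proof of nilpotence of the fixed vector) is required; as written, your proof of the first assertion does not go through.
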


\begin{proof}

Suppose $p \neq 3$. Consider $Y = A_2 < D_4$ (the Levi $D_4$ in $E_6$) embedded via $V_{A_2}(11)$. We claim $Y$ is conjugate to $X$. Indeed, \cite[Table 8.3]{LS3} shows that this is the case for $p > 3$ and the argument given in \cite[p. 64]{LS3} extends to $p = 2$. Therefore $X$ is contained in a parabolic subgroup when $p \neq 3$. When $p=3$, we show that $X < F_4$ and is not $F_4$-irreducible. Indeed, in subsection \ref{maxf4} we showed $A_2 \tilde{A}_2 < A_2^3$ and by comparing composition factors we see $X$ is conjugate to $A_2 \hookrightarrow A_2 \tilde{A}_2$ via $(10,01)$, which Theorem \ref{simplef4} shows is contained in a parabolic subgroup of $F_4$.  

Now consider the other candidate subgroups in $A_2^3$. If $p \neq 3$ then we can apply Corollary \ref{notrivs} (the restrictions are in Table \ref{E6tab}). Now suppose $p=3$. We apply Lemma \ref{wrongcomps}. Let $Z$ be any of the candidate subgroups other than $X$. Then we have to check whether the composition factors of $Z$ match those of an $L'$-irreducible subgroup of type $A_2$ for some Levi subgroup $L$. The possibilities for $L'$ are $D_5$, $D_4$, $A_5$, $A_4$, $A_3$, $A_2^2$ and $A_2$. The subgroups $D_5$ and $A_5$ contain every subgroup in that list between them. So if we show $Z$ does not match the composition factors of any $A_2$ subgroup of $D_5$ or $A_5$ (not necessarily irreducible) on $V_{27}$ then that is enough. 

From Table \ref{levie6}, $V_{27} \downarrow D_5 = \lambda_1 / \lambda_4 / 0$ and $V_{27} \downarrow A_5 = \lambda_1^2 / \lambda_4$. The dimensions for the composition factors are $16, 10, 1$ for $D_5$ and $15, 6, 6$ for $A_5$.  Depending on which embedding we take into $A_2^3$, the list of dimensions of composition factors of $V_{27} \downarrow Z$ is one of the following: 7,6,6,3,3,1,1 or 9,9,7,1,1 or 9,9,6,3 or 9,9,9 (from Table \ref{E6tab}). We need to show it is not possible for any of these to correspond with an $A_2$ subgroup of $D_5$ or $A_5$. The latter two, 9,9,6,3 and 9,9,9 cannot match; they have no 1-dimensional composition factor, ruling out $D_5$ and have two composition factors of dimension 9, ruling out $A_5$. Now suppose $Z$ has composition factors of dimensions 9,9,7,1,1. Then $A_5$ is ruled out by the previous reasoning. The only way for an $A_2$ to be contained in $D_5$ with matching composition factors is if $V_{D_5}(\lambda_1) \downarrow A_2$ has a 9-dimensional and a trivial composition factor. This is a contradiction because none of the 9-dimensional composition factors  of $Z$ is self-dual. Finally, suppose $Z$ has the list 7,6,6,3,3,1,1 for its dimensions of composition factors, so $V_{27} \downarrow Z = 10/01/20/02/11/00^2$. If $Z$ is contained in $A_5$ then $V_{A_5}(\lambda_1) \downarrow Z$ must be $V_{A_2}(20)$ or $V_{A_2}(02)$. But then $V_{A_5}(\lambda_4) \downarrow Z = V_{A_2}(12)$ or $V_{A_2}(21)$ (\cite[Prop. 2.10]{LS3}) which is impossible. Similarly, any $A_2$ contained in $D_5$, with the same composition factors as $Z$, has composition factors of $V_{D_5}(\lambda_1) \downarrow A_2$ of dimensions 7,3 or 6,3,1. But such an $A_2$ does not preserve an orthogonal form, a contradiction. Hence $Z$ is $E_6$-irreducible. 
\end{proof}

\subsection{Maximal $M = A_2 G_2$}

The only possible candidate subgroups are of type $A_2$, so we need to consider $G_2$-irreducible subgroups of type $A_2$. The factor $G_2$ of $M$ is contained in a Levi $D_4$, hence the maximal $\bar{A}_2$ generated by long root subgroups of $G_2$ is in fact generated by root subgroups of $E_6$. So for any candidate subgroup $X < A_2 \bar{A}_2$, we will have $X < \bar{A}_2 C_{E_6}(\bar{A}_2) = \bar{A}_2^3$. We have already considered these in the previous section. By Lemma \ref{simpleg2} we are left to consider the maximal $\tilde{A}_2$ when $p=3$, generated by short root subgroups of $G_2$. As $N_{G_2}(\tilde{A}_2)=\tilde{A}_2.2$ a diagonal subgroup of $A_2 \tilde{A}_2$ with a graph automorphism applied to the second factor is conjugate to one without. Therefore the following two lemmas finish the case of $M = A_2 G_2$. 

\begin{lem}
The subgroup $X = A_2 \hookrightarrow A_2 \tilde{A}_2 < A_2 G_2$ via $(10,10)$ when $p=3$  is not $E_6$-irreducible.
\end{lem}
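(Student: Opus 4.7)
The plan is to compute $V_{27}\downarrow X$, recognise its composition factors as those of an explicit $A_5$-irreducible $A_2$-subgroup sitting inside a proper Levi of $E_6$, and then promote this numerical match to an $E_6$-conjugacy, so that $X$ inherits the reducibility.

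I would first restrict $V_{27}$ along $A_2G_2 \geq A_2\tilde{A}_2 \geq X$. In characteristic $3$, the Weyl modules $W_{G_2}(10) = V_{G_2}(10)$ (dimension $7$) and $W_{A_2}(02) = V_{A_2}(02)$ (dimension $6$) are irreducible, and $V_{G_2}(10)\downarrow\tilde{A}_2 = V_{A_2}(11)$ since $V(11)$ is the unique $7$-dimensional irreducible $A_2$-module in characteristic $3$. Ext-vanishing between the two $A_2G_2$-summands of $V_{27}$ (they differ in their projections to each factor) makes the decomposition in Theorem \ref{max} a genuine direct sum, so restricting via $(10,10)$ gives $V_{27}\downarrow X = (V_X(10)\otimes V_X(11))\oplus V_X(02)$. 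The Grothendieck class of the tensor product is $[W(21)] + [W(02)]$; a Jantzen sum check shows $W(21)=V(21)$ is irreducible in characteristic $3$, and a linkage-principle check shows $V(21)$ and $V(02)$ lie in distinct blocks, so the tensor product splits as $V_X(21)\oplus V_X(02)$. Hence $V_{27}\downarrow X$ has composition factors $V(21) + 2V(02)$, of dimensions $15, 6, 6$.

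Next, let $L = A_5T_1$ be the Levi of the maximal parabolic $P_6$ of $E_6$, and let $Y = A_2 \leq A_5$ be the $A_5$-irreducible subgroup embedded via $V(02)$. From Table \ref{levie6}, $V_{27}\downarrow A_5 = V(\lambda_1)^2 + V(\lambda_4)$; and $V(\lambda_4)\downarrow Y = \Lambda^4 V(02) = \Lambda^2 V(20) = V(21)$. Hence $V_{27}\downarrow Y$ has composition factors $V(21)+2V(02)$, matching $X$. A parallel computation on $L(E_6)$, accounting for the fact that in characteristic $3$ the adjoint $\mathfrak{sl}_6$ has structure $V(\lambda_1+\lambda_5) \mid V(00)$ (because $3 \mid 6$), shows that $L(E_6)\downarrow X$ and $L(E_6)\downarrow Y$ likewise share composition factors, namely $V(22) + 2V(30) + 2V(03) + 5V(11) + 4V(00)$.

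The main obstacle is to promote these coincident composition factors to an $E_6$-conjugacy $X\sim Y$, from which $X \leq P_6$ (and hence $E_6$-reducibility) follows. My plan is to enumerate $A_2$-subgroups of $E_6$ with the required $V_{27}$-composition factors by examining the remaining maximal subgroups from Theorem \ref{max}: direct computation of $V_{27}\downarrow Z$ for every diagonal $A_2$-subgroup $Z$ of $A_2^3$ (and, where relevant, of $F_4$ and $C_4$) yields dimension multisets of composition factors such as $\{7,6,6,3,3,1,1\}$, $\{9,9,7,1,1\}$, $\{9,9,6,3\}$ or $\{9,9,9\}$, using $9$-dimensional irreducibles of Frobenius-twisted form ($V(40)$, $V(13)$, etc.) in the twisted-diagonal cases—none of which equals $\{15,6,6\}$. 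Consequently $Y$ is the unique $E_6$-conjugacy class of $A_2$-subgroups with these composition factors, which forces $X \sim Y$ and hence $X \leq P_6$. The technical challenge is systematically covering every diagonal (with or without a graph twist coming from $N_{G_2}(\tilde{A}_2) = \tilde{A}_2.2$) and confirming that no candidate $A_2$ other than $Y$ produces a $15$-dimensional composition factor on $V_{27}$; where ambiguity remains one can use the refined $L(E_6)$-composition-factor data as a second invariant.
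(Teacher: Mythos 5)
Your composition-factor computations are correct: $V_{27}\downarrow X$ has factors $21/02^2$ of dimensions $15,6,6$, and both these and the $L(E_6)$ factors agree with those of an $A_5$-irreducibly embedded $A_2$ in a Levi $A_5$. The gap is the step you yourself flag as "the main obstacle": promoting this coincidence to an $E_6$-conjugacy. Equality of composition factors on $V_{27}$ and $L(E_6)$ does not imply conjugacy (Lemma \ref{wrongcomps} is only usable in the contrapositive direction, and Corollary 1 of this paper exhibits non-conjugate irreducible subgroups of $E_8$ with identical composition factors on the adjoint module). Your proposed uniqueness argument is moreover circular: $X$ is itself a candidate subgroup (it is $A_2G_2$-irreducible) whose $V_{27}$-factors have dimensions $\{15,6,6\}$, so an enumeration of candidates with those factors returns $X$ alongside $Y$, and deciding whether these two classes coincide is precisely the question you set out to answer; counting dimension multisets cannot distinguish "one class" from "two classes with the same factors".

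In fact the intermediate claim is false: in the proof of Corollary 2 (case $G=E_6$) it is shown, using the finite subgroup $A_2(9)<X$ and the direct-summand structure of $L(E_6)'\downarrow X$, that $X$ is \emph{not} contained in any Levi $A_5$. Hence $X$ is not conjugate to your $Y$; it is a non-$G$-cr subgroup lying in a parabolic but in no Levi, and is listed as such in Table \ref{cortab}. So no refinement of the composition-factor comparison can close your gap. The paper's proof proceeds quite differently: from the socle series of $L(E_6)'\downarrow A_2G_2$ and $\mathrm{Ext}^1_{A_2}(11,00)\cong K$ it deduces that $X$ fixes a nonzero vector of $L(E_6)'$; Lemma \ref{fix} then forces $C_{E_6}(l)$ either into a parabolic (nilpotent case) or to contain a maximal torus (semisimple case), and in the latter case an $E_6$-irreducible $X$ would have to lie in $A_2^3$, which is ruled out because no subgroup of $A_2^3$ has $V_{A_2}(22)$ as a composition factor on $L(E_6)'$. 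An argument of this kind — or the non-abelian cohomology analysis used for the analogous $E_7$ subgroup in Lemma \ref{badA2} — is what is actually needed here.
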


\begin{proof}
Firstly we note that when $p=3$, $L(E_6)'$ is irreducible of dimension 77 for an adjoint $E_6$. We need to consider the restriction of $L(E_6)'$ to $M = A_2 G_2$, given in \cite[Table 10.1]{LS1}: $$\mfra{L(E_6)' \downarrow A_2 G_2 =} \mfra{(11,00) \ +} \cfrac{(00,10)}{\cfrac{(00,01) + (11,10)}{(00,10)}}$$ where the tower of modules is a socle series. The restrictions of the $G_2$-modules $V_{G_2}(10)$ and $V_{G_2}(01)$ to $\tilde{A}_2$ are as follows (from Table \ref{G2tab}): $V_{G_2}(10) \downarrow \tilde{A}_2 = V_{A_2}(11)$ and $V_{G_2}(01) \downarrow \tilde{A}_2 = V_{A_2}(30) + V_{A_2}(03) + V_{A_2}(00)$. We also note that $V_{A_2}(11) \otimes V_{A_2}(11)$ has a trivial submodule. Therefore, the tower of modules restricted to $X$ has a submodule $ 00^2 | 11$. But $\text{Ext}^1_{A_2}(11,00)$ is 1-dimensional so at least one of those trivial modules must actually occur as a submodule in $L(E_6)' \downarrow X$. Therefore $X$ fixes a non-zero vector of $L(E_6)'$ and we may apply Lemma \ref{fix}. The stabilizer of this vector in $L(E_6)'$ is contained in either a parabolic subgroup or maximal rank subgroup. Then, assuming $X$ is $E_6$-irreducible, it must be contained in $A_2^3$. However, no subgroup of $A_2^3$ has a composition factor $V_{A_2}(22)$ on $L(E_6)'$ (the restriction $L(E_6) \downarrow A_2^3$ is given in Theorem \ref{max}). Therefore $X$ is not contained in $A_2^3$ and is not $E_6$-irreducible.  
\end{proof}

In the next result, recall that $r$ and $s$ are assumed to be distinct non-negative integers and therefore $(10^{[r]},10^{[s]})$ is not equal to $(10,10)$. 

\begin{lem}
The subgroups $A_2 \hookrightarrow A_2 \tilde{A}_2 < A_2 G_2$ via $(10^{[r]},10^{[s]})$ $(rs=0)$ when $p=3$ are all $E_6$-irreducible.
\end{lem}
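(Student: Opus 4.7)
My plan is to apply Lemma~\ref{wrongcomps} with $V = V_{27}$. For this I first compute $V_{27} \downarrow X$ and exhibit a composition factor whose dimension exceeds the dimension of any composition factor of $V_{27} \downarrow L'$ for a proper Levi $L$ of $E_6$.

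From Theorem~\ref{max}, $V_{27} \downarrow A_2 G_2$ has composition factors $(V_{A_2}(10), V_{G_2}(10))$ and $(V_{A_2}(02), 0)$ (in characteristic $3$ the Weyl modules $W_{G_2}(10)$ and $W_{A_2}(02)$ are the irreducible modules of dimensions $7$ and $6$ respectively). Using $V_{G_2}(10) \downarrow \tilde{A}_2 = V_{A_2}(11)$ (the $7$-dimensional simple module in characteristic $3$, noted in the preceding lemma), restriction to $X$ gives composition factors
\[
V_{A_2}(10)^{[r]} \otimes V_{A_2}(11)^{[s]} \qquad \text{and} \qquad V_{A_2}(02)^{[r]}.
\]
Since $rs = 0$ and $r \neq s$, exactly one of $r, s$ is zero and the other is positive, so the two tensor factors sit at distinct Frobenius positions with restricted weights. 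Steinberg's tensor product theorem therefore identifies the tensor product with the irreducible $A_2$-module of high weight $3^r \cdot (10) + 3^s \cdot (11)$, of dimension $3 \cdot 7 = 21$. Hence $V_{27} \downarrow X$ has exactly two irreducible composition factors, of dimensions $21$ and $6$.

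I would then read off from Table~\ref{levie6} that for every proper Levi subgroup $L$ of $E_6$, every composition factor of $V_{27} \downarrow L'$ has dimension at most $16$ (the maximum being the half-spin module for $L' = D_5$). Consequently no subgroup of any proper Levi can have a $21$-dimensional composition factor on $V_{27}$, so no $L'$-irreducible $A_2$ subgroup $H$ can match the composition factors of $X$ on $V_{27}$. Lemma~\ref{wrongcomps} applied to $V = V_{27}$ then forces $X$ to be $E_6$-irreducible.

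The one subtle point is the identification of $V_{A_2}(11)$ as the $7$-dimensional simple module (rather than the $8$-dimensional Weyl module) in characteristic $3$, and verifying that the Frobenius exponents $r$ and $s$ are genuinely distinct so that Steinberg's theorem yields irreducibility; once these are granted, the dimension comparison against Table~\ref{levie6} is immediate and requires no case analysis over the various Levis.
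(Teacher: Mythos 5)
Your proposal is correct and follows essentially the same route as the paper: both apply Lemma \ref{wrongcomps} to $V_{27}$, compute $V_{27} \downarrow X = 10^{[r]} \otimes 11^{[s]} / 02^{[r]}$ with composition factor dimensions $21$ and $6$, and conclude by noting that no proper Levi subgroup has a composition factor of dimension at least $21$ on $V_{27}$. Your extra care over the irreducibility of the $21$-dimensional tensor factor via Steinberg's theorem is a sound (if implicit in the paper) justification of the same dimension count.
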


\begin{proof}
To see this we can apply Lemma \ref{wrongcomps} to $V_{27}$. Let $X$ be one of the subgroups in the statement. As $X$ is of type $A_2$ we just need to show it does not have the same composition factors on $V_{27}$ as any $A_2$ subgroup of $D_5$ or $A_5$. But $V_{27} \downarrow X = 10^{[r]} \otimes 11^{[s]} / 02^{[r]}$ (from Table \ref{E6tab}) so the dimensions of the composition factors are 21,6. This is incompatible with any subgroup of either $D_5$ or $A_5$ as neither has a composition factor of dimension at least 21 on $V_{27}$.  
\end{proof}
\vspace{-0.4cm}
\subsection{Maximal $M = G_2$ $(p \neq 7)$}

The only possible candidate subgroups which are proper in $M$ are of type $A_2$. First we consider the $A_2$ generated by long root subgroups of the $G_2$ (but not of the $E_6$).

\begin{lem}

Let $X$ be the $A_2$ subgroup generated by long root subgroups of a maximal $G_2$ $(p \neq 7)$. Then $X$ is conjugate to $Y =  A_2 \hookrightarrow A_2^3$ via $(10,10,01)$. (So $X$ is $E_6$-irreducible by Lemma \ref{a2e6}.)

\end{lem}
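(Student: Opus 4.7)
The plan is to identify $X$ up to Aut$(E_6)$-conjugacy by computing the composition factors of $V_{27} \downarrow X$, matching them with those of $V_{27} \downarrow Y$, and then invoking the classification of simple connected $A_2$ subgroups of $E_6$.

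First I would compute $V_{27} \downarrow X$ by restricting $V_{27} \downarrow G_2 = W_{G_2}(20)$ (Theorem \ref{max}) to $\bar{A}_2 < G_2$. From Lemma \ref{simpleg2} and Table \ref{G2tab} we have $V_{G_2}(10) \downarrow \bar{A}_2 = 10 + 01 + 00$; since the formal character of the Weyl module $W_{G_2}(20)$ is characteristic-independent, the multiset of $\bar{A}_2$-composition factors of $W_{G_2}(20) \downarrow \bar{A}_2$ agrees with the characteristic zero decomposition, namely $\{20, 02, 11, 10, 01, 00\}$ (which can be read off from $S^2(10 + 01 + 00) = 20 + 02 + 11 + 10 + 01 + 2\cdot 00$, subtracting the trivial summand corresponding to $W_{G_2}(00)$).

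Next I would restrict the $A_2^3$-composition factors of $V_{27}$ given in Theorem \ref{max} to $Y = A_2 \hookrightarrow A_2^3$ via $(10, 10, 01)$. The three $9$-dimensional summands $(10, 01, 00)$, $(00, 10, 01)$ and $(01, 00, 10)$ pull back to $10 \otimes 01$, $10 \otimes 10$ and $01 \otimes 01$ respectively, with composition factors $\{11, 00\}$, $\{20, 01\}$ and $\{02, 10\}$, giving the same multiset $\{20, 02, 11, 10, 01, 00\}$ as for $X$.

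Finally, I would invoke \cite[Table 8.3]{LS3}, which lists the Aut$(E_6)$-conjugacy classes of simple connected $A_2$ subgroups of $E_6$ together with their composition factors on $V_{27}$; as remarked in the proof of Lemma \ref{a2e6}, the classification extends to all $p \neq 7$. Up to the symmetries of $A_2^3$ inside Aut$(E_6)$, the multiset $\{20, 02, 11, 10, 01, 00\}$ arises only from the embedding $(10, 10, 01)$, so $X$ is conjugate to $Y$. The main obstacle is that in small characteristics ($p = 2, 3$) the internal structure of $W_{G_2}(20)$ and of the relevant $A_2$-tensor products differs from the characteristic zero situation; however, since the identification uses only multisets of composition factors—which are controlled by formal characters—the argument proceeds uniformly for all $p \neq 7$.
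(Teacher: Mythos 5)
There is a genuine gap at the final step. Equality of composition factors on $V_{27}$ does not by itself yield conjugacy: you would need to know both that every irreducible $A_2$ subgroup of $E_6$ has already been classified up to Aut$(E_6)$-conjugacy in \emph{all} characteristics, and that the classes are separated by their composition factors on $V_{27}$. Neither is available here. \cite[Table 8.3]{LS3} carries characteristic restrictions, and removing those restrictions is precisely the point of the present paper, so citing it for the full classification is circular; the remark in the proof of Lemma \ref{a2e6} only asserts that the enumeration of \emph{diagonal subgroups of $A_2^3$} up to conjugacy is unaffected by the characteristic, not that every irreducible $A_2$ in $E_6$ appears there. (Indeed, Corollary 1 --- that composition factors on $L(G)$ determine conjugacy, with one exception --- is a \emph{consequence} of Theorems 1--3, not an input.) What is actually needed, and what the paper supplies, is a proof that $X$ \emph{lies in} (a conjugate of) $A_2^3$; only then does a composition-factor comparison among the known diagonal subgroups of $A_2^3$ pin down the class. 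The paper does this in two cases: for $p \neq 3$, $X \cong SL_3$ has a centre of order $3$ whose connected centraliser in $E_6$ is $A_2^3$, forcing $X < A_2^3$; for $p = 3$ this fails (the centre is infinitesimal), so instead one shows $X$ fixes a non-zero vector of $L(E_6)'$ using the uniserial structure of $L(E_6)' \downarrow G_2$, applies Lemma \ref{fix} together with $E_6$-irreducibility to place $X$ in a maximal rank subgroup, and identifies that subgroup as $A_2^3$. Your proposal contains no substitute for this containment step.

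A secondary point: your claim that the multiset of $\bar{A}_2$-composition factors of $W_{G_2}(20)\downarrow \bar{A}_2$ ``agrees with the characteristic zero decomposition'' because the formal character is characteristic-independent is not quite right. The formal character is indeed independent of $p$, but its decomposition into \emph{irreducible} characters is not (e.g.\ for $p=3$ the character of $W_{A_2}(11)$ splits as $11/00$, contributing an extra trivial factor); this is why Table \ref{E6tab} records the factors of $V_{27}\downarrow Y$ as Weyl modules $W(20)/W(02)/W(11)/\ldots$. Your comparison of $X$ with $Y$ still goes through in each fixed characteristic, since both restrictions have the same Brauer character, but the step as stated is inaccurate.
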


\begin{proof}
If $p \neq 3$ this is straightforward because $X$ is an $SL_3$ inside $G_2$ and has a centre of order 3. The full connected centraliser in $E_6$ of a generator of this centre is $A_2^3$. Therefore $X < A_2^3$ and must be conjugate to $Y$ as it is the only conjugacy class with those composition factors on $V_{27}$. 

Now suppose $p=3$. The composition factors of $X$ and $Y$ agree on $L(E_6)$. Therefore, the proof of Lemma \ref{a2e6} shows that both $X$ and $Y$ must be $E_6$-irreducible. From \cite[p. 215]{LS1} we have that $L(E_6)' \downarrow G_2 = 10|01|11|01|10$, a uniserial module and from Table \ref{G2tab}, $V_{G_2}(10) \downarrow X = 10 + 01 + 00$. This implies that $X$ fixes a non-zero vector in $L(E_6)'$ and we can apply Lemma \ref{fix}. As $X$ is $E_6$-irreducible, it must be contained in a maximal rank subgroup of $E_6$, which must be $A_2^3$. Comparing composition factors shows that $X$ must be conjugate to $Y$. 
\end{proof}

Finally, we must consider $\tilde{A}_2$ when $p=3$. 

\begin{lem}

Let $Z = \tilde{A}_2 < G_2$ when $p=3$. Then $Z$ is $E_6$-irreducible. 

\end{lem}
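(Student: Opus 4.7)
The plan is to apply Lemma \ref{wrongcomps} to the minimal module $V_{27}$, closely mirroring the arguments used for $A_2 \hookrightarrow A_2 \tilde{A}_2$ in the preceding lemma and for the Levi analysis in Lemma \ref{a2e6}.

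First I would identify the composition factors of $V_{27} \downarrow Z$. By Theorem \ref{max}, $V_{27} \downarrow G_2 = W_{G_2}(20)$, and when $p=3$ this Weyl module coincides with the $27$-dimensional irreducible $V_{G_2}(20)$ (since the latter is irreducible for $p \neq 7$). The given restriction $V_{G_2}(10) \downarrow \tilde{A}_2 = V_{A_2}(11)$ (Table \ref{G2tab}) shows that the $G_2$-weight $\lambda_1$ pulls back to the $\tilde{A}_2$-weight $\lambda_1 + \lambda_2$, so the highest weight $2\lambda_1$ of $V_{G_2}(20)$ pulls back to the $\tilde{A}_2$-weight $2\lambda_1 + 2\lambda_2$. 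Hence $V_{A_2}(22)$ appears as a composition factor of $V_{G_2}(20) \downarrow Z$. As $V_{A_2}(22)$ is the Steinberg module of $A_2$ in characteristic $3$, of dimension $3^3 = 27$ equal to the total dimension, the restriction must be exactly $V_{27} \downarrow Z = V_{A_2}(22)$, irreducible of dimension $27$. (Alternatively, this restriction may be read off directly from Table \ref{G2tab}.)

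Next I would apply Lemma \ref{wrongcomps} with $V = V_{27}$. Following the Levi reduction in Lemma \ref{a2e6}, it suffices to rule out matching composition factors among $L'$-irreducible $A_2$-subgroups of Levi subgroups of type $D_5$ or $A_5$, as between them these contain every proper Levi factor of $E_6$. From Table \ref{levie6}, the composition factor dimensions of $V_{27} \downarrow D_5$ and $V_{27} \downarrow A_5$ are $\{16,10,1\}$ and $\{15,6,6\}$ respectively, so any $A_2$-subgroup of such a Levi has every composition factor on $V_{27}$ of dimension at most $16$. Since $V_{A_2}(22)$ has dimension $27$, no $A_2$-subgroup of a proper Levi can share composition factors with $Z$, and Lemma \ref{wrongcomps} forces $Z$ to be $E_6$-irreducible.

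The only non-routine step is the identification $V_{G_2}(20) \downarrow Z = V_{A_2}(22)$; this is the main obstacle, but the weight-theoretic argument above makes it essentially immediate once we know $V_{G_2}(10) \downarrow Z = V_{A_2}(11)$, and the remainder of the proof is a short dimension count against the Levi data of Table \ref{levie6}.
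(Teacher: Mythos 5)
Your proposal is correct and follows essentially the same route as the paper: identify $V_{27} \downarrow Z = V_{A_2}(22)$ and apply Lemma \ref{wrongcomps} to $V_{27}$ against the Levi data of Table \ref{levie6}. The only cosmetic difference is the final count — the paper observes that no Levi subgroup has just one composition factor on $V_{27}$, whereas you observe that none has a composition factor of dimension $27$ — and both readings of Table \ref{levie6} are valid.
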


\begin{proof}
We can apply Lemma \ref{wrongcomps} to $V_{27}$. From Table \ref{E6tab}, $V_{27} \downarrow G_2 = 20$ and we therefore deduce that $V_{27} \downarrow Z = V_{A_2}(22)$. No Levi subgroup has just one composition factor on $V_{27}$, hence $Z$ is $E_6$-irreducible.  
\end{proof}

\subsection{Maximal $M = A_2$ $(p \geq 5)$}

Here there is nothing to prove as any proper simple subgroup of $A_2$ has rank 1 and so the only candidate subgroup is $M$ itself, which is $G$-irreducible by maximality.  
 
This completes the proof of Theorem 1.

\section{Proof of Theorem 2: $E_7$-irreducible subgroups}

We use the same approach as in Section \ref{e6proof} to prove Theorem 2. We start by listing the reductive, maximal connected subgroups of $E_7$ (with no $A_1$ simple factor) from Theorem \ref{max}. These are $A_7$, $A_2 A_5$, $G_2 C_3$ and $A_2$ $(p \geq 5)$. We must consider the $E_7$-candidates contained in each of them in the following subsections.

\subsection{Maximal $M = A_7$}

First, applying Lemma \ref{class} and \cite{lubeck} we find all candidate subgroups of $M$. These are $A_7$, $C_4$, $D_4$, $B_3$ embedded irreducibly via $V_{B_3}(001)$ and $A_2$ $(p \neq 3)$ embedded irreducibly via $V_{A_2}(11)$. The following lemma handles all of these subgroups.  

\begin{lem} \label{a7e7}

The only $E_7$-candidates contained in $A_7$ that are $E_7$-irreducible are $A_7$, $D_4$ $(p > 2)$ and $A_2$ $(p>3)$. Furthermore, $X = A_2$ $(p>3)$ is conjugate to $Y = A_2 \hookrightarrow \bar{A}_2 {A_2}^{(\star)} < \bar{A}_2 A_5$ via $(10,10)$ (where ${A_2}^{(\star)}$ is embedded in $A_5$ via $V_{A_2}(20)$). 

\end{lem}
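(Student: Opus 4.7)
My plan is to handle the assertions in three parts: positive irreducibility of $A_7$, $D_4$ ($p > 2$) and $A_2$ ($p > 3$); the conjugacy of the $A_2$ with $Y$; and reducibility of the remaining candidates $C_4$, $B_3$, $D_4$ ($p = 2$) and $A_2$ ($p = 2$). First, $A_7$ is $E_7$-irreducible by its maximality. For $X = D_4$ with $p > 2$ and $X = A_2$ with $p > 3$, I would apply Lemma \ref{wrongcomps} to $V_{56}$: compute $V_{56} \downarrow X$ by restricting $V_{56} \downarrow A_7 = \wedge^2 V \oplus \wedge^6 V$ (Theorem \ref{max}) through $V \downarrow D_4 = V_{D_4}(\lambda_1)$ or $V \downarrow A_2 = V_{A_2}(11)$, and check against Table \ref{levie7} that no $L'$-irreducible simple subgroup of $E_7$ of the same type as $X$ produces matching composition factors. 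Corollary \ref{notrivs} on $L(E_7)$ is useful in several cases as an alternative.

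For the conjugacy of $X = A_2$ with $Y$, observe that $V \downarrow X = V_{A_2}(11)$ is the adjoint module of $A_2$, on which $Z(A_2) \cong \mathbb{Z}/3$ acts trivially. Hence $X$ centralises an element $t$ of order $3$ in $A_7$, whose connected centraliser in $E_7$ is the maximal-rank subsystem subgroup $\bar{A}_2 A_5$. Thus $X < \bar{A}_2 A_5$; comparing the composition factors of $X$ on the $6$-dimensional natural module of the $A_5$ factor (which must be the irreducible $V_{A_2}(20)$ or its dual, since $V \downarrow X = V_{A_2}(11)$ admits no $2$-dimensional constituent) identifies the embedding as the diagonal $(10, 10)$ into $\bar{A}_2 A_2^{(\star)} < \bar{A}_2 A_5$, which is exactly $Y$.

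The reducibility arguments split by difficulty. For $C_4$ with $p > 2$, the symplectic form on $V$ produces a trivial summand in $\wedge^2 V \downarrow C_4$, hence a fixed vector in $V_{56}$; analysing the $E_7$-orbit of this vector places $C_4$ in a proper parabolic of $E_7$. For $B_3$ via the spin representation, note that this $B_3$ factors through $B_3 < D_4 < A_7$ via a half-spin of $D_4$; this same abstract embedding is realised by a $B_3 < D_4$ via spin where $D_4$ is a Levi subgroup of $E_7$, and matching composition factors on $V_{56}$ and $L(E_7)$ forces $E_7$-conjugacy of the two copies, placing $B_3$ in a proper parabolic. For $A_2$ at $p = 2$, compute $L(E_7) \downarrow A_2$, exhibit a fixed non-zero (necessarily nilpotent) element via Lemma \ref{litlem} or direct inspection, and apply Lemma \ref{fix}(ii).

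The main obstacle is the non-$E_7$-cr pair $C_4$ and $D_4$ at $p = 2$ listed in Table \ref{cortab}, which lie in no proper Levi of $E_7$ yet must still be shown to sit in a proper parabolic. Following the strategy of Lemmas \ref{badA2} and \ref{badB4}, I would exhibit each as a complement to the unipotent radical $Q$ of a suitable parabolic $P = QL$ of $E_7$ whose Levi derived subgroup $L'$ contains a closely related subgroup $H$ (of type $C_4$, exploiting the $B_4 \leftrightarrow C_4$ special isogeny at $p = 2$). Showing that the complement is not $L'$-conjugate to $H$ reduces to producing a non-trivial class in the non-abelian cohomology $H^1(H, Q)$, which is computed level-by-level using the filtration of $Q$ by the subgroups $Q(i)$ together with Lemma \ref{b4cohom}. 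This cohomological calculation is the delicate point of the proof.
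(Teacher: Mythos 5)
Your treatment of the positive cases ($A_7$ by maximality; $D_4$, $p>2$, and $A_2$, $p>3$, via Corollary \ref{notrivs} or Lemma \ref{wrongcomps}) agrees with the paper, but two of your key steps fail. The conjugacy argument is the more serious problem. Since $V_{A_7}(\lambda_1)\downarrow X=V_{A_2}(11)$ is irreducible and kills $Z(A_2)$, the image of $X$ in $A_7$ is an \emph{adjoint} $A_2$ acting irreducibly on the natural module; by Schur's lemma $C_{A_7}(X)$ consists of scalars and contains no element of order~$3$. So the order-$3$ element you want does not lie in $A_7$, and ``$Z(A_2)$ acts trivially'' gives you nothing. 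The paper's element $t$ lies in $E_7\setminus A_7$: it is an element of $N_{E_7}(D_4)$ inducing triality on the $D_4$ containing $X$, with $X$ its fixed-point subgroup, and one must then compute $\dim C_{L(E_7)}(t)=43$ to identify $C_{E_7}(t)$ as $A_2A_5$ via \cite[Prop. 1.2]{LS8} rather than some other order-$3$ centraliser such as $A_6T_1$ or $E_6T_1$. Without this, the containment $X<\bar{A}_2A_5$ is unjustified. The same triality element is what makes the $B_3$ case work: by \cite[Lemma 2.15]{clss} the spin $B_3<D_4$ is $E_7$-conjugate to the naturally embedded $B_3$, which fixes a $1$-space of $V_8$ and hence lies in a parabolic of $A_7$. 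Your version --- that the second copy sits inside a Levi $D_4$ and that matching composition factors force conjugacy --- does not hold up: the relevant $D_4<A_7$ is not a Levi subgroup of $E_7$ (it has no trivial composition factors on $L(E_7)$, unlike the Levi $D_4$ of Table \ref{levie7}), and equality of composition factors never by itself implies conjugacy.

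The second gap is the part you yourself flag as ``the delicate point'': $C_4$ and $D_4$ at $p=2$. You leave this as an unexecuted non-abelian cohomology computation, but none is needed. This $C_4$ is generated by long root subgroups of $E_7$, so by \cite[Lemma 4.9]{LS6} its connected centraliser is one-dimensional (a torus if $p\neq2$, a connected unipotent group if $p=2$); since connected $G$-irreducible subgroups have finite centraliser (Lemma \ref{semirr}), $C_4$ is $E_7$-reducible for every $p$, and the candidate $D_4$ at $p=2$ lies inside this $C_4$, hence is reducible too, as is the candidate $A_2$ at $p=2$ inside that $D_4$. (Your fixed-vector argument on $V_{56}$ would also work, indeed for all $p$, since landing in the Levi $E_6$ already places the subgroup in a parabolic; the point is that some completed argument is required, and the cohomological route you sketch is both unnecessary and unfinished. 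Note also that your parenthetical ``necessarily nilpotent'' for the $A_2$ at $p=2$ is unjustified: a fixed vector on $L(E_7)$ could be semisimple, in which case Lemma \ref{fix}(i) yields a maximal torus in the centraliser, not a parabolic.)
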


\begin{proof}

First consider $C_4$. This $C_4$ is generated by long root subgroups of $A_7$, hence of $E_7$. Therefore, the connected centraliser of this $C_4$ is $T_1$ ($p \neq 2$) or a 1-dimensional connected unipotent subgroup ($p =2$) by \cite[Lemma 4.9]{LS6}. As the centraliser is infinite it follows from Lemma \ref{semirr} that this candidate $C_4$ is not $E_7$-irreducible.

Now consider $D_4$. When $p=2$ this is contained in the $C_4$ and therefore is $E_7$-reducible. When $p \neq 2$, Corollary \ref{notrivs} shows it is $E_7$-irreducible (the restrictions are in Table \ref{E7tab}). 

There are no $E_7$-irreducible subgroups isomorphic to $B_3$ however. This is because the $A_7$-irreducible $B_3$ is embedded via $V_{B_3}(001)$ and is therefore contained in $D_4$. The normaliser in $E_7$ of $D_4$ induces a triality automorphism on this $D_4$ (\cite[Lemma 2.15]{clss}), which means this $B_3$ is $E_7$-conjugate to the $B_3$ embedded in $A_7$ via $V_{A_7}(\lambda_1) \downarrow B_3 = W_{B_3}(100) / V_{B_3}(000)$ which is contained in a parabolic subgroup of $A_7$.

Finally, consider $A_2$ $(p \neq 3)$ embedded in $A_7$ via $V_{A_2}(11)$. This is contained in $D_4$. When $p=2$, the $D_4$ is contained in a parabolic subgroup of $E_7$ and hence the candidate $A_2$ is not $E_7$-irreducible. When $p > 3$, $X = A_2$ is the group of fixed points of a triality automorphism of $D_4$ induced by an element $t \in E_7$ of order 3. To find the full centraliser of $t$ in $E_7$ we calculate the dimension of $C_{L(E_7)}(t)$. By restricting from $L(E_7) \downarrow A_7$, it follows that $L(E_7) \downarrow D_4 = 2000 + 0020 + 0002 + 0100$. The triality element $t$ fixes a subgroup $A_2$ of $D_4$ hence fixes a dimension 8 subspace of $L(D_4) = \lambda_2$. It fixes a diagonal submodule of $2000 + 0020 + 0002$ of dimension 35. It follows that \linebreak $\text{dim}(C_{L(E_7)}(t)) = 43$. Therefore, using \cite[Prop. 1.2]{LS8}, the full centraliser of $t$ in $E_7$ is $A_2 A_5$. Hence $X$ is conjugate to a subgroup of $A_2 A_5$ and comparing composition factors shows $X$ is conjugate to $Y$. Corollary \ref{notrivs} shows that $X$ is $E_7$-irreducible. 
\end{proof} 

\subsection{Maximal $M = A_2 A_5$} \label{a2a2starnotation}

We need to find all $A_5$-irreducible subgroups of $A_5$ that are isomorphic to $A_2$. This is straightforward from Lemma \ref{class}. There is just one $A_5$-conjugacy class of irreducible $A_2$ subgroups when $p \neq 2$, with $V_{A_5}(\lambda_1) \downarrow A_2 = 20$ and none if $p=2$. Let ${A_2}^{(\star)}$ be the $A_2$ embedded in $A_5$ via \linebreak $V_{A_5}(\lambda_1) \downarrow A_2^{(\star)} = V_{A_2}(20)$. The normaliser in $E_7$ of $M$ is $M.2$ where the involution on top acts as a graph automorphism on both simple factors, by \cite[Table 10]{car}. A graph automorphism of $A_5$ induces a graph automorphism of ${A_2}^{(\star)}$. Therefore $N_{E_7}(\bar{A}_2 {A_2}^{(\star)}) / C_{E_7}(\bar{A}_2 {A_2}^{(\star)}) \geq \mathbb{Z}_2$ where the involution acts as a graph automorphism on both of the $A_2$ factors. Considering the composition factors of $L(E_7) \downarrow \bar{A}_2 {A_2}^{(\star)}$ shows this is in fact an equality. Therefore the conjugacy classes of candidate subgroups are $A_2 \hookrightarrow \bar{A_2} A_2^{(\star)}$ $(p \neq 2)$ via $(10,10)$, $(10^{[r]},10^{[s]})$ $(rs=0)$, $(10,01)$, $(10^{[r]},01^{[s]})$ $(rs=0)$. We require a technical lemma, before considering which candidate subgroups are $E_7$-irreducible.  

\begin{lem} \label{techa2}

Let $p=3$. Then any subgroup $X \cong A_2$ of $E_7$ with $L(E_7) \downarrow X = 22^3 / 11^6 / 30 / 03 / 00^4$ has a trivial submodule on $L(E_7)$. 

\end{lem}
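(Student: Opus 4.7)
The plan is to isolate a self-dual submodule of $L(E_7)\downarrow X$ whose composition factors lie only in $\{00,11\}$, and then invoke Lemma \ref{repa2} to produce a trivial submodule.

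First, I would use Lemma \ref{a2ext}(i) to split off the $V(22)$-isotypic direct summand. Indeed, Lemma \ref{a2ext}(i) gives $\mathrm{Ext}^1_X(22,W) = 0$ for every $W\in\{00,11,30,03\}$, and combining the duality identity $\mathrm{Ext}^1_X(A,B)\cong \mathrm{Ext}^1_X(B^*,A^*)$ with the self-dualities of $22,11,00$ and $30^*\cong 03$ shows the same vanishing holds in the other direction. A standard block decomposition therefore yields $L(E_7)\downarrow X = M_1\oplus M_2$, with $M_1$ having composition factors $22^3$ and $M_2$ a self-dual summand with composition factors $11^6, 30, 03, 00^4$.

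Next, Lemma \ref{a2ext}(iv) combined with duality furnishes the vanishing $\mathrm{Ext}^1_X(00,30) = \mathrm{Ext}^1_X(30,00) = \mathrm{Ext}^1_X(00,03) = \mathrm{Ext}^1_X(03,00) = 0$. I would then decompose $M_2 = \bigoplus_j I_j$ by Krull--Schmidt and let $S$ be the direct sum of those $I_j$ containing $30$ or $03$ as a composition factor. Because $30, 03$ each appear with multiplicity one and are swapped by duality, $S$ is a self-dual direct summand of $M_2$; let $M_3$ denote its complement. Then $M_3$ is self-dual with composition factors lying only in $\{00,11\}$. If $M_3$ contains at least one trivial composition factor, Lemma \ref{repa2} applied to $M_3$ immediately yields a trivial submodule of $M_3\subseteq L(E_7)$, and we are done.

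The main obstacle, and the technical heart of the proof, is the case in which $S$ absorbs all four trivial composition factors of $M_2$. My plan is to analyze each indecomposable summand $I\subseteq S$ containing a trivial composition factor: if $00$ occurs in the socle of some $I$, we are done; if $00$ occurs in the head of some $I$, then by self-duality of $S$ the dual summand $I^*$ has $00$ in its socle and we are again done. The remaining case is that every trivial composition factor in $S$ is strictly interior to an indecomposable summand, which (by the Ext-vanishing above) would force a Loewy configuration with $11$ immediately above and below each such interior $00$. To derive a contradiction, I would invoke the non-existence of an indecomposable $A_2$-module with Loewy structure $11\,|\,00\,|\,11$ (as used in the proof of Lemma \ref{repa2} via \cite[Lemma 4.2.4(i)]{LS1}), and exploit the multiplicity bound of $11^6$ together with the fact that each interior $00$ consumes flanking $11$'s to rule out all remaining configurations by a counting argument.
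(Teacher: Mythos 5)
Your opening moves are sound and run parallel to the paper: splitting off the $22$-isotypic part via Lemma \ref{a2ext}(i), isolating the summands containing $30$ or $03$ into a self-dual piece $S$, and invoking Lemma \ref{repa2} whenever the complementary piece $M_3$ retains a trivial composition factor are all legitimate and match steps the paper also takes. The case where a trivial factor lies in the socle or head of a summand of $S$ is also handled correctly via self-duality.

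The gap is in the final case, and it is not a small one. Your plan rests on the premise that an interior trivial factor, being ``flanked'' by $11$'s, must produce the forbidden Loewy configuration $11\,|\,00\,|\,11$, so that a count against the multiplicity $11^6$ finishes the proof. But interior trivial composition factors are genuinely possible for $A_2$ in characteristic $3$: the tilting module $T(30)=11\,|\,(30+00)\,|\,11$ --- which this very paper constructs and uses in the proof of Lemma \ref{bada2a2} --- is indecomposable with socle and head both isomorphic to $11$ and a trivial factor strictly in the middle, and it contains no subquotient with socle series $11\,|\,00\,|\,11$. So the non-existence of $11\,|\,00\,|\,11$ does not forbid interior $00$'s, and your summand $S$ is exactly the place where such $T(30)$- and $T(03)$-like configurations live. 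To close the argument one needs a quantitative statement such as: an indecomposable module with all composition factors among $\{00,11,30,03\}$ and with simple socle $11$ contains at most one trivial composition factor (equivalently, the relevant truncated injective hull of $11$ has the form $11^2\,|\,(30+03+00)\,|\,11$, with a single trivial factor). With that in hand a socle/head count does succeed, but none of the lemmas you cite (Lemma \ref{a2ext}, Lemma \ref{repa2}, or \cite[Lemma 4.2.4(i)]{LS1}) supplies it, and your sketch neither identifies nor proves it. The paper sidesteps precisely this difficulty by a non-module-theoretic device: it introduces $C=C_{L(E_7)}(L(X))$, disposes of $C\neq 0$ by arguments close in spirit to yours, and for $C=0$ invokes \cite[Lemma 4.2.6]{LS1}, which uses the adjoint action of the subalgebra $L(X)\leq L(E_7)$. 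Your purely module-theoretic outline has no replacement for that step as written.
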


\begin{proof}
Assume there is no trivial submodule. We use the proof and notation of \cite[Lemma 4.2.6]{LS1}. Define $C = C_{L(E_7)}(L(X))$, an $X$-submodule of $L(E_7)$. Suppose $C$ is non-zero. Then it can only contain the totally twisted $X$-composition factors on $L(E_7)$ (and trivial ones). So $C = 30^x/03^y/00^z$. The module $V_{A_2}(22)$ does not extend any of the other composition factors by Lemma \ref{a2ext}(i) and hence the $22^3$ forms a direct summand of $L(E_7)$. So we consider its complement, call it $M_1$. Now $M_1$ is self-dual and has composition factors $30/03/11^6/00^4$. Suppose $z \neq 0$. Then the trivial composition factors of $C$ are direct summands of $C$ because neither $30$ nor $03$ extends the trivial module by Lemma \ref{a2ext}(iv). Therefore $C$ has a trivial submodule, a contradiction. Hence $z=0$. Now suppose $x=1$ and $y=0$ (or vice versa). There must be an indecomposable direct summand $M_2$ of $M_1$ of the form $03 | (11^{6-a} / 00^{4-b}) | 30$, which is self-dual. Therefore, the complement to $M_2$, call it $M_3$, is self-dual and has composition factors $11^a / 00^b$. Lemma \ref{repa2} shows that if $b \neq 0$ then $M_3$ has a trivial submodule, a contradiction. Hence $b=0$. We can apply Lemma \ref{repa2} again to show that the module in the middle of the indecomposable $M_2$ must contain a trivial submodule. But 30 does not extend the trivial module so we have a trivial submodule, a contradiction. Finally, suppose $x=y=1$. Then by self-duality $C$ must split off as a direct summand and again we apply Lemma \ref{repa2} to the complement to show there is a trivial submodule. 

So we have shown that $C = 0$. Now we can apply \cite[ Lemma 4.2.6]{LS1}. This shows that the multiplicities of composition factors of $L(E_7) \downarrow X$ force $X$ to have a trivial submodule, a final contradiction.
\end{proof}

Before the next lemma, we require the following definition. Suppose $Q$ is a unipotent subgroup of $G$ and $X$ is a reductive algebraic group which acts algebraically on $Q$. Then we say a complement $Y$ to $Q$ in the semidirect product $Q X$ is \emph{non-standard} if $Y$ is not $Q$-conjugate to $X$.

In the proofs of Lemmas \ref{badA2} and \ref{badB4} we use a result from non-abelian cohomology (\cite[Lemma 3.2.11]{dav}). This allows us to deduce results on complements to a non-abelian unipotent radical $Q$ of a parabolic subgroup. To do this, we consider the filtration of $Q$ by levels and then calculate certain abelian cohomology groups for each level. This method is introduced by Stewart in \cite[3.2]{dav} and is subsequently used throughout. Also, in \cite{littho} Litterick and the author use these methods to classify the non-$G$-cr subgroups of exceptional algebraic groups in good characteristic, with an introduction given in Section $3$ there. 

\begin{lem} \label{badA2}
The $E_7$-candidate subgroups contained in $A_2 A_5$ that are $E_7$-irreducible are: $A_2 \hookrightarrow \bar{A}_2  {A_2}^{(\star)} < \bar{A}_2 A_5$ via $(10,10)$ $(p > 3)$, $(10,01)$ $(p >2)$, $(10^{[r]},10^{[s]})$ $(rs=0, p>2)$ and $(10^{[r]},01^{[s]})$ $(rs=0, p>2)$.
\end{lem}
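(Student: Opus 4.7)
The plan splits the candidates into four ``good'' families (to be shown $E_7$-irreducible) and one ``bad'' family, namely via $(10,10)$ at $p=3$ (to be shown $E_7$-reducible). Note that via $(10,10)$ at $p>3$ is already handled by Lemma \ref{a7e7}. The starting point is to compute $L(E_7) \downarrow \bar{A}_2 A_2^{(\star)}$, beginning from $L(E_7) \downarrow A_2 A_5$ in Theorem \ref{max} and using $V_{A_5}(\lambda_1) \downarrow A_2^{(\star)} = V(20)$ to restrict each $A_5$-composition factor to $A_2^{(\star)}$. For each diagonal $X$ one then tensors appropriately with the first factor and applies the relevant Frobenius and graph twists.

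For the three good families via $(10,01)$, $(10^{[r]},10^{[s]})$, and $(10^{[r]},01^{[s]})$ (with $rs=0$, $p>2$), I would verify directly that the resulting $L(E_7) \downarrow X$ has no trivial composition factor, so that Corollary \ref{notrivs} gives $E_7$-irreducibility. In any borderline situation where a trivial composition factor does appear, I would instead apply Lemma \ref{wrongcomps}, ruling out matches with the composition factors of $L(E_7) \downarrow H$ for every $L'$-irreducible $A_2$-subgroup $H$ of every proper Levi subgroup of $E_7$, with the requisite data read off from Table \ref{levie7}.

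For the bad case the computation yields $L(E_7) \downarrow X = 22^3 / 11^6 / 30 / 03 / 00^4$, matching exactly the hypothesis of Lemma \ref{techa2}. Hence $X$ fixes a non-zero vector $l \in L(E_7)$. If the nilpotent part of $l$ is non-zero, Lemma \ref{fix}(ii) places $X$ inside a proper parabolic and we are done; otherwise $l$ is semisimple and $C_{E_7}(l)^\circ$ is reductive of maximal rank, which does not immediately yield a contradiction since $X$ already lies in the maximal-rank subgroup $A_2 A_5$. To close this subcase I would apply the non-abelian cohomology framework of \cite[Section 3.2]{dav}: select a parabolic $P = QL$ of $E_7$ whose Levi factor contains a copy of $A_2^{(\star)}$, compute the abelian cohomology groups $H^1(A_2^{(\star)}, Q(i)/Q(i+1))$ level by level, and invoke \cite[Lemma 3.2.11]{dav} to produce a non-standard complement $Z$ to $Q$ in $Q A_2^{(\star)}$. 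Comparing the action on $L(E_7)$ should show that $Z$ is $E_7$-conjugate to $X$, placing $X$ inside the proper parabolic $P$.

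The hard part will be the non-abelian cohomology step in the bad case: identifying the correct parabolic, carrying out the level-by-level $H^1$ computations, and verifying that the non-standard complement so produced is indeed $E_7$-conjugate to $X$ rather than to some other $A_2$ diagonal subgroup of $\bar{A}_2 A_2^{(\star)}$.
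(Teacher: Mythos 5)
Your treatment of the four $E_7$-irreducible families matches the paper's: Corollary \ref{notrivs} for $p\geq 5$, and Lemma \ref{wrongcomps} at $p=3$ (where two trivial composition factors do appear, so the fallback you describe is in fact the main route; the only Levi needing real work is $A_6$, whose unique irreducible $A_2$ has four trivial composition factors on $L(E_7)$ rather than two). The genuine problem is in the bad case $X=A_2\hookrightarrow \bar{A}_2 A_2^{(\star)}$ via $(10,10)$, $p=3$.

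First, a point of precision: the parabolic you need is the $A_6$-parabolic, and the subgroup acting on $Q$ is the $A_6$-irreducible $A_2$ with $V_{A_6}(\lambda_1)\downarrow Y = 11$, not ``a copy of $A_2^{(\star)}$'' (which acts six-dimensionally via $V(20)$ and does not embed irreducibly in $A_6$). More seriously, your final step --- ``comparing the action on $L(E_7)$ should show that $Z$ is $E_7$-conjugate to $X$'' --- cannot work as stated, and this is precisely where the difficulty lies. The Levi subgroup $Y$ itself, the standard complement, already has exactly the same composition factors on $L(E_7)$ (and on $V_{56}$) as $X$, yet is not conjugate to $X$: $X$ is non-$G$-cr while $Y$ lies in a Levi. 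So composition factors on $L(E_7)$ cannot distinguish among $X$, $Y$, and the various non-standard complements, and no comparison of that kind can identify $Z$ with $X$. The paper closes this gap by a substantially longer argument: it applies Lemma \ref{techa2} to the \emph{complements} $Z$ (not to $X$), so each $Z$ fixes a non-zero vector of $L(E_7)$; it then uses \cite[Table 22.1.2]{LS9} and a detailed analysis of $R_u(C)$ versus $Q(2)$ (where $C$ is the centraliser of a nilpotent element in class $A_2A_1^3$) to show that the non-standard complements fixing only nilpotent vectors all lie in $YQ(2)$, whence some non-standard complement $Z$ must fix a \emph{semisimple} vector; the classification of semisimple centralisers then forces $Z < A_2A_5$, and only there --- inside $A_2A_5$, where the candidate diagonal subgroups are already classified --- does a composition-factor comparison legitimately pin $Z$ down as conjugate to $X$. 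Your outline omits all of this intermediate machinery, and the identification step as you propose it would fail. Relatedly, your opening dichotomy (apply Lemma \ref{fix} to a vector fixed by $X$ itself) contributes nothing in the semisimple branch, so the entire burden falls on the complement argument you have only gestured at.
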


\begin{proof}
The only possible candidate subgroups are the diagonal subgroups of $\bar{A}_2 A_2^{(\star)}$ $(p \neq 2)$ by the discussion before Lemma \ref{techa2}. We must prove they are all $E_7$-irreducible except $A_2 \hookrightarrow \bar{A}_2  {A_2}^{(\star)}$ via $(10,10)$ when $p=3$. If $p \geq 5$ it suffices to use Corollary \ref{notrivs}, with the restrictions given in Table \ref{E7tab}. Now assume $p=3$. First we will use Lemma \ref{wrongcomps} to show $A_2 \hookrightarrow \bar{A}_2 A_2^{(*)}$ via $(10^{[r]},10^{[s]})$, $(10^{[r]},01^{[s]})$, $(10,01)$ are $E_7$-irreducible. Take $Z$ to be any one of these subgroups. Then $Z$ has only two trivial composition factors on $L(E_7)$. Therefore if $Z$ is contained in a parabolic subgroup it has to match the composition factors of an irreducible $A_2$ subgroup of $E_6$, $A_6$ or $A_2 A_4$ (using Table \ref{levie7}). We immediately rule out $A_2 A_4$, since there is no $A_4$-irreducible $A_2$ subgroup. Since $E_6$ has two trivial composition factors on $V_{56}$ and $Z$ has none, we rule out $E_6$ and so we are left with just $A_6$ as a possibility. There is only one $A_6$-irreducible $A_2$ subgroup. Call it $Y$, where $V_{A_6}(\lambda_1) \downarrow Y = V_{A_2}(11)$. The composition factors of $L(E_7) \downarrow Y$ are $22^3/11^6/30/03/00^4$. Therefore $Y$ has more trivial composition factors on $L(E_7)$ than $Z$, and so $Z$ is not a conjugate of $Y$ and hence does not lie in an $A_6$-parabolic.  

Now we must consider $X = A_2 \hookrightarrow \bar{A}_2 A_2^{(\star)}$ via $(10,10)$ (still with $p=3$). Then $L(E_7) \downarrow X = 22^3/11^6/30/03/00^4$, which matches the restriction of $Y$. The composition factors of $X$ and $Y$ match on $V_{56}$ also, both being $30^2/03^2/11^6/00^2$. This suggests $X$ could be contained in an $A_6$-parabolic, which we now prove. Let $P = Q A_6 T_1$, a standard $A_6$-parabolic. Then \cite[Theorem 2]{ABS} shows that $A_6$ acts on the levels of $Q$ and that each level is a direct sum of irreducible $A_6$-modules (see Section \ref{nota} for the definition of a level). Furthermore, $Q$ has two levels, each level being just a single irreducible $A_6$-module. The high weights of these irreducible modules are given below.
\begin{align*}
Q/Q(2) \downarrow A_6 &= \lambda_4 \\
Q(2) \downarrow A_6 &= \lambda_1
\end{align*}
If we write $L$ for the Levi subgroup of $P$ and restrict each level to $Y < L' = A_6$ we obtain the following.
\begin{align*}
Q/Q(2) \downarrow Y &=  22/11/00 \\
Q(2) \downarrow Y &= 11
\end{align*}
We need to know the structure of level 1 more precisely. To do this we note that $A_2$ is contained in a $G_2$ in $L'$. We have $\bigwedge^3(V_{G_2}(10)) = V_{G_2}(20) / V_{G_2}(10)/V_{G_2}(00)$ by \cite[Prop. 2.10]{LS3}. Moreover, this module must be completely reducible as there are no non-trivial extensions between any of the composition factors ($W_{G_2}(20)$ and $W_{G_2}(10)$ are irreducible when $p=3$). Therefore level 1 is completely reducible when restricted to $Y$. Since $W_{A_2}(22) = V_{A_2}(22)$ and $W_{A_2}(11) = V_{A_2}(11)|V_{A_2}(00)$ it follows that $H^1(A_2,22) = 0$ and $H^1(A_2,11) = K$. Therefore $\mathrm{dim} (H^1(A_2,Q(i)/Q(i+1)))=1$ for $i=1, 2$. By \cite[Theorem 1]{ste3}, $H^2(A_2,M_1)=0$ for each direct summand $M_1$ of level 1 and 2. Therefore we can apply \cite[Lemma 3.2.11]{dav}. This shows that every complement to $Q / Q(i+1)$ in $Y Q / Q(i+1)$ lifts to a complement to $Q$ in $Y Q$ for $i = 1,2$. 

All complements have the same composition factors on $L(E_7)$, namely those of $Y$ (by Lemma \ref{unip}). Therefore Lemma \ref{techa2} shows that each complement fixes some non-zero vector $ l \in L(E_7)$. By Lemma \ref{fix}, if $l$ is semisimple then $C_{E_7}(l)$ contains a maximal torus and if it is nilpotent then $R_u(C_{E_7}(l)) \neq 1$. Suppose we have a non-standard $A_2$ complement to $Q$ in $Y Q$ which fixes a non-zero nilpotent vector $l_1$ of $L(E_7)$, call it $Z_1$, and let $C := C_{E_7}(l_1)$. We use \cite[Table 22.1.2]{LS9}, which lists all centralisers of nilpotent elements in $L(E_7)$, to show that $C$ is equal to the centraliser of an element in the nilpotent class $A_2 A_1^3$. To show this, we consider each centraliser in  \cite[Table 22.1.2]{LS9}  and check whether it contains a subgroup $A_2$ with the same composition factors as $Y$. We find that there is only one possibility, the centraliser of an element in the nilpotent class $A_2 A_1^3$, which lies in an $A_6$-parabolic subgroup of $E_7$. From \cite[Table 22.1.2]{LS9} we also have that $C / R_{u}(C) = G_2$ and the dimension of $R_u(C)$ is 35.

We now show that $C$ must lie in $P$ and $R_u(C) < Q$. Suppose $C < P^{g}$ for some $g \in E_7$. Since $Z_1 < C$, we have $Z_1 < P \cap P^g$. As $P$ is a maximal parabolic subgroup we can deduce from \cite[2.8.7, 2.8.8]{carter1} that $P \cap P^{g} = P$ or $L$. But $Z_1$ was chosen to not be $Q$-conjugate to $Y$ and $YQ \cap L = Y$. Hence $Z_1$ is not contained in $L$, and therefore $P = P^{g}$ and $C$ must lie in $P$. The projection of $R_u(C)$ to $P/Q \cong A_6 T_1$ is a closed unipotent subgroup. By the Borel-Tits Theorem \cite{BT}, its normaliser is contained in a parabolic subgroup of $A_6 T_1$; as it contains $G_2$ which is $A_6$-irreducible, this parabolic subgroup must be the whole of $A_6 T_1$. Since the projection of $R_u(C)$ is contained in the unipotent radical of this parabolic, which is trivial, we conclude that $R_u(C) < Q$. 

Suppose $Z_2$ is another non-standard complement to $Q$ in $Y Q$ fixing a non-zero nilpotent vector $l_2$ of $L(E_7)$. Then from the arguments in the previous two paragraphs, we have that $C_{E_7}(l_2) < P$ and $C_{E_7}(l_2) = C^g$ for some $g \in E_7$. Moreover, since $C^g < P^g \cap P = P$ it follows that $g \in N_{E_7}(P) = P$. Therefore, we conclude that $Z_2$ is contained in $C^g$ for some $g \in P$. 

We now prove that there exists a non-standard complement to $Q$ in $Y Q$ not contained in any $P$-conjugate of $C$ and hence fixing a non-zero semisimple vector of $L(E_7)$. Let $Z_3$ be any non-standard complement to $Q$ in $Y Q$ and suppose $Z_3 < C^g$ where $g \in P$. We saw above that the levels of $Q$ restrict to $G_2$ as the following.
\begin{align*}
Q/Q(2) \downarrow G_2 &= 20 + 10 + 00 \\
Q(2) \downarrow G_2 &= 10 
\end{align*}
We now claim that $Q(2)$ is contained in $R_u(C^g)$. We know that $R_u(C^g)$ is a 35-dimensional \linebreak {$G_2$-invariant} subgroup of $Q$ and therefore has a filtration by $G_2$-modules which contains all of the above factors bar one copy of $V_{G_2}(10)$. Suppose $R_u(C^g) \cap Q(2) = 1$. Then $R_u(C^g)$ is isomorphic to $Q / Q(2)$ and hence abelian. Thus $R_u(C^g) Q(2)$ is an abelian 42-dimensional subgroup of $Q$. This implies that $Q$ is abelian, a contradiction. Therefore, $R_u(C^g) \cap Q(2) \neq 1$ and so $R_u(C^g) \cap Q(2) = Q(2)$, proving the claim. We may now consider $R_u(C^g) / Q(2)$. Since the composition factors of $R_u(C^g) / Q(2) \downarrow Y$ are $22 / 00$, we have $H^1(Z_3 Q(2)/Q(2), R_u(C^g) / Q(2)) = 0$ and so $Z_3 Q(2) / Q(2)$ is $R_u(C^g) / Q(2)$-conjugate to $Y Q(2) / Q(2)$. Therefore, $Z_3$ is contained in $Y Q(2)$. It remains to show that there exists a non-standard complement to $Q$ in $YQ$ that is not contained in $Y Q(2)$. Let $\widehat{W}$ be a non-standard $A_2$ complement to $Q/Q(2)$ in $Y Q/ Q(2)$ and let $W$ be such that $Q(2) < W < YQ$ and $\widehat{W} = W / Q(2)$. The definition of $\widehat{W}$ being non-standard means that $\widehat{W}$ is not $Q / Q(2)$-conjugate to $Y Q(2) / Q(2)$. Now, as shown above, we can lift $\widehat{W}$ to a non-standard complement to $Q$ in $Y$. Let $V$ be such a lift, so $W = V Q(2)$. Suppose $V$ is $Q$-conjugate to a subgroup of $Y Q(2)$. Then $V^q < Y Q(2)$ for some $q \in Q$ and so $W^q = (V Q(2))^q = V^q Q(2) = Y Q(2)$. However, $\widehat{W}^q = W^q / Q(2) = Y Q(2) / Q(2)$, contradicting the fact that $\widehat{W}$ was non-standard. 

By the previous paragraph, there exists a non-standard complement to $Q$ in $Y Q$ fixing a semisimple non-zero vector $l$. Let $Z$ be such a complement. Then $C_{E_7}(l)$ contains a maximal torus. We claim that $C_{E_7}(l)^\circ = A_2 A_5$. Indeed, the only centralisers of semisimple elements that contain an $A_2$ with the same composition factors as $Z$ are $A_2 A_5$ and $A_6 T_1$. Suppose $Z < A_6 T_1$. Then $Z < L^g$ for some $g \in E_7$ and we may assume $Z = Y^g$. Therefore $Z < C^g$ and so $Z$ fixes a non-zero nilpotent vector of $L(E_7)$, a contradiction. Hence $Z < A_2 A_5$ and by comparing composition factors we see that $Z$ must be conjugate to $X$. Therefore $X$ is contained in a parabolic subgroup of $E_7$.  
\end{proof}

\subsection{Maximal  $M = G_2 C_3$}

The only subgroups of rank at least $2$ contained in $G_2$ are $G_2$ and $A_2$ (2 classes if $p=3$). However, there are no $C_3$-irreducible subgroups of type $A_2$ and only if $p=2$ is there one conjugacy class of $G_2$ subgroups in $C_3$. 

\begin{lem} \label{g2c3e7} 

The $E_7$-irreducible candidate subgroups contained in $G_2 C_3$ are $G_2 \hookrightarrow G_2 G_2 < G_2 C_3$ via $(10^{[r]},10^{[s]})$ $(rs=0)$ with $p=2$.  

\end{lem}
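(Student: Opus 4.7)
The discussion preceding the statement already reduces the problem to $p=2$ with $X \cong G_2$ a diagonal subgroup of $G_2 G_2 < G_2 C_3$, where the second $G_2$ is the (unique up to conjugacy) $C_3$-irreducible $G_2$ acting on $V_{C_3}(\lambda_1)$ via $V_{G_2}(10)$. Since $G_2$ has no non-trivial graph automorphism in characteristic $2$, these candidates are the diagonals $(10^{[r]},10^{[s]})$; applying a common power of the standard Frobenius and following the normalisation used elsewhere in the paper, we may assume $rs=0$. The two $G_2$ factors of $G_2 G_2$ are inequivalent inside $E_7$ (one is a direct factor of the maximal $M$, the other sits inside $C_3$), so the pairs $(10,10^{[s]})$ and $(10^{[s]},10)$ for $s>0$ yield distinct $E_7$-conjugacy classes. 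The excluded case $r=s=0$ falls outside the stated family by the convention that twists are distinct and is treated in Lemma \ref{badG2}; it therefore remains to prove $E_7$-irreducibility when $r \ne s$.

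The plan is to apply Lemma \ref{wrongcomps} with $V=L(E_7)$. From Theorem \ref{max},
\[L(E_7) \downarrow G_2 C_3 = (W(10),W(010)) \,/\, (W(01),000) \,/\, (00,W(200)).\]
Using the $p=2$ embedding and the structure of $W_{C_3}(010) \downarrow G_2$, the tensor summand $(W(10),W(010))$ restricts to $X$ with an irreducible composition factor of the form $V_{G_2}(10)^{[r]} \otimes V_{G_2}(10)^{[s]}$, which is $36$-dimensional and irreducible by Steinberg's tensor product theorem since $r \ne s$. Any $L'$-irreducible $H \cong G_2$ in a Levi $L$ of $E_7$ must be contained in a classical or $E_6$ simple component of $L'$ via a standard embedding (no Levi of $E_7$ contains a product of two $G_2$-capable components), so inspection of Table \ref{levie7} shows that the composition factors of $L(E_7) \downarrow H$ are direct summands of restrictions of restricted $L$-modules to $H$, each carrying at most a single global Frobenius twist. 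In particular no composition factor of $L(E_7) \downarrow H$ can have the mixed-twist shape $V_{G_2}(\mu)^{[r]} \otimes V_{G_2}(\nu)^{[s]}$ with both $\mu,\nu$ non-zero and $r \ne s$. This contradicts the composition factor identified for $X$, so by Lemma \ref{wrongcomps} the subgroup $X$ is $E_7$-irreducible.

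The main obstacle is the structural input that $V_{G_2}(10)$ really does occur as a composition factor of $W_{C_3}(010) \downarrow G_2$ when $p=2$; this requires a direct analysis of the special embedding $G_2 < C_3$ in characteristic $2$ and of the $14$-dimensional Weyl module $W_{C_3}(010)$, together with its submodule structure inherited via the $6$-dimensional symplectic representation. Once this module-theoretic fact is in hand, the twist-mixing obstruction rules out every possible Levi embedding and the application of Lemma \ref{wrongcomps} is straightforward.
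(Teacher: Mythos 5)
There are two genuine problems with your proposal. First, and most seriously, you have not proved half of the lemma: the statement asserts that the $E_7$-irreducible candidates are \emph{exactly} the diagonals $(10^{[r]},10^{[s]})$ with $r \neq s$, so you must show that $X = G_2 \hookrightarrow G_2 G_2$ via $(10,10)$ is \emph{not} $E_7$-irreducible. You defer this to Lemma \ref{badG2}, but that lemma concerns $G_2 \hookrightarrow G_2 G_2 < G_2 F_4$ inside $E_8$ with $p=7$ — a different group, a different maximal subgroup and a different characteristic. The reducibility of the untwisted diagonal in $G_2 C_3 < E_7$ is established nowhere else in the paper; it is part of this very lemma. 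The paper handles it by exhibiting a trivial submodule of $V_{56} \downarrow X$: using the socle structure $V_{56} \downarrow G_2 C_3 = (00,100)\,|\,((10,100)+(00,001))\,|\,(00,100)$ from \cite[Table 10.2]{LS1}, the middle layer restricted to $X$ acquires at least a $2$-dimensional trivial submodule (from $V_{G_2}(10)\otimes V_{G_2}(10)$ and from $V_{C_3}(001)\downarrow G_2 = 20/00^2$), while $\mathrm{Ext}^1_{G_2}(10,00)$ being $1$-dimensional means the bottom layer $(00,100)$ can block only one trivial; a non-zero fixed vector on $V_{56}$ then forces $X$ into $E_6$ or a parabolic, and $E_6$ is a Levi. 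None of this appears in your proposal.

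Second, your key module computation is wrong. One has $W_{C_3}(010) \downarrow G_2 = V_{G_2}(01)$ (the $14$-dimensional module), not something containing $V_{G_2}(10)$; consequently the mixed-twist composition factor of $L(E_7) \downarrow X$ coming from $(W(10),W(010))$ is $10^{[r]} \otimes 01^{[s]}$, of dimension $6 \times 14 = 84$, not your claimed $36$-dimensional $10^{[r]} \otimes 10^{[s]}$ (compare Table \ref{E7tab}). This actually matters: with an $84$-dimensional irreducible composition factor the irreducibility argument is immediate, since Table \ref{levie7} shows no Levi subgroup of $E_7$ has a composition factor of dimension $\geq 84$ on $L(E_7)$ — this is exactly the paper's one-line argument via Lemma \ref{wrongcomps}. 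Your substitute ``twist-mixing obstruction'' (that no composition factor of $L(E_7)\downarrow H$ for $H$ in a Levi can carry two distinct Frobenius twists) is not justified as stated and is unnecessary once the correct factor is identified.
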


\begin{proof}
To see the subgroups listed in the statement of the lemma are all $E_7$-irreducible we use Lemma \ref{wrongcomps} on $L(E_7)$. All of the subgroups in the lemma have an 84-dimensional composition factor on $L(E_7)$ (see Table \ref{E7tab}) but no Levi subgroup has a composition factor of dimension 84 or higher, by Table \ref{levie7}. 

The only candidate subgroup we are claiming is not $E_7$-irreducible is the $G_2 \hookrightarrow G_2 G_2$ via $(10,10)$. Call this $X$. To prove $X$ is contained in a parabolic subgroup of $E_7$ we show $V_{56} \downarrow X$ has a trivial submodule. This is sufficient because the full centraliser of a non-zero vector in $V_{56}$ has dimension at least 77, hence is either $E_6$ or contained in a parabolic subgroup. So this implies $X$ is contained in a parabolic because $E_6$ is a Levi subgroup. From \cite[Table 10.2]{LS1} we get the exact structure of $V_{56}$ restricted to $G_2 C_3$: 
$$\mfra{V_{56} \downarrow G_2 C_3 = } \cfrac{(00,100)}{\cfrac{(10,100)+(00,001)}{(00,100)}} \mfra{.}$$
Also, $V_{C_3}(100) \downarrow G_2 = 10$ and $V_{C_3}(001) \downarrow G_2 = 20/00^2$, with a trivial submodule by self-duality. There is a trivial submodule in $V_{G_2}(10) \otimes V_{G_2}(10)$ as well. So the restriction of $(10,100) + (00,001)$ to $X$ has a trivial submodule of dimension at least 2. Since $(00,100)$ restricted to $X$ is $V_{G_2}(10)$ and $\mathrm{Ext}^1_{G_2}(10,00)$ is 1-dimensional, it follows that $(00,100)$ can only block a 1-dimensional trivial module. Therefore there is a trivial submodule for $X$ on $V_{56}$, as required.  
\end{proof}

\subsection{Maximal  $M = A_2$ $(p \geq 5)$}

The maximal $A_2$ is the only $E_7$-irreducible subgroup from this maximal subgroup. 

This completes the proof of Theorem 2. 

\section{Proof of Theorem 3: $E_8$-irreducible subgroups}

We now move on to the proof of Theorem 3, finding the conjugacy classes of simple, connected irreducible subgroups of $E_8$ of rank at least $2$. As before, we use Theorem \ref{max} which lists the reductive, maximal connected subgroups (with no simple $A_1$ factor) of $E_8$. These are $D_8$, $A_8$, $A_2 E_6$, $A_4^2$, $G_2 F_4$ and $B_2$ $(p \geq 5)$. We take each maximal subgroup in turn, finding all $E_8$-irreducible subgroups up to $E_8$-conjugacy.  

\subsection{Maximal  $M  = D_8$} \label{b4notation}

We start by finding all $E_8$-candidate subgroups contained in $D_8$. 

\begin{lem} \label{d8irr}
The simple, connected $D_8$-irreducible subgroups of $D_8$ of rank at least $2$ are listed in the following Table (each $E_8$-conjugacy class is listed exactly once). 
\end{lem}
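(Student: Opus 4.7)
The plan is to apply Lemma \ref{class} to $G = D_8$ acting on its $16$-dimensional natural module $V = V_{D_8}(\lambda_1)$. A simple connected $D_8$-irreducible subgroup $X$ of rank at least $2$ must satisfy either case (ii) or case (iii) of that lemma. In case (ii), $V$ decomposes as $V \downarrow X = V_1 \perp \cdots \perp V_k$ with the $V_i$ pairwise non-isomorphic, non-degenerate and irreducible; since a single irreducible summand supports a non-degenerate bilinear form only when it is self-dual, each $V_i$ is forced to be a self-dual irreducible $X$-module. In case (iii), which requires $p=2$, the subgroup $X$ stabilises a non-singular vector, so it sits inside $G_v = B_7$ and is $B_7$-irreducible; I reapply Lemma \ref{class} to $B_7$ on its $15$-dimensional natural module.

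For the enumeration I work through the simple types of rank at least $2$ and, using L\"ubeck's tables \cite{lubeck}, list the self-dual irreducible $X$-modules of dimension at most $16$. This produces a finite collection of orthogonal decompositions $V \downarrow X$; the expected candidates include $D_8$ itself, the spin embedding $B_4 < D_8$ via $V_{B_4}(\lambda_4)$, natural-module embeddings $D_a D_b < D_8$ together with their diagonal $D_n$ subgroups, tensor-product diagonal embeddings such as $B_2 \hookrightarrow B_2^2 < D_4^2 < D_8$ via $(V_{B_2}(10), V_{B_2}(10)^{[r]})$ and analogous constructions, classical $A_n$ and $C_n$ embeddings compatible with an orthogonal form, and the exceptional $G_2$. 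Frobenius twists contribute the parameters $r,s,\ldots$ in the table; I then verify pairwise inequivalence of the summands (as $X$-modules, taking twists into account) to confirm $D_8$-irreducibility, and record a representative of each $D_8$-conjugacy class.

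To pass from $D_8$-conjugacy to $E_8$-conjugacy, I use the fact that $N_{E_8}(D_8)$ induces a graph automorphism on $D_8$ swapping the two half-spin modules $V_{D_8}(\lambda_7)$ and $V_{D_8}(\lambda_8)$. This action may fuse pairs of $D_8$-classes into a single $E_8$-class, and only one representative per $E_8$-orbit should appear in the table; classes that are invariant under the graph automorphism stay distinct.

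The hardest step will be case (iii) and the $p=2$ subtleties. The $B_7$ subgroup arising as the stabiliser of a non-singular vector provides a second source of embeddings, notably the $(\ddagger)$-subgroups of type $B_4$ and $B_2 \hookrightarrow B_2^2$ referenced in Corollary \ref{nongcr}, and showing these give genuinely new $D_8$-conjugacy classes distinct from those produced in case (ii) will require explicit analysis of the $16$-dimensional spin module for $B_4$ together with the socle structure recorded in Lemma \ref{b4cohom}, as well as careful bookkeeping of Frobenius twists in the $B_2 \hookrightarrow B_2^2 (\ddagger)$ family.
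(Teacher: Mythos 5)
Your enumeration strategy is the same as the paper's: apply Lemma \ref{class} to $V_{D_8}(\lambda_1)$, use L\"ubeck's tables to list the self-dual irreducible summands of dimension at most $16$, and treat the $p=2$ stabiliser $B_7$ of a non-singular vector separately. However, your passage from $D_8$-conjugacy to $E_8$-conjugacy rests on a false premise. The graph automorphism of $D_8$ is \emph{not} induced by $N_{E_8}(D_8)$: it interchanges $\lambda_7$ and $\lambda_8$, whereas $L(E_8)\downarrow D_8 = W(\lambda_2)/\lambda_7$ involves only one half-spin module, so no element of $E_8$ normalising $D_8$ can realise it. Consequently, pairs of $D_8$-classes swapped by the graph automorphism do \emph{not} fuse in $E_8$ — they remain distinct $E_8$-classes, distinguishable by their composition factors on $V_{D_8}(\lambda_7) \subset L(E_8)$. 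This is precisely the origin of the $\dagger$/$\ddagger$ notation: there are two $D_8$-classes of $B_4$ embedded via the spin module $V_{B_4}(0001)$, interchanged by the graph automorphism, and both appear in the table as inequivalent $E_8$-classes (likewise for $B_2^2(\dagger)$ and $B_2^2(\ddagger)$). Your procedure of "keeping one representative per orbit of the graph automorphism" would wrongly collapse these pairs; since $B_4(\ddagger)$ is $E_8$-reducible for $p=2$ while $B_4(\dagger)$ is not, this error would propagate seriously into the rest of the classification.

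A secondary misconception: you attribute the $(\ddagger)$-subgroups to case (iii) of Lemma \ref{class} via $B_7$. In fact both $B_4(\dagger)$ and $B_4(\ddagger)$ act irreducibly on the $16$-dimensional natural module (it is the spin module for each), so neither fixes a non-singular vector and neither lies in $B_7$; the distinction between them is not module-theoretic on $V_{16}$ at all, but is the $SO_{16}$- versus $O_{16}$-conjugacy issue just described. The genuine content of the $B_7$ branch is $B_7$ itself, $A_3 < B_7$ via $V_{A_3}(101)+000$ for $p>2$ (for $p=2$ this module drops to dimension $14$ and $A_3$ falls into $D_7$, so no irreducible $A_3$ exists), and $B_2^3 < B_7$.
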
 \pagebreak
\setlength\LTleft{0.05\textwidth}

\begin{longtable}{p{0.29\textwidth - 2\tabcolsep}>{\raggedright\arraybackslash}p{0.08\textwidth-2\tabcolsep}>{\raggedright\arraybackslash}p{0.53\textwidth-\tabcolsep}@{}}

\hline \noalign{\smallskip}

Irreducible subgroup & $p$ & Comments \\

\hline \noalign{\smallskip}

$D_8$ & any & \\

\hline \noalign{\smallskip}

$B_7$ & any & Maximal subgroup of $D_8$.  \\

\hline \noalign{\smallskip}

$B_4 (\dagger)$ & any & Maximal with $V_{D_8}(\lambda_1) \downarrow B_4 (\dagger) = W(0001)$ and $V_{D_8}(\lambda_7) \downarrow B_4 (\dagger) = W(1001)$. \\

\hline \noalign{\smallskip}

$B_4 (\ddagger)$ & any & Maximal with $V_{D_8}(\lambda_1) \downarrow B_4 (\ddagger) = W(0001)$ and $V_{D_8}(\lambda_7) \downarrow B_4 (\ddagger) = W(2000) / W(0010)$. \\

\hline \noalign{\smallskip}

$A_3$ & $p >2$ & $A_3 < B_7$ with $V_{D_8}(\lambda_1) \downarrow A_3 = 101 + 000$. \\

\hline \noalign{\smallskip}

$D_4 \hookrightarrow D_4^2$ via: \newline 
$(1000,1000^{[r]})$ $(r \neq 0)$ & \newline any & $D_4^2$ is maximal, see Section \ref{nota} for notation of the diagonal subgroups.   \\
$(1000,0010)$ & any & \\
$(1000,0010^{[r]})$ $(r \neq 0)$ & any & \\
$(1000,1000^{[\iota r]})$ $(r \neq 0)$ & any & \\

\hline \noalign{\smallskip}

 $B_3 \hookrightarrow  B_3^2$ via: \newline $(100,100^{[r]})$ $(r \neq 0)$ & \newline any & $B_3^2 < D_4^2$ with $V_{D_8}(\lambda_1) \downarrow B_3^2 = (100,000)$ $ + $ $(000,001)$ $+$ $(000,000)^2$.  \\

\hline \noalign{\smallskip}

 $A_2 \hookrightarrow A_2^2$ via: \newline $(10,10^{[r]})$ $(r \neq 0)$  & \newline $p \neq 3$ & $A_2^2 < D_4^2$ $(p \neq 3)$ where $V_{D_4}(\lambda_1) \downarrow A_2 = 11$. \\

$(10,01^{[r]})$ $(r \neq 0)$ & $p \neq 3$ &  \\

\hline \noalign{\smallskip}

 $B_2 \hookrightarrow B_2^2 (\dagger)$ via: & & $B_2^2 (\dagger)$ is maximal if $p \neq 2$, while $B_2^2 (\dagger) < B_4 (\dagger)$ if   \\

$(10,10^{[r]})$ $(r \neq 0)$ & any & $p=2$. In both cases $V_{D_8}(\lambda_1) \downarrow B_2^2 (\dagger) = (01,01)$  \\

$(10,02)$ & $p=2$ & and $V_{D_8}(\lambda_7) \downarrow B_2^2 (\dagger) = (01,W(11)) / (W(11),01)$. \\

$(10,02^{[r]})$ $(r \neq 0)$ & $p=2$ & \\

\hline \noalign{\smallskip}

 $B_2 \hookrightarrow B_2^2 (\ddagger)$ via: & & $B_2^2 (\ddagger)$ is maximal if $p \neq 2$, while $B_2^2 (\ddagger) < B_4 (\ddagger)$ if   \\

$(10,10^{[r]})$ $(r \neq 0)$ & any & $p=2$. In both cases $V_{D_8}(\lambda_1) \downarrow B_2^2 (\ddagger) = (01,01)$ \\

$(10,02)$ & $p=2$ & and $V_{D_8}(\lambda_7) \downarrow B_2^2 (\ddagger) = (W(20),00) / (00,W(20)) /$    \\

$(10,02^{[r]})$ $(r \neq 0)$ & $p=2$ & $ (W(10),W(02)) / (W(02),W(10))$. \\

\hline \noalign{\smallskip}

 $B_2 \hookrightarrow B_2 B_2$ via: & & $B_2 B_2 < A_3 D_5$ with $V_{D_8}(\lambda_1) \downarrow B_2 B_2 = (10,00)$ $+$  \\

 $(10,10)$ & $p > 2$ & $(00,02) + (00,00)$. \\

 $(10^{[r]}, 10^{[s]})$ $(rs = 0)$ & $p > 2$ & \\

\hline \noalign{\smallskip}

$B_2 \hookrightarrow B_2^3$ via: & & $B_2^3 < B_7, A_3 D_5$ with $V_{D_8}(\lambda_1) \downarrow B_2^3 = (10,00,00)$ $+$  \\

 $(10,10^{[r]},10^{[s]})$ $(0<r<s)$ & $p > 2$ & $(00,10,00) + (00,00,10) + (00,00,00)$. \\

\hline

\end{longtable}

\begin{proof}
This is a mainly routine task of using Lemma \ref{class} and the tables in \cite{lubeck} to calculate the possibilities for $V_{D_8}(\lambda_1) \downarrow X$, with $X$ irreducible. We note a few technical details. Firstly, there are two conjugacy classes of $B_4$ in $D_8$ embedded via $V_{B_4}(0001)$ that are interchanged by the graph automorphism of $D_8$. We distinguish between them by using $B_4 (\dagger)$ and $B_4 (\ddagger)$, which have different composition factors on $V_{D_8}(\lambda_7)$ (as given in \cite[Prop. 2.12]{LS3}). Similarly there are two conjugacy classes of $B_2^2$ embedded via $V_{B_2}(01) \otimes V_{B_2}(01)$ and we use $B_2^2 (\dagger)$ and $B_2^2 (\ddagger)$ to represent the two classes. If $p=2$ then $B_2^2 (\dagger)$ is contained in $B_4 (\dagger)$ and $B_2^2 (\ddagger)$ is contained in $B_4 (\ddagger)$. Indeed, the action of the $B_2^2$ subgroup of $B_4$ on $V_{B_4}(0001)$ is $01 \otimes 01$. There are only two conjugacy classes in $D_8$ of stabilisers of the tensor product decomposition of $V_{16} = V_{4} \otimes V_{4}$, namely $B_2^2 (\dagger)$ and $B_2^2 (\ddagger)$. By checking the composition factors on $L(E_8)$ we see that the $B_2^2$ subgroup of $B_4 (\dagger)$ is conjugate to $B_2^2 (\dagger)$ and similarly $B_2^2 (\ddagger) < B_4 (\ddagger)$. 

For $p > 2$, the module $V_{A_3}(101)$ is 15-dimensional and self-dual, therefore $A_3$ embeds into $B_7$ and is $D_8$-irreducible. When $p=2$, the module $V_{A_3}(101)$ is 14-dimensional and \cite[Table 1]{se1} shows $A_3$ embeds into $D_7$. Therefore there is no $D_8$-irreducible $A_3$ when $p=2$.  

Finally, consider $D_4^2$. The $E_8$-conjugacy classes of diagonal $D_4$ subgroups are found in \cite[p. 59]{LS3} and only those conjugacy classes of $D_8$-irreducible subgroups are given in the conclusion of the lemma.     
\end{proof}

Now we take each candidate subgroup in Lemma \ref{d8irr} which is proper in $M$ and either show it is $E_8$-irreducible or prove it lies in a parabolic subgroup of $E_8$. 

\begin{lem}
The candidate subgroups $B_7$ and $A_3$ $(p \neq 2)$ are $E_8$-irreducible.
\end{lem}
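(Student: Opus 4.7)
The plan is to handle each subgroup separately and apply either Lemma \ref{wrongcomps} or Corollary \ref{notrivs} to rule out containment in a proper parabolic subgroup of $E_8$. Both results rest on computing composition factors of $L(E_8)\downarrow X$, which can be derived from Theorem \ref{max} (giving $L(E_8)\downarrow D_8$) together with the restriction data in Lemma \ref{d8irr}.

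For $X = B_7$, I would argue purely by rank via Lemma \ref{wrongcomps}. Suppose $X$ were contained in a proper parabolic; then the lemma produces an $L'$-irreducible subgroup of a proper Levi derived group $L'$ of the same type as $X$, or of type $C_7$ when $p = 2$. Since $B_7$ has rank $7$, the derived Levi $L'$ must contain a simple factor of rank at least $7$. Inspecting the Levi types of $E_8$ (see Table \ref{levie8}), the only candidates are $E_7$, $D_7$ and $A_7$, none of which contains a simple subgroup of type $B_7$ or $C_7$. This contradiction handles $B_7$ in every characteristic, with no explicit character computation needed.

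For $X = A_3$ with $p \neq 2$, the rank argument no longer suffices, so I would invoke Corollary \ref{notrivs}. Using the embedding $X < B_7$ from Lemma \ref{d8irr} (where $V_{B_7}(\lambda_1)\downarrow X = V_{A_3}(101)$), combined with the composition factors of $L(E_8)\downarrow B_7$ obtained from Theorem \ref{max} via $V_{D_8}(\lambda_1)\downarrow B_7 = V_{B_7}(\lambda_1) + 0$ and $V_{D_8}(\lambda_7)\downarrow B_7 = V_{B_7}(\lambda_7)$, I would compute the composition factors of $L(E_8)\downarrow X$ and verify the absence of any trivial constituent. In odd characteristic the factors coming from $V_{B_7}(\lambda_1)$, $W_{B_7}(\lambda_2)$ and $V_{B_7}(\lambda_7)$ restricted to $A_3$ should all be non-trivial, after which Corollary \ref{notrivs} immediately yields $E_8$-irreducibility.

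The main obstacle is the explicit decomposition of the $128$-dimensional spin module $V_{B_7}(\lambda_7)$ restricted to the adjoint embedding $A_3 \hookrightarrow B_7$ given by $\lambda_1 \mapsto 101$. This can be carried out using Weyl character formula arguments or by consulting \cite{lubeck}; the crucial point is that no trivial $A_3$-composition factor arises. As a backup, if such a trivial did appear, one could enumerate the $L'$-irreducible $A_3$ subgroups in each relevant Levi derived group and compare composition factor lists directly via Lemma \ref{wrongcomps}, since the restrictions $L(E_8)\downarrow L'$ are tabulated in Table \ref{levie8}.
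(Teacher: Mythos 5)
Your proposal is correct and follows essentially the same route as the paper: the $B_7$ case is dispatched by observing that no proper Levi subgroup of $E_8$ contains a simple subgroup of type $B_7$ (or $C_7$ when $p=2$), and the $A_3$ $(p\neq 2)$ case is handled by Corollary \ref{notrivs}, since the composition factors $101^2/210/012/W(111)^2$ of $L(E_8)\downarrow A_3$ recorded in Table \ref{E8tab} contain no trivial constituent. The extra discussion of how one would actually compute the spin-module restriction is sensible but not a departure from the paper's argument.
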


\begin{proof}
The $B_7$ is $E_8$-irreducible since no Levi subgroup has a subgroup of type $B_7$ (or $C_7$ if $p=2$). The composition factors in Table \ref{E8tab} show Corollary \ref{notrivs} applies to prove that $A_3$ $(p \neq 2)$ is $E_8$-irreducible.
\end{proof}

The following technical lemma is required to prove $B_4 (\ddagger)$ is not $E_8$-irreducible when $p=2$.  

\begin{lem} \label{b4hastriv}

Suppose $p=2$, $X \cong B_4$ and $L(E_8) \downarrow X = 2000/1000^4/0100^4/0010^2/0000^8$. Then $L(E_8) \downarrow X$ has a trivial submodule. 

\end{lem}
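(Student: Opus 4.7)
The plan is to apply Lemma~\ref{litlem} to $M := L(E_8) \downarrow X$. Since $L(E_8)$ is self-dual as an $E_8$-module, $M$ is self-dual as an $X$-module, so the ``in particular'' clause of Lemma~\ref{litlem} will furnish a trivial submodule as soon as the hypotheses of that lemma are established.

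First I would verify the two preliminary conditions: the equality $H^1(X,K)=0$ holds because $X = B_4$ is semisimple and simply connected; and the bi-implication $H^1(X,W_i)=0 \Leftrightarrow H^1(X,W_i^{*})=0$ is automatic, since every $B_4$-irreducible appearing among the composition factors of $M$, namely $0$, $\lambda_1$, $\lambda_2$, $\lambda_3$ and $2\lambda_1$, is self-dual.

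The substantive step is to verify $m \geq n > 0$, where $m = 8$ is the multiplicity of the trivial factor and $n = \sum_i \dim H^1(X,W_i)$ is taken with multiplicity. Lemma~\ref{b4cohom}(1)--(2) supplies $\dim H^1(X,\lambda_1) = 1$ and $\dim H^1(X,\lambda_3) = 0$, giving a contribution of $4 \cdot 1 + 2 \cdot 0 = 4$. The remaining contributions come from $\lambda_2^{4}$ and one copy of $2\lambda_1$. I would compute $\dim H^1(X,\lambda_2)$ by dimension shifting from the socle series $W(\lambda_2) = \lambda_2 \,|\, 0 \,|\, \lambda_1 \,|\, 0$ recorded in Lemma~\ref{b4cohom}(ii), combining the resulting long exact sequences with the tilting-module vanishing that drove the proof of Lemma~\ref{b4cohom}(3) (specifically, the fact that $0 \,|\, \lambda_1 \,|\, 0$ is tilting and therefore has vanishing higher cohomology by \cite[II.4.13]{Jan}). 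A parallel calculation, starting from the socle series of $W(2\lambda_1)$ (to be extracted from \cite[Lemma 7.2.2]{LS1} or computed directly) and dimension shifting against the same tilting building blocks, will bound $\dim H^1(X,2\lambda_1)$.

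I expect these two Ext groups to be small enough that $n \leq 8 = m$, while $n > 0$ is already guaranteed by the $\lambda_1^{4}$ contribution. Once the bound is in place, Lemma~\ref{litlem} immediately delivers a trivial submodule of $L(E_8) \downarrow X$. The main obstacle will be the cohomology computation for $2\lambda_1$: the structure of $W(2\lambda_1)$ for $B_4$ in characteristic $2$ is not recorded in Lemma~\ref{b4cohom}, and it must be pinned down precisely enough to run the dimension-shifting argument and verify that the contribution to $n$ does not exceed the four trivials still available in the budget $m-4$.
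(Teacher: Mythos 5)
There is a genuine gap: Lemma \ref{litlem} cannot be applied directly to the whole of $M = L(E_8) \downarrow X$, because the hypothesis $m \geq n$ fails. You correctly budget $4$ for the four copies of $\lambda_1$ and $0$ for the two copies of $\lambda_3$, but the two quantities you leave to be ``pinned down'' do not come out small enough. From the uniserial structure $W(\lambda_2) = \lambda_2|0|\lambda_1|0$ in Lemma \ref{b4cohom}(ii) one reads off $\dim H^1(X,\lambda_2) = 1$ (the head of $\mathrm{rad}\,W(\lambda_2)$ is trivial), so the four copies of $\lambda_2$ already contribute $4$, bringing the running total to $n = 8 = m$ before the factor $2000$ is even counted. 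Moreover $2000 = \lambda_1^{[1]}$ when $p=2$, and inflation along the Frobenius morphism gives an injection $H^1(X,\lambda_1) \hookrightarrow H^1(X,\lambda_1^{[1]})$, so $\dim H^1(X,2000) \geq 1$ and hence $n \geq 9 > 8 = m$. The inequality you hope for is therefore false, and the dimension-shifting computations you defer would only confirm this.

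The missing idea is the first half of the paper's argument: before invoking Lemma \ref{litlem} one must remove the factor $2000$ from the module. The paper does this by considering $C = C_{L(E_8)}(L(X))$ as in the proof of \cite[Lemma 7.2.3]{LS1}: if $C = 0$ that proof already yields a trivial submodule, while if $C \neq 0$ (and, for contradiction, there is no trivial submodule) then $C$ must contain $V_X(2000)$ as a submodule, and since $L(E_8)$ is self-dual with a single composition factor $2000$, this factor splits off as a direct summand. Lemma \ref{litlem} is then applied only to the complement $M_1$ with factors $1000^4/0100^4/0010^2/0000^8$, where the count is exactly $m = n = 8$ and the self-duality clause closes the argument. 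Without this reduction your plan stalls at the verification of $m \geq n$.
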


\begin{proof}
We use the proof of \cite[Lemma 7.2.3]{LS1}. In the hypothesis of the lemma it is assumed that there is a subgroup isomorphic to $B_4$ with the same composition factors on $L(E_8)$ as $X$. Let $C = C_{L(E_8)}(L(X))$. If $C = 0$ then the proof of \cite[Lemma 7.2.3]{LS1} shows that $L(E_8) \downarrow X$ has a trivial submodule. So we may assume $C \neq 0$ and that $X$ has no trivial submodule on $L(E_8)$. The composition factors of $C$ are among the totally twisted composition factors of $L(E_8) \downarrow X$ so $C$ must have $V_{X}(2000)$ as a submodule (we are assuming no trivial submodules of $L(E_8)$ and therefore of $C$). This submodule of $C$ must be a submodule of $L(E_8)$, but $L(E_8)$ is self-dual and has only one composition factor isomorphic to $V_{X}(2000)$. Therefore $C$ splits off as a direct summand of $L(E_8)$. Hence $L(E_8) \downarrow X = 2000 \oplus M_1$ where $M_1$ has composition factors $1000^4/0100^4/0010^2/0000^8$. We claim that $M_1$ has a trivial submodule. To prove the claim we use Lemma \ref{litlem}. The conditions of the lemma hold because only $V_{X}(1000)$ and $V_{X}(0100)$ extend the trivial module, as shown in Lemma \ref{b4cohom}(i)-(iii). Because $M_1$ is self-dual it therefore has a trivial submodule. This is a contradiction, completing the proof.  
\end{proof}

\begin{lem} \label{badB4}
Suppose $p=2$. Let $B_4 (\ddagger) < D_8$ be as in Lemma \ref{d8irr}. Then $B_4(\ddagger)$ is contained in an $A_7$-parabolic of $E_8$. 
\end{lem}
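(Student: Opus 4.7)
The plan is to follow the template established in Lemma \ref{badA2}: produce an $A_7$-parabolic $P = QL$ of $E_8$ and an $L'$-irreducible subgroup $Y$ of $L' = A_7$ whose composition factors on $L(E_8)$ match those of $X = B_4(\ddagger)$ (allowing for the $B_n \leftrightarrow C_n$ identification in characteristic $2$ permitted by Lemma \ref{wrongcomps}), and then exhibit $X$ as $E_8$-conjugate to a non-standard complement to $Q$ in $YQ$. The natural candidate is $Y = C_4 < A_7$ embedded via the natural $8$-dimensional module; since $p=2$, this is simultaneously the $8$-dimensional irreducible head of $V_{B_4}(\lambda_1)$ via the special isogeny, so $Y$ is $A_7$-irreducible by Lemma \ref{class}. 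A direct restriction check using Table \ref{levie8} should verify that $L(E_8) \downarrow Y$ has the same composition factors as $L(E_8) \downarrow X$ in the sense of Lemma \ref{wrongcomps}.

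First I would compute the $A_7$-module structure of the levels $Q(i)/Q(i+1)$ via \cite[Theorem 2]{ABS} and restrict each level to $Y$. Using Lemma \ref{b4cohom} (appropriately transported between $B_4$ and $C_4$, which are isogenous in characteristic $2$), I would then compute $H^1(Y, Q(i)/Q(i+1))$ and $H^2(Y, Q(i)/Q(i+1))$ for every level. The goal is to verify that $H^1$ is non-zero on some level so that non-standard complements to $Q$ in $YQ$ exist, while $H^2$ vanishes on every level so that \cite[Lemma 3.2.11]{dav} permits complements to lift inductively through the filtration of $Q$.

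With lifting in hand, every complement to $Q$ in $YQ$ shares the composition factors of $X$ on $L(E_8)$ by Lemma \ref{unip}, and by Lemma \ref{b4hastriv} each such complement fixes a non-zero vector $l \in L(E_8)$. Lemma \ref{fix} splits the analysis into the semisimple and nilpotent cases. For nilpotent $l$, I would use \cite[Table 22.1.2]{LS9} to single out the unique nilpotent class whose centraliser $C$ contains a $B_4$ with the required composition factors, and then copy the structure of the argument in Lemma \ref{badA2}: show $C \leq P$, that $R_u(C) \leq Q$, that some deep level $Q(i)$ is contained in $R_u(C)$ (forced by a dimension-and-abelianity argument in $Q$), and that on the quotient $Q/Q(i)$ all complements are conjugate. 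A non-standard complement that cannot be $P$-conjugate to something in this nilpotent-fixing family must therefore fix a non-zero semisimple vector $l$. For such a complement $Z$, $C_{E_8}(l)^\circ$ is a maximal-rank connected subgroup containing a $B_4$ (or $C_4$) whose composition factors on $L(E_8)$ match those of $X$; the only admissible such subgroup is $D_8$, and comparison of composition factors then identifies $Z$ with a conjugate of $B_4(\ddagger)$, completing the proof.

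The main obstacle will be the cohomology calculations needed to invoke \cite[Lemma 3.2.11]{dav}, particularly establishing $H^2(Y,Q(i)/Q(i+1)) = 0$ on each level of $Q$; this is where the technical content of Lemma \ref{b4cohom} (and its transfer across the $B_4$--$C_4$ isogeny) will be indispensable. A secondary difficulty is the nilpotent-vector analysis: correctly identifying the relevant entry of \cite[Table 22.1.2]{LS9} and establishing the chain of containments $Q(i) \leq R_u(C) \leq Q \leq P$ inside the $A_7$-parabolic, mirroring the $G_2$-centraliser analysis carried out for the $A_6$-parabolic in Lemma \ref{badA2}.
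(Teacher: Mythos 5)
Your proposal follows essentially the same route as the paper's proof: the standard $A_7$-parabolic $P = QA_7T_1$ with $C_4$ (the fixed points of the graph automorphism of $A_7$) acting on $Q$ and $B_4$ acting through the special isogeny, the level-by-level $H^1$/$H^2$ computation from Lemma \ref{b4cohom} feeding into \cite[Lemma 3.2.11]{dav}, and then Lemmas \ref{b4hastriv} and \ref{fix} with the nilpotent/semisimple dichotomy modelled on Lemma \ref{badA2} (for $E_8$ the relevant reference is Table 22.1.1 of \cite{LS9}, not 22.1.2, and the class is $A_1^4$ with $C/R_u(C)=C_4$). The one substantive point you elide is the very last step: comparing composition factors does not by itself identify the semisimple-vector-fixing complement $Z$ with $B_4(\ddagger)$, because a $B_4$ sitting in an $A_7$-parabolic of $D_8$ has the same factors on $L(E_8)$; the paper rules this out by observing that the unipotent radical $Q_1$ of that parabolic restricts to $B_4$ as $0|\lambda_2|0$, whose $H^1$ vanishes by Lemma \ref{b4cohom}, so no non-standard $B_4$ complement exists there.
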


\begin{proof}
We use the same method as in the proof of Lemma \ref{badA2}. We find a $B_4$ in an $A_7$-parabolic and show it is contained in $D_8$ and conjugate to $B_4(\ddagger)$.

Let $P = Q A_7 T_1$ be a standard $A_7$-parabolic. The centraliser of the standard graph automorphism of $A_7$ is a $C_4$. So $C_4$ acts on $Q$ and we may look for complements to $Q$ in $Q C_4$. However, as $p=2$, the group $B_4$ can also act on $Q$. This is due to the special isogeny from $B_4$ to $C_4$. We want to find a $B_4$ complement to $Q$ in $Q C_4$ and need to use the same tools as in the  proof of Lemma \ref{badA2}. We first find the structure of the levels of $Q$ under the action of $B_4$. Using \cite[Theorem 2]{ABS} (and the notation defined in Section \ref{nota}) we find $Q$ has three levels and that under the action of $L' = A_7$ they have the following $A_7$-module structure. 
\begin{align*}
Q/Q(2) \downarrow A_7 &= \lambda_5 \\
Q(2)/Q(3) \downarrow A_7 &= \lambda_2 \\
Q(3) \downarrow A_7 &= \lambda_7
\end{align*}
Now under the action of $B_4$ we find the levels of $Q$ have the following structure. 
\begin{align*}
Q/Q(2) \downarrow B_4 &= \lambda_1 \oplus \lambda_3 \\
Q(2)/Q(3) \downarrow B_4 &= 0|\lambda_2|0 \\
Q(3) \downarrow B_4 &= \lambda_1
\end{align*}
By Lemma \ref{b4cohom}, $H^1(B_4,\lambda_1)$ is 1-dimensional and $H^1(B_4,0 | \lambda_2 | 0)$, $H^1(B_4,\lambda_3)$ are both 0. Also, $H^2(B_4,0 | \lambda_2 | 0) = H^2(B_4,\lambda_1) = 0$. Therefore \cite[Lemma 3.2.11]{dav} applies. This shows that every $B_4$ complement to $Q / Q(i+1)$ in $C_4 Q / Q(i+1)$ lifts to a $B_4$ complement to $Q$ in $Q C_4$ for $i = 1$, $2$, $3$. 

We know that any $B_4$ complement to $Q$ has the composition factors $L(E_8) \downarrow B_4 = 2000/$ $\! 1000^4 /$ $\! 0100^4/$ $\! 0010^2 /$ $\! 0000^8$ (these are images of the composition factors of the $C_4 < A_7$ under the special isogeny). Therefore, Lemma \ref{b4hastriv} shows that any $B_4$ complement to $Q$ has a trivial submodule on $L(E_8)$. Let $0 \neq l \in L(E_8)$ be one such fixed vector. Lemma \ref{fix} shows $C_{E_8}(l)$ either contains a maximal torus of $E_8$ or has non-trivial unipotent radical. 

Assume we have a non-standard $B_4$ complement to $Q$, call it $X$, which fixes $l_1$, a non-zero nilpotent vector of $L(E_8)$. Then using \cite[Table 22.1.1]{LS9}, which gives all possible centralisers in $E_8$ of nilpotent elements of $L(E_8)$, and an argument analogous to that given in the proof of Lemma \ref{badA2}, we conclude that $C_{E_8}(l_1)$ is in fact $P$-conjugate to $C$ where $C$ is the centraliser of an element of the nilpotent class $A_1^4$. Moreover, $C / R_u(C) = C_4$, the dimension of $R_u(C)$ is 84 and $C < P$ with $R_u(C) < Q$. 

It similarly follows as in the proof of Lemma \ref{badA2} that any non-standard $B_4$ complement contained in a $P$-conjugate of $C$ is contained in $Q(3) C_4$ and that there exists a non-standard $B_4$ complement to $Q$ in $Q C_4$ that is not contained in any $P$-conjugate of $C$. Let $Y$ be such a complement. Then $Y$ fixes a non-zero semisimple element $l_2$ and $D := C_{E_7}(l_2)$ contains a maximal torus. It follows, from considering the composition factors of $Y$, that $D = A_7 T_1, A_8$ or $D_8$. We rule out the first possibility as $A_7 T_1$ does not contain a $B_4$ subgroup. By Theorem \ref{max}, we know that $A_8$ does not fix a non-zero vector on $L(E_8)$ when $p=2$ and so $D = D_8$. Therefore $Y$ is contained in $D_8$ and has the same composition factors as $B_4(\ddagger)$ by construction. To prove $Y$ is conjugate to $B_4(\ddagger)$ we need to show it is not conjugate to any other $B_4$ subgroup in $D_8$. The only other possibility for a $B_4$ subgroup in $D_8$ with the same composition factors as $Y$ is a $B_4$ in the $A_7$-parabolic of $D_8$ (whose Levi factor is a Levi subgroup of $E_8$). Let this parabolic be $Q_1 A_7 T_1 < D_8$. Then $Q_1 \downarrow A_7 = \lambda_2$. Therefore under the action of $B_4$ the structure of $Q_1$ is  $Q_1 \downarrow B_4 = 0|\lambda_2|0$. Since $H^1(B_4, 0 | \lambda_2 | 0) = 0$ there are no $B_4$ complements to $Q_1$ in $Q_1 C_4$ in such an $A_7$-parabolic of $D_8$. Hence $Y$ is conjugate to $B_4(\ddagger)$ and $B_4(\ddagger)$ is contained in an $A_7$-parabolic of $E_8$. 
\end{proof}

\begin{lem}
The candidate subgroups in Lemma \ref{d8irr} isomorphic to $B_4$ are $E_8$-irreducible, except for $B_4(\ddagger)$ when $p=2$. 
\end{lem}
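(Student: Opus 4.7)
The plan is to treat $B_4(\ddagger)$ with $p=2$ as already handled by Lemma \ref{badB4}, so only four cases remain: $B_4(\dagger)$ in any characteristic and $B_4(\ddagger)$ in characteristic $p \neq 2$. For each of these I would use Lemma \ref{wrongcomps} (or in favourable cases Corollary \ref{notrivs}) applied to $V = L(E_8)$. The key input is the composition factors of $L(E_8) \downarrow B_4(\dagger)$ and $L(E_8) \downarrow B_4(\ddagger)$, which I would compute by restricting $L(E_8) \downarrow D_8 = \lambda_2 / \lambda_7$ (from Theorem \ref{max}) through the embeddings specified in Lemma \ref{d8irr}: for $B_4(\dagger)$ the spin module $V_{D_8}(\lambda_7)$ restricts to $W(1001)$, giving a composition factor of high dimension, while for $B_4(\ddagger)$ it restricts to $W(2000)/W(0010)$. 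The adjoint module $V_{D_8}(\lambda_2)$ restricts in both cases to the wedge-square (or Weyl-module analogue) of $W(0001)$.

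Having these composition factors in hand, I would consult Table \ref{levie8} and check that no Levi subgroup $L$ of $E_8$ admits an $L'$-irreducible simple subgroup of type $B_4$ (or of type $C_4$ when $p=2$, as permitted by Lemma \ref{wrongcomps}) with matching composition factors on $L(E_8)$. In most characteristics this is clear because the composition factor dimensions arising (in particular those coming from the spin factor) exceed, or otherwise fail to fit inside, anything a proper Levi subgroup can accommodate; this is precisely the kind of dimension-counting argument used throughout Section \ref{e6proof} and the $A_7$ case in Lemma \ref{a7e7}.

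The delicate case is $B_4(\dagger)$ when $p=2$, since then the $B_4 \leftrightarrow C_4$ ambiguity of Lemma \ref{wrongcomps}(i) becomes relevant and the Weyl modules $W(1001)$, $W(0001)$, $W(0100)$ become reducible, so the composition factor list is longer and could a priori coincide with that of a $C_4$ sitting inside some Levi subgroup (the natural candidate being the $C_4$ inside an $A_7$-Levi, which is exactly the home of $B_4(\ddagger)$ by Lemma \ref{badB4}). Here I would argue that the $V_{D_8}(\lambda_7)$-composition factors of $B_4(\dagger)$ and $B_4(\ddagger)$ are distinguished by \cite[Prop.~2.12]{LS3}, so that the composition factors of $L(E_8) \downarrow B_4(\dagger)$ differ from those of the $B_4$ complement found inside the $A_7$-parabolic in the proof of Lemma \ref{badB4}, and also from those of any Levi candidate of type $C_4$ obtained by listing the $A_7$-irreducible $C_4$ and applying the special isogeny; Lemma \ref{wrongcomps} then forces $E_8$-irreducibility.

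I expect the main obstacle to be this $p=2$, $B_4(\dagger)$ bookkeeping: one must carefully tabulate composition factors of both Weyl and irreducible modules for $B_4$ in characteristic $2$ (using Lemma \ref{b4cohom} and \cite[Lemma~7.2.2]{LS1}), and cross-check them against every $B_4$ or $C_4$ subgroup of a Levi in Table \ref{levie8}. For the remaining cases ($p \neq 2$) the proof reduces to a brief dimension count, so the lemma should follow swiftly after the bookkeeping is in place.
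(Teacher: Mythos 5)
Your overall strategy is the paper's: dispose of $B_4(\ddagger)$, $p=2$ via Lemma \ref{badB4}, and handle the remaining cases by comparing composition factors on $L(E_8)$ with those of Levi subgroups via Lemma \ref{wrongcomps}. The differences are in execution. For $p \neq 2$ the paper simply cites \cite[p.~97]{LS3} rather than redoing the Levi comparison, though your dimension count (e.g.\ the two $84$-dimensional factors $V_{B_4}(0010)$ for $B_4(\ddagger)$, which no Levi can accommodate) would also work; note that Corollary \ref{notrivs} fails for $B_4(\ddagger)$ when $p=3$ since $W(2000)$ acquires a trivial factor, so you do need the full Lemma \ref{wrongcomps} argument there. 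More importantly, you have misjudged where the difficulty lies: the case you flag as delicate, $B_4(\dagger)$ with $p=2$, is in fact immediate. The restriction $L(E_8) \downarrow B_4(\dagger)$ contains the $128$-dimensional irreducible $V_{B_4}(1001)$, and the only Levi subgroup of $E_8$ with a composition factor of dimension at least $128$ on $L(E_8)$ is $E_7$; since $E_7$ has no irreducible subgroup of type $B_4$ (or $C_4$) by Theorem 2, you are done. In particular the $C_4 < A_7$ candidate you worry about is excluded at once ($A_7$ has no composition factor of dimension exceeding $63$ on $L(E_8)$), and no appeal to \cite[Prop.~2.12]{LS3} or tabulation of $B_4$ versus $C_4$ composition factors in characteristic $2$ is needed. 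Your plan is sound but would produce a substantially longer proof than necessary.
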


\begin{proof}
If $p \neq 2$ then the results of \cite[p. 97]{LS3} apply, showing $B_4 (\dagger)$ and $B_4 (\ddagger)$ are $E_8$-irreducible.  

Now suppose $p=2$. Lemma \ref{badB4} shows $B_4 (\ddagger)$ is contained in an $A_7$-parabolic. We claim $B_4 (\dagger)$ is $E_8$-irreducible. We have $L(E_8) \downarrow B_4 (\dagger) = 0100^2 / 1000^2 / 0010 / 1001 / 0000^4$. The module $V_{B_4}(1001)$ is 128-dimensional. Therefore the only possible Levi subgroup that could contain an irreducible $B_4$ with the same composition factors is $E_7$. But $E_7$ has no irreducible subgroups of type $B_4$ by Theorem 2. Therefore $B_4 (\dagger)$ is $E_8$-irreducible.
\end{proof}

\begin{lem} \label{e8a2d4}
All of the candidate subgroups in Lemma \ref{d8irr} isomorphic to $A_2$ are $E_8$-irreducible. 
\end{lem}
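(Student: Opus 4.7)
The plan is to compute $L(E_8) \downarrow X$ for each of the two candidate subgroups $X_1 = A_2 \hookrightarrow A_2^2 < D_4^2 < D_8$ via $(10, 10^{[r]})$ and $X_2 = A_2 \hookrightarrow A_2^2 < D_4^2 < D_8$ via $(10, 01^{[r]})$, both with $r \neq 0$ and $p \neq 3$, and then apply either Corollary \ref{notrivs} or Lemma \ref{wrongcomps}. Recall that each $A_2 \to D_4$ embeds via $V_{D_4}(\lambda_1) \downarrow A_2 = V_{A_2}(11)$.

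I would restrict in stages $E_8 > D_8 > D_4^2 > A_2^2 > X_k$. From Theorem \ref{max}, $L(E_8) \downarrow D_8 = W(\lambda_2) / V(\lambda_7)$. The restriction to $D_4^2$ uses the exterior-power decomposition $W_{D_8}(\lambda_2) \downarrow D_4^2 = L(D_4)_1 + L(D_4)_2 + (\lambda_1, \lambda_1)$ (from the splitting of $\Lambda^2(V_1 \oplus V_2)$), and the standard half-spin restriction $V_{D_8}(\lambda_7) \downarrow D_4^2$, which is a sum of two factors of the form $(\lambda_i, \lambda_j)$ for the half-spin representations of the two $D_4$ factors. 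Next, $L(D_4) \downarrow A_2 = V(30) + V(03) + V(11)$ as composition factors (computing $\Lambda^2 V(11)$), while $V_{D_4}(\lambda_3) \downarrow A_2$ and $V_{D_4}(\lambda_4) \downarrow A_2$ can be computed using the explicit adjoint embedding of $A_2$ in $D_4$. Restricting finally to the diagonal $X_k$ produces composition factors that include copies of the tensor product $V(11) \otimes V(11)^{[r]}$, which is irreducible of dimension 64 by Steinberg's tensor product theorem since $r \neq 0$, together with $r$-twisted copies of smaller $A_2$-modules (with $V(30) \leftrightarrow V(03)$ swapped on the second factor for $X_2$ due to the graph twist).

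When $p \geq 5$, I expect no trivial composition factor to appear on $L(E_8) \downarrow X_k$, so Corollary \ref{notrivs} applies immediately to yield $E_8$-irreducibility of both $X_1$ and $X_2$. When $p = 2$, trivial composition factors may appear (the Weyl modules involved pick up trivial pieces in characteristic $2$, and $V_{A_2}(30) = V(10) \otimes V(10)^{[1]}$ is $9$-dimensional rather than $10$-dimensional there), and in that case I would apply Lemma \ref{wrongcomps} instead: the presence of the $64$-dimensional composition factor $V(11) \otimes V(11)^{[r]}$ with multiplicity three, carrying a non-trivial field twist, is compared against the composition factors arising from every $L'$-irreducible $A_2$ inside a proper Levi subgroup of $E_8$ (using Table \ref{levie8}, Corollary \ref{A2subgroups}, and the classifications of $A_2$-irreducible subgroups of classical groups and of $E_6$ from Theorem 1).

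The main obstacle is the characteristic $2$ case: one must carefully identify all potential matchings with $L'$-irreducible $A_2$ subgroups of proper Levi subgroups of $E_8$. Since Corollary \ref{A2subgroups} tells us that no $E_7$-irreducible $A_2$ exists when $p=2$, the most delicate Levi type is ruled out, but the classical-type Levi components and the $E_6$-type Levi component still need careful treatment. I expect that the multiplicity of the $64$-dimensional factor $V(11) \otimes V(11)^{[r]}$, together with the precise pattern of twisted small-dimensional composition factors, serves as a sufficient discriminating invariant to close the argument.
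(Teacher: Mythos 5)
Your overall strategy is exactly the paper's: restrict composition factors down the chain $E_8>D_8>D_4^2>A_2^2>X$, apply Corollary \ref{notrivs} when $p>3$ (correct --- there are no trivial factors there), note there is nothing to do at $p=3$, and at $p=2$ use Lemma \ref{wrongcomps} with the three $64$-dimensional factors $(11\otimes 11^{[r]})^3$ as the main discriminant. The computation you outline does give the right answer, $L(E_8)\downarrow X = 11/11^{[r]}/30/03/30^{[r]}/03^{[r]}/(11\otimes 11^{[r]})^3/00^4$ at $p=2$, and the trivial-factor count plus the $64$-dimensional count eliminates every Levi except $D_7$ (the $E_6$ Levi you worry about falls immediately to the trivial-factor count, since $L(E_8)\downarrow E_6$ has at least eight trivial factors while $X$ has four).

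The genuine gap is the $D_7$ case, which is precisely where your proposed invariant fails: $L(E_8)\downarrow D_7 = \lambda_2/\lambda_1^2/\lambda_6/\lambda_7/0$ with $\lambda_6$ and $\lambda_7$ each $64$-dimensional and $\lambda_2$ of dimension $90$ at $p=2$, so a $D_7$-irreducible $A_2$ could a priori carry three $64$-dimensional composition factors, and the multiplicity of $11\otimes 11^{[r]}$ does not rule it out. You gesture at ``the precise pattern of twisted small-dimensional composition factors'' but do not supply the argument. The paper closes this case by observing that $V_{D_7}(\lambda_1)$ is $14$-dimensional and self-dual and occurs twice in $L(E_8)\downarrow D_7$, whereas the remaining composition factors of $X$ have dimensions $8,8,9,9,9,9,1,1,1,1$ and no sub-multiset of these sums to $14$ (one would need at least five trivial factors), so no $A_2\le D_7$ can match the composition factors of $X$. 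Without this (or an equivalent) step your proof does not close.
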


\begin{proof}
If $p>3$ then Corollary \ref{notrivs} shows they are all $E_8$-irreducible. When $p=3$ there are no $A_2$ candidate subgroups from Lemma \ref{d8irr}, so let $p=2$ and $X$ be a candidate $A_2$. Then $L(E_8) \downarrow X = 11/ 11^{[r]} / 30 / 03 / 30^{[r]} / 03^{[r]} / (11 \otimes 11^{[r]})^3 / 00^4$ with $r \neq 0$. As before, we use Lemma \ref{wrongcomps}. By considering the number of trivial composition factors, the only possible Levi subgroups containing an irreducible $A_2$ with the same composition factors as $X$ are $L' = E_7, D_7, A_7$ or $A_6$. We can rule out all but $D_7$ because $X$ has three 64-dimensional composition factors. There is no $D_7$-irreducible $A_2$ that shares the composition factors of $X$ because there is no way of making a 14-dimensional self-dual module out of the composition factors of $X$ (let alone two: $V_{D_7}(\lambda_1)$ occurs twice in $L(E_8) \downarrow D_7$). Therefore $X$ is $E_8$-irreducible as claimed.
\end{proof}

\begin{lem}
All of the candidate subgroups in Lemma \ref{d8irr} isomorphic to $D_4$ or $B_3$ are $E_8$-irreducible. 
\end{lem}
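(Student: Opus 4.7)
The plan is to follow the same template used in the preceding lemma. The candidates to consider are the diagonal subgroups $D_4 \hookrightarrow D_4^2$ via $(1000, 1000^{[r]})$ ($r \neq 0$), $(1000, 0010)$, $(1000, 0010^{[r]})$ ($r \neq 0$) and $(1000, 1000^{[\iota r]})$ ($r \neq 0$), together with $B_3 \hookrightarrow B_3^2$ via $(100, 100^{[r]})$ ($r \neq 0$), where $B_3^2 < D_4^2$ as specified in Lemma \ref{d8irr}. For each such candidate $X$, I will compute the composition factors of $L(E_8)\downarrow X$ and then apply either Corollary \ref{notrivs} (if there are no trivial composition factors) or Lemma \ref{wrongcomps} (reading off dimensions and self-duality properties) to rule out containment in a proper parabolic subgroup.

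First I would write down $L(E_8)\downarrow D_4^2$ explicitly. From Theorem \ref{max} we have $L(E_8)\downarrow D_8 = W(\lambda_2)\, /\, V_{D_8}(\lambda_7)$ (or its conjugate under the graph automorphism), and the restriction to $D_4^2$ of $V_{D_8}(\lambda_1)$ is $(1000,0000) + (0000,1000)$. Consequently
\[
L(E_8)\downarrow D_4^2 = (0100,0000) + (0000,0100) + (1000,1000) + (\mu_1,\mu_1) + (\mu_2,\mu_2),
\]
where $\{\mu_1,\mu_2\} \subseteq \{0010, 0001\}$ records how the half-spin module of $D_8$ splits. For the $B_3^2$ candidate, each $B_3$-factor is the stabiliser of a non-singular vector in the corresponding $D_4$-natural module, and a similar restriction (together with the identifications $V_{D_4}(0010)\downarrow B_3 = V_{D_4}(0001)\downarrow B_3 = V_{B_3}(001)$) produces the composition factors of $L(E_8)\downarrow B_3^2$.

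Next, I would restrict to each diagonal subgroup $X$. For the Frobenius-twisted embeddings $(1000,1000^{[r]})$, $(1000,0010^{[r]})$, $(1000,1000^{[\iota r]})$ and the analogous $B_3$-case $(100,100^{[r]})$ (all with $r\neq 0$), the off-diagonal summands such as $V(1000)\otimes V(1000^{[r]})$ become irreducible of high weight $1000 + 1000^{[r]}$ (or its twisted variants) by Steinberg's tensor product theorem, and none of the resulting composition factors is trivial; hence Corollary \ref{notrivs} delivers $G$-irreducibility immediately. For the remaining untwisted case $(1000,0010)$, I would decompose the three tensor products $V(1000)\otimes V(0010)$, $V(0010)\otimes V(0001)$, $V(0001)\otimes V(1000)$ using the fact that the $D_4$-triality cycles $(1000, 0010, 0001)$, so each tensor product restricts to a sum of irreducibles of the form $V_{D_4}(\lambda_i + \lambda_j)$ plus at most small-dimensional pieces; I expect no trivial summands to appear, so Corollary \ref{notrivs} again applies.

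If any remaining candidate $X$ does produce trivial composition factors on $L(E_8)$, I would fall back on Lemma \ref{wrongcomps}: the composition factors of $L(E_8)\downarrow X$ contain summands of dimension at least $28\cdot 64 = 64$ or higher, and a comparison with the Levi-subgroup data in Table \ref{levie8} (Appendix \ref{app}) shows that no $L'$-irreducible subgroup of the same type (here of type $D_4$ or $B_3$, and also of type $C_4$ or $C_3$ in characteristic $2$, cf.\ Lemma \ref{wrongcomps}) realises those dimensions with the requisite self-duality. The main obstacle I anticipate is bookkeeping for the $(1000,0010)$ case, where the three tensor products are not automatically irreducible by Steinberg and one must combine the triality symmetry with the structure of the Weyl modules $W_{D_4}(\lambda_i + \lambda_j)$ to confirm the composition-factor list; once that is settled, the conclusion in each subcase is a one-line invocation of Corollary \ref{notrivs} or Lemma \ref{wrongcomps}.
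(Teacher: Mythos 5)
Your proposal breaks down exactly where the real work lies, namely in characteristic $2$. You assert that for the Frobenius-twisted embeddings (and, you expect, for $(1000,0010)$ as well) no trivial composition factors appear on $L(E_8)$, so that Corollary \ref{notrivs} applies "immediately". This is false when $p=2$: the summands $(0100,0000)+(0000,0100)$ of $L(E_8)\downarrow D_4^2$ restrict to the diagonal $D_4$ as (twists of) $W(0100)$, and for $D_4$ in characteristic $2$ the Weyl module $W(\lambda_2)$ has composition factors $\lambda_2/0^2$ (Table \ref{weyl}); similarly $W_{B_3}(010)$ and $W_{B_3}(100)$ each acquire trivial factors. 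Every candidate in the lemma therefore has four trivial composition factors on $L(E_8)$ when $p=2$ (see Table \ref{E8tab}), and Corollary \ref{notrivs} is unavailable precisely in the one characteristic where the statement is not already settled by that corollary. (For $p\neq 2$ your argument coincides with the paper's one-line application of Corollary \ref{notrivs}.)

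Your fallback — "compare with Table \ref{levie8} and invoke Lemma \ref{wrongcomps}" — is where the entire proof actually lives, and you have not carried it out. The paper's treatment of $p=2$ uses three separate ideas, none of which appears in your sketch: (i) for the $D_4$ embeddings with a non-zero field twist, it avoids the composition-factor analysis altogether by observing that such a $D_4$ contains an $E_8$-irreducible $A_2$ (Lemma \ref{e8a2d4}), hence is itself $E_8$-irreducible; (ii) for $D_4\hookrightarrow D_4^2$ via $(1000,0010)$, after narrowing the possible Levi subgroups to $E_7$, $D_7$, $A_7$, the case $L'=A_7$ is eliminated by noting that any $D_4<A_7$ acts on $V_{A_7}(\lambda_1)$ as $V_{D_4}(\lambda_1)$ and therefore has $V_{D_4}(2\lambda_1)$ as a composition factor of $L(E_8)$, which $X$ does not; (iii) for the $B_3$ candidate, the surviving possibility $L'=D_7$ is killed by a duality argument: the unique $64$-dimensional factor $001\otimes 001^{[r]}$ would have to sit in $V_{D_7}(\lambda_2)$, forcing $V_{D_7}(\lambda_6)\downarrow Z$ and $V_{D_7}(\lambda_7)\downarrow Z$ to contain $100\otimes 001^{[r]}$ and $001\otimes 100^{[r]}$ respectively, contradicting $V_{D_7}(\lambda_6)^*\cong V_{D_7}(\lambda_7)$. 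Your phrase "with the requisite self-duality" gestures at (iii) but does not supply the argument, and your dimension bound "$28\cdot 64=64$" is not a coherent estimate. As written, the proposal proves the lemma only for $p\neq 2$.
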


\begin{proof}
If $p \neq 2$ then Corollary \ref{notrivs} shows each $D_4$ and $B_3$ candidate subgroup is $E_8$-irreducible. So suppose $p=2$. First consider candidate subgroups isomorphic to $D_4$. If the embedding of $D_4$ has a non-zero field twist then it contains an $E_8$-irreducible $A_2$ (by Lemma \ref{e8a2d4}) and must be $E_8$-irreducible itself. So consider $X = D_4 \hookrightarrow D_4^2$ via $(1000,0010)$. We will use Lemma \ref{wrongcomps} and therefore need to do the usual analysis for $X$, where $L(E_8) \downarrow X = 1000^2 / 0100^2 / 0010^2 / 0001^2 / 1010 / 1001 / 0011 / 0000^4$, from Table \ref{E8tab}. Using Table \ref{levie8}, the only possible Levi subgroups containing a $D_4$ with the same composition factors as $X$ are $L' = E_7, D_7$ or $A_7$. We use Theorem 2 to rule out $E_7$, as it has no irreducible subgroup $D_4$ when $p=2$. There are no $D_7$-irreducible $D_4$ subgroups in $D_7$. That leaves just $L' = A_7$. If $D \cong D_4 < A_7$ then $V_{A_7}(\lambda_1) \downarrow D = V_{D_4}(\lambda_1)$. But $L(E_8) \downarrow D$ has $V_{D_4}(2 \lambda_1)$ as a composition factor, which means $D$ does not have the same composition factors as $X$. Hence $X$ is $E_8$-irreducible.    

Now consider (still with $p=2$) a candidate $B_3$ as in Lemma \ref{d8irr}, call it $Y$. Then $$L(E_8) \downarrow Y = 010 / 100^2 / (100^{[r]})^2 / 010^{[r]} / 001^2 / (001^{[r]})^2 / 100 \otimes 001^{[r]} / 001 \otimes 100^{[r]} / 001 \otimes 001^{[r]} / 000^4$$ with $r \neq 0$ (from Table \ref{E8tab}). As there are four trivial composition factors, the only possible Levi subgroups containing an irreducible $B_3$ with the same composition factors are $L' = E_7, D_7, A_7$ and $A_6$. We know from Theorem 2 that $E_7$ has no irreducible $B_3$ subgroups. Both $A_6$ and $A_7$ have no composition factors of dimension at least 64 so they are ruled out. Therefore $L' = D_7$ is the only remaining possibility. Suppose $B_3 \cong Z < D_7$ has the same composition factors as $Y$ on $L(E_8)$. Since $V_{D_7}(\lambda_6)^* = V_{D_7}(\lambda_7)$ it follows that the only 64-dimensional composition factor of $L(E_8) \downarrow Y$, namely $001 \otimes 001^{[r]}$, is contained in $V_{D_7}(\lambda_2) \downarrow Z$. But then $V_{D_7}(\lambda_6) \downarrow Z$ must contain $100 \otimes 001^{[r]}$ and $V_{D_7}(\lambda_7) \downarrow Z$ must contain $001 \otimes 100^{[r]}$ (or vice versa). This implies that $V_{D_7}(\lambda_6) \downarrow Z$ is not the dual of $V_{D_7}(\lambda_7) \downarrow Z$, which is a contradiction. So no such $Z$ exists and $Y$ is $E_8$-irreducible. 
\end{proof}

\begin{lem} \label{b2ind8}
Suppose $X$ is a candidate subgroup in Lemma \ref{d8irr} isomorphic to $B_2$. Then $X$ is $E_8$-irreducible unless $p=2$ and $X$ is contained in $B_2^2 (\ddagger) < D_8$. 
\end{lem}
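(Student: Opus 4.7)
The plan is to dispose of the exceptional case first and then establish $E_8$-irreducibility for each remaining candidate by the standard composition-factor tests on $L(E_8)$.

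For the exception, suppose $p = 2$ and $X \leq B_2^2(\ddagger)$. The parenthetical remarks in Lemma \ref{d8irr} record that $B_2^2(\ddagger) < B_4(\ddagger) < D_8$ when $p = 2$, and Lemma \ref{badB4} shows $B_4(\ddagger)$ lies in an $A_7$-parabolic of $E_8$; consequently $X$ lies in the same parabolic and is $E_8$-reducible.

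For each remaining candidate $X$, I would compute $L(E_8) \downarrow X$ by restricting the data of Theorem \ref{max} through the relevant intermediate group $B_2^2(\dagger)$, $B_2^2(\ddagger)$, $B_2 B_2 < A_3 D_5$, or $B_2^3$ given in Lemma \ref{d8irr}, and then apply Corollary \ref{notrivs} when no trivial composition factors appear and Lemma \ref{wrongcomps} otherwise. In the latter case, Table \ref{levie8} and Theorem \ref{Thm2} restrict the list of Levi subgroups $L$ whose derived subgroup $L'$ could contain an $L'$-irreducible subgroup of type $B_2$ (or $C_2$, when $p=2$) with matching composition factors: $L'$ must be large enough to carry the high-dimensional composition factors of $L(E_8) \downarrow X$, which leaves only $E_7$, $D_7$, $A_7$, and smaller possibilities to rule out. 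Theorem \ref{Thm2} eliminates $L' = E_7$, and the remaining classical possibilities can be dispatched by a restriction-to-$V_L(\lambda_1)$ analysis of the form used for $D_4$ and $B_3$ in the preceding lemma.

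The delicate part will be the $p = 2$ subgroups $X \leq B_2^2(\dagger)$, particularly those with a trivial twist such as via $(10, 02)$, which pick up several trivial composition factors on $L(E_8)$ and so cannot be handled by Corollary \ref{notrivs} alone. The main obstacle there is the careful enumeration of $L'$-irreducible $B_2$ and $C_2$ subgroups of the classical Levi subgroups in characteristic $2$, where the exceptional isogeny $B_2 \to C_2$ and the nontrivial extension structure of Weyl modules can produce unexpected dimension coincidences; however, since the composition factors of $L(E_8) \downarrow X$ contain a $64$-dimensional piece arising from the spin restriction $V_{D_8}(\lambda_7) \downarrow B_2^2(\dagger)$ built from $(01, W(11))$ and its transpose, I expect a dimension-and-self-duality check to rule out each candidate Levi in turn, completing the proof without any further non-abelian cohomology computation.
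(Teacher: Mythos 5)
Your plan matches the paper's proof essentially step for step: the $p=2$ subgroups of $B_2^2(\ddagger)$ are disposed of via $B_2^2(\ddagger)<B_4(\ddagger)$ and Lemma \ref{badB4}, Corollary \ref{notrivs} handles most $p\neq 2$ cases (with Lemma \ref{wrongcomps} covering the $p=5$ embeddings that acquire trivial factors), and the $p=2$ diagonals in $B_2^2(\dagger)$ are eliminated exactly as you anticipate, by using the $64$-dimensional composition factors and Theorems 1 and 2 to whittle the Levi candidates down to $D_7$ and then forcing any such $B_2$ into $B_6$, where $L(E_8)\downarrow B_6=\lambda_1^4/\lambda_2/\lambda_6^2/0^8$ cannot accommodate the $64$-dimensional pieces. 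The paper likewise needs no further non-abelian cohomology here, so your proposal is correct and takes essentially the same route.
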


\begin{proof}
If $p \neq 2$ then Corollary \ref{notrivs} is enough for most of the candidate $B_2$ subgroups. There are two exceptions, both occurring when $p=5$, namely $X = B_2 \hookrightarrow B_2 B_2$ via $(10,10)$ and $Y = B_2 \hookrightarrow B_2^2 (\ddagger)$ via $(10,10^{[r]})$ ($r \neq 0)$. We show both are $E_8$-irreducible using Lemma \ref{wrongcomps}. From Table \ref{E8tab}, $L(E_8) \downarrow X = 02^6 / 10^4 / 20^2 / 12^4 /00^2$. There are only two trivial composition factors which means only $L' = D_7$ or $A_7$ could possibly contain a $B_2$ with the same composition factors as $X$. There is no way of making an 8-dimensional module from the composition factors of $L(E_8) \downarrow X$ which rules out $A_7$. Similarly there is no way to make two copies of the same 14-dimensional module without using both trivial composition factors, but $L(E_8) \downarrow D_7 = \lambda_2 / \lambda_1^2 / \lambda_6 / \lambda_7 / 0$, from Table \ref{levie8}. So $V_{D_7}(\lambda_1)^2$ cannot contain any trivial composition factors in a restriction to a $B_2$ sharing the composition factors of $X$. This proves $X$ is $E_8$-irreducible.

Now consider $Y$, still with $p=5$. Again, from Table \ref{E8tab}, $$L(E_8) \downarrow Y = 02 / 02^{[r]} / 20 / 20^{[r]} / (10 \otimes 02^{[r]})^2 / (10^{[r]} \otimes 02)^2 / 00^2.$$ Therefore the only Levi subgroup that could contain an $L'$-irreducible $B_2$ with the same composition factors as $Y$ is $D_7$. But $L(E_8) \downarrow Y$ has four 50-dimensional composition factors which, using Table \ref{levie8}, rules out $D_7$. Hence $Y$ is $E_8$-irreducible.  

If $p=2$ then the only candidate subgroups are diagonal subgroups of $B_2^2 (\dagger)$ and $B_2^2 (\ddagger)$. Any subgroup of $B_2^2 (\ddagger)$ will not be $E_8$-irreducible. This is because $B_2^2 (\ddagger) < B_4 (\ddagger)$ and Lemma \ref{badB4} shows $B_4 (\ddagger)$ is contained in a parabolic subgroup of $E_8$ when $p=2$. Now we consider diagonal subgroups of $B_2^2 (\dagger)$ with $p=2$. 

First, let $Z_1 = B_2 \hookrightarrow B_2^2 (\dagger)$ via $(10,10^{[r]})$ $(r \neq 0)$. Then $$L(E_8) \downarrow Z_1 = 02^2 / (02^{[r]})^2 / 10^4 / (10^{[r]})^4 / 10 \otimes 02^{[r]} / (10 \otimes 10^{[r]})^2 / 10^{[r]} \otimes 02 / 01 \otimes 11^{[r]} / 01^{[r]} \otimes 11 / 00^8.$$ Since there are two 64-dimensional composition factors it follows (from Table \ref{levie8}) that the only Levi subgroups that could contain an irreducible $B_2$ with these composition factors are $E_7$ and $D_7$. Theorem 2 rules out $E_7$, so assume there is a $B_2$ contained $D_7$-irreducibly in $D_7$ with the same composition factors as $Z_1$. The 14-dimensional $V_{D_7}(\lambda_1)$ occurs in $L(E_8) \downarrow D_7$ and so $V_{D_7}(\lambda_1) \downarrow B_2$ includes two trivial composition factors because that is the only way to make a 14-dimensional module from the composition factors of $L(E_8) \downarrow Z_1$. The only possibility for a $D_7$-irreducible $B_2$ with two trivial composition factors is one contained irreducibly in $B_6$. From \cite[Table 8.1]{LS3}, $L(E_8) \downarrow B_6 = \lambda_1^4 / \lambda_2 / \lambda_6^2 / 0^8$. This means the two, non-isomorphic, 64-dimensional composition factors of $L(E_8) \downarrow Z_1$ are contained in $V_{B_6}(\lambda_6)^2$ (note $V_{B_6}(\lambda_2)$ is 64-dimensional but  on restricting to $B_2$ it cannot be equal to $01 \otimes 11^{[r]}$ or $01^{[r]} \otimes 11$). Clearly this cannot happen, which proves $Z_1$ is $E_8$-irreducible. 

The same argument works for $B_2 \hookrightarrow B_2^2 (\dagger)$ via $(10,02^{[r]})$ $(r \neq 0)$. It remains to consider $Z_2 = B_2 \hookrightarrow B_2^2 (\dagger)$ via $(10,02)$. From Table \ref{E8tab}, $$L(E_8) \downarrow Z_2 = 10^4 / 01^4 / 20^4 / 02^{6} / 21 / 12^2 / 30 / 03 / 13 / 04 / 00^{12}.$$ Using Table \ref{levie8} (noting we have one 64-dimensional and 12 trivial composition factors), the possible Levi subgroups that could contain an irreducible $B_2$ with the same composition factors as $Z_2$ are $D_7, E_7$ and $E_6$. Theorems 1 and 2 rule out $E_6$ and $E_7$. As in the previous argument, it follows that such an irreducible $B_2$ contained in $D_7$ is contained in $B_6$. The one 64-dimensional composition factor, $V_{B_2}(13)$, is then equal to $V_{B_6}(\lambda_2) \downarrow B_2$. But there is no possible $B_2$ in $B_6$ that has $V_{B_6}(\lambda_2) \downarrow B_2 = V_{B_2}(13)$. Hence $Z_2$ is $E_8$-irreducible.    
\end{proof}

\subsection{Maximal  $M = A_8$}

We need to find all simple subgroups, of rank at least $2$, that have an irreducible 9-dimensional module. By \cite{lubeck} we see that these are $A_8$, $B_4$ $(p \neq 2)$ embedded via $V_{B_4}(1000)$ and $A_2 < A_2^2$ where $V_{A_8} \downarrow A_2^2 = (10,10)$. It is shown in \cite[p. 58]{LS3} that $B_4 < A_8$ is conjugate to $B_4(\ddagger) < D_8$. Similarly, if $p \neq 3$, then $A_2^2$ is conjugate to $A_2^2 < D_4^2 < D_8$ (each factor $A_2$ irreducibly embedded) \cite[pp. 66-67]{LS3}. The following lemma shows that when $p=3$, the subgroup $A_2^2$ is contained in a parabolic subgroup of $E_8$. Therefore all diagonal subgroups of $A_2^2$ are contained in a parabolic subgroup of $E_8$. The proof is different in flavour from all of the arguments so far. We use finite subgroups and computations in Magma \cite{magma} to show $A_2^2$ is contained in a $D_7$-parabolic.   

\begin{lem} \label{bada2a2}
Let $p=3$. The subgroup $X = A_2^2 < A_8$, embedded via $V_{A_8}(\lambda_1) \downarrow A_2^2 = (10,10)$, is contained in a $D_7$-parabolic of $E_8$. 
\end{lem}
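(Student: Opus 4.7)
The plan is to exhibit a non-zero vector $l \in L(E_8)$ fixed by $X$ and identify the $E_8$-centraliser $C_{E_8}(l)$ as lying in a $D_7$-parabolic, so that Lemma \ref{fix} gives $X \leq C_{E_8}(l) \leq P$ for a $D_7$-parabolic $P$. Because both $L(A_2)$ and $L(A_8)$ have non-semisimple Weyl-module structure when $p=3$, directly extracting a trivial submodule of $L(E_8) \downarrow X$ by tracking extensions appears delicate. Instead, I will reduce to a finite-subgroup calculation via Lemma \ref{subspaces}.

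Fix $q = 3^a$ with $a$ large enough that the finite subgroup $S := SL_3(q) \times SL_3(q) \leq X$ satisfies the three hypotheses of Lemma \ref{subspaces} with $V = L(E_8)$: every $X$-composition factor of $L(E_8)$ restricts to an irreducible $S$-module (automatic provided $a$ exceeds the largest Frobenius twist appearing in the composition factors), the restriction maps on the relevant $\mathrm{Ext}^1$-groups are injective (via standard comparisons between algebraic-group and Chevalley-group cohomology), and distinct $X$-composition factors stay non-isomorphic as $S$-modules. Lemma \ref{subspaces} then guarantees that $X$ and $S$ fix precisely the same subspaces of $L(E_8)$.

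Next, realise $S$ explicitly inside $E_8(\mathbb{F}_q)$ using Magma \cite{magma}: construct $E_8(\mathbb{F}_q)$ as a Chevalley group, locate the subsystem subgroup $A_8$ via the extended Dynkin diagram, and embed $S$ into $SL_9(\mathbb{F}_q) = A_8(\mathbb{F}_q)$ through the tensor-product homomorphism $(g,h) \mapsto g \otimes h$. Compute the $S$-fixed subspace of the $248$-dimensional $L(E_8)(\mathbb{F}_q)$. A non-zero fixed vector $l$ should appear, and by the previous paragraph it is also fixed by $X$. Determine the $E_8$-orbit of $l$ through its Jordan and Bala--Carter data, then cross-reference \cite[Table 22.1.1]{LS9}: the expected outcome is that $l$ is nilpotent and $C_{E_8}(l)^{\circ}$ is contained in a unique maximal parabolic of $E_8$ whose Levi is of type $D_7$. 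The identification of the parabolic as a $D_7$-parabolic (and not a larger one) follows by matching the composition-factor data of $X$ on $L(E_8)$ against the restriction tables in Appendix \ref{app}.

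The principal obstacle is (i) verifying the $\mathrm{Ext}^1$-injectivity condition of Lemma \ref{subspaces}, which for $p=3$ and $X = A_2^2$ requires explicit cohomology computations in the spirit of Lemma \ref{a2ext} (and possibly taking $a$ strictly larger than the maximal Frobenius twist appearing in $L(E_8) \downarrow X$); and (ii) setting up the $A_8 \hookrightarrow E_8$ embedding in Magma so that the $248$-dimensional adjoint action on the Lie algebra, and hence its $S$-fixed subspace, can actually be computed. Both tasks are laborious but essentially mechanical, and once carried out they conclude the proof.
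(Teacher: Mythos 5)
Your reduction to a finite subgroup via Lemma \ref{subspaces} and a Magma computation is exactly the paper's device (the paper uses $S = A_2(3)\times A_2(3)$, i.e.\ $q=3$ suffices since no Frobenius twists occur among the composition factors and $(10,00)$ is not a composition factor, so $(30,00)\downarrow S$ clashes with nothing). But the step on which your whole plan pivots --- ``a non-zero fixed vector $l$ should appear'' --- is an unverified hope, not a consequence of anything you have established, and it is precisely the point the paper's proof is engineered to avoid. Here $L(E_8)\downarrow X = (11,11)^3/(11,00)^6/(00,11)^6/(30,00)/(00,30)/(03,00)/(00,03)/(00,00)^5$, so $m=5$ trivial factors while $\sum\dim H^1(X,W_i)=12$ (each of the twelve factors $(11,00)$, $(00,11)$ contributes $K$); thus Lemma \ref{litlem} does not apply and no trivial submodule is guaranteed. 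Even if a fixed vector did exist, you would additionally need it to be \emph{nilpotent} for Lemma \ref{fix}(ii) to put you in a parabolic; a semisimple fixed vector only yields a maximal-rank reductive overgroup and your argument would stall.

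The paper sidesteps both problems by searching not for a fixed vector but for a $14$-dimensional $S$-invariant subspace: Magma finds a unique $14$-dimensional $S$-submodule of $L(E_8)$ that is an abelian subalgebra, ad-nilpotent of exponent $3$ (it is the Lie algebra of the second level of the unipotent radical of a $D_7$-parabolic, on which $D_7$ acts as $\lambda_1$). By Lemma \ref{subspaces} this subspace is $X$-invariant; exponentiating it produces a non-trivial unipotent subgroup normalised by $X$, whence $X$ lies in a proper parabolic by Borel--Tits, and Lemma \ref{wrongcomps} with Table \ref{levie8} pins the parabolic down to type $D_7$ (a step you should carry out explicitly in any case). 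To repair your proposal, replace the search for an $S$-fixed vector by a search for an $X$-invariant ad-nilpotent subalgebra, or else actually verify by computation that a non-zero \emph{nilpotent} $S$-fixed vector exists --- which you cannot take for granted.
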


\begin{proof}
Lemma \ref{wrongcomps}, along with Table \ref{levie8} shows the only parabolic subgroup $X$ can be contained in is a $D_7$-parabolic subgroup. To prove $X$ is contained in a parabolic subgroup we first use Lemma \ref{subspaces} to show that the finite group $S = A_2(3) \times A_2(3) < X$ fixes the same subspaces as $X$ on $L(E_8)$. To show Lemma \ref{subspaces} applies we have to check the three conditions. We have $L(E_8) \downarrow X = (11,11)^3 / (11,00)^6 / (00,11)^6 / (30,00) / (00,30) / (03,00) / (00,03) / (00,00)^5$ and hence conditions (i) and (iii) hold. Condition (ii) holds for all pairs of composition factors for which there are no non-trivial extensions between them. Using the K\"{u}nneth formula \cite[10.85]{rot} we see that the only pairs of composition factors that have a non-trivial extension between them are $\{M_1,M_2\} = \{(11,11),(11,00)\},$ $\{(11,00),(00,00)\}$ and $\{(30,00),(11,00)\}$ up to duals and swaps. For all but the last pair, \cite[Theorem 7.4]{cps} shows immediately that $\mathrm{Ext}^1_X(M_1,M_2) \rightarrow \mathrm{Ext}_S^1(M_1,M_2)$ is injective. To show the restriction map $\mathrm{Ext}^1_X((30,00),(11,00)) \rightarrow \mathrm{Ext}^1_S((10,00),(11,00))$ is injective it suffices to show $\mathrm{Ext}^1_{A_2}(30,11) \rightarrow \mathrm{Ext}^1_{A_2(3)}(10,11)$ is injective. We know $\mathrm{Ext}^1_{A_2}(30,11)$ is 1-dimensional and the tilting module $T(30) = 11|(30 + 00)|11$ is indecomposable. We construct $T(30)$ as a direct summand of $10 \otimes 10 \otimes 10$. Therefore, if we show a direct summand of $10 \otimes 10 \otimes 10$ for $A_2(3)$ contains a non-trivial extension of $10$ by $11$, the restriction map must be injective. This last check is easily done using Magma \cite{magma}.

We now show that $S$ fixes a 14-dimensional abelian subalgebra of $L(E_8)$ that is ad-nilpotent of exponent 3 i.e. $(\text{ad } a)^3 = 0$ for all $a$.  To do this we construct $S$ as a normal subgroup of the maximal subgroup $(SL(3,3) \otimes SL(3,3)).2$ in $SL(9,3)$. Doing this in Magma gives 11 generators for $S$, as $9 \times 9$ matrices over GF(3). We then write these 11 generators as words in the Magma generators of $A_8$. Finally, we write the words of $A_8$ generators in terms of the generators of $E_8$. This allows us to use Magma to find all 14-dimensional $S$-submodules of $L(E_8)$. There is a unique such $S$-submodule that is an abelian subalgebra, and it is ad-nilpotent of exponent 3. 

So $S$ and therefore $X$ fixes a 14-dimensional abelian subalgebra of $L(E_8)$ that is ad-nilpotent of exponent 3. Exponentiating this subalgebra yields a 14-dimensional unipotent subgroup of $E_8$, normalised by $X$. Therefore $X$ is contained in a parabolic subgroup of $E_8$, as required. 
\end{proof}

\subsection{Maximal  $M = A_2 E_6$} \label{maxa2e6}

From Theorem 1, we have all of the $E_6$-irreducible subgroups of type $A_2$. They are diagonal subgroups of $A_2^3$, diagonal subgroups of $A_2 \tilde{A}_2 < A_2 G_2$ when $p=3$ or conjugates of $A_2 \cong Y$, where $Y$ acts on $V_{27}$ as $W(22)$ with $p \neq 2$. Therefore all candidate subgroups contained in $A_2 E_6$ are diagonal subgroups of $A_2^4$, $\bar{A}_2 A_2 \tilde{A}_2$ or $\bar{A}_2 Y$. We consider these separately in the next three lemmas. 

We fix the composition factors of $L(E_8) \downarrow A_2^4$ as follows (which is consistent with the restriction of $L(E_8)$ to $A_2 E_6$ and of $V_{27}$ and $L(E_6)$ to $A_2^3$ in Theorem \ref{max}):  \begin{align*} L(E_8) \downarrow A_2^4 =& (W(11),00,00,00)/ (00,W(11),00,00) / (00,00,W(11),00) / (00,00,00,W(11)) /\\ & (00,10,10,10) / (00,01,01,01) / (10,01,10,00) / (10,00,01,10) / (10,10,00,01) /\\ & (01,10,01,00) / (01,00,10,01) / (01,01,00,10).\\ \end{align*}  
\vspace{-1.4cm}
\begin{lem} \label{A2E8}
Suppose $X \cong A_2$ is a diagonal subgroup of $A_2^4$ (with non-trivial projection to each simple factor). Then $X$ is $E_8$-irreducible unless $X$ is conjugate to $A_2 \hookrightarrow A_2^4$ via $(10,10,10,10)$, $(10^{[r]},10,10,10)$ or $(10,10^{[r]},10^{[r]},10^{[r]})$. 
\end{lem}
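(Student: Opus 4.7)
The plan is to split the proof into two parts: first, verify that the three listed diagonal embeddings are $E_8$-reducible; second, show that every other diagonal $A_2 \hookrightarrow A_2^4$ is $E_8$-irreducible by applying Lemma \ref{wrongcomps} or Corollary \ref{notrivs} to $L(E_8) \downarrow X$.

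For the reducible direction, the key input is Lemma \ref{a2e6}: among diagonal $A_2$ subgroups of $A_2^3 \leq E_6$ (with non-trivial projection to each factor), only $A_2 \hookrightarrow A_2^3$ via $(10,10,10)$ is $E_6$-reducible. Using that $F^k$ is bijective on $K$-points of $A_2$, the closed subgroup $\{(F^k(h), F^k(h), F^k(h)) : h \in A_2\}$ coincides with the standard diagonal, so a common Frobenius shift does not change a diagonal subgroup. Normalising the smallest twist to $0$, the diagonal $A_2 \hookrightarrow A_2^4$ subgroups whose projection to $A_2^3$ is conjugate to $(10,10,10)$ are exactly the three listed embeddings, distinguished by the twist $a_0$ on the external factor. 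For each, the projection $\pi(X)$ to $E_6$ is $E_6$-reducible, so $\pi(X) \leq P_6$ for some proper parabolic $P_6$ of $E_6$. Then $X \leq A_2 \cdot P_6$, and since $R_u(P_6)$ is non-trivial and normalised by $X$, the Borel-Tits theorem places $X$ inside a proper parabolic of $E_8$. Alternatively, in characteristic $p \neq 3$, this can be made explicit: $\pi(X)$ is $E_6$-conjugate to an $A_2 \leq D_4 \leq E_6$ acting via $V_{A_2}(11)$, whence $X \leq A_2 D_4 \leq A_2 D_5$, a Levi subgroup of $E_8$ obtained by removing node $\alpha_6$ from the Dynkin diagram.

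For the irreducible direction, I would compute $L(E_8) \downarrow X$ by restricting the displayed decomposition of $L(E_8) \downarrow A_2^4$. Via Steinberg's tensor product theorem, a triple tensor $N_1^{[b_1]} \otimes N_2^{[b_2]} \otimes N_3^{[b_3]}$ of restricted $A_2$-modules picks up a trivial composition factor only when the $b_i$ coincide in a way that allows a trace-type contraction $V \otimes V^* \to K$. Among the twelve summands of $L(E_8) \downarrow A_2^4$, the only ones that can contribute trivial $X$-composition factors are the two pure triple tensors $(00,10,10,10)$ and $(00,01,01,01)$, and they do so precisely when the three relevant twists agree without graph automorphism mismatches -- that is, in the exceptional configuration handled above. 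For all other embeddings, the restriction $L(E_8) \downarrow X$ is free of trivial composition factors (up to Weyl module refinements in characteristic $3$, treated separately), and Corollary \ref{notrivs} gives $E_8$-irreducibility immediately. In the residual subcases, the tensor-product composition factors of dimension $27$ (or $81$, after appropriate twist coincidences) exceed the composition factor dimensions available in $L(E_8) \downarrow L$ for any proper Levi $L \leq E_8$ (cf.\ Table \ref{levie8}), so Lemma \ref{wrongcomps} closes out the argument.

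The main obstacle will be combinatorial bookkeeping: parameterising the diagonal subgroups up to $E_8$-conjugacy requires accounting for the $S_3$ action permuting the three $A_2$-factors of $A_2^3 < E_6$, the graph automorphisms of each of the four factors, and the common-Frobenius identification, while keeping the external $A_2$ of $A_2 E_6$ distinguished from the three internal ones. Characteristic $p=3$ deserves particular care because $W_{A_2}(11)$ carries an extra trivial composition factor compared with $V_{A_2}(11)$, which inflates the trivial count in $L(E_8) \downarrow X$ and necessitates the use of Lemma \ref{wrongcomps} rather than Corollary \ref{notrivs} in that case.
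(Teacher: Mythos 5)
Your overall architecture (reducibility via the $E_6$-projection plus Borel--Tits; irreducibility via Corollary \ref{notrivs} for $p\neq 3$; a separate argument for $p=3$) matches the paper's, but two of your key steps fail as stated. First, the claim that only the summands $(00,10,10,10)$ and $(00,01,01,01)$ of $L(E_8)\downarrow A_2^4$ can contribute trivial composition factors is false: for an embedding of the form $(10,01,10,\mu^{[a]})$ the summand $(10,01,10,00)$ restricts to $10\otimes 10\otimes 10$, which contains the trivial module in every characteristic. This embedding has non-trivial projection to each factor, and its projection to the standard $A_2^3<E_6$ is an \emph{irreducible} diagonal, so it is not caught by your characterisation of the exceptional classes; yet it has the same composition factors on $L(E_8)$ as $(10^{[r]},10,10,10)$ and is in fact $E_8$-conjugate to it via an element of $N_{E_8}(A_2^4)$ that does not normalise $A_2E_6$ (the normaliser induces a transitive permutation action on all four factors, entangled with graph automorphisms). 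So the ``combinatorial bookkeeping'' you defer is not peripheral: without the precise action of $N_{E_8}(A_2^4)$ on the four factors, your list of exceptional classes is incomplete and your no-trivial-composition-factor claim is wrong for some genuine diagonal subgroups. The paper avoids this by importing the conjugacy classification from \cite[pp.\ 64--65]{LS3} and then verifying Corollary \ref{notrivs} only for the chosen representatives in Table \ref{E8tab}.

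Second, your $p=3$ argument does not work. When $p=3$ every diagonal subgroup of $A_2^4$ acquires four trivial composition factors from the summands $(W(11),00,00,00),\dots,(00,00,00,W(11))$, so Corollary \ref{notrivs} never applies; and the proposed dimension comparison fails because the largest $A_2$-composition factors occurring (at most about $24$--$27$ dimensional for classes such as $(10^{[r]},10,10,01)$) do not exceed the composition factor dimensions available in $L(E_8)\downarrow L$ for Levis such as $E_7$ or $D_7$ (which have factors of dimension $133$, $91$, $64$, \dots). The paper instead uses the trivial-factor count of four to restrict the possible Levis to $E_7$, $D_7$, $A_6$ and $D_4A_2$, and then rules out each one by comparing against the explicitly known irreducible $A_2$ subgroups of those Levis (via Theorem 2 and the classifications of $D_7$-, $A_6$- and $D_4$-irreducible $A_2$'s). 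You would need to replace your dimension argument with something of this kind.
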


\begin{proof}
Firstly, note that  $A_2 \hookrightarrow A_2^4$ via $(10,10,10,10)$, $(10^{[r]},10,10,10)$ or $(10,10^{[r]},10^{[r]},10^{[r]})$ is contained in a parabolic subgroup of $A_2 E_6$. This follows from Theorem 1.  It remains for us to show all other diagonal subgroups are indeed $E_8$-irreducible. The conjugacy classes are determined in \cite[pp. 64-65]{LS3}. If $p \neq 3$, we apply Corollary \ref{notrivs} (the composition factors are listed in Table \ref{E8tab}). Now let $p=3$. Then any $X \hookrightarrow A_2^4$ (except those we already know to be $E_8$-reducible) has four trivial composition factors on $L(E_8)$. Therefore the possible Levi subgroups that can contain an irreducible $A_2$ subgroup sharing the same composition factors are $E_7$, $D_7$, $A_6$ and $D_4 A_2$ (using Table \ref{levie8}, Lemma \ref{class} and Theorem 2). We use Table \ref{E7tab} to see that any $E_7$-irreducible $A_2$ does not have the same composition factors as $X$. The only $D_7$-irreducible $A_2$ subgroup is embedded via $V_{A_2}(11) + V_{A_2}(11)^{[r]}$ $(r \neq 0)$. But this does not have the same composition factors as $X$ either. Similarly, the only $A_6$-irreducible $A_2$ is embedded via $V_{A_2}(11)$ and this has six 7-dimensional composition factors, which is more than $X$ has. Finally, the only $D_4$-irreducible $A_2$ subgroup is embedded via $V_{A_2}(11) + 00$. Therefore any $D_4 A_2$-irreducible $A_2$ subgroup of $D_4 A_2$ has nine trivial composition factors (using the restriction $L(E_8) \downarrow D_4 A_2$ in Table \ref{levie8}), which is more than $X$ has. Hence $X$ is $E_8$-irreducible.           
\end{proof}

\begin{lem} \label{bara2a2tildea2}
Suppose $X \cong A_2$ is a diagonal subgroup of $\bar{A}_2 A_2 \tilde{A}_2 < \bar{A}_2 A_2 G_2 < \bar{A}_2 E_6$ $(p=3)$. Then $X$ is $E_8$-irreducible if and only if it is conjugate to one of the subgroups in Table \ref{E8tab}.
\end{lem}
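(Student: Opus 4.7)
My plan is to mirror the approach used for $\bar{A}_2 A_2 \tilde{A}_2$'s projections in the $E_6$ analysis and for the diagonal $A_2^4$ subgroups in Lemma \ref{A2E8}. First, I would parametrise the $E_8$-conjugacy classes of diagonal $A_2$ subgroups of $\bar{A}_2 A_2 \tilde{A}_2$. Writing the embedding as $(10^{[r_1]}, \mu^{[r_2]}, 10^{[r_3]})$ with $\mu \in \{10,01\}$, the equality $N_{G_2}(\tilde{A}_2) = \tilde{A}_2.2$ (already invoked for the $E_6$ case) shows a graph twist on the third coordinate is absorbed into $E_8$-conjugation, and $N_{E_8}(A_2 E_6) = A_2 E_6.2$ from \cite{car} (with its involution acting as simultaneous graph automorphism on both factors) pairs a graph twist on $\bar{A}_2$ with one on $E_6$, i.e.\ on the $(\mu$-$10)$ pair. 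After this bookkeeping I obtain a short list of representatives.

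Second, I would dispose of every candidate whose projection to $A_2 \tilde{A}_2 < A_2 G_2$ is itself $E_6$-reducible. By the lemma in Section \ref{e6proof} dealing with $A_2 \hookrightarrow A_2 \tilde{A}_2$, this is exactly the projection $(10,10)$. For such $X$ the projection lies in a parabolic $P$ of $E_6$; since $A_2 E_6$ is generated by root subgroups of $E_8$ a parabolic of $E_6$ extends to a parabolic of $E_8$, so $X \leq \bar{A}_2 P$ is contained in a parabolic of $E_8$ and hence $E_8$-reducible. This removes every embedding with $r_2 = r_3 = 0$ (and, combined with the two normaliser identifications above, its equivalents). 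For each surviving candidate I would write out $L(E_8) \downarrow X$ using Theorem \ref{max} for $L(E_8) \downarrow A_2 E_6$, the restriction of $L(E_6)$ to $A_2 G_2$ from Theorem \ref{max}, and the explicit restrictions $V_{G_2}(10) \downarrow \tilde{A}_2 = V_{A_2}(11)$ and $V_{G_2}(01) \downarrow \tilde{A}_2 = V_{A_2}(30) + V_{A_2}(03) + V_{A_2}(00)$ recorded already in the $E_6$ discussion. Then I would apply Lemma \ref{wrongcomps} with $V = L(E_8)$: for each candidate I would run down the possible Levi subgroups $L'$ of $E_8$ that could host a matching $L'$-irreducible $A_2$ (using Table \ref{levie8} and Theorems 1 and 2 to prune $E_7$, $E_6$, $D_n$, $A_n$ and $D_4 A_2$), and rule each out by counting trivial composition factors, inspecting the dimensions and self-duality of high-dimensional composition factors, and noting non-existence of $A_2$ subgroups with the required restriction on the natural module. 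The composition factors predicted to remain should match exactly those listed in Table \ref{E8tab}.

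The main obstacle is that $p=3$ and the presence of the short-root $\tilde{A}_2$ mean $L(E_8) \downarrow X$ typically contains many trivial composition factors (coming from $V_{G_2}(01)\downarrow \tilde{A}_2$), which makes the comparison with Levi subgroups that also carry many trivials (notably $D_4 A_2$, $A_6$, $D_7$) the most delicate step. If a candidate slips past the composition-factor check, I would fall back on the trivial-submodule argument used in the companion $E_6$ lemma: exhibit a trivial submodule of $L(E_8)\downarrow X$ via the $\mathrm{Ext}^1$ computations of Lemma \ref{a2ext} applied to the socle structure of $L(E_8)\downarrow \bar{A}_2 A_2 G_2$, then invoke Lemma \ref{fix} together with the classification of positive-dimensional centralisers to force $X$ into a proper reductive overgroup, and rule out each overgroup by composition-factor data from Theorem \ref{max}. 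I expect this fallback to be needed only for isolated cases (if at all), the bulk of the work being the Lemma \ref{wrongcomps} verification.
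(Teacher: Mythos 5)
Your parametrisation of the classes and your argument for the ``only if'' direction on the surviving candidates (count three trivial composition factors, reduce to $E_7$ and $D_7$, kill $E_7$ by Theorem 2 and $D_7$ by comparison with its unique irreducible $A_2$) line up with what the paper does. But there is a genuine gap in your reducibility filter. You impose only the condition that the projection to $A_2\tilde A_2 < A_2G_2 < E_6$ be $E_6$-irreducible, i.e.\ $r_2 \neq r_3$ in your notation. The paper imposes a second necessary condition: $X$ must also be irreducible in the \emph{other} reductive maximal overgroup $G_2F_4$ (note $C_{E_8}(G_2)=F_4$ puts $\bar A_2 A_2$ inside $F_4$), and since the diagonal $A_2$ in $\bar A_2 A_2 < F_4$ corresponding to equal twists on the first two coordinates is $F_4$-reducible when $p=3$, this forces $r_1 \neq r_2$ as well. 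You never impose $r_1 \neq r_2$, so embeddings such as $(10,10,10^{[r]})$ with $r\neq 0$ survive your filter even though they are $E_8$-reducible and absent from Table \ref{E8tab}.

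This is not a gap you can close with the tools you propose for the remaining cases. The paper points out that $A_2 \hookrightarrow \bar A_2 A_2 \tilde A_2$ via $(10,10,10^{[r]})$ has \emph{exactly} the same composition factors on $L(E_8)$ as the $D_7$-irreducible $A_2$, so Lemma \ref{wrongcomps} cannot certify it irreducible; and your fallback (exhibit a fixed vector, then rule out every positive-dimensional overgroup by composition factors) also stalls, because $D_7$ (or $D_7T_1$) cannot be ruled out by composition-factor data for precisely the same reason. The clean resolution is the one you omitted: these candidates lie $G_2F_4$-reducibly in $G_2F_4$, hence in a parabolic of $E_8$ by the Borel--Tits theorem. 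You need to add the $G_2F_4$-irreducibility condition to your second step; with it, your list of survivors matches Table \ref{E8tab} and the rest of your argument goes through.
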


\begin{proof}
First we must find the conjugacy classes of $A_2 \hookrightarrow \bar{A}_2 A_2 \tilde{A}_2$. We have $C_{E_8}(G_2) = F_4$, hence $\bar{A}_2 A_2 < F_4$. It follows that there is an involution acting on $\bar{A}_2 A_2$, inducing a graph automorphism on both factors. We also have that $\tilde{A}_2.2 < G_2$. Therefore all of the conjugacy classes can be represented by $(10^{[a]},10^{[b]},10^{[c]})$ or $(10^{[d]},01^{[e]},10^{[f]})$ (with $a,b,c$ and $d,e,f$ not necessarily distinct for the moment). For $X$ to be $E_8$-irreducible it must be both $A_2 E_6$-irreducible and $G_2 F_4$-irreducible. To satisfy the first of these conditions we must have $b \neq c$ and $e \neq f$ (from Theorem 1). To satisfy the second condition we must have $a \neq b$ (there is only one $F_4$-reducible $A_2$ contained diagonally in $A_2 A_2$ when $p=3$, and by considering composition factors we see it forces $a \neq b$ rather than $d \neq e$). Therefore, the subgroups in Table \ref{E8tab} represent all of the conjugacy classes of candidate subgroups which might be $E_8$-irreducible. By considering their composition factors on $L(E_8)$ we see there are no more conjugacies between them. It remains to show they are all $E_8$-irreducible. 

Suppose $X$ is in one of the conjugacy classes of $A_2$ subgroups in Table \ref{E8tab}. Then $L(E_8) \downarrow X$ has only three trivial composition factors. Therefore among Levi subgroups, only $E_7$ and $D_7$ can contain an irreducible $A_2$ subgroup having the same composition factors as $X$ (the proof of the previous lemma shows an irreducible $A_2$ subgroup of $A_2 D_4$ has more than three trivial composition factors). Theorem 2 rules out $E_7$. Any $D_7$-irreducible $A_2$ does not have the same composition factors as $X$ (in fact it has the same composition factors as $A_2 \hookrightarrow \bar{A}_2 A_2 \tilde{A}_2$ via $(10,10,10^{[r]})$ $(r \neq 0)$ which was $G_2 F_4$-reducible). Hence $X$ is $E_8$-irreducible.   
\end{proof}

\begin{lem} \label{bara2y}

Suppose $X \cong A_2$ is a diagonal subgroup of $\bar{A}_2 Y < \bar{A}_2 E_6$ (where $Y \cong A_2$ acts on $V_{27}$ via $W_{A_2}(22)$, $p>2$). Then $X$ is $E_8$-irreducible.

\end{lem}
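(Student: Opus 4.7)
The plan is to apply Corollary \ref{notrivs}, i.e.\ to show that $L(E_8) \downarrow X$ has no trivial composition factors. This matches the strategy used throughout Section \ref{maxa2e6}.

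First I would compute $L(E_8) \downarrow \bar{A}_2 Y$. By Theorem \ref{max},
\[ L(E_8) \downarrow A_2 E_6 = (W(11),0) / (0, L(E_6)) / (10, V_{27}) / (01, V_{27}^*), \]
so it suffices to restrict $L(E_6)$ and $V_{27}$ from $E_6$ to $Y$. By hypothesis $V_{27} \downarrow Y = W_{A_2}(22)$, which is self-dual. For $p \geq 5$, $Y$ is the maximal $A_2$ of $E_6$ and Theorem \ref{max} gives $L(E_6) \downarrow Y = V(11) + V(41) + V(14)$. For $p = 3$, the subgroup $Y$ is conjugate to the subgroup $\tilde{A}_2 < G_2 < E_6$ considered at the end of the $M = G_2$ subsection, and $L(E_6) \downarrow Y$ is obtained by composing Theorem \ref{max}'s restriction $L(E_6) \downarrow A_2 G_2$ (the summands involving the $A_2$ factor are trivial here) with the restrictions $V_{G_2}(10) \downarrow \tilde{A}_2 = V_{A_2}(11)$ and $V_{G_2}(01) \downarrow \tilde{A}_2 = V_{A_2}(30) + V_{A_2}(03) + V_{A_2}(00)$ from Table \ref{G2tab}.

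Next I would classify the diagonal embeddings $X \hookrightarrow \bar{A}_2 Y$ up to conjugacy. Since $Y$ admits a graph automorphism inherited from $E_6$ and $\bar{A}_2$ admits the standard graph automorphism, and the normaliser of $\bar{A}_2 Y$ in $E_8$ induces such graph automorphisms simultaneously on both factors (verified by considering composition factors on $L(E_8)$), the conjugacy classes are parameterised by pairs $(10^{[\theta_1 r]}, 10^{[\theta_2 s]})$ modulo a simultaneous graph twist. For each such $X$, the composition factors of $L(E_8) \downarrow X$ are obtained by restricting the composition factors above through the diagonal embedding; the only non-immediate step is to decompose $V_{A_2}(10)^{[r]} \otimes V_{A_2}(22)^{[s]}$ and its dual via Clebsch--Gordan for $A_2$, yielding constituents among $V(32)$, $V(13)$, $V(21)$ (and their duals) with suitable field twists.

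Finally, I would verify that none of the resulting composition factors is trivial. For $p \geq 5$ this is immediate: each of $V(11)$, $V(41)$, $V(14)$, $V(32)$, $V(13)$, $V(21)$ (and duals) is a nontrivial irreducible, and the crucial observation is that $V(22)^* = V(22) \neq V(01) = V(10)^*$, so $V(10) \otimes V(22)$ contains no trivial summand regardless of the field twists. Corollary \ref{notrivs} then gives $E_8$-irreducibility.

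The main obstacle is the $p = 3$ case, where $W_{A_2}(a,b)$ need not be irreducible and linkage could in principle introduce a trivial constituent into some composition factor. I would handle this by writing out the composition factors of the relevant Weyl modules explicitly (using Jantzen's sum formula or the tables in \cite{lubeck}) and checking directly that no trivial appears; if unexpectedly a trivial does appear, I would fall back on Lemma \ref{wrongcomps}, comparing the composition factors of $L(E_8) \downarrow X$ with those of $L(E_8) \downarrow H$ for each $L'$-irreducible $A_2$ subgroup $H$ of a Levi subgroup $L$ (ruling out each in turn using Table \ref{levie8} and Theorems 1 and 2, as in Lemmas \ref{A2E8} and \ref{bara2a2tildea2}).
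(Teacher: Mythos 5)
Your overall route is the paper's: for $p>3$ the paper also just applies Corollary \ref{notrivs}, and your uncertainty about $p=3$ resolves in favour of your fallback. Concretely, at $p=3$ trivial composition factors \emph{do} appear in $L(E_8)\downarrow X$ -- four of them, coming from $W(11)^2$ (since $W_{A_2}(11)=11|00$ when $p=3$) together with one each from $W(41)$ and $W(14)$ (see Table \ref{weyl}) -- so Corollary \ref{notrivs} is unavailable there and the Lemma \ref{wrongcomps} argument is not a contingency but the actual proof. That is exactly what the paper does: it notes the four trivial factors, observes that the only Levi subgroups whose derived group could contain an irreducible $A_2$ with matching composition factors are $E_7$, $D_7$, $A_6$ and $D_4A_2$ (as in the proof of Lemma \ref{A2E8}), and rules each out by comparing composition factors. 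Two smaller points. First, your computation of $L(E_6)\downarrow Y$ at $p=3$ goes through the wrong subgroup: $Y$ is the $\tilde{A}_2$ of the \emph{maximal} $G_2$ $(p\neq 7)$, whose restriction is $L(E_6)\downarrow G_2=W(01)/W(11)$, not the $G_2$ factor of $A_2G_2$ (whose $\tilde{A}_2$ acts on $V_{27}$ as $11^3/00^6$, not $V(22)$). Second, the conjugacy classes are not taken modulo a \emph{simultaneous} graph twist only: since $Y.2\leq E_6$ (as $Y<G_2$ and $Y.2<G_2$, even for $p=3$), the graph automorphism on the $Y$ factor can be absorbed, which is why Table \ref{E8tab} lists only the classes $(10^{[r]},10^{[s]})$ with $rs=0$. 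Neither point affects the correctness of the irreducibility argument itself.
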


\begin{proof}
The conjugacy classes in Table \ref{E8tab} follow from \cite[p. 67]{LS3} (noting that even when $p=3$, $E_6$ contains $Y.2$ because $Y < G_2$ and $Y.2 < G_2$). If $p>3$, the $E_8$-irreducibility follows from Corollary \ref{notrivs}. So suppose $p=3$ and let $X \hookrightarrow \bar{A}_2 Y$. Then $L(E_8) \downarrow X$ has four trivial composition factors. Therefore the possible Levi subgroups that could contain an irreducible $A_2$ having the same composition factors as $X$ are $E_7$, $D_7$, $A_6$ and $D_4 A_2$ (from the proof of Lemma \ref{A2E8}). We may rule all of these out, using the same ideas as in the proof of Lemma \ref{A2E8}, by considering the irreducible $A_2$ subgroups in each Levi subgroup and noting that they do not have the same composition factors as $X$ on $L(E_8)$.  
\end{proof}

Using the composition factors listed in Table \ref{E8tab} we see that there are no conjugacies between any of the $E_8$-irreducible subgroups in the three different overgroups $A_2^4$, $\bar{A}_2 A_2 \tilde{A}_2$ and $\bar{A}_2 Y$.   

\subsection{Maximal  $M = A_4^2$}

By considering which simple groups have an irreducible 5-dimensional module we see that the only $A_4$-irreducible simple subgroups (of rank at least $2$) are $A_4$ and $B_2$ $(p \neq 2)$. So the candidate subgroups contained in $A_4^2$ are diagonal subgroups of type $A_4$ and $B_2$ $(p \neq 2)$. There is no prime restriction on $A_4$ subgroups in $E_8$ in \cite{LS3} so we immediately use \cite[Table 8.1]{LS3} to see that all $A_4$ diagonal subgroups (with a non-trivial projection to both $A_4$ factors) are $E_8$-irreducible, and the conjugacy classes of such subgroups are as in Table \ref{E8tab}. For the $B_2$ subgroups we note that $B_2^2 < A_4^2$ is actually conjugate to $B_2^2 (\ddagger) < D_8$ $(p \neq 2)$, as is shown in \cite[p. 63]{LS3}. Therefore, we have already considered them in Lemma \ref{b2ind8}.    

\subsection{Maximal $M = G_2 F_4$}

The only possible candidate subgroups contained in $G_2 F_4$ are of type $A_2$ and $G_2$. Theorem \ref{simplef4} lists all $F_4$-irreducible subgroups of type $A_2$ and $G_2$. All such subgroups of type $A_2$ are contained in $\bar{A}_2 A_2$ in $F_4$. Therefore any $A_2$ candidate subgroup contained in $G_2 F_4$ will be conjugate to one in $\bar{A}_2 C_{E_8}(\bar{A}_2) = \bar{A}_2 E_6$. Therefore we have already considered them in Section \ref{maxa2e6}.

There is only one $F_4$-irreducible subgroup of type $G_2$, namely the maximal $G_2$ when $p=7$. Therefore we finish this section by proving the following lemma. 

\begin{lem} \label{badG2}
Suppose $X \cong G_2$ is a candidate subgroup contained in $G_2 F_4$. Then $X$ is conjugate to $G_2 \hookrightarrow G_2 G_2 < G_2 F_4$ $(p=7)$ via $(10^{[r]},10^{[s]})$ $(rs=0)$ or $(10,10)$. All such candidate subgroups are $E_8$-irreducible except for $G_2 \hookrightarrow G_2 G_2$ $(p=7)$ via $(10,10)$.  
\end{lem}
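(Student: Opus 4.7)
The plan is to first reduce to the case $p = 7$ and $X < G_2 G_2 < G_2 F_4$. Since $X$ is $G_2 F_4$-irreducible of rank $2$, Lemma \ref{easy} forces both projections to be non-trivial and to be irreducible in their respective factors. The projection to $F_4$ must therefore be an $F_4$-irreducible $G_2$ subgroup; by Theorem \ref{simplef4} the only such subgroup is the maximal $G_2 < F_4$, which exists only when $p = 7$. Thus $p = 7$ and $X$ is a diagonal subgroup of a commuting product $G_2 G_2$, parameterised (since $G_2$ admits no graph automorphism) by a pair of Frobenius twists $(r,s)$.

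Next I would determine the $E_8$-conjugacy classes of such diagonal subgroups. The first factor has $C_{E_8}(G_2) = F_4$, which contains the maximal $G_2$; conversely, a dimension count shows that $C_{E_8}(\max G_2 < F_4)$ contains the first $G_2$ factor. Hence the two $G_2$'s sit symmetrically in $E_8$, so there is an element of $N_{E_8}(G_2 G_2)$ that interchanges them. Combined with a uniform Frobenius shift, representatives of the $E_8$-conjugacy classes are $(10,10)$ together with $(10,10^{[s]})$ for $s > 0$, i.e.\ the pairs $(10^{[r]},10^{[s]})$ with $rs = 0$ in the paper's notation. Non-conjugacy between distinct representatives is confirmed by comparing their composition factors on $L(E_8)$, which are computed by restricting Theorem \ref{max} via the $p=7$ actions of the maximal $G_2 < F_4$ on $V_{F_4}(\lambda_4)$ and $L(F_4)$.

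For $E_8$-irreducibility in the case $rs=0$, the plan is to apply Lemma \ref{wrongcomps} with $V = L(E_8)$, using the composition factors recorded in Table \ref{E8tab}. The presence of Frobenius-twisted factors of large dimension rules out the classical Levi subgroups of $E_8$ by direct comparison, while Theorems 1 and 2 dispose of potential $G_2$ subgroups inside the $E_6$- and $E_7$-Levi subgroups: none of the $G_2$-type irreducible subgroups of those Levi subgroups share the composition factors of $X$.

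The genuine obstacle is showing that $X = G_2 \hookrightarrow G_2 G_2$ via $(10,10)$ is $E_8$-reducible. I would follow the template of Lemma \ref{g2c3e7}. Restricting $L(E_8) \downarrow G_2 F_4$ from Theorem \ref{max} to this diagonal $X$, the factor $(W(10),W(0001))$ becomes $V_{G_2}(10)\otimes V_{G_2}(20)$, which has a trivial summand by self-duality, and any extensions in the $G_2 F_4$-socle series built from $(W(01),0000)$ and $(00,W(1000))$ can account for at most one further blocking of trivial modules; a cohomology count then shows a non-zero $X$-fixed vector $l \in L(E_8)$ survives. Lemma \ref{fix} leaves two cases. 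If $l$ is nilpotent, $C_{E_8}(l)$ lies in a proper parabolic of $E_8$ and we are done. If $l$ is semisimple, $C_{E_8}(l)^\circ$ is reductive of maximal rank; inspection of \cite[Prop.~1.2]{LS8} for the maximal rank subgroups of $E_8$ containing a $G_2$ with the appropriate composition factors should narrow this to one that is itself contained in a proper parabolic (or forces $X$ back into a Levi subgroup via Theorems 1 and 2). The semisimple case is the delicate point, and if the centraliser-based argument does not close cleanly it will be replaced by a non-abelian cohomology argument in a suitable parabolic, in the style of Lemmas \ref{badA2} and \ref{badB4}.
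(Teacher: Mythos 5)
Your reduction to $p=7$ and to diagonal subgroups of $G_2 G_2$ is correct and matches the paper, as does the treatment of the twisted classes (the paper simply invokes Corollary \ref{notrivs}, since $L(E_8)\downarrow X = 01^{[r]}/10^{[r]}\otimes 20^{[s]}/01^{[s]}/11^{[s]}$ has no trivial composition factors, rather than running Lemma \ref{wrongcomps} case by case). The problem is the key step, proving that $X = G_2 \hookrightarrow G_2 G_2$ via $(10,10)$ is $E_8$-reducible. Your argument hinges on the claim that $V_{G_2}(10)\otimes V_{G_2}(20)$ ``has a trivial summand by self-duality.'' It does not: $\mathrm{Hom}_{G_2}(00,\,10\otimes 20) \cong \mathrm{Hom}_{G_2}(10,20) = 0$ because $10$ and $20$ are non-isomorphic irreducibles (self-duality of a tensor product of two \emph{distinct} self-dual irreducibles gives no trivial constituent; you need $V\otimes V^*$ for that). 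Indeed, in the proof of Corollary 2 the paper shows $L(E_8)\downarrow X = 10\otimes 20 + 01^2 + 11$ and explicitly concludes that this module has \emph{no} trivial submodule. Consequently no fixed-vector argument via Lemma \ref{fix} can ever get started here, and the Lemma \ref{g2c3e7} template you cite (which worked for $V_{56}$ of $E_7$ precisely because a trivial submodule was available) does not transfer.

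The paper's actual route is quite different: it takes the finite subgroup $S = G_2(7) < X$, verifies the hypotheses of Lemma \ref{subspaces} so that $S$ and $X$ fix the same subspaces of $L(E_8)$, and then uses an explicit Magma computation (writing generators of $S$ as products of $E_8$ root elements) to exhibit a $14$-dimensional abelian subalgebra of $L(E_8)$, ad-nilpotent of exponent $3$, stabilised by $S$ and hence by $X$; exponentiating yields a unipotent subgroup normalised by $X$, placing $X$ in a proper parabolic. So the object $X$ stabilises is a $14$-dimensional subspace, not a vector, and finding it requires the finite-subgroup/computational machinery of Lemma \ref{bada2a2} rather than cohomology or centralisers of elements of $L(E_8)$. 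Your fallback suggestion of a non-abelian cohomology argument in a parabolic is not developed enough to assess, but as written the proposal's central mechanism fails.
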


\begin{proof}
The previous discussion shows that a candidate subgroup must be conjugate to a diagonal subgroup of $G_2 G_2$. Corollary \ref{notrivs} shows that the subgroups $G_2 \hookrightarrow G_2 G_2 < G_2 F_4$ $(p=7)$ via $(10^{[r]},10^{[s]})$ $(rs=0)$ are all $E_8$-irreducible. We need to prove $X = G_2 \hookrightarrow G_2 G_2$ via $(10,10)$ $(p=7)$ is contained in a parabolic subgroup of $E_8$. We use the same approach as in the proof of Lemma \ref{bada2a2}. First, let $S = G_2(7) < X$. We show Lemma \ref{subspaces} applies to $S < X$ acting on $L(E_8)$. We have $L(E_8) \downarrow X = 30 / 11^2 / 20^2 / 01^3 / 00$, so conditions (i) and (iii) of the lemma hold. Condition (ii) holds directly from \cite[Theorem 7.4]{cps}. 

We now show that $S$ fixes a 14-dimensional, abelian subalgebra of $L(E_8)$ that is ad-nilpotent of exponent 3. We use Magma \cite{magma} to check this. First we need to write down generators of $S$ in terms of root group elements. We will use the notation $a_1 \ldots a_8$ for the root $a_1 \alpha_1 + \cdots + a_8 \alpha_8$ in $E_8$. Consider $\bar{A}_2 E_6 < E_8$. The generators for $\bar{A}_2$ are $x_{\pm \alpha_8}(t)$ and $x_{\pm(23465432)}(t)$, where $t \in K$. The generators for $E_6$ are $x_{\pm \alpha_1}$(t), \ldots, $x_{\pm \alpha_6}(t)$ and the generators for $F_4$ are $x_{\alpha_1}(t)x_{\alpha_6}(t)$,  $x_{-\alpha_1}(t)x_{-\alpha_6}(t)$, $x_{\alpha_3}(t)x_{\alpha_5}(t)$,  $x_{-\alpha_3}(t)x_{-\alpha_5}(t)$, $x_{\pm \alpha_2} (t)$ and $x_{\pm \alpha_4}(t)$. Let $AB = G_2(7) G_2(7) < G_2 F_4$. The following elements are a set of generators for $A$: 
 \begin{align*}
x_{\gamma_1} & = x_{22343221}(1) x_{12343321}(6) x_{12244321}(1),\\
x_{-\gamma_1} & = x_{-(22343221)}(1) x_{-(12343321)}(6) x_{-(12244321)}(1),\\
x_{\gamma_2} & = x_{23465432}(1),\\
x_{-\gamma_2} & = x_{-(23465432)}(1). 
\end{align*}

 From \cite[Prop. G.1]{testerman}, the following elements are a set of generators for $B$:
\begin{align*} 
x_{\gamma'_1} & = x_{\alpha_1}(1) x_{\alpha_3}(1) x_{\alpha_1 + \alpha_3}(3) x_{\alpha_2}(1) x_{\alpha_5}(1) x_{\alpha_6}(1) x_{\alpha_5 + \alpha_6}(3),\\
x_{-\gamma'_1} & = x_{-\alpha_1}(2) x_{-\alpha_3}(2) x_{-\alpha_1 - \alpha_3}(2) x_{-\alpha_2}(1) x_{-\alpha_5}(2) x_{-\alpha_6}(2) x_{-\alpha_5 - \alpha_6}(2),\\
x_{\gamma'_2} & = x_{\alpha_3 + \alpha_4}(1) x_{\alpha_2 + \alpha_4}(3) x_{\alpha_4 + \alpha_5}(6),\\
x_{-\gamma'_2} & = x_{-\alpha_3 - \alpha_4}(1) x_{-\alpha_2 - \alpha_4}(5) x_{-\alpha_4 - \alpha_5}(6). 
\end{align*}

The map $x_{\pm \gamma_i} \rightarrow x_{\pm \gamma'_i}$ gives an isomorphism $A \rightarrow B$. Therefore \[S = \langle x_{\gamma_1} x_{\gamma'_1}, x_{-\gamma_1} x_{-\gamma'_1}, x_{\gamma_2} x_{\gamma'_2}, x_{-\gamma_2} x_{-\gamma'_2} \rangle.\] We check that $S \cong G_2(7)$ and that there is a unique 14-dimensional abelian subalgebra of $L(E_8)$ that is ad-nilpotent of exponent 3, which is fixed by $S$. Therefore, by Lemma \ref{subspaces} $X$ fixes this subalgebra and exponentiating gives a unipotent group normalised by $X$. Hence $X$ is contained in a parabolic subgroup of $E_8$.
\end{proof}

\subsection{Maximal  $M = B_2$ $(p \geq 5)$}

The maximal $B_2$ is the only candidate subgroup and is $E_8$-irreducible by maximality. 

This completes the proof of Theorem 3. 

\section{Corollaries} \label{cors}

Corollary 1 follows from Tables \ref{G2tab}--\ref{E8tab}, noting that all $G$-irreducible conjugacy class representatives have a unique set of composition factors on $L(G)$, apart from the one exception in the statement. Corollary 3 also follows immediately from Tables \ref{G2tab}--\ref{E8tab}. Corollary 4 follows from the proofs of Lemma \ref{simpleg2}, Theorem \ref{simplef4} and Theorems 1--3.  

Corollary 2 requires more work. First we need a technical lemma for $A_2$ subgroups when $p=3$. 

\begin{lem} \label{weirda2}
Let $p=3$ and $A_2^2 < A_8$ be embedded via $V_{A_8}(\lambda_1) \downarrow A_2^2 = (10,10)$. If $Y_1 = A_2 \hookrightarrow A_2^2$ via $(10,10^{[r]})$ and $Y_2 = A_2 \hookrightarrow A_2^2$ via $(10,01^{[r]})$ (with $r \neq 0$ in both) then the following modules have the indicated socle series: 

\begin{enumerate}[label=\normalfont(\arabic*),leftmargin=*]
\item $V_{A_8}(\lambda_1 + \lambda_8) \downarrow Y_1 \cong V_{A_8}(\lambda_1 + \lambda_8) \downarrow Y_2$ = $(11 +11^{[r]}) | (11 \otimes 11^{[r]} + 00^2) | (11 + 11^{[r]})$,

\item $V_{A_8}(\lambda_3) \downarrow Y_1$ = $( (11 + 11^{[r]}) | (11 \otimes 11^{[r]} + 30 + 30^{[r]} + 00^i) | (11 + 11^{[r]}) )+ 00^{1-i}$ with $i=0$ or $1$,

\item $V_{A_8}(\lambda_3) \downarrow Y_2$ = $( (11 + 11^{[r]}) | (11 \otimes 11^{[r]} + 30 + 03^{[r]} + 00^i) | (11 + 11^{[r]}) ) + 00^{1-i}$ with $i=0$ or $1$.
\end{enumerate}

\end{lem}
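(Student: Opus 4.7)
The plan is to compute each restricted module using standard operations (tensor, exterior power), then determine the Loewy structure via the extension data in Lemma \ref{a2ext}. Throughout, we use the restrictions $V_{A_8}(\lambda_1) \downarrow Y_1 = L(10) \otimes L(10)^{[r]}$, $V_{A_8}(\lambda_8) \downarrow Y_1 = L(01) \otimes L(01)^{[r]}$, and the analogous formulae for $Y_2$ (with $L(10)^{[r]}$ replaced by $L(01)^{[r]}$).

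For part (1), observe that in characteristic $3$ the $A_8$-module $V \otimes V^* \cong \mathfrak{gl}_9$ is uniserial with structure $0 \mid V_{A_8}(\lambda_1+\lambda_8) \mid 0$: the socle is the identity $K \cdot I$ (contained in $\mathfrak{sl}_9$ because $3 \mid 9$), while the head is the trace quotient. Using the identification $L(10) \otimes L(01) \cong T(11)$ (the $SL_3$-conjugation module on $\mathfrak{gl}_3$), we compute $V \otimes V^* \downarrow Y_i \cong T(11) \otimes T(11)^{[r]}$ for both $i=1,2$, immediately yielding $V_{A_8}(\lambda_1+\lambda_8) \downarrow Y_1 \cong V_{A_8}(\lambda_1+\lambda_8) \downarrow Y_2$. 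Taking the Loewy tensor of $T(11) = L(00) \mid L(11) \mid L(00)$ with its twist produces a five-layer series
\[ L(00) \mid (L(11) + L(11^{[r]})) \mid (L(00)^2 + L(11) \otimes L(11^{[r]})) \mid (L(11) + L(11^{[r]})) \mid L(00), \]
with no collapse (confirmed by $\operatorname{Hom}$-calculations using Lemma \ref{a2ext}(ii),(v)); stripping the outer trivials gives the required socle series.

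For parts (2) and (3), we apply Cauchy's formula to $V_{A_8}(\lambda_3) \downarrow Y_i = \bigwedge^3(V \downarrow Y_i)$. This produces a filtration (not necessarily split in characteristic $3$) whose subquotients are $W(30)$, $W(11) \otimes W(11)^{[r]}$, $W(30)^{[r]}$ for $Y_1$, respectively $W(30)$, $W(11) \otimes H^0(11)^{[r]}$, $W(03)^{[r]}$ for $Y_2$. Using Steinberg's identification $L(30) = L(10)^{[1]}$, we have $W(30) = L(30) \mid L(11)$, and the middle tensor products can be analyzed as three-layer modules: for example $W(11) \otimes W(11)^{[r]} = L(11) \otimes L(11^{[r]}) \mid (L(11) + L(11^{[r]})) \mid L(00)$, obtained via tensor of Loewy series and the non-splitting of the extensions guaranteed by Lemma \ref{a2ext}(v). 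Invoking the Ext$^1$ vanishings in Lemma \ref{a2ext}(iii),(iv),(vi) to control the gluing of the Cauchy pieces then yields a three-layer uniserial component with head and socle each $L(11) + L(11^{[r]})$ and middle containing $L(11) \otimes L(11^{[r]})$, $L(30)$, and $L(30^{[r]})$ (respectively $L(03^{[r]})$ for $Y_2$); the remaining trivial composition factor either sits inside this uniserial component (case $i=1$) or splits off as a direct summand (case $i=0$).

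The main obstacle is verifying the precise Loewy structure of the tensor products of Weyl modules appearing in the Cauchy decomposition, in particular whether the Cauchy filtration splits in characteristic $3$, and locating each $L(00)$ composition factor among the surrounding layers. The ambiguity $i \in \{0,1\}$ reflects exactly the fact that the final trivial composition factor may or may not split off, with either outcome consistent with the claimed decomposition and with all of the extension data supplied by Lemma \ref{a2ext}.
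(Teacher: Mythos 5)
Your overall skeleton is the same as the paper's: part (1) is reduced to the structure of $(00|11|00)\otimes(00|11^{[r]}|00)$, and parts (2)--(3) to $\bigwedge^3(10\otimes 10^{[r]})$ and $\bigwedge^3(10\otimes 01^{[r]})$, with Lemma \ref{a2ext} controlling which composition factors can be glued to which. But there is a genuine gap at exactly the point you flag as ``the main obstacle'': the $\mathrm{Ext}^1$ data of Lemma \ref{a2ext} cannot decide which composition factors actually lie in the socle and head of these specific modules. For instance, $\mathrm{Ext}^1_X(11\otimes 11^{[r]},11)\cong K$ tells you a non-split extension \emph{exists}; it does not tell you whether the copy of $11\otimes 11^{[r]}$ inside $T(11)\otimes T(11)^{[r]}$ sits strictly above the layer $11+11^{[r]}$ or drops into the socle, and likewise for the copies of $30$, $30^{[r]}$ (resp.\ $03^{[r]}$) inside $\bigwedge^3$. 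Both outcomes are consistent with all of Lemma \ref{a2ext}, so your assertion that the five-layer series has ``no collapse,'' and your three-layer description of the exterior cube, are unsubstantiated as written. Your appeal to ``Hom-calculations using Lemma \ref{a2ext}(ii),(v)'' does not close this: that lemma records $\mathrm{Ext}^1$ groups, not the Hom spaces $\mathrm{Hom}(11\otimes 11^{[r]},M)$ or $\mathrm{Hom}(30,M)$ that you would actually need.

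The paper closes this gap by restricting to a finite subgroup $S\cong SL(3,3)$ (embedded as $10\otimes 10$ for $Y_1$ and as $10\otimes 01$ for $Y_2$) and computing the relevant socles in Magma; this is what shows that $11\otimes 11^{[r]}$ is not a submodule or quotient in part (1), and that $11\otimes 11^{[r]}$, $30$, $30^{[r]}$ (resp.\ $03^{[r]}$) are not submodules or quotients of the exterior cubes, after which the Ext data of Lemma \ref{a2ext} does pin down the structure up to the stated ambiguity in $i$. (Note also that for $Y_2$ the computation only excludes $11\otimes 11^{[r]}$ as a direct \emph{summand}, not as a submodule or quotient, so even the finite-group input has to be used with some care.) A further point needing an argument in part (1), which you pass over, is excluding $11$ and $11^{[r]}$ themselves from the socle; the paper does this with a self-duality and twist-symmetry argument combined with the one-dimensionality of $\mathrm{Ext}^1_X(11,00)$, not with the tensor-of-Loewy-series heuristic. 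Until you supply a replacement for the finite-subgroup computation (or some other method of computing the actual socles), the proof is incomplete.
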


\begin{proof}
For part (1) note that $V_{A_8}(\lambda_1) \otimes V_{A_8}(\lambda_8) = V_{A_8}(0) | V_{A_8}(\lambda_1 + \lambda_8) | V_{A_8}(0)$ and $(V_{A_8}(\lambda_1) \downarrow Y_1) \otimes (V_{A_8}(\lambda_8) \downarrow Y_1) \cong (V_{A_8}(\lambda_1) \downarrow Y_2) \otimes (V_{A_8}(\lambda_8) \downarrow Y_2) = 10 \otimes 10^{[r]} \otimes 01 \otimes 01^{[r]}$. It is therefore enough to prove for an $A_2$ that the self-dual module $M := 10 \otimes 10^{[r]} \otimes 01 \otimes 01^{[r]}$ has socle series $00 | (11 +11^{[r]}) | (11 \otimes 11^{[r]} + 00^2) | (11 + 11^{[r]}) | 00$. The composition factors are as indicated, so we need to show the structure of the module is as claimed. Firstly, $M = (10 \otimes 01) \otimes (10^{[r]} \otimes 01^{[r]}) = (00|11|00) \otimes (00|11^{[r]}|00)$ and hence $M$ only has a $1$-dimensional trivial submodule and a $1$-dimensional trivial quotient. If there is a trivial direct summand then $M$ does not have enough composition factors that extend the trivial (see Lemma \ref{a2ext}) to block all of the other three trivial composition factors, leading to a trivial submodule or quotient of larger dimension. Hence $M$ has no trivial direct summands. Furthermore, the composition factor $11 \otimes 11^{[r]}$ does not occur as a submodule or quotient of $M$. This is because if we restrict to $S \cong SL(3,3)$ inside $Y_1$ acting as $10 \otimes 10$ on $V_{A_8}(\lambda_1)$ then $V_{S}(11) \otimes V_{S}(11)$ does not occur as a submodule or quotient of $(V_S(00)|V_S(11)|V_S(00)) \otimes (V_S(00)|V_S(11)|V_S(00))$ (checked using Magma \cite{magma}). Therefore $\text{Socle}(M) = 11^a + (11^{[r]})^b + 00$, and since $M$ is self-dual it follows that $a$ and $b$ are at most 1. Moreover, swapping the field twists $0$ and $r$ in $M$ does not change $M$ so it is also symmetrical, hence $a=b$. Suppose $a = b = 1$. Then $M = (11 + 11^{[r]} + 00) | (11 \otimes 11^{[r]} + 00^2) |  (11 + 11^{[r]} + 00)$ since $11 \otimes 11^{[r]}$ does not extend the trivial module. But no such $M$ exists since the module in the middle of $M$ does not extend the trivial module, hence there are three trivial modules for the socle to block. This is impossible because $\text{Ext}^1_{A_2}(11,00) \cong K \cong \text{Ext}^1_{A_2}(11^{[r]},00)$ by Lemma \ref{a2ext}. Hence $a=b=0$ and the socle is $00$. Now consider the socle of $M / 00$. Since $11 \otimes 11^{[r]}$ is not in the socle of $M$ and does not extend the trivial module it is not in the socle of $M/ 00$. Similarly, the socle does not have a composition factor isomorphic to $00$. Hence the socle must be $11 + 11^{[r]}$, again by self-duality and symmetry. It therefore follows that $M$ has the required socle series, since there are no non-trivial extensions between $11 \otimes 11^{[r]}$ and $00^2$.     

For part (2) we need to prove that $M := \bigwedge^3(10 \otimes 10^{[r]})$ has one of the two indicated socle series ($i = 0$ or $1$). The composition factors are easily checked to be correct and we again consider $S \cong SL(3,3)$ inside $Y_1$, acting as $10 \otimes 10$ on $V_{A_8}(\lambda_1)$. Using Magma, we check that $\bigwedge^3(10 \otimes 10) \downarrow S = 22 + 00 + ( 11 | (30 + 03) | 11 ) + ( 11 | (30 + 00) | 11 )^2$ and $11 \otimes 11 \downarrow S = 22 + 00  + ( 11 | (30 + 03 + 00) | 11 )$. Hence we see that none of $11 \otimes 11^{[r]}$, $30$ or $30^{[r]}$ are submodules or quotients of the $Y_1$-module $M$. Lemma \ref{a2ext} shows that  only $11$ and $11^{[r]}$ extend these three $Y_1$-modules and so $M$ must have the structure $( (11 + 11^{[r]}) | (11 \otimes 11^{[r]} + 30 + 03^{[r]}) | (11 + 11^{[r]}) ) + 00$ or $ (11 + 11^{[r]}) | (11 \otimes 11^{[r]} + 30 + 03 + 00) | (11 + 11^{[r]})$.

The last part is similar to the previous one. Let $M := \bigwedge^3(10 \otimes 01^{[r]})$. In this case we consider the subgroup $S' \cong SL(3,3)$ of $Y_2$ acting as $10 \otimes 01$ on $V_{A_8}(\lambda_1)$. Then $\bigwedge^3(10 \otimes 01) \downarrow S' = 22 + 00 + ((11 + 11) | (30 + 03 + 11) | 11 ) + ( 11 | (30 + 03 + 11) | (11 + 11) )$. It follows that $11 \otimes 11^{[r]}$ could occur as a submodule or a quotient but not a direct summand of the $Y_2$-module $M$. Furthermore, $30$ and $03^{[r]}$ cannot occur as submodules or quotients of the $Y_2$-module $M$. Using Lemma \ref{a2ext} again, we conclude that $M$ must have one of the structures indicated, completing the proof. 
\end{proof}

\begin{lem} \label{weirdG2C3}
Let $G = E_7$, $p=2$ and $X = G_2 \hookrightarrow G_2 G_2 < G_2 C_3$ via $(10,10)$. Then $V_{56} \downarrow X$ is indecomposable.  
\end{lem}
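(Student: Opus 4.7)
The plan is to restrict the uniserial structure of $V_{56} \downarrow G_2 C_3$ given in Lemma \ref{g2c3e7},
$$V_{56} \downarrow G_2 C_3 = \cfrac{(00,100)}{\cfrac{(10,100)+(00,001)}{(00,100)}},$$
to $X$ and then show that the resulting $X$-module admits no non-trivial direct sum decomposition. Since $(00,100) \downarrow X = V_{G_2}(10)$ is irreducible, the outer layers of this filtration each restrict to a single copy of $V_{G_2}(10)$, so the task reduces to understanding the middle layer $N$ and how it is connected to these outer copies.

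First I would describe $N$ as an $X$-module. The summand $(10,100) \downarrow X$ is the tensor square $V_{G_2}(10) \otimes V_{G_2}(10)$, whose composition factors and socle structure in characteristic $2$ can be obtained from the tables in \cite{lubeck} or a direct computation, and which contains a trivial submodule by self-duality. The other summand $(00,001) \downarrow X = V_{C_3}(001) \downarrow G_2$ has structure $V_{G_2}(20)/V_{G_2}(00)^2$ with a trivial submodule, as established in the proof of Lemma \ref{g2c3e7}. Combining these with the relevant $\mathrm{Ext}^1$ computations for simple $G_2$-modules in characteristic $2$ gives the socle of $N$ and hence of $V_{56} \downarrow X$.

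To conclude indecomposability I would show that the non-split extensions coming from the uniserial $G_2 C_3$-structure persist under restriction to $X$. Concretely, the extension class in $\mathrm{Ext}^1_{G_2 C_3}((00,100), N|(00,100))$ producing the full uniserial module must restrict to a non-zero class in $\mathrm{Ext}^1_X(V_{G_2}(10), (N|V_{G_2}(10))\downarrow X)$, and similarly for the bottom two layers. Combined with the self-duality of $V_{56}$ and the Krull--Schmidt theorem, a case analysis on how the socle of $V_{56} \downarrow X$ could split across a hypothetical decomposition $A \oplus B$ would then rule out every possibility. The main obstacle will be this $\mathrm{Ext}$-tracking step, which reduces to concrete $\mathrm{Ext}^1_X$ computations in characteristic $2$ for the relevant composition factors; an efficient alternative would be to identify $V_{56} \downarrow X$ explicitly with a known indecomposable uniserial $X$-module built from the layers above, from which indecomposability is immediate.
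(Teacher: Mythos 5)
Your outline identifies the right object to analyse --- the restriction to the diagonal $G_2$ of the uniserial $G_2C_3$-module $(00,100)\,|\,((10,100)+(00,001))\,|\,(00,100)$ --- but the step that actually carries the content of the lemma is missing. You assert that the extension class in $\mathrm{Ext}^1_{G_2C_3}$ glueing the layers "must restrict to a non-zero class" in the corresponding $\mathrm{Ext}^1_X$; this is precisely what has to be proved, and restriction maps on $\mathrm{Ext}^1$ can perfectly well vanish. Moreover, even granting non-vanishing of the classes between adjacent layers, the concluding "case analysis on how the socle could split" is much less routine than you suggest: the socle of $V_{56}\downarrow X$ is $00+10+01$ (three simple summands), so one must rule out a direct summand supported on each proper subset of the socle, which in practice requires knowing essentially the whole submodule lattice of a $56$-dimensional module whose middle piece is the $36$-dimensional $10\otimes 10$ in characteristic $2$. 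Your proposed shortcut --- identifying $V_{56}\downarrow X$ with a known indecomposable \emph{uniserial} module --- is not available, since $V_{56}\downarrow X$ is not uniserial: its radical layers are not simple (the socle series is $(00+10+01)\,|\,(20+00)\,|\,(20+00)\,|\,(10+01+00)$).

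The paper avoids all of this by a different device: it restricts further to the finite subgroup $S=G_2(2)<X$, observes that indecomposability of $V_{56}\downarrow S$ suffices, constructs the module explicitly for $G_2(2)\times C_3(2)$ in Magma (as a quotient of a maximal submodule of $M_1+M_2$ with $M_1=(00,100)\,|\,(10,100)\,|\,(00,100)$ and $M_2=\bigwedge^3((00,100))$), and checks computationally that the restriction to $S$ is indecomposable. If you want to complete your approach without a computer, you would need to supply the $\mathrm{Ext}$-restriction arguments and the full structural analysis explicitly; as written, the proposal is a plausible plan rather than a proof.
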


\begin{proof}
It is enough to prove that $V_{56} \downarrow S$ is indecomposable where $S = G_2(2) < X$. By \cite[Table 10.2]{LS1}, $V_{56} \downarrow G_2 C_3 = (00,100) | ( (10,100) + (00,001)) | (00,100)$. This module is constructed from $M_1 := (00,100) | (10,100) | (00,100)$ and $M_2 := \bigwedge^3((00,100)) = (00,100) | (00,001) | (00,100)$ as follows: take a maximal submodule of $M_1 + M_2$, call it $M_3$ and then quotient $M_3$ by a diagonal submodule of $\textrm{Soc}(M_3) = (00,100) + (00,100)$. We construct such a module for $G_2(2) \times C_3(2)$ in Magma. This module is still indecomposable when restricted to $S$ and in fact has socle series $(00 + 10 + 01) | (20 + 00) | (20 + 00) | (10 + 01 + 00)$.  
\end{proof}

\begin{pf}{Proof of Corollary 2}
The strategy for the proof is as follows. For each exceptional algebraic group $G$ and each reductive, maximal connected subgroup $M$ of $G$ we find all simple $M$-irreducible connected subgroups of rank at least $2$ that are not $G$-irreducible from the proofs of Lemma \ref{simpleg2}, Theorem \ref{simplef4} and Theorems 1--3. Given such a subgroup $X$ we then check whether it satisfies the hypothesis of Corollary \ref{nongcr}. That is to say, we check whether $X$ is contained reducibly in another reductive, maximal connected subgroup $M'$.

The result is trivial for $G = G_2$. 

For $G = F_4$, we follow the proof of Theorem \ref{simplef4} in \cite{dav} noting that the only subgroup that is $M$-irreducible for some $M$, but not $F_4$-irreducible is $X = A_2 \hookrightarrow A_2 \tilde{A}_2$ via $(10,01)$ when $p=3$. Suppose $X$ is contained reducibly in another reductive, maximal connected subgroup $M'$ or contained in a Levi subgroup. Consideration of the composition factors of $X$ on $V_{26}$ shows that the only possibility is $M' = B_4$ (which contains a Levi subgroup  $B_3 T_1$). The possibilities for a subgroup $A_2$ with the same composition factors as $X$ contained in $B_4$ are $V_{B_4}(1000) \downarrow A_2 = 11 + 00^2$ or $00|11|00$. By \cite[Prop 4.2.2]{dav}, neither of these subgroups is conjugate to $X$. Hence $X$ is not contained in $B_4$ and satisfies the hypothesis of the corollary. It is therefore listed in Table \ref{cortab}. 

Now let $G=E_6$. The only simple connected subgroups which are $M$-irreducible for some $M$ but not $G$-irreducible can immediately be found from the proof of Theorem 1. Most are contained in a $D_5$ Levi subgroup because they are contained in a maximal subgroup $B_4$ of $F_4$. These subgroups do not satisfy the hypothesis of Corollary \ref{nongcr}. The remaining possibilities are $A_2 \hookrightarrow A_2^3$ via $(10,10,10)$ and $A_2 \hookrightarrow A_2 \tilde{A}_2 < A_2 G_2$ $(p=3)$ via $(10,10)$. By the proof of Lemma \ref{a2e6}, the first subgroup is contained in a $D_4$ Levi subgroup when $p \neq 3$ and is contained $F_4$-reducibly in $F_4$ when $p=3$. Therefore it does not satisfy the hypothesis. Now let $p=3$ and consider $X = A_2 \hookrightarrow A_2 \tilde{A}_2 < A_2 G_2$ via $(10,10)$. Suppose $X$ is contained reducibly in another reductive, maximal connected subgroup or contained in a Levi subgroup. Consideration of the composition factors of $X$ on $V_{27}$ shows that $X$ is contained in a Levi $A_5$. We have $L(E_6)' \downarrow A_5 = V_{A_5}(\lambda_1 + \lambda_5) + V_{A_5}(\lambda_3)^2 + 0^3$ (when $p=3$ the centres of $E_6$ and $A_5$ coincide). By considering the finite subgroup $A_2(9) < X$ and using the same method as in the proof of Lemma \ref{weirdG2C3}, we see that $X$ has two direct summands of dimension 21 on $L(E_6)'$ (and only one trivial direct summand). Therefore $X$ is not contained in a Levi $A_5$ and satisfies the hypothesis of Corollary \ref{nongcr} and $X$ is listed in Table \ref{cortab}.    

For $G = E_7$ the result is checked in the same way as for $E_6$. By Lemmas \ref{a7e7}, \ref{badA2} and \ref{g2c3e7} the only subgroups which are $M$-irreducible for some $M$ but not $E_7$-irreducible are $X_1 = C_4 < A_7$, $X_2 = D_4 < A_7$ ($p=2$), $X_3 = A_2 < A_7$ $(p=2)$, $Y = A_2 \hookrightarrow A_2 A_2^{(*)} < A_2 A_5$ $(p=3)$ via $(10,10)$ and $Z = G_2 \hookrightarrow G_2 G_2 < G_2 C_3$ $(p=2)$ via $(10,10)$. 

First consider $X_1$. When $p \neq 2$, $X_1$ is contained in a Levi $E_6$ subgroup since its centraliser in $E_7$ contains a torus (see the proof of Lemma \ref{a7e7}) and hence $X_1$ does not satisfy the hypothesis of the corollary. Now let $p=2$. Suppose $X_1$ is contained in another reductive, maximal connected subgroup or contained in a Levi subgroup. Consideration of the composition factors of $X_1$ on $V_{56}$ shows that the only possibilities are that $X_1$ is contained in $E_6$ or $A_1 F_4$. The connected component of the centraliser in $E_7$ of $X_1$ is a 1-dimensional connected unipotent subgroup (see the proof of Lemma \ref{a7e7}). Therefore, $X_1$ is not contained in $E_6$ or $A_1 F_4$ since the connected component of the centraliser of $X_1$ would contain $T_1$ or an $A_1$, respectively. Hence $X_1$ satisfies the hypothesis of Corollary \ref{nongcr} and is listed in Table \ref{cortab}. 

Now consider $X_2$. Suppose $X_2$ is contained in another reductive, maximal connected subgroup or contained in a Levi subgroup. As with $X_1$, consideration of the composition factors of $X_2$ on $V_{56}$ shows that the only possibilities are that $X_2$ is contained in $E_6$ or $A_1 F_4$. By \cite[Table 8.6]{LS3}, $V_{56} \downarrow A_7 = \lambda_2 \oplus \lambda_6$, and therefore $V_{56} \downarrow X_2 = (0 | \lambda_2 | 0)^2$. The restrictions of $V_{56}$ to $E_6$ and $A_1 F_4$ are as follows: $V_{56} \downarrow E_6 = \lambda_1 \oplus \lambda_6 \oplus 0^2$, by \cite[Table 8.6]{LS3} and $V_{56} \downarrow A_1 F_4 = (1,0001) + (3,0000)$, by \cite[Table 10.2]{LS1}. Therefore, $X_2$ is not contained in $E_6$ or $A_1 F_4$ and hence $X_2$ satisfies the hypothesis of Corollary \ref{nongcr}. 

The last subgroup of $A_7$ to consider is $X_3$. This $A_2$ subgroup is contained in $D_4$ and is the group of fixed points of a triality automorphism of $D_4$ induced by an element $t$ of $E_7$. By \cite[Prop. 1.2]{LS8}, the full centraliser of $t$ is either a Levi subgroup or $A_2 A_5$. Since there are no $A_2 A_5$-irreducible $A_2$ subgroups when $p=2$, it follows that $X_3$ is contained either in a Levi subgroup or $M'$-reducibly in a reductive, maximal connected subgroup $M'$. Hence $X_3$ does not satisfy the hypothesis of Corollary~\ref{nongcr}.  

Next consider $Y$. Suppose $Y$ is contained in another reductive, maximal connected subgroup or contained in a Levi subgroup. Consideration of the composition factors of $Y$ on $V_{56}$ shows that $Y$ is contained in $A_7$. The proof of Lemma \ref{badA2} shows $Y$ is not contained in $A_7$ and so $Y$ satisfies the hypothesis of the corollary and is in Table \ref{cortab}. 

Finally, consider $Z$. If $Z$ does not satisfy the hypothesis of the corollary then $Z$ is contained in an $E_6$ Levi subgroup. From Table \ref{levie7} it follows that $V_{56} \downarrow E_6 = \lambda_1 + \lambda_6 + 0^2$. Lemma \ref{weirdG2C3} shows that $Z$ does not have any trivial direct summands on $V_{56}$ and hence $Z$ is not contained in $E_6$. Therefore, $Z$ satisfies the hypothesis and is listed in Table \ref{cortab}.      

Finally, let $G=E_8$. The proof of Theorem 3 yields the candidate subgroups that are not \linebreak $G$-irreducible. They are those listed in Table \ref{cortab}. It remains to prove that they satisfy the hypothesis of Corollary \ref{nongcr}, that is that they are not contained reducibly in another reductive, maximal connected subgroup or contained in a Levi subgroup. 

First, consider $X = B_4 (\ddagger) < D_8$, with $p=2$. The proof of Lemma \ref{badB4} shows $X$ is only contained in $D_8$ and not $A_8$ or a Levi $A_7$, hence it satisfies the hypothesis of the corollary. 

Next, consider the diagonal subgroups of $B_2^2 (\ddagger)$ $(p=2)$. Consideration of their composition factors on $L(E_8)$ shows that the only possibilities for another maximal subgroup or Levi subgroup containing them are $A_8$ or $A_4^2$ (or a Levi $A_7$, $A_3 A_4$ or $A_3 A_3$ but they are contained in $A_8$ or $A_4^2$). First, let $X_1 = B_2 \hookrightarrow B_2^2 (\ddagger)$ via $(10,10^{[r]})$ with $r \neq 0$. Then $X_1$ contains a subgroup $S_1 \cong Sp(4,2)$ embedded in $D_8$ via $01 \otimes 01$. Using Magma \cite{magma}, we check that $\bigwedge^2(V_{S_1}(01) \otimes V_{S_1}(01))$ has an indecomposable direct summand of dimension 88. Therefore $X_1$ has an indecomposable direct summand of $L(E_8)$ of dimension at least 88. But the largest dimension of a direct summand of $L(E_8) \downarrow A_4^2$ is 50 and  of $L(E_8) \downarrow A_8$ is 84, hence $X_1$ is not a subgroup of $A_4^2$ or $A_8$. To prove $X_2 = B_2 \hookrightarrow B_2^2 (\ddagger)$ via $(10,02^{[r]})$ (including $r=0$) is not contained in $A_4^2$ we consider the parity of $r$. If $r$ is even then $X_2$ contains a subgroup $S_2 \cong Sp(4,4)$ embedded in $D_8$ via $01 \otimes 10 = 11$. If $r$ is odd then $X_2$ contains a subgroup $S_3 \cong Sp(4,4)$ embedded in $D_8$ via $01 \otimes 20 = 21$.  Again using Magma, we find that both $\bigwedge^2(V_{S_2}(11))$ and $\bigwedge^2(V_{S_3}(21))$ are indecomposable (of dimension 120). Therefore, for all $r$, the subgroup $X_2$ has a 120-dimensional indecomposable direct summand on $L(E_8)$ and is not a subgroup of $A_4^2$ or $A_8$. Hence both $X_1$ and $X_2$ satisfy the hypothesis of Corollary \ref{nongcr}. 

Now let $p=3$ and consider $Y_1 = A_2 \hookrightarrow A_2^2$ via $(10,10^{[r]})$ and $Y_2 = A_2 \hookrightarrow A_2^2$ via $(10,01^{[r]})$ ($r \neq 0$ in both cases). Consideration of their composition factors on $L(E_8)$ shows that the only reductive, maximal connected subgroup that can contain $Y_1$ or $Y_2$ (other than $A_8$) is $D_8$ and the only Levi subgroup is $D_7$. So it suffices to show that $Y_1$ and $Y_2$ are not contained in $D_8$. By Theorem \ref{max}, we have $L(E_8) \downarrow D_8 = V_{D_8}(\lambda_2) + V_{D_8}(\lambda_7)$ where $V_{D_8}(\lambda_2)$ is 120-dimensional and $V_{D_8}(\lambda_7)$ is 128-dimensional. But Lemma \ref{weirda2} shows $Y_1$ and $Y_2$ have a 79-dimensional indecomposable summand and two indecomposable direct summands of dimension at least 83 on $L(E_8)$. Hence neither $Y_1$ nor $Y_2$ is contained in $D_8$ and they both satisfy the hypothesis of the corollary. 

Finally, let $p=7$ and $Z = G_2 \hookrightarrow G_2 G_2 < G_2 F_4$ via $(10,10)$. Other than $G_2 F_4$, the only reductive, connected maximal subgroup that can contain $Z$ is $D_8$ and the only Levi subgroup is $D_7$. There is only one $D_8$-conjugacy class of $G_2$ subgroups with the same composition factors as $Z$ on $L(E_8)$, with $V_{D_8}(\lambda_1) \downarrow G_2 = V_{G_2}(01) + V_{G_2}(00)^2$. This $G_2$ is contained in a $D_7$ Levi subgroup of $D_8$ (and $E_8$) and therefore $L(E_8) \downarrow G_2$ has a trivial submodule. However, restricting from $L(E_8) \downarrow G_2 F_4$ in \cite[Table 10.1]{LS1}, we find that $L(E_8) \downarrow Z = 10 \otimes 20 + 01^2 + 11$. Since $\mathrm{Hom}_{G_2}(00,10 \otimes 20) = \mathrm{Hom}_{G_2}(10,20) = 0$, we see that $L(E_8) \downarrow Z$ has no trivial submodules. Hence $Z$ is not contained in $D_7$ and does satisfy the hypothesis of Corollary \ref{nongcr}. \qed
\end{pf}

\section{Variations of Steinberg's Tensor Product Theorem}\label{vars} 

We need some background for the next set of corollaries. Let $X$ be a simple, simply connected algebraic group over an algebraically closed field $K$ of characteristic $p < \infty$. We recall Steinberg's tensor product theorem \cite{stein}. It states that if $\phi : X \rightarrow SL(V)$ is an irreducible rational representation, then we can write $V = V_1^{[r_1]} \otimes \ldots \otimes V_k^{[r_k]}$, where the $V_i$ are restricted $X$-modules and the $r_i$ are distinct. The main result of \cite{LS4} generalises this conclusion to the situation where $\phi$ is a rational homomorphism from $X$ to an arbitrary simple algebraic group $G$. We describe this generalisation now. Recall the definition of a subgroup being $G$-cr and of being restricted from Section \ref{intro}.  

\begin{thm} \textup{\cite[Corollary 1]{LS4}} \label{stein}
Assume $p$ is good for $G$. If $X$ is a connected simple $G$-cr subgroup of $G$, then there is a uniquely determined commuting product $Y_1 \dots Y_k$ with $X \leq Y_1 \dots Y_k \leq G$, such that each $Y_i$ is a simple restricted subgroup of the same type as $X$, and each of the projections $X \rightarrow Y_i/Z(Y_i)$ is non-trivial and involves a different field twist. 
\end{thm}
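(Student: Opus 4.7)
The plan is to adapt Steinberg's tensor product theorem from representations to embeddings, with the action of $X$ on $L(G)$ as the bridge. Since $X$ is $G$-cr, $L(G)\downarrow X$ is a direct sum of irreducibles, each of which, by Steinberg's original theorem, decomposes as $V_1^{[r_1]}\otimes\cdots\otimes V_s^{[r_s]}$ with the $V_j$ restricted and the $r_j$ pairwise distinct. Let $R=\{r_1,\ldots,r_k\}$ enumerate, without repetition, the Frobenius twists that appear across all composition factors of $L(G)\downarrow X$; this $R$ will be the indexing set for the desired $Y_i$.

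For each $r\in R$, I would construct a restricted simple subgroup $Y_r\le G$ of the same type as $X$ through which the ``$r$-twisted part'' of the embedding $\phi\colon X\hookrightarrow G$ factors. Concretely, fix a maximal torus $T_X$ of $X$ contained in a maximal torus $T$ of $G$; the induced cocharacter map $\phi_\ast\colon X_\ast(T_X)\to X_\ast(T)$ decomposes along the $R$-indexed Steinberg expansion into pieces $\phi_\ast^{(r)}$. Pairing $\phi_\ast^{(r)}$ with the root-space decomposition of $L(G)$ identifies the root subgroups of $G$ on which $X$ acts through $F^r$; the subgroup generated by these root subgroups together with the image of $\phi_\ast^{(r)}$ is $Y_r$. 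Good characteristic is needed here to ensure the relevant Chevalley structure constants do not vanish and that $Y_r$ is a genuine closed subgroup of the expected isomorphism type, restricted on $L(G)$ by construction.

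Commutation of the $Y_r$ should follow because different Frobenius components are linearly independent, so the defining root subgroups of $Y_r$ and $Y_{r'}$ for $r\ne r'$ lie in each other's centralisers by the Chevalley commutator formula --- bad primes could in principle obstruct this via degenerate structure constants. The diagonal inclusion $X\le Y_1\cdots Y_k$ is immediate from how the $Y_r$ were built, and that the projection $X\to Y_r/Z(Y_r)$ involves $F^r$ holds by design. For uniqueness, given a second product $Y_1'\cdots Y_{k'}'$ with the same properties, matching the Frobenius twists of the projections of $X$ to each factor forces $k=k'$ and a bijection of twist multisets; a conjugacy argument inside $C_G(X)^\circ$ (which is reductive because $X$ is $G$-cr and $p$ is good) then identifies each $Y_i$ with its match.

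The hardest step will be the construction of $Y_r$ as an actual closed subgroup with the correct type: assembling a simple group of a prescribed type from a torus and root subgroups of $G$ requires Chevalley's relations to close up correctly, and this is where the good-prime hypothesis enters essentially. In classical $G$ the construction can be made explicit via the natural module, but in exceptional $G$ the required root-system analysis is genuinely delicate --- which is presumably why the author treats the extension to bad primes case-by-case in this section rather than as a uniform theorem.
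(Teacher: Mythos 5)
The first thing to say is that the paper does not prove this statement at all: it is quoted verbatim from \cite[Corollary 1]{LS4}, and the proof there is a lengthy argument resting on the classification of maximal connected subgroups of the exceptional groups and detailed restriction-of-modules calculations. So there is no in-paper proof to compare against, and any self-contained argument would have to reproduce a substantial portion of that work.

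As a blind proof attempt, your sketch has two genuine gaps. First, the opening step --- ``since $X$ is $G$-cr, $L(G) \downarrow X$ is a direct sum of irreducibles'' --- is false in general. $G$-complete reducibility is a statement about containment in parabolics and Levis, not about semisimplicity of the adjoint module; the two notions are related only under extra hypotheses (low-height or dimension bounds in the style of Serre), and indeed this very paper repeatedly works with $G$-cr subgroups whose adjoint-module restrictions are indecomposable but not irreducible (see e.g.\ the socle series appearing in the proofs of Lemmas \ref{badA2} and \ref{weirda2}). Second, the construction of $Y_r$ does not get off the ground: a single irreducible composition factor $V_1^{[r_1]} \otimes \cdots \otimes V_s^{[r_s]}$ of $L(G) \downarrow X$ involves \emph{all} of the twists $r_1, \ldots, r_s$ simultaneously, so there is no well-defined partition of the root subgroups of $G$ into those ``on which $X$ acts through $F^r$'' for a single $r$; the cocharacter map $\phi_*$ records only the restriction to a maximal torus and cannot be ``decomposed along the Steinberg expansion'' into pieces that generate closed subgroups. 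The commutation and uniqueness steps are therefore built on objects that have not been defined, and the appeal to good characteristic via ``non-vanishing structure constants'' does not reflect where the hypothesis is actually used in \cite{LS4} (namely, to control the possible restricted overgroups via the known subgroup structure). The approach as written would not yield a proof.
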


Using our classification of $G$-irreducible subgroups, we can investigate to what extent Theorem \ref{stein} is true in bad characteristics for simple, connected $G$-irreducible subgroups of rank at least $2$. For $G= G_2, F_4, E_6$ and $E_7$ the bad characteristics are $2, 3$ and for $G = E_8$ they are $2, 3, 5$. To save repeating ourselves, we will say a subgroup $X$ satisfies the conclusion of Theorem \ref{stein} if there is a uniquely determined commuting product $Y_1 \dots Y_k$ with $X \leq Y_1 \dots Y_k \leq G$, such that each $Y_i$ is a simple restricted subgroup of the same type as $X$, and each of the projections $X \rightarrow Y_i/Z(Y_i)$ is non-trivial and involves a different field twist.

\begin{cor}

Let $G = G_2$ and $X$ be a simple, connected irreducible subgroup of rank at least $2$. Then either $X$ satisfies the conclusion of Theorem \ref{stein} or $p=3$ and $X = \tilde{A}_2$. 

\end{cor}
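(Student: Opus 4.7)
The plan is to invoke Lemma \ref{simpleg2}, which gives (up to $G_2$-conjugacy) the complete list of simple connected $G_2$-irreducible subgroups of rank at least $2$: the maximal long-root subgroup $\bar{A}_2$ in every characteristic, together with the maximal short-root subgroup $\tilde{A}_2$ when $p=3$. Since $G_2$ has rank $2$ and each $Y_i$ appearing in the conclusion of Theorem \ref{stein} must be a simple subgroup of type $A_2$ (hence of rank $2$), every commuting product $Y_1 \cdots Y_k$ inside $G_2$ has $k=1$. Because each proper parabolic of $G_2$ has Levi factor of rank $1$, any connected closed $A_2 \le G_2$ is automatically $G_2$-irreducible, so Lemma \ref{simpleg2} gives an exhaustive list of candidates for such a $Y_1$.

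For $X = \bar{A}_2$ I would simply take $Y_1 = X$. From Table \ref{G2tab} the composition factors of $L(G_2) \downarrow \bar{A}_2$ have high weights among $11$, $10$ and $01$ (with possible trivial factors in bad characteristic), each of which is restricted for $A_2$ in every characteristic; hence $\bar{A}_2$ is a restricted subgroup of $G$. The choice $k=1$, $Y_1 = \bar{A}_2$ with trivial field twist then verifies the conclusion of Theorem \ref{stein}, and maximality of $\bar{A}_2$ in $G_2$ gives uniqueness.

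The remaining, and only substantive, case is $X = \tilde{A}_2$ with $p=3$. Here I would read off from Table \ref{G2tab} that $V_{G_2}(01) \downarrow \tilde{A}_2$ contains $V_{A_2}(30)$ as a composition factor, so $L(G_2) \downarrow \tilde{A}_2$ has a composition factor of high weight $30$, which is not restricted since $3 > p-1 = 2$. Therefore $\tilde{A}_2$ is not itself a restricted subgroup of $G$, ruling out $Y_1 = \tilde{A}_2$. The only other $A_2$-type subgroup of $G_2$ available is the long $\bar{A}_2$; but as $\bar{A}_2$ and $\tilde{A}_2$ are both maximal in $G_2$, neither contains the other. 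Thus no valid $Y_1$ exists, and $\tilde{A}_2$ falls into the exceptional case. The one technical obstacle is correctly extracting the composition-factor data from Table \ref{G2tab}; once that is done, the argument is entirely formal.
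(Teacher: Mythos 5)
Your proposal is correct and follows essentially the same route as the paper: invoke Lemma \ref{simpleg2} to reduce to the two maximal subgroups $A_2$ and $\tilde{A}_2$ ($p=3$), observe that maximality (together with the rank bound forcing $k=1$) leaves only $Y_1 = X$ as a candidate, and then read off from Table \ref{G2tab} that $A_2$ is restricted in all characteristics while $\tilde{A}_2$ is not $3$-restricted because of the composition factor $30$ on $L(G_2)$. The extra justifications you supply (why $k=1$, why no other $Y_1$ can occur) are correct and merely make explicit what the paper compresses into ``both are maximal subgroups.''
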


\begin{proof}
From Lemma \ref{simpleg2} we know the only $G_2$-irreducible subgroups which are simple and of rank at least $2$ are $A_2$ and $\tilde{A}_2$ $(p=3)$. Both are maximal subgroups, so we only have to check whether they are restricted for $p=2, 3$. To do this we use Table \ref{G2tab} which shows $A_2$ is restricted for both $p=2, 3$ but $\tilde{A}_2$ is not restricted for $p=3$. 
\end{proof}

\begin{cor}

Let $G = F_4$ and $X$ be a simple, connected irreducible subgroup of rank at least $2$. Then either $X$ satisfies the conclusion of Theorem \ref{stein} or $p=2$ and $X$ is conjugate to one of the following subgroups:

\begin{enumerate}[leftmargin=*,label=\normalfont(\arabic*)]

\item \label{c4p2lab} $C_4$;

\item \label{d4p2lab} $\tilde{D}_4$;

\item \label{a2a2p2lab} $A_2 \hookrightarrow A_2 \tilde{A}_2$ via any $G$-irreducible embedding;

\item \label{b2p2label} $B_2 \hookrightarrow B_2^2$ via $(10,02)$. 

\end{enumerate}

\end{cor}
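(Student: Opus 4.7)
The plan is to work through the list of $F_4$-irreducible simple connected subgroups $X$ of rank at least $2$ given in Theorem \ref{simplef4} and Table \ref{F4tab}. Since $p \geq 5$ is good for $F_4$, Theorem \ref{stein} already yields the desired conclusion in that range, so it suffices to examine the cases $p \in \{2,3\}$. For each such $X$, I would read off the composition factors of $L(F_4) \downarrow X$ from Table \ref{F4tab} and decide whether all their high weights are $p$-restricted; if so, the conclusion of Theorem \ref{stein} holds trivially with $k=1$ and $Y_1 = X$.

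When $X$ is not restricted, the task is to locate a commuting product $Y_1 \cdots Y_k$ inside $F_4$ with each $Y_i$ a restricted simple subgroup of the same type as $X$, such that $X$ embeds diagonally with projections to $Y_i/Z(Y_i)$ non-trivial and involving distinct Frobenius twists. Uniqueness of this product is automatic, since each $Y_i$ must be the Zariski closure of the image of $X \circ F^{-r_i}$. For a generic diagonal subgroup $X$ of a commuting product $H_1 \cdots H_k \leq F_4$, where each $H_i$ is restricted simple of the same type and $X$ arises through distinct Frobenius twists on each factor, this product serves as the required $Y_1 \cdots Y_k$, so the conclusion will hold for all $F_4$-irreducible subgroups of Table \ref{F4tab} beyond the four listed exceptions.

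The main obstacle is the reverse direction: confirming that the four families in the statement genuinely fail the conclusion. For (1) $X=C_4$ and (2) $X=\tilde{D}_4$ at $p=2$: I would use Table \ref{F4tab} to exhibit a non-restricted composition factor of $L(F_4) \downarrow X$ arising from the exceptional isogeny of $F_4$. Since $C_{F_4}(X)$ is finite by Lemma \ref{semirr}, any commuting product $Y_1 \cdots Y_k$ containing $X$ with $k \geq 2$ would either force a positive-dimensional centraliser inside $F_4$ or require factors of a type different from that of $X$, both contradictions; and $k=1$ forces $Y_1 = X$, which is not restricted.

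For (3) $X = A_2 \hookrightarrow A_2 \tilde{A}_2$ at $p=2$ and (4) $X = B_2 \hookrightarrow B_2^2$ via $(10,02)$ at $p=2$, the obstruction is the special isogeny in characteristic $2$. In (3) the maximal subgroup $A_2 \tilde{A}_2$ has its two factors interchanged by the exceptional isogeny of $F_4$, so the projections of $X$ onto the two $A_2$-type factors differ by an endomorphism whose square is Frobenius rather than Frobenius itself. In (4) the $B_2$-module $V(02)$ is the pullback of $V(10)$ along the special $B_2 \to B_2$ isogeny, so the two projections of $X$ are again related by a half-Frobenius. In both cases, by inspecting composition factors on $V_{26}$ and $L(F_4)$, any putative commuting product $Y_1 \cdots Y_k$ of restricted subgroups of the same type would have to lie inside $A_2 \tilde{A}_2$ (respectively $B_2^2$), and then the special-isogeny relation between the factors prevents the projections from being distinct ordinary Frobenius twists, so no such product exists. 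This completes the dichotomy.
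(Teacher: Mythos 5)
Your overall structure (run through Table \ref{F4tab}, identify for each $X$ the unique candidate commuting product, then test restrictedness) is the same as the paper's, and your treatment of cases (1), (2) and (4) is essentially sound. But your justification for exception (3) is wrong. The $F_4$-irreducible embeddings $A_2 \hookrightarrow A_2 \tilde{A}_2$ via $(10^{[r]},10^{[s]})$ or $(10^{[r]},01^{[s]})$ with $rs=0$, $r \neq s$ really are diagonal subgroups whose projections to the two factors involve distinct \emph{ordinary} Frobenius twists --- there is no ``half-Frobenius'' in the projections. The exceptional isogeny of $F_4$ at $p=2$ interchanges the two factors of the ambient group, but that has no bearing on the endomorphisms $\phi_i$ defining the diagonal embedding, which are honest powers of $F$. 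The actual obstruction, and the one the paper uses, is that the factor $\tilde{A}_2$ is not a $2$-restricted subgroup of $F_4$: $L(F_4) \downarrow \tilde{A}_2 = W(11)/W(20)^3/W(02)^3/00^8$ has composition factors of high weight $20$ and $02$, which are not $2$-restricted. So the unique candidate product $A_2\tilde{A}_2$ exists and contains $X$ with distinct twists, but fails to consist of restricted subgroups. Your argument cannot detect this, and indeed it would give you no way to explain why the very same embeddings \emph{do} satisfy the conclusion of Theorem \ref{stein} when $p=3$ (where both $A_2$ and $\tilde{A}_2$ are $3$-restricted). For the untwisted embeddings $(10,10)$ and $(10,01)$ in (3), the candidate product is just $X$ itself, and the failure is simply that $X$ is not $2$-restricted.

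The ``twists are not genuinely distinct'' reasoning is correct only for case (4): there $02$ is notation for the graph morphism of $B_2$ (the special isogeny) applied with zero field twist, so $B_2 \hookrightarrow B_2^2$ via $(10,02)$ is not a diagonal subgroup with distinct field twists, and the only candidate product is $X$ itself, which is not $2$-restricted. You should separate these two mechanisms rather than attributing both (3) and (4) to the special isogeny.
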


\begin{proof}
Using the proof of Theorem \ref{simplef4} we find that for each simple $F_4$-irreducible connected subgroup $X$ there is at most one commuting product of restricted groups, of the same type as $X$, containing $X$ as a diagonal subgroup with distinct field twists. We have to check if the subgroups in the possible commuting product are indeed restricted (that is the only obstruction to the conclusion of Theorem \ref{stein}).  

If the possible commuting product is just $X$ itself, then we use Table \ref{F4tab} to check whether $X$ is restricted or not and hence whether $X$ satisfies the conclusion of Theorem \ref{stein}. This leads to the subgroups in \ref{c4p2lab}, \ref{d4p2lab} and \ref{b2p2label} as well as $A_2 \hookrightarrow A_2 \tilde{A}_2$ via $(10,10)$ and $(10,01)$ in \ref{a2a2p2lab}. Note that the subgroup in \ref{b2p2label}, namely $B_2 \hookrightarrow B_2^2$ via $(10,02)$, does not satisfy the conclusion of Theorem \ref{stein}: firstly, it is not itself $2$-restricted; secondly, despite appearances the twists in the two factors are not distinct, as $02$ is just our notation for an endomorphism of $B_2$ involving a graph automorphism, which is untwisted although it happens to induces a field twist when applied to $V_{B_2}(10)$. 

Now let $X \cong A_2$. We see that the only commuting product of $A_2$ subgroups containing $X$ as a diagonal subgroup with distinct field twists is $A_2 \tilde{A}_2$ (or just $X$ itself which is covered above). Since $\tilde{A}_2$ is not 2-restricted ($L(F_4) \downarrow \tilde{A}_2 = W(11) / W(20)^3 / W(02)^3 / 00^8$) it follows that when $p=2$, the subgroup $X$ does not satisfy the conclusion of Theorem \ref{stein}. When $p=3$, both $A_2$ and $\tilde{A}_2$ are 3-restricted and so $X$ satisfies the conclusion of Theorem \ref{stein}. 

Let $X \cong B_2$. Then $p=2$ and the possible commuting product is $B_2^2$, containing $X$ diagonally with distinct field twists. The two $B_2$ factors are restricted ($L(F_4) \downarrow B_2 = 10^5 / 01^5 / 00^{12}$) and hence $X$ satisfies the conclusion of Theorem \ref{stein}. 
\end{proof}

\begin{cor}

Let $G = E_6$ and $X$ be a simple, connected irreducible subgroup of rank at least $2$. Then either $X$ satisfies the conclusion of Theorem \ref{stein} or $p=2$ or $3$ and $X$ is conjugate to one of the following subgroups:

\begin{enumerate}[leftmargin=*,label=\normalfont(\arabic*)]

\item  \label{a2p3lab} maximal $A_2$ $(p = 3)$;

\item $A_2 \hookrightarrow A_2 \tilde{A}_2 < A_2 G_2$ $(p=3)$ via $(10^{[r]},10^{[s]})$ $(rs=0)$;

\item \label{c4p2e6lab} $C_4$ $(p=2)$;

\item \label{d4p2e6lab} $D_4$ $(p=2)$;

\item $A_2 \hookrightarrow A_2^3$ $(p=2)$ via:
\begin{enumerate}
\item \label{a2a2a2e6lab} $(10,10,01)$

\item $(10,10,10^{[r]})$ $(r \neq 0)$

\item $(10,10,01^{[r]})$ $(r \neq 0)$

\item $(10,10^{[r]},10^{[r]})$ $(r \neq 0)$

\item $(10,01^{[r]},01^{[r]})$ $(r \neq 0)$.

\end{enumerate}

\end{enumerate}

\end{cor}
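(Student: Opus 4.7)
The plan is to follow the same strategy used in the proof of the analogous corollary for $G = F_4$. Starting from Theorem 1 (Table \ref{E6tab}), I would run through each $\text{Aut}(E_6)$-conjugacy class of simple, connected $E_6$-irreducible subgroup $X$ of rank at least $2$ and, using the embedding description in Table \ref{E6tab}, identify the unique minimal commuting product $Y_1 \cdots Y_k$ of simple subgroups of $E_6$ of the same type as $X$ in which $X$ is embedded diagonally with each projection $X \rightarrow Y_i/Z(Y_i)$ non-trivial. The subgroup $X$ satisfies the conclusion of Theorem \ref{stein} if and only if each $Y_i$ is restricted in $E_6$ and the field twists $r_1, \dots, r_k$ (writing each projection as $\phi_i = \theta_i F^{r_i}$ per Section \ref{nota}) are pairwise distinct.

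First I would verify that for $p \geq 5$ every candidate $Y_i$ is restricted, by inspecting Table \ref{E6tab}: all composition factors listed have high weights with coefficients bounded by $4$, and so are restricted once $p \geq 5$. Hence no exceptions arise in this range. Next I would handle $p = 3$: the only $Y_i$ appearing that fails to be restricted is $\tilde{A}_2 < G_2 < E_6$, because Table \ref{G2tab} gives $V_{G_2}(01) \downarrow \tilde{A}_2 = V_{A_2}(30) + V_{A_2}(03) + V_{A_2}(00)$, producing composition factors of high weight $30$ or $03$, which are non-restricted in characteristic $3$. This yields the exceptions in items $(1)$ and $(2)$.

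Then for $p = 2$ I would note that the $F_4$-irreducible subgroups $C_4$ and $\tilde{D}_4$ fail to be restricted: a short computation restricting the composition factors of $L(E_6) \downarrow F_4$ in Theorem \ref{max} to these subgroups (using Table \ref{F4tab}) produces composition factors with a weight coefficient $2$, giving items $(3)$ and $(4)$. Finally, for the $A_2$ diagonal subgroups of $A_2^3$ in characteristic $2$, each factor of $A_2^3$ is restricted, so the question reduces to distinctness of the field twists after stripping graph automorphisms (which contribute no field twist). Enumerating the $\text{Aut}(E_6)$-conjugacy classes of diagonal $A_2$ subgroups from Table \ref{E6tab} and checking in each case whether the multiset $\{r_1, r_2, r_3\}$ has all three elements distinct leaves precisely the five embeddings in item $(5)$ as exceptions, while all other diagonal embeddings either already have pairwise distinct field twists or can be conjugated under $N_{E_6}(A_2^3)/A_2^3$ to a representative that does.

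The hard part will be the last step: carefully accounting for the combined action of triality and the graph automorphism on diagonal $A_2$ embeddings in $A_2^3$ in characteristic $2$, so as to correctly decide which classes force a coincident field twist. All remaining cases reduce to routine inspection of Tables \ref{E6tab} and \ref{G2tab} together with the restriction data in Theorem \ref{max}.
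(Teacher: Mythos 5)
Your overall skeleton matches the paper's: run through the classes of Table \ref{E6tab}, locate the relevant commuting product, and test restrictedness. Your treatment of items (1)--(4) is essentially the paper's argument and is fine (for (1), note that the cleanest witness is the non-$3$-restricted factor $41$ in $L(E_6) \downarrow X = W(11)/W(14)/W(41)$, though your route via $\tilde{A}_2 < G_2$ also works since for $p=3$ this class does lie in the maximal $G_2$).

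The genuine gap is in your criterion for item (5). You propose to decide the $A_2 \hookrightarrow \bar{A}_2^3$ cases in characteristic $2$ by asking whether the multiset of field twists $\{r_1,r_2,r_3\}$ can be made pairwise distinct, and you flag the triality/graph-automorphism bookkeeping as the hard step. That is the wrong test: repeated twists are never by themselves an obstruction to the conclusion of Theorem \ref{stein}, because one simply merges the equal-twist factors into their diagonal, obtaining a product that \emph{does} have distinct twists; the real question is whether that merged diagonal is restricted. Concretely, the classes $(10,10^{[r]},01)$ and $(10,10^{[r]},01^{[r]})$ both have a repeated twist, yet they are \emph{not} exceptions and do not appear in item (5): the equal-twist pair there is embedded via $(10,01)$, and the corresponding diagonal $B = A_2 \hookrightarrow \bar{A}_2^2$ via $(10,01)$ satisfies $L(E_6) \downarrow B = W(11)^8/00^{14}$, so it is $2$-restricted and $X \leq B\bar{A}_2$ realises the conclusion of Theorem \ref{stein}. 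By contrast, the diagonal $A = A_2 \hookrightarrow \bar{A}_2^2$ via $(10,10)$ has $L(E_6) \downarrow A = W(11)^2/W(20)^3/W(02)^3/10^3/01^3/00^8$, and the factor $20$ is not $2$-restricted (though it is $3$-restricted, which is why no $p=3$ exceptions arise here). The five classes of item (5) are exactly those whose equal-twist pair is of $(10,10)$- or $(01,01)$-type. As written, your criterion would output the wrong exceptional list (adding $(10,10^{[r]},01)$ and $(10,10^{[r]},01^{[r]})$, and, if applied uniformly, spurious $p=3$ exceptions as well), so this step needs to be replaced by the restrictedness check on the merged diagonals $A$ and $B$.
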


\begin{proof}
We proceed as we did for $F_4$. The proof of Theorem 1 shows that for each simple $E_6$-irreducible connected subgroup $X$, there is at most one commuting product of restricted groups, of the same type as $X$, containing $X$ as a diagonal subgroup with distinct field twists. When this is just $X$, Table \ref{E6tab} is used to determine whether $X$ is restricted or not and hence whether $X$ satisfies the conclusion of Theorem \ref{stein}. This yields subgroups \ref{a2p3lab}, \ref{c4p2e6lab}, \ref{d4p2e6lab} and \ref{a2a2a2e6lab} in the conclusion of the corollary.  

Now let $X \cong A_2$, and first suppose $X \hookrightarrow A_2 \tilde{A}_2 < A_2 G_2$ via $(10^{[r]},10^{[s]})$ when $p=3$. Because $\tilde{A}_2$ is not 3-restricted ($L(G) \downarrow \tilde{A}_2 = 30 / 03 / 11^9 / 00^9$) it follows that $X$ does not satisfy the conclusion of Theorem \ref{stein}. Now suppose $X < \bar{A}_2^3$. Since $\bar{A}_2$ is both 2-restricted and 3-restricted, if $X$ is a diagonal subgroup with three distinct field twists then $X$ satisfies the conclusion of Theorem \ref{stein} for $p=2,3$. If $X$ has two distinct field twists then it is contained in $\bar{A}_2 A < \bar{A}_2^3$ or $\bar{A}_2 B < \bar{A}_2^3$ where $A = A_2 \hookrightarrow \bar{A}_2^2$ via $(10,10)$ and $B = A_2 \hookrightarrow \bar{A}_2^2$ via $(10,01)$. We have $L(E_6) \downarrow A = W(11)^2 / W(20)^3 / W(02)^3 / 10^3 / 01^3 / 00^8$  and $L(E_6) \downarrow B = W(11)^8 / 00^{14}$. It follows that $B$ is both 2-restricted and 3-restricted, but $A$ is only 3-restricted. Therefore $X$ satisfies the conclusion of Theorem \ref{stein} unless $p=2$ and it is contained in $\bar{A}_2 A$.  
\end{proof}

\begin{cor}

Let $G = E_7$ and $X$ be a connected, simple irreducible subgroup of rank at least $2$. Then either $X$ satisfies the conclusion of Theorem \ref{stein} or $p=2$ or $3$ and $X$ is conjugate to one of the following subgroups: 

\begin{enumerate}[leftmargin=*,label=\normalfont(\arabic*)]

\item $A_2 \hookrightarrow A_2 A_2^{(\star)} < A_2 A_5$ $(p = 3)$ via $(10,01)$ (where $A_2^{(\star)}$ is embedded in $A_5$ via $V_{A_2}(20)$);

\item $G_2 \hookrightarrow G_2 G_2 < G_2 C_3$ $(p=2)$ via $(10^{[r]},10^{[s]})$ $(rs=0)$.

\end{enumerate}

\end{cor}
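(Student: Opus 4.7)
The plan is to follow the same strategy as in the proofs of the preceding corollaries for $F_4$ and $E_6$: enumerate all simple, connected $E_7$-irreducible subgroups $X$ of rank at least $2$ from Table \ref{E7tab}, and for each such $X$ determine whether there is a (necessarily unique) commuting product $Y_1 \cdots Y_k$ of restricted simple subgroups of the same type as $X$ with $X \leq Y_1 \cdots Y_k$ diagonally embedded and distinct field twists in the projections. Restrictedness of each $Y_i$ is verified via composition factors of $L(E_7) \downarrow Y_i$.

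The cases where $X$ has no non-trivial diagonal structure (so that $k = 1$ is forced and $Y_1 = X$) are $X = A_7$, $X = D_4 < A_7$ ($p > 2$), and the maximal $A_2$ ($p \geq 5$). In each instance, the composition factors recorded in Table \ref{E7tab} have restricted high weights in the relevant characteristics, so the conclusion holds.

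The main analysis concerns the diagonal $A_2$ subgroups of $\bar{A}_2 A_2^{(\star)} < A_2 A_5$. When the embedding is $(10^{[r]},10^{[s]})$ or $(10^{[r]},01^{[s]})$ with $rs=0$ and $p > 2$, the projections have distinct field twists, so I take $k = 2$ with $Y_1 = \bar{A}_2$ and $Y_2 = A_2^{(\star)}$. Restricting the composition factors from Theorem \ref{max} via $V_{A_5}(\lambda_1) \downarrow A_2^{(\star)} = V_{A_2}(20)$ (using \cite[Prop.~2.10]{LS3} for the restrictions of $V_{A_5}(\lambda_2)$ and $V_{A_5}(\lambda_4)$) shows that $L(E_7) \downarrow A_2^{(\star)}$ has composition factors with high weights only in $\{22, 21, 12, 11, 00\}$, all restricted for $p > 2$; the same holds for $\bar{A}_2$. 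For the untwisted embeddings $(10, 10)$ (only $E_7$-irreducible when $p > 3$) and $(10, 01)$ (irreducible for $p > 2$), both projections involve trivial field twist --- just as in the analogous $B_2 \hookrightarrow B_2^2$ via $(10, 02)$ situation in the $F_4$ corollary --- so no $k = 2$ decomposition with distinct twists is available, and we are forced into $k = 1$ with $Y_1 = X$. By Lemma \ref{techa2} and an entirely analogous calculation for the $(10, 01)$ embedding, $L(E_7) \downarrow X$ has $V_{A_2}(30)$ (and $V_{A_2}(03)$) as a composition factor, which is restricted precisely when $p > 3$. Hence the sole exception from this family is $(10, 01)$ with $p = 3$, giving case (1) of the corollary.

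For the $G_2$ diagonal subgroups of $G_2 G_2 < G_2 C_3$, Lemma \ref{g2c3e7} leaves only $(10^{[r]}, 10^{[s]})$ with $rs = 0$ and $p = 2$, and the natural commuting product $Y_1 Y_2 = G_2 G_2$ has distinct twists. The $G_2$ factor of $G_2 C_3$ is restricted, its composition factors on $L(E_7)$ having high weights only in $\{10, 01, 00\}$. The hard step --- and the principal obstacle --- is showing that the second $G_2$, embedded in $C_3$ via $V_{C_3}(\lambda_1) \downarrow G_2 = V_{G_2}(10)$, fails to be restricted when $p=2$. This uses the characteristic $2$ identification $L(C_3) \cong S^2 V_{C_3}(\lambda_1)$: under restriction to $G_2$ the resulting $S^2 V_{G_2}(10)$ contains $V_{G_2}(10)^{[1]} = V_{G_2}(20)$ as a subobject via the squaring map, so $V_{G_2}(20)$ appears as a composition factor of $L(E_7) \downarrow Y_2$ and its high weight $20$ is non-restricted for $p = 2$. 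Hence $Y_2$ is not restricted, no alternative commuting product of restricted $G_2$-subgroups containing $X$ with distinct twists exists, and these subgroups furnish case (2), completing the proof.
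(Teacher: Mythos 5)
Your proposal follows essentially the same route as the paper: reduce to the bad primes, observe that for each $X$ there is at most one candidate commuting product, and test restrictedness of its factors via composition factors on $L(E_7)$; your treatment of $A_7$, $D_4$, the twisted diagonal $A_2$ subgroups of $\bar{A}_2 A_2^{(\star)}$, and the $G_2$ case (where your $S^2 V_{C_3}(\lambda_1)$ derivation of the non-restricted factor $V_{G_2}(20)$ is a pleasant alternative to reading it off the tables) all match the paper's argument and reach the correct conclusion. One justification is off, however: Lemma \ref{techa2} concerns an $A_2$ with composition factors $22^3/11^6/30/03/00^4$ on $L(E_7)$, i.e.\ the $(10,10)$ embedding (which is not $E_7$-irreducible at $p=3$) and the $A_6$-irreducible $A_2$ --- it says nothing about the $(10,01)$ embedding. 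For $(10,01)$ the composition factors from Table \ref{E7tab} are $W(11)^2/20/21/W(31)/W(22)/02/12/W(13)$, and at $p=3$ the Weyl module $W(31)$ has factors $31/12$ (and dually for $W(13)$), so the non-restricted composition factors are $V_{A_2}(31)$ and $V_{A_2}(13)$, not $V_{A_2}(30)$ and $V_{A_2}(03)$, which do not occur here. This does not affect the conclusion --- the $(10,01)$ subgroup at $p=3$ is still not $3$-restricted, exactly as the paper states --- but the claim as written is false and should be replaced by a direct appeal to the entries $W(31)$, $W(13)$ in Table \ref{E7tab}.
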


\begin{proof}
Studying the proof of Theorem 2, specifically Lemmas \ref{a7e7}, \ref{badA2} and \ref{g2c3e7}, we find that for each subgroup $X$ there is at most one commuting product of restricted groups, of the same type as $X$, containing $X$ as a diagonal subgroup with distinct field twists. As before, it remains to check whether the groups in the product are restricted. 

From Table \ref{E7tab}, we see that $A_7$ and $D_4$ $(p>2)$ are restricted for $p=2,3$. Let $X = A_2 \hookrightarrow \bar{A}_2 A_2^{(\star)}$ via any $G$-irreducible embedding. The only possibility for a commuting product of $A_2$ restricted subgroups containing $X$ as a diagonal subgroup with distinct field twists is $\bar{A}_2 A_2^{(\star)}$. Both $\bar{A}_2$ and $A_2^{(\star)}$ are 3-restricted ($L(G) \downarrow A_2 = W(11) / 10^{15} / 01^{15} / 00^{35}$ and $L(G) \downarrow A_2^{(\star)} = 22 / 21^3 / 12^3 / W(11) / 00^8$). It follows that if $X$ has two distinct field twists then it satisfies the conclusion of Theorem \ref{stein}. If $X$ is embedded via $(10,01)$ then Table \ref{E7tab} shows that $X$ is not 3-restricted and hence does not satisfy the conclusion of Theorem \ref{stein}.  

Now consider $X \cong G_2$, with $p=2$. The only  possibility for a commuting product of $G_2$ subgroups containing $X$ as a diagonal subgroup with distinct field twists is $G_2 G_2 < G_2 C_3$. The subgroup $G_2 < C_3$ is not 2-restricted ($L(G) \downarrow G_2 = 20 / 01^8 / 00^{15}$) and hence $X$ does not satisfy the conclusion of Theorem \ref{stein}. 
\end{proof}

\begin{cor}

Let $G = E_8$ and $X$ be a connected, simple irreducible subgroup of rank at least $2$. Then either $X$ satisfies the conclusion of Theorem \ref{stein} or $p=2, 3$ or $5$ and $X$ is conjugate to one of the following subgroups:

\begin{enumerate}[leftmargin=*,label=\normalfont(\arabic*)]

\item maximal $B_2$ $(p = 5)$;

\item $A_2 \hookrightarrow \bar{A}_2 A_2 < \bar{A}_2 E_6$ $(p=3)$ via any $G$-irreducible embedding;

\item $A_2 \hookrightarrow \bar{A}_2 A_2 \tilde{A}_2 < \bar{A}_2 A_2 G_2$ $(p=3)$ via any $G$-irreducible embedding;

\item $A_2 \hookrightarrow A_2^4$ $(p=2)$ via any $G$-irreducible embedding that does not have four distinct field twists;

\item $A_2 \hookrightarrow A_2^2 < D_4^2$ $(p=2)$ via $(10,10^{[r]})$ $(r \neq 0)$ or $(10,01^{[r]})$ $(r \neq 0)$;

\item $B_2 \hookrightarrow B_2^2 (\dagger)$ $(p=2)$ via any $G$-irreducible embedding.

\end{enumerate}

\end{cor}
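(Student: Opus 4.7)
\begin{pf}{Proof plan}
The approach mirrors that used for $G = G_2, F_4, E_6, E_7$ in the preceding corollaries. For every $E_8$-conjugacy class of simple, connected $E_8$-irreducible subgroups $X$ of rank at least $2$, as classified in Theorem $3$ and Table \ref{E8tab}, I would read off from the proof of Theorem $3$ (Sections $7.1$--$7.6$) the unique commuting product $Y_1 \cdots Y_k$ of simple subgroups of the same type as $X$ in which $X$ embeds diagonally with distinct field twists on the projections $X \to Y_i/Z(Y_i)$. When $X$ itself is not a diagonal subgroup of any such larger product (e.g.\ $X = B_7$, the maximal $B_2$, $C_4, D_4, A_3$ in $D_8$, or a single-twist $A_4 < A_4^2$), the candidate product is just $X$. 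In all the other cases the candidate product is completely forced by the proof of Theorem~$3$: for example $A_2^4 < \bar A_2 E_6$ for diagonal $A_2$'s from the $A_2^3$ overgroups, $\bar A_2 A_2 \tilde A_2 < \bar A_2 A_2 G_2$ ($p=3$), $\bar A_2 Y < \bar A_2 E_6$ with $Y \cong A_2$ acting on $V_{27}$ as $W_{A_2}(22)$, the $B_2^2(\dagger)$, $B_2^2(\ddagger)$, $B_2 B_2$, $B_2^3$, $D_4^2$, $B_3^2$, $A_2^2 < D_4^2$, $A_4^2$, $G_2^2 < G_2 F_4$ and $A_2^2 < A_8$ products appearing throughout Section $7$.

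The key step is then to decide for each $Y_i$ whether it is restricted in the sense of the definition of Section~$1$, i.e.\ whether all composition factors of $L(E_8) \downarrow Y_i$ are $p$-restricted. Since Theorem \ref{stein} already handles all good characteristics, I only need to check this for $p \in \{2,3,5\}$, using the composition factors displayed in Table \ref{E8tab} (and in Theorem \ref{max} for the factors coming directly from a reductive maximal connected subgroup). Whenever some $Y_i$ has a non-$p$-restricted composition factor, $X$ fails to satisfy the conclusion of Theorem \ref{stein}; otherwise, since the $Y_i$ are simple and restricted of the same type with distinct twists, $X$ does satisfy it (the uniqueness follows exactly as in \cite[Corollary~1]{LS4}). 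I would check each listed exception in turn, e.g.\ maximal $B_2$ for $p=5$ has the non-$5$-restricted composition factor $W(06)$ on $L(E_8)$; the $E_6$-irreducible $A_2$ with $V_{27} \downarrow A_2 = W(22)$ is not $3$-restricted for $p=3$ (composition factors of $L(E_6) \downarrow A_2$ include weights $41, 14$); $\tilde A_2 < G_2$ is not $3$-restricted (witness the $W(30)$ appearing in $L(E_8)$); the $A_2 < A_2 \tilde A_2$ yields item $(3)$ via the same obstruction; for $p=2$ the $\bar A_2$ factor of $A_2^3 < E_6$ has non-$2$-restricted composition factors whenever two factor-indices coincide after applying a graph automorphism, producing item $(4)$; and the $B_2 < B_2^2(\dagger)$ factor for $p=2$ is not $2$-restricted ($L(E_8) \downarrow B_2^2(\dagger)$ contains $W(11)$), giving item $(6)$.

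Finally, for every remaining class in Table \ref{E8tab} I would verify that each $Y_i$ in the uniquely determined commuting product is restricted, using Table \ref{E8tab}: for the $D_8$-cases $B_4(\dagger)$, $D_4^2$, $B_3^2$, and $B_2 B_2$, $B_2^3$ with $p > 2$, the listed highest weights are $p$-restricted; for the $A_4$ diagonal subgroups of $A_4^2$ with two distinct twists, both $A_4$-factors are restricted; and for $A_2 \hookrightarrow A_2^4$ with four distinct twists under $p=3$, all four $\bar A_2$-factors are $3$-restricted (each acts on $L(E_8)$ with composition factors of weights $00, 10, 01$ and $W(11)$), so uniqueness and restrictedness both hold. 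Packaging these verifications produces exactly the list $(1)$--$(6)$ of exceptions.

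The main obstacle is simply the sheer volume of bookkeeping: each candidate $X$ demands that one (i) identify the correct commuting product from the proof of Theorem~$3$, (ii) enumerate the composition factors of $L(E_8) \downarrow Y_i$ for each factor $Y_i$, and (iii) verify $p$-restrictedness of all highest weights that appear. The genuinely delicate checks are the $A_2$-subgroups in $\bar A_2 E_6$ for $p=3$, where several non-conjugate $A_2$ overgroups coexist and must be distinguished by composition factors on $L(E_6)$ (or equivalently $V_{27}$), and the $B_2$-subgroups in $D_8$ for $p=2$, where the two classes $B_2^2(\dagger)$ and $B_2^2(\ddagger)$ behave differently with respect to restrictedness (and where $B_2^2(\ddagger)$ is already eliminated by Lemmas \ref{badB4} and \ref{b2ind8}).
\end{pf}
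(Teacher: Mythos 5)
Your proposal follows essentially the same route as the paper: for each class in Table \ref{E8tab} one reads off from the proof of Theorem 3 the unique candidate commuting product of subgroups of the same type, and then tests each factor for $p$-restrictedness on $L(E_8)$ in the bad characteristics $p=2,3,5$, which yields exactly the exceptions (1)--(6). One small slip: for $B_2 \hookrightarrow B_2^2(\dagger)$ with $p=2$ the obstruction is the composition factor $W(02)$ of $L(E_8) \downarrow B_2$ (the weight $11$ is $2$-restricted), and similarly for item (4) the failure lies not in the long-root $\bar{A}_2$ factors themselves (which are restricted for all $p$) but in the diagonal $A_2$ subgroups of $\bar{A}_2^2$ that must appear in the commuting product when two twists coincide; neither point affects the conclusion.
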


\begin{proof}
As for $E_7$, we use the proof of Theorem 3 to check that for each subgroup $X$ there is only one possibility for a commuting product of groups, of the same type as $X$, containing $X$ as a diagonal subgroup with distinct field twists. 

When this is just $X$, Table \ref{E8tab} is used to determine whether or not $X$ is restricted and hence whether it satisfies the conclusion of Theorem \ref{stein}. 

If $X \cong D_4$, $A_4$ or $B_3$ then $X$ satisfies the conclusion of Theorem \ref{stein} because the simple factors of \linebreak $D_4^2$, $A_4^2$ and $B_3^2 < D_4^2$ are all restricted for $p=2,3,5$. Indeed, $L(G) \downarrow D_4 = 0100 / 1000^8 / 0010^8 /$ $\! 0001^8 / 0000^{28}$, $L(G) \downarrow A_4 = W(1001) / 1000^{10} / 0100^5 / 0010^5 / 0001^{10} / 0000^{24}$ and $L(G) \downarrow B_3 = 010 /$ $\!100^9 / 001^{16} / 000^{36}$. 

Now suppose $X \cong A_2$. When $X$ is contained diagonally in $A_2^2 < D_4^2$ $(p \neq 3)$ (each factor $A_2$ irreducibly embedded) it satisfies the conclusion of Theorem \ref{stein} only when $p=5$. Indeed, $A_2 < D_4$ is $5$-restricted but not $2$-restricted since $L(G) \downarrow A_2 = 11^{25} / W(30) / W(03) / 00^{28}$. If $X$ is contained in $\bar{A}_2 A_2 \tilde{A}_2 < \bar{A}_2 A_2 G_2$ $(p=3)$ then it does not satisfy the conclusion of Theorem \ref{stein} because $\tilde{A}_2$ is not $3$-restricted ($L(G) \downarrow \tilde{A}_2 = 30 /$ $\! 03 /$ $\! 11^{27} /$ $\! 00^{53}$). If $X$ is contained in $\bar{A}_2 A_2 < A_2 E_6$ $(p \geq 3)$, where the $A_2$ acts on $V_{27}$ as $W(22)$ then $X$ satisfies the conclusion of Theorem \ref{stein} when $p=5$ but not when $p=3$. Indeed, $\bar{A}_2$ is restricted for all $p$ since $L(G) \downarrow A_2 = 11 /$ $\! 10^{27} /$ $\! 01^{27} /$ $\! 00^{78}$ but $A_2 < E_6$ is $5$-restricted but not $3$-restricted since $L(G) \downarrow A_2 = W(41) / W(14) / W(22)^6 / W(11) / 00^{8}$. 

Now let $X$ be contained in $A_2^4$. If $X$ is embedded with four distinct field twists then it satisfies the conclusion of Theorem \ref{stein} for all $p$ because $\bar{A}_2$ is restricted for all $p$, as seen in the previous paragraph. If $X$ is embedded with three distinct field twists then the possibilities for a commuting product of $A_2$ subgroups diagonally containing $X$ are $\bar{A}_2^2 A$ or $\bar{A}_2^2 B$ where $A = A_2 \hookrightarrow \bar{A}_2^2$ via $(10,10)$ and $B = A_2 \hookrightarrow \bar{A}_2^2$ via $(10,01)$. From $L(G) \downarrow A_2^4$ in subsection \ref{maxa2e6} we find that when $p=3$ or $5$, both $A$ and $B$ are restricted but neither $A$ nor $B$ is $2$-restricted and hence $X$ satisfies the conclusion of Theorem \ref{stein} only when $p>2$. Similarly, if $X$ is embedded with two distinct field twists then it satisfies the conclusion of Theorem \ref{stein} only when $p>2$. Indeed, the possibilities for a commuting product of $A_2$ subgroups diagonally containing $X$ are $A^2$, $A B$ and $\bar{A}_2 C$ where $C = A_2 \hookrightarrow \bar{A}_2^3$ via $(10,10,01)$. 

Finally, assume $X \cong B_2$. Suppose $X$ is diagonally contained in $B_2^2 (\ddagger)$, $B_2 B_2 < A_3 D_5$ or $B_2^3$ (all with $p \geq 3$) with distinct field twists. Then $X$ satisfies the conclusion of Theorem \ref{stein} because all of the simple $B_2$ factors are restricted for $p=3$ and $5$. Indeed, the factors are contained in $A_4$, $A_3$, $D_5$ and $A_3$ again, respectively and the restrictions of $L(G)$ are $20 / 02^{11} / 10^{20} / 00^{24}$, $02 / 10^{11} / 01^{32} / 00^{55}$ and $02^7 / 11^8 / 12 / 00^{15}$ respectively. Now suppose $X$ is diagonally contained in $B_2^2 (\dagger)$ with two distinct field twists. The simple $B_2$ factors are restricted for $p=3$ and $5$ but not for $p=2$ ($L(G) \downarrow B_2 = W(02)^6 / 11^4 / 10^{10} / 01^{16} / 00^{10}$). Hence $X$ satisfies the conclusion of Theorem \ref{stein} only when $p>2$.  
\end{proof}

\section{Tables} \label{tabs}

Here we give the Tables from Lemma \ref{simpleg2}, Theorem \ref{simplef4} and Theorems 1--3. The notation used is explained in Section \ref{nota}. All of the subgroups listed in Theorems 1--3 are either maximal and hence found in Theorem \ref{max} or an $M$-irreducible subgroup for one of the maximal connected subgroups $M$. Details of these can be found in Sections 4--6. For some subgroups we give a reference to the subsection they are defined in. We should also explain where all of the restrictions for $V_{26}$, $V_{27}$, $V_{56}$ and $L(G)$ come from. Theorem \ref{max} gives the composition factors for the maximal subgroups. From these it is just a case of restricting to the $M$-irreducible subgroups. This is a mainly routine calculation, and lots of them have been carried out already. Specifically, in \cite[Tables 8.1-8.7]{LS3}, the composition factors are given with some restrictions on the characteristic. However, even in the characteristics not covered, it is still possible to deduce the composition factors. Sometimes we give a composition factor as a Weyl module, which will be reducible in certain characteristics. The Weyl modules package for GAP, by S. Doty, can be used to determine the composition factors of these reducible Weyl modules. For convenience, Appendix A has a table listing all of the reducible Weyl modules that occur in Tables \ref{G2tab}--\ref{E8tab}, which have a trivial composition factor. 

\setlength\LTleft{0pt}
\begin{longtable}{p{0.26\textwidth - 2\tabcolsep}>{\raggedright\arraybackslash}p{0.36\textwidth-2\tabcolsep}>{\raggedright\arraybackslash}p{0.38\textwidth-\tabcolsep}@{}}

\caption{The simple, connected irreducible subgroups of $G_2$ of rank at least $2$. \label{G2tab}} \\

\hline \noalign{\smallskip}

Irreducible subgroup $X$ & Comp. factors of $V_7 \downarrow X$ & Comp. factors of $L(G_2) \downarrow X$ \\

\hline \noalign{\smallskip}

$A_2$ & $10 / 01 / 00$ & $W(11) / 10 / 01$  \\

\hline \noalign{\smallskip}

$\tilde{A}_2$ $(p=3)$ & $11$ & $11 / 30 / 03 / 00$ \\

\hline

\end{longtable}

\begin{longtable}{p{0.26\textwidth - 2\tabcolsep}>{\raggedright\arraybackslash}p{0.36\textwidth-2\tabcolsep}>{\raggedright\arraybackslash}p{0.38\textwidth-\tabcolsep}@{}}

\caption{The simple, connected irreducible subgroups of $F_4$ of rank at least $2$. \label{F4tab}} \\

\hline \noalign{\smallskip}

Irreducible subgroup $X$ & Comp. factors of $V_{26} \downarrow X$ & Comp. factors of $L(F_4) \downarrow X$ \\

\hline \noalign{\smallskip}

$B_4$ & $W(1000) /$ $\! 0001 /$ $\! 0000$  & $W(0100) /$ $\! 0001$ \\

\hline \noalign{\smallskip}

$D_4$ & $1000 /$ $\! 0010 /$ $\! 0001 /$ $\! 0000^2$ & $W(0100) /$ $\! 1000 /$ $\! 0010 /$ $\! 0001$ \\

\hline \noalign{\smallskip}

$C_4$ $(p=2)$ & $0100$ & $2000 /$ $\! 0100 /$ $\! 0001 /$ $\! 0000^2$ \\

\hline \noalign{\smallskip}

$\tilde{D}_4$ $(p=2)$ & $0100$ & $0100 /$ $\! 2000 /$ $\! 0020 /$ $\! 0002 /$ $\! 0000^2$ \\

\hline \noalign{\smallskip}

$G_2$ $(p=7)$ & $20$ & $01 /$ $\! 11$ \\

\hline \noalign{\smallskip}

$A_2 \hookrightarrow A_2 \tilde{A}_2$ via: & & \\

$(10,10)$ & $W(20) /$ $\! W(02) /$ $\! 10 /$ $\! 01 /$ $\! W(11)$ & $W(11)^2 /$ $\! W(21) /$ $\! W(12) /$ $\! 10 /$ $\! 01$  \\

$(10^{[r]},10^{[s]})$ $(rs=0)$ & $10^{[r]} \otimes 10^{[s]} /$ $\! 01^{[r]} \otimes 01^{[s]} /$ $\!$ $ W(11)^{[s]}$ & $W(11)^{[r]} /$ $\! W(11)^{[s]} /$ $\! 10^{[r]} \otimes W(02)^{[s]} /$ $\! 01^{[r]} \otimes W(20)^{[s]}$  \\

$(10,01)$ $(p \neq 3)$ & $11^3 /$ $\! 00^2$ & $11^4 /$ $\! W(30) /$ $\! W(03)$ \\

$(10^{[r]},01^{[s]})$ $(rs=0)$ & $10^{[r]} \otimes 01^{[s]} /$ $\! 01^{[r]} \otimes 10^{[s]} /$ $\!$ $ W(11)^{[s]}$ & $W(11)^{[r]} /$ $\! W(11)^{[s]} /$ $\! 10^{[r]} \otimes W(20)^{[s]} /$ $\! 01^{[r]} \otimes W(02)^{[s]}$  \\

\hline \noalign{\smallskip}

$B_2 \hookrightarrow B_2^2$ $(p=2)$ via: & & \\

$(10,10^{[r]})$ $(r \neq 0)$ & $10 /$ $\! 10^{[r]} /$ $\! 01 \otimes 01^{[r]} /$ $\! 00^2$ & $10 /$ $\! 10^{[r]} /$ $\! 02 /$ $\! 02^{[r]} /$ $\! 10 \otimes 10^{[r]} /$ $\! 01 \otimes 01^{[r]} /$ $\! 00^4$ \\

$(10,02)$ & $10 /$ $\! 02 /$ $\! 11 /$ $\! 00^2$  & $10 /$ $\! 20 /$ $\! 02^2 /$ $\! 11 /$ $\! 12 /$ $\! 00^4$ \\

$(10,02^{[r]})$ $(r \neq 0)$ & $10 /$ $\! 01^{[r+1]} /$ $\! 01 \otimes 10^{[r]} /$ $\! 00^2$ & $10 /$ $\! 01^{[r+1]} /$ $\! 02 /$ $\! 10^{[r+1]} /$ $\! 01 \otimes 10^{[r]} /$ $\! 10 \otimes 01^{[r+1]} /$ $\! 00^4$ \\

\hline

\end{longtable}

\pagebreak
\begin{longtable}{p{0.26\textwidth - 2\tabcolsep}>{\raggedright\arraybackslash}p{0.36\textwidth-2\tabcolsep}>{\raggedright\arraybackslash}p{0.38\textwidth-\tabcolsep}@{}}

\caption{The simple, connected irreducible subgroups of $E_6$ of rank at least $2$. \label{E6tab}} \\

\hline \noalign{\smallskip}

Irreducible subgroup $X$ & Composition factors of $V_{27} \downarrow X$ & Composition factors of $L(E_6) \downarrow X$ \\

\hline \noalign{\smallskip}

$F_4$ & $W(0001) /$ $\! 0000$ & $W(1000) /$ $\! W(0001) $ \\

\hline \noalign{\smallskip}

$C_4$ & $W(0100)$ & $W(2000) /$ $\! W(0001)$  \\

\hline \noalign{\smallskip}

$\tilde{D}_4 < F_4$ $(p=2)$ & $0100 /$ $\! 0000$ & $ 2000 /$ $\! 0020 /$ $\! 0002 /$ $\! 0100^2 /$ $\! 0000^2 $ \\ 

\hline \noalign{\smallskip}

$G_2$ &  $W(20)$ & $W(01) /$ $\! W(11)$ \\

\hline \noalign{\smallskip}

$A_2$ $(p \geq 3)$ & $W(22)$ & $W(11) /$ $\! W(14) /$ $\! W(41)$ \\

\hline \noalign{\smallskip}

\multicolumn{3}{l}{$A_2 \hookrightarrow A_2 \tilde{A}_2 < A_2 G_2$ $(p=3)$ via:} \\ \noalign{\smallskip} 
$(10^{[r]},10^{[s]})$ $(rs=0)$ & $10^{[r]} \otimes 11^{[s]} /$ $\! 02^{[r]}$ & $11^{[r]} /$ $\! 11^{[r]} \otimes 11^{[s]} /$ $\! (11^{[s]})^2 /$ $\! 30^{[s]} /$ $\! 03^{[s]} /$ $\! 00^2$    \\ 

\hline \noalign{\smallskip}

$A_2 \hookrightarrow A_2^3$ via: & & \\

$(10,10,01)$ & $10 /$ $\! 01 /$ $\! W(20) /$ $\! W(02) /$ $\! W(11) /$ $\! 00$ & $W(11)^3 /$ $\! 10^2 /$ $\! 01^2 /$ $\! W(20) /$ $\! W(02) /$ $\! W(21) /$ $\! W(12)$ \\

$(10,10,10^{[r]})$ $(r \neq 0)$ & $W(11) /$ $\! 10 \otimes 01^{[r]} /$ $\! 10^{[r]} \otimes 01 /$ $\! 00$ & $W(11)^2 /$ $\! W(11)^{[r]} /$ $\! W(20) \otimes 10^{[r]} /$ $\! 01 \otimes 10^{[r]} /$ $\! W(02) \otimes 01^{[r]} /$ $\! 10 \otimes 01^{[r]}$ \\

 $(10,10,01^{[r]})$ $(r \neq 0)$ &  $W(11) /$ $\! 10 \otimes 10^{[r]} /$ $\! 01^{[r]} \otimes 01 /$ $\! 00 $ & $W(11)^2 /$ $\! W(11)^{[r]} /$ $\! W(20) \otimes 01^{[r]} /$ $\! 01 \otimes 01^{[r]} /$ $\! W(02) \otimes 10^{[r]} /$ $\! 10 \otimes 10^{[r]}$ \\

 $(10,10^{[r]},01)$ $(r \neq 0)$ & $10 \otimes 01^{[r]} /$ $\! W(02) /$ $\! 10 /$ $\! 10^{[r]} \otimes 10 $ & $W(11)^2 /$ $\! W(11)^{[r]} /$ $\! W(11) \otimes 10^{[r]} /$ $\! 10^{[r]} /$ $\! W(11) \otimes 01^{[r]} /$ $\! 01^{[r]}$ \\

 $(10,10^{[r]},10^{[r]})$ $(r \neq 0)$ & $10 \otimes 01^{[r]} /$ $\! 10^{[r]} \otimes 01 /$ $\! W(11)^{[r]} /$ $\! 00$ & $W(11) /$ $\! (W(11)^{[r]})^2 /$ $\! W(20)^{[r]} \otimes 10 /$ $\! 01^{[r]} \otimes 10 /$ $\! W(02)^{[r]} \otimes 01 /$ $\! 10^{[r]} \otimes 01$  \\

 $(10,10^{[r]},01^{[r]})$ $(r \neq 0)$ & $10 \otimes 01^{[r]} /$ $\! 01 \otimes 01^{[r]} /$ $\! W(20)^{[r]} /$ $\! 01^{[r]}$ & $W(11) /$ $\! (W(11)^{[r]})^2 /$ $\! W(11)^{[r]} \otimes 10 /$ $\!10 /$ $\! W(11)^{[r]} \otimes 01 /$ $\! 01$ \\

 $(10,01^{[r]},01^{[r]})$ $(r \neq 0)$ & $10 \otimes 10^{[r]} /$ $\! 01 \otimes 01^{[r]} /$ $\! W(11)^{[r]} /$ $\! 00$  &  $W(11) /$ $\! (W(11)^{[r]})^2 /$ $\! W(02)^{[r]} \otimes 10 /$ $\! 10^{[r]} \otimes 10 /$ $\! W(20)^{[r]} \otimes 01 /$ $\! 01^{[r]} \otimes 01$ \\

 $(10,10^{[r]},10^{[s]})$ \newline $(0<r<s)$ & $10 \otimes 01^{[r]} /$ $\! 01 \otimes 10^{[s]} /$ $\! 10^{[r]} \otimes 01^{[s]}$  & $W(11) /$ $\! W(11)^{[r]} /$ $\! W(11)^{[s]} /$ $\! 10 \otimes 10^{[r]} \otimes 10^{[s]} /$ $\! 01 \otimes 01^{[r]} \otimes 01^{[s]} $ \\

 $(10,10^{[r]},01^{[s]})$ \newline $(0 < r < s)$ & $10 \otimes 01^{[r]} /$ $\! 01 \otimes 01^{[s]} /$ $\! 10^{[r]} \otimes 10^{[s]}$ & $W(11) /$ $\! W(11)^{[r]} /$ $\! W(11)^{[s]} /$ $\! 10 \otimes 10^{[r]} \otimes 01^{[s]} /$ $\! 01 \otimes 01^{[r]} \otimes 10^{[s]}$ \\

 $(10,01^{[r]},10^{[s]})$ \newline $(0 < r < s)$ & $10 \otimes 10^{[r]} /$ $\! 01 \otimes 10^{[s]} /$ $\! 01^{[r]} \otimes 01^{[s]}$ & $W(11) /$ $\! W(11)^{[r]} /$ $\! W(11)^{[s]} /$ $\! 10 \otimes 01^{[r]} \otimes 10^{[s]} /$ $\! 01 \otimes 10^{[r]} \otimes 01^{[s]}$ \\

 $(10,01^{[r]},01^{[s]})$ \newline $(0 < r < s)$ & $10 \otimes 10^{[r]} /$ $\! 01 \otimes 01^{[s]} /$ $\! 01^{[r]} \otimes 10^{[s]}$ & $W(11) /$ $\! W(11)^{[r]} /$ $\! W(11)^{[s]} /$ $\! 10 \otimes 01^{[r]} \otimes 01^{[s]} /$ $\! 01 \otimes 10^{[r]} \otimes 10^{[s]}$ \\

\hline

\end{longtable}

\begin{longtable}{p{0.26\textwidth - 2\tabcolsep}>{\raggedright\arraybackslash}p{0.36\textwidth-2\tabcolsep}>{\raggedright\arraybackslash}p{0.38\textwidth-\tabcolsep}@{}}

\caption{The simple, connected irreducible subgroups of $E_7$ of rank at least $2$. \label{E7tab}} \\

\hline \noalign{\smallskip}

Irreducible subgroup $X$ & Composition factors of $V_{56} \downarrow X$ & Composition factors of $L(E_7) \downarrow X$ \\

\hline \noalign{\smallskip}

$A_7$ & $0100000 /$ $\! 0000010$ & $W(1000001) /$ $\! 0001000$ \\

\hline \noalign{\smallskip}

$D_4$ $(p>2)$ & $0100^2$ & $0100 /$ $\! 2000 /$ $\! 0020 /$ $\! 0002$ \\

\hline \noalign{\smallskip}

\multicolumn{3}{l}{$A_2 \hookrightarrow \bar{A}_2 A_2^{(*)} < \bar{A}_2 A_5$ $(p \neq 2)$  (see \S \ref{a2a2starnotation}) via:} \\ \noalign{\smallskip}

$(10,10)$ $(p > 3)$ & $30^2 /$ $\! 03^2 /$ $\! 11^2$  & $11^4 /$ $\! W(22)^3 /$ $\! 30 /$ $\! 03$ \\

$(10^{[r]},10^{[s]})$ $(rs=0)$ & $10^{[r]} \otimes 20^{[s]} /$ $\! 01^{[r]} \otimes 02^{[s]} /$ $\! W(30)^{[s]} /$ $\! W(03)^{[s]}$ & $W(11)^{[r]} /$ $\! W(11)^{[s]} /$ $\! W(22)^{[s]} /$ $\! 01^{[r]} \otimes 21^{[s]} /$ $\! 10^{[r]} \otimes 12^{[s]}$  \\

$(10,01)$ & $10 /$ $\! 01 /$ $\! W(30) /$ $\! W(03) /$ $\! 21 /$ $\! 12$  & $W(11)^2 /$ $\! 20 /$ $\! 21 /$ $\! W(31) /$ $\! W(22) /$ $\! 02 /$ $\! 12 /$ $\! W(13)$ \\

$(10^{[r]},01^{[s]})$ $(rs=0)$ & $10^{[r]} \otimes 02^{[s]} /$ $\! 01^{[r]} \otimes 20^{[s]} /$ $\! W(30)^{[s]} /$ $\! W(03)^{[s]}$ & $W(11)^{[r]} /$ $\! W(11)^{[s]} /$ $\! W(22)^{[s]} /$ $\! 10^{[r]} \otimes 21^{[s]} /$ $\! 01^{[r]} \otimes 12^{[s]}$ \\

\hline \noalign{\smallskip}

\multicolumn{3}{l}{$G_2 \hookrightarrow G_2 G_2 < G_2 C_3$ $(p=2)$ via:} \\  \noalign{\smallskip}

$(10^{[r]},10^{[s]})$ $(rs=0)$ & $10^{[r]} \otimes 10^{[s]} /$ $\! 20^{[s]} /$ $\! (10^{[s]})^2 /$ $\! 00^2$ & $01^{[r]} /$ $\! (01^{[s]})^2 /$ $\! 10^{[r]} \otimes 01^{[s]} /$ $\! 20^{[s]} /$ $\! 00$ \\

\hline \noalign{\smallskip}

$A_2$ $(p \geq 5)$ & $W(60) /$ $\! W(06)$ & $W(44) /$ $\! 11$ \\

\hline

\end{longtable}

\begin{longtable}{p{0.26\textwidth - 2\tabcolsep}>{\raggedright\arraybackslash}p{0.74\textwidth-\tabcolsep}@{}}

\caption{The simple, connected irreducible subgroups of $E_8$ of rank at least $2$. \label{E8tab}} \\

\hline \noalign{\smallskip}

Irreducible subgroup $X$ & Composition factors of $L(E_8) \downarrow X$ \\

\hline \noalign{\smallskip}

$D_8$ & $W(01000000) /$ $\! 00000010$ \\

\hline \noalign{\smallskip}

$B_7$ & $W(0100000) /$ $\! W(1000000) /$ $\! 0000001$ \\

\hline \noalign{\smallskip}

 $B_4 (\dagger)$ (see \S \ref{b4notation}) & $W(0100) /$ $\! W(0010) /$ $\! W(1001)$ \\

\hline \noalign{\smallskip}

 $B_4 (\ddagger)$ $(p \neq 2)$ (see \S \ref{b4notation})  & $0100 /$ $\! W(2000) /$ $\! 0010^2$ \\ 

\hline \noalign{\smallskip}

 $A_3$ ($p \neq 2$) & $ 101^2 /$ $\! 210 /$ $\! 012 /$ $\! W(111)^2$ \\

\hline \noalign{\smallskip}

 $D_4 \hookrightarrow D_4^2$ via: & \\

 $(1000,1000^{[r]})$ $(r \neq 0)$ & $W(0100) /$ $\! W(0100)^{[r]} /$ $\! 1000 \otimes 1000^{[r]} /$ $\! 0010 \otimes 0010^{[r]} /$ $\! 0001 \otimes 0001^{[r]}$ \\

 $(1000,1000^{[\tau]})$ & $W(0100)^2 /$ $\! 1000 /$ $\! 0010 /$ $\! 0001 /$ $\! W(1010) /$ $\! W(1001) /$ $\! W(0011)$ \\

 $(1000,1000^{[\tau r]})$ $(r \neq 0)$ & $W(0100) /$ $\! W(0100)^{[r]} /$ $\! 1000 \otimes 0010^{[r]} /$ $\! 0010 \otimes 0001^{[r]} /$ $\!  0001 \otimes 1000^{[r]}$ \\

 $(1000,1000^{[\iota r]})$ $(r \neq 0)$ & $W(0100) /$ $\! W(0100)^{[r]} /$ $\! 1000 \otimes 1000^{[r]} /$ $\! 0010 \otimes 0001^{[r]} /$ $\! 0001 \otimes 0010^{[r]}$ \\

\hline \noalign{\smallskip}

 $B_3 \hookrightarrow B_3 B_3 < D_4^2$ via: & \\

 $(100,100^{[r]})$ $(r \neq 0)$ & $W(010) /$ $\! W(100) /$ $\! W(100)^{[r]} /$ $\! W(010)^{[r]} /$ $\! 001 /$ $\! 001^{[r]} /$ $\! W(100) \otimes 001^{[r]} /$ $\! 001 \otimes W(100)^{[r]} /$ $\! 001 \otimes 001^{[r]}$ \\

\hline \noalign{\smallskip}

\multicolumn{2}{l}{$A_2 \hookrightarrow A_2^2 < D_4^2$  $(p \neq 3)$ via:} \\ \noalign{\smallskip}

$(10,10^{[r]})$ $(r \neq 0)$ & $11 /$ $\! W(30) /$ $\! W(03) /$ $\! 11^{[r]} /$ $\! W(30)^{[r]} /$ $\! W(03)^{[r]} /$ $\! (11 \otimes 11^{[r]})^3$ \\

$(10,01^{[r]})$ $(r \neq 0)$ & $11 /$ $\! W(30) /$ $\! W(03) /$ $\! 11^{[r]} /$ $\! W(30)^{[r]} /$ $\! W(03)^{[r]} /$ $\! (11 \otimes 11^{[r]})^3$ \\

\hline \noalign{\smallskip}

\multicolumn{2}{l}{$B_2 \hookrightarrow B_2^2 (\dagger)$ (see \S \ref{b4notation}) via:} \\ \noalign{\smallskip} 

$(10,10^{[r]})$ $(r \neq 0)$ & $W(02) /$ $\! W(02)^{[r]} /$ $\! W(10) \otimes W(02)^{[r]} /$ $\! W(02) \otimes W(10)^{[r]} /$ $\! 01 \otimes W(11)^{[r]} /$ $\! W(11) \otimes 01^{[r]} $\\

$(10,02)$ $(p=2)$ & $10^4 /$ $\! 01^4 /$ $\! 20^4 /$ $\! 02^{6} /$ $\! 21 /$ $\! 12^2 /$ $\! 30 /$ $\! 03 /$ $\! 13 /$ $\! 04 /$ $\! 00^{12}$ \\

$(10,02^{[r]})$ $(p=2, r \neq 0)$ & $10^4 /$ $\! 02^2 /$ $\! (02^{[r]})^4 /$ $\! (20^{[r]})^2 /$ $\! 02 \otimes 02^{[r]} /$ $\! (10 \otimes 02^{[r]})^2 /$ $\! 10 \otimes 20^{[r]} /$ $\! 01 \otimes 12^{[r]} /$ $\! 11 \otimes 10^{[r]} /$ $\! 00^8$ \\

\hline \noalign{\smallskip}

\multicolumn{2}{l}{$B_2 \hookrightarrow B_2^2 (\ddagger)$ $(p \neq 2)$ (see \S \ref{b4notation}) via:} \\ \noalign{\smallskip}

$(10,10^{[r]})$ $(r \neq 0)$ & $02 /$ $\! 02^{[r]} /$ $\! W(20) /$ $\! W(20)^{[r]} /$ $\! (10 \otimes 02^{[r]})^2 /$ $\! (02 \otimes 10^{[r]})^2$\\

\hline \noalign{\smallskip}

\multicolumn{2}{l}{$B_2 \hookrightarrow B_2 B_2 < A_3 D_5$ $(p \geq 3)$ via:} \\ \noalign{\smallskip}

$(10,10)$ & $02^6 /$ $\! 10^4 /$ $\! W(20)^2 /$ $\! W(12)^4 $ \\

 $(10^{[r]},10^{[s]})$ $(rs=0)$ & $02^{[r]} /$ $\! (02^{[s]})^2 /$ $\! 10^{[r]} /$ $\! W(12)^{[s]} /$ $\! 10^{[r]} \otimes 02^{[s]} /$ $\! (01^{[r]} \otimes W(11)^{[s]})^2$ \\

\hline \noalign{\smallskip}

 $B_2 \hookrightarrow B_2^3$ $(p \geq 3)$ via: & \\
 $(10,10^{[r]},10^{[s]})$ \newline $(0<r<s)$ & $02 /$ $\! 10 /$ $\! 10^{[r]} /$ $\! 10^{[s]} /$ $\! 02^{[r]} /$ $\! 02^{[s]} /$ $\! 10 \otimes 10^{[r]} /$ $\! 10 \otimes 10^{[s]} /$ $\! 10^{[r]} \otimes 10^{[s]} /$ $\! (01 \otimes 01^{[r]} \otimes 01^{[s]})^2$ \\

\hline \noalign{\smallskip}

$A_8$ & $W(10000001) /$ $\! 00100000 /$ $\! 00000100$ \\

\hline \noalign{\smallskip}

$A_4 \hookrightarrow A_4^2$ via: & \\

$(1000,1000)$ & $W(1001)^2 /$ $\! 1000 /$ $\! 0001 /$ $\! 0100 /$ $\! 0010 /$ $\! W(1100) /$ $\! W(0011) /$ $\!  W(1010) /$ $\! W(0101)$ \\

$(1000,1000^{[r]})$ $(r \neq 0)$ & $W(1001) /$ $\! W(1001)^{[r]} /$ $\! 1000 \otimes 0100^{[r]} /$ $\! 0100 \otimes 0001^{[r]} /$ $\! 0001 \otimes 0010^{[r]} /$ $\! 0010 \otimes 1000^{[r]}$ \\

$(1000,0001^{[r]})$ $(r \neq 0)$ & $W(1001) /$ $\! W(1001)^{[r]} /$ $\! 1000 \otimes 0010^{[r]} /$ $\! 0100 \otimes 1000^{[r]} /$ $\! 0001 \otimes 0100^{[r]} /$ $\! 0010 \otimes 0001^{[r]}$  \\

\hline \noalign{\smallskip}

$A_2 \hookrightarrow A_2^4$ via: & \\

$(10^{[r]},10,10,01)$ \newline $(r \neq 0)$ & $W(11)^3 /$ $\! W(11)^{[r]} /$ $\! 10^2 /$ $\! 01^2 /$ $\! W(20) /$ $\! W(02) /$ $\! W(21) /$ $\! W(12) /$ $\! 10^{[r]} /$ $\! 01^{[r]} /$ $\! 10 \otimes 10^{[r]} /$ $\! 01 \otimes 01^{[r]} /$ $\! 10 \otimes 01^{[r]} /$ $\! 01 \otimes 10^{[r]} /$ $\! W(20) \otimes 10^{[r]} /$ $\! W(02) \otimes 01^{[r]} /$ $\! W(20) \otimes 01^{[r]} /$ $\! W(02) \otimes 10^{[r]} /$ $\! W(11) \otimes 10^{[r]} /$ $\! W(11) \otimes 01^{[r]}$  \\

$(10,10^{[r]},10^{[r]},01^{[r]})$ \newline $(r \neq 0)$ & $W(11) /$ $\! (W(11)^{[r]})^3 /$ $\! 10 /$ $\! 01 /$ $\! W(20)^{[r]} /$ $\! W(02)^{[r]} /$ $\! W(21)^{[r]} /$ $\!W(12)^{[r]} /$ $\! (10^{[r]})^2 /$ $\! (01^{[r]})^2 /$ $\! 10 \otimes 10^{[r]} /$ $\! 01 \otimes 01^{[r]} /$ $\! 10 \otimes 01^{[r]} /$ $\! 01 \otimes 10^{[r]} /$ $\! W(20)^{[r]} \otimes 10 /$ $\! W(02)^{[r]} \otimes 01 /$ $\! W(20)^{[r]} \otimes 01 /$ $\! W(02)^{[r]} \otimes 10 /$ $\! W(11)^{[r]} \otimes 10 /$ $\! W(11)^{[r]} \otimes 01$ \\

$(10,10,10^{[r]},10^{[r]})$ \newline $(r \neq 0)$ & $W(11)^2 /$ $\! (W(11)^{[r]})^2 /$ $\! 10 \otimes W(20)^{[r]} /$ $\! 10 \otimes 01^{[r]} /$ $\! 01 \otimes W(02)^{[r]} /$  $\! 01 \otimes 10^{[r]} /$ $\! 10 \otimes W(11)^{[r]} /$ $\! 10 /$ $\! 01 \otimes W(11)^{[r]} /$  $\! 01 /$  $\! W(20) \otimes 01^{[r]} /$ $\! 01 \otimes 01^{[r]} /$  $\! W(02) \otimes 10^{[r]} /$ $\! 10 \otimes 10^{[r]} /$  $\! W(11) \otimes 10^{[r]} /$ $\! 10^{[r]} /$ $\! W(11) \otimes 01^{[r]} /$ $\! 01^{[r]}$  \\

$(10,10,10^{[r]},01^{[r]})$ \newline $(r \neq 0)$ & $W(11)^2 /$ $\! (W(11)^{[r]})^2 /$ $\! 10 \otimes W(11)^{[r]} /$ $\! 10 /$ $\! 01 \otimes W(11)^{[r]} /$  $\! 01 /$ $\! 10 \otimes W(02)^{[r]} /$ $\! 10 \otimes 10^{[r]} /$ $\! 01 \otimes W(20)^{[r]} /$ $\! 01 \otimes 01^{[r]} /$ $\! W(20) \otimes 10^{[r]} /$   $\! 01 \otimes 10^{[r]} /$ $\! W(02) \otimes 01^{[r]} /$ $\! 10 \otimes 01^{[r]} /$ $\! W(11) \otimes 10^{[r]} /$ $\! 10^{[r]} /$ $\! W(11) \otimes 01^{[r]} /$ $\! 01^{[r]}$  \\

$(10,10,10^{[r]},10^{[s]})$ \newline $(0 < r < s)$ & $W(11)^2 /$ $\! W(11)^{[r]} /$ $\! W(11)^{[s]} /$  $\!10 \otimes 10^{[r]} \otimes 10^{[s]} /$  $\!01 \otimes 01^{[r]} \otimes 01^{[s]} /$  $\!10 \otimes 01^{[r]} \otimes 10^{[s]} /$  $\!01 \otimes 10^{[r]} \otimes 01^{[s]} /$  $\! W(20) \otimes 01^{[s]} /$  $\! 01 \otimes 01^{[s]} /$  $\! W(02) \otimes 10^{[s]} /$ $\! 10 \otimes 10^{[s]} /$ $\! W(11) \otimes 10^{[r]} /$ $\! 10^{[r]} /$ $\! W(11) \otimes 01^{[r]} /$ $\! 01^{[r]}$ \\

$(10,01,10^{[r]},10^{[s]})$ \newline $(0 < r < s)$ & $W(11)^2 /$ $\! W(11)^{[r]} /$ $\! W(11)^{[s]} /$ $\! 01 \otimes 10^{[r]} \otimes 10^{[s]} /$ $\! 10 \otimes 01^{[r]} \otimes 01^{[s]} /$ $\!10 \otimes 01^{[r]} \otimes 10^{[s]} /$ $\! 01 \otimes 10^{[r]} \otimes 01^{[s]} /$ $\! W(11) \otimes 01^{[s]} /$ $\! 01^{[s]} /$ $\! W(11) \otimes 10^{[s]} /$ $\! 10^{[s]} /$ $\! W(20) \otimes 10^{[r]} /$ $\! 01 \otimes 10^{[r]} /$ $\! W(02) \otimes 01^{[r]} /$ $\! 10 \otimes 01^{[r]}$ \\

$(01,10,10^{[r]},10^{[s]})$ \newline $(0 < r < s)$ & $W(11)^2 /$ $\! W(11)^{[r]} /$ $\! W(11)^{[s]} /$ $\! 10 \otimes 10^{[r]} \otimes 10^{[s]} /$ $\! 01 \otimes 01^{[r]} \otimes 01^{[s]} /$ $\!01 \otimes 01^{[r]} \otimes 10^{[s]} /$ $\! 10 \otimes 10^{[r]} \otimes 01^{[s]} /$ $\! W(11) \otimes 01^{[s]} /$ $\! 01^{[s]} /$ $\! W(11) \otimes 10^{[s]} /$ $\! 10^{[s]} /$ $\! W(02) \otimes 10^{[r]} /$ $\! 10 \otimes 10^{[r]} /$ $\! W(20) \otimes 01^{[r]} /$ $\! 01 \otimes 01^{[r]}$   \\

$(01,01,10^{[r]},10^{[s]})$ \newline $(0 < r < s)$ & $W(11)^2 /$ $\! W(11)^{[r]} /$ $\! W(11)^{[s]} /$ $\! 01 \otimes 10^{[r]} \otimes 10^{[s]} /$ $\! 10 \otimes 01^{[r]} \otimes 01^{[s]} /$ $\!01 \otimes 01^{[r]} \otimes 10^{[s]} /$ $\! 10 \otimes 10^{[r]} \otimes 01^{[s]} /$ $\! W(02) \otimes 01^{[s]} /$ $\! 10 \otimes 01^{[s]} /$ $\! W(20) \otimes 10^{[s]} /$ $\! 01 \otimes 10^{[s]} /$ $\! W(11) \otimes 10^{[r]} /$ $\! 10^{[r]} /$ $\!W(11) \otimes 01^{[r]} /$ $\! 01^{[r]} $ \\

$(10,10^{[r]},10^{[r]},10^{[s]})$ \newline $(0 < r < s)$ & $W(11) /$ $\! (W(11)^{[r]})^2 /$ $\! W(11)^{[s]} /$ $\!W(20)^{[r]} \otimes 10^{[s]} /$ $\!01^{[r]} \otimes 10^{[s]} /$  $\!W(02)^{[r]} \otimes 01^{[s]} /$ $\! 10^{[r]} \otimes 01^{[s]} /$ $\!10 \otimes 01^{[r]} \otimes 10^{[s]} /$  $\!01 \otimes 10^{[r]} \otimes 01^{[s]} /$  $\!10 \otimes 10^{[r]} \otimes 01^{[s]} /$  $\!01 \otimes 01^{[r]} \otimes 10^{[s]} /$ $\! 10 \otimes W(11)^{[r]} /$ $\! 10 /$ $\! 01 \otimes W(11)^{[r]} /$ $\! 01 $ \\

$(10,10^{[r]},01^{[r]},10^{[s]})$ \newline $(0 < r < s)$ & $W(11) /$ $\! (W(11)^{[r]})^2 /$ $\! W(11)^{[s]} /$ $\!W(11)^{[r]} \otimes 10^{[s]} /$ $\! 10^{[s]} /$  $\!W(11)^{[r]} \otimes 01^{[s]} /$ $\! 01^{[s]} /$ $\!10 \otimes 10^{[r]} \otimes 10^{[s]} /$  $\!01 \otimes 01^{[r]} \otimes 01^{[s]} /$  $\!10 \otimes 10^{[r]} \otimes 01^{[s]} /$  $\!01 \otimes 01^{[r]} \otimes 10^{[s]} /$ $\! 10 \otimes W(02)^{[r]} /$ $\! 10 \otimes 10^{[r]} /$ $\! 01 \otimes W(20)^{[r]} /$ $\! 01 \otimes 01^{[r]} $ \\

$(10,01^{[r]},10^{[r]},10^{[s]})$ \newline $(0 < r < s)$ & $W(11) /$ $\! (W(11)^{[r]})^2 /$ $\! W(11)^{[s]} /$ $\!W(11)^{[r]} \otimes 10^{[s]} /$ $\!10^{[s]} /$  $\!W(11)^{[r]} \otimes 01^{[s]} /$ $\! 01^{[s]} /$ $\!10 \otimes 01^{[r]} \otimes 10^{[s]} /$  $\!01 \otimes 10^{[r]} \otimes 01^{[s]} /$  $\!10 \otimes 01^{[r]} \otimes 01^{[s]} /$  $\!01 \otimes 10^{[r]} \otimes 10^{[s]} /$ $\! 10 \otimes W(20)^{[r]} /$ $\! 10 \otimes 01^{[r]} /$ $\! 01 \otimes W(02)^{[r]} /$ $\! 01 \otimes 10^{[r]} $ \\

$(10,01^{[r]},01^{[r]},10^{[s]})$ \newline $(0 < r < s)$ & $W(11) /$ $\! (W(11)^{[r]})^2 /$ $\! W(11)^{[s]} /$ $\!W(02)^{[r]} \otimes 10^{[s]} /$ $\!10^{[r]} \otimes 10^{[s]} /$  $\!W(20)^{[r]} \otimes 01^{[s]} /$ $\! 01^{[r]} \otimes 01^{[s]} /$ $\!10 \otimes 10^{[r]} \otimes 10^{[s]} /$  $\!01 \otimes 01^{[r]} \otimes 01^{[s]} /$  $\!10 \otimes 01^{[r]} \otimes 01^{[s]} /$  $\!01 \otimes 10^{[r]} \otimes 10^{[s]} /$ $\! 10 \otimes W(11)^{[r]} /$ $\! 10 /$ $\! 01 \otimes W(11)^{[r]} /$ $\! 01 $ \\

$(10,10^{[r]},10^{[s]},10^{[s]})$ \newline $(0 < r < s)$ & $W(11) /$ $\! W(11)^{[r]} /$ $\! (W(11)^{[s]})^2 /$ $\! 10^{[r]} \otimes W(20)^{[s]} /$ $\! 10^{[r]} \otimes 01^{[s]} /$ $\! 01^{[r]} \otimes W(02)^{[s]} /$ $\! 01^{[r]} \otimes 10^{[s]} /$ $\! 10 \otimes W(11)^{[s]} /$ $\! 10 /$ $\! 01 \otimes W(11)^{[s]}/$ $\! 01 /$ $\! 10 \otimes 10^{[r]} \otimes 01^{[s]} /$ $\! 01 \otimes 01^{[r]} \otimes 10^{[s]} /$ $\! 10 \otimes 01^{[r]} \otimes 10^{[s]} /$ $\! 01 \otimes 10^{[r]} \otimes 01^{[s]} $ \\ 

$(10,10^{[r]},10^{[s]},01^{[s]})$ \newline $(0 < r < s)$ & $W(11) /$ $\! W(11)^{[r]} /$ $\! (W(11)^{[s]})^2 /$ $\! 10^{[r]} \otimes W(11)^{[s]} /$ $\! 10^{[r]} /$ $\! 01^{[r]} \otimes W(11)^{[s]} /$ $\! 01^{[r]}/$ $\! 10 \otimes W(02)^{[s]} /$ $\! 10 \otimes 10^{[s]}/$ $\! 01 \otimes W(20)^{[s]}/$ $\! 01 \otimes 01^{[s]} /$ $\! 10 \otimes 10^{[r]} \otimes 10^{[s]} /$ $\! 01 \otimes 01^{[r]} \otimes 01^{[s]} /$ $\! 10 \otimes 01^{[r]} \otimes 10^{[s]} /$ $\! 01 \otimes 10^{[r]} \otimes 01^{[s]} $ \\ 

$(10,10^{[r]},01^{[s]},10^{[s]})$ \newline $(0 < r < s)$ & $W(11) /$ $\! W(11)^{[r]} /$ $\! (W(11)^{[s]})^2 /$ $\! 10^{[r]} \otimes W(11)^{[s]} /$ $\! 10^{[r]} /$ $\! 01^{[r]} \otimes W(11)^{[s]} /$ $\! 01^{[r]}/$ $\! 10 \otimes W(20)^{[s]} /$ $\! 10 \otimes 01^{[s]}/$ $\! 01 \otimes W(02)^{[s]}/$ $\! 01 \otimes 10^{[s]} /$ $\! 10 \otimes 10^{[r]} \otimes 01^{[s]} /$ $\! 01 \otimes 01^{[r]} \otimes 10^{[s]} /$ $\! 10 \otimes 01^{[r]} \otimes 01^{[s]} /$ $\! 01 \otimes 10^{[r]} \otimes 10^{[s]} $ \\ 

$(10,10^{[r]},01^{[s]},01^{[s]})$ \newline $(0 < r < s)$ & $W(11) /$ $\! W(11)^{[r]} /$ $\! (W(11)^{[s]})^2 /$ $\! 10^{[r]} \otimes W(02)^{[s]} /$ $\! 10^{[r]} \otimes 10^{[s]} /$ $\! 01^{[r]} \otimes W(20)^{[s]} /$ $\! 01^{[r]} \otimes 01^{[s]} /$ $\! 10 \otimes W(11)^{[s]} /$ $\! 10 /$ $\! 01 \otimes W(11)^{[s]}/$ $\! 01 /$ $\! 10 \otimes 10^{[r]} \otimes 10^{[s]} /$ $\! 01 \otimes 01^{[r]} \otimes 01^{[s]} /$ $\! 10 \otimes 01^{[r]} \otimes 01^{[s]} /$ $\! 01 \otimes 10^{[r]} \otimes 10^{[s]} $ \\ 

$(10,10^{[r]},10^{[s]},10^{[t]})$ \newline $(0 < r < s < t)$ & $W(11) /$ $\! W(11)^{[r]} /$ $\! W(11)^{[s]} /$ $\! W(11)^{[t]} /$ $\! 10^{[r]} \otimes 10^{[s]} \otimes 10^{[t]} /$ $\! 01^{[r]} \otimes 01^{[s]} \otimes 01^{[t]} /$ $\! 10 \otimes 01^{[s]} \otimes 10^{[t]} /$ $\! 01 \otimes 10^{[s]} \otimes 01^{[t]} /$ $\! 10 \otimes 10^{[r]} \otimes 01^{[t]} /$ $\! 01 \otimes 01^{[r]} \otimes 10^{[t]} /$ $\!10 \otimes 01^{[r]} \otimes 10^{[s]} /$ $\! 01 \otimes 10^{[r]} \otimes 01^{[s]}$ \\

$(10,10^{[r]},10^{[s]},01^{[t]})$ \newline $(0 < r < s < t)$ & $W(11) /$ $\! W(11)^{[r]} /$ $\! W(11)^{[s]} /$ $\! W(11)^{[t]} /$ $\! 10^{[r]} \otimes 10^{[s]} \otimes 01^{[t]} /$ $\! 01^{[r]} \otimes 01^{[s]} \otimes 10^{[t]} /$ $\! 10 \otimes 01^{[s]} \otimes 01^{[t]} /$ $\! 01 \otimes 10^{[s]} \otimes 10^{[t]} /$ $\! 10 \otimes 10^{[r]} \otimes 10^{[t]} /$ $\! 01 \otimes 01^{[r]} \otimes 01^{[t]} /$ $\!10 \otimes 01^{[r]} \otimes 10^{[s]} /$ $\! 01 \otimes 10^{[r]} \otimes 01^{[s]}$ \\

$(10,10^{[r]},01^{[s]},10^{[t]})$ \newline $(0 < r < s < t)$ & $W(11) /$ $\! W(11)^{[r]} /$ $\! W(11)^{[s]} /$ $\! W(11)^{[t]} /$ $\! 10^{[r]} \otimes 01^{[s]} \otimes 10^{[t]} /$ $\! 01^{[r]} \otimes 10^{[s]} \otimes 01^{[t]} /$ $\! 10 \otimes 10^{[s]} \otimes 10^{[t]} /$ $\! 01 \otimes 01^{[s]} \otimes 01^{[t]} /$ $\! 10 \otimes 10^{[r]} \otimes 01^{[t]} /$ $\! 01 \otimes 01^{[r]} \otimes 10^{[t]} /$ $\!10 \otimes 01^{[r]} \otimes 01^{[s]} /$ $\! 01 \otimes 10^{[r]} \otimes 10^{[s]}$ \\

$(10,10^{[r]},01^{[s]},01^{[t]})$ \newline $(0 < r < s < t)$ &  $W(11) /$ $\! W(11)^{[r]} /$ $\! W(11)^{[s]} /$ $\! W(11)^{[t]} /$ $\! 10^{[r]} \otimes 01^{[s]} \otimes 01^{[t]} /$ $\! 01^{[r]} \otimes 10^{[s]} \otimes 10^{[t]} /$ $\! 10 \otimes 10^{[s]} \otimes 01^{[t]} /$ $\! 01 \otimes 01^{[s]} \otimes 10^{[t]} /$ $\! 10 \otimes 10^{[r]} \otimes 10^{[t]} /$ $\! 01 \otimes 01^{[r]} \otimes 01^{[t]} /$ $\!10 \otimes 01^{[r]} \otimes 01^{[s]} /$ $\! 01 \otimes 10^{[r]} \otimes 10^{[s]}$ \\

$(10,01^{[r]},10^{[s]},10^{[t]})$ \newline $(0 < r < s < t)$ & $W(11) /$ $\! W(11)^{[r]} /$ $\! W(11)^{[s]} /$ $\! W(11)^{[t]} /$ $\! 01^{[r]} \otimes 10^{[s]} \otimes 10^{[t]} /$ $\! 10^{[r]} \otimes 01^{[s]} \otimes 01^{[t]} /$ $\! 10 \otimes 01^{[s]} \otimes 10^{[t]} /$ $\! 01 \otimes 10^{[s]} \otimes 01^{[t]} /$ $\! 10 \otimes 01^{[r]} \otimes 01^{[t]} /$ $\! 01 \otimes 10^{[r]} \otimes 10^{[t]} /$ $\!10 \otimes 10^{[r]} \otimes 10^{[s]} /$ $\! 01 \otimes 01^{[r]} \otimes 01^{[s]}$ \\

$(10,01^{[r]},10^{[s]},01^{[t]})$ \newline $(0 < r < s < t)$ & $W(11) /$ $\! W(11)^{[r]} /$ $\! W(11)^{[s]} /$ $\! W(11)^{[t]} /$ $\! 01^{[r]} \otimes 10^{[s]} \otimes 01^{[t]} /$ $\! 10^{[r]} \otimes 01^{[s]} \otimes 10^{[t]} /$ $\! 10 \otimes 01^{[s]} \otimes 01^{[t]} /$ $\! 01 \otimes 10^{[s]} \otimes 10^{[t]} /$ $\! 10 \otimes 01^{[r]} \otimes 10^{[t]} /$ $\! 01 \otimes 10^{[r]} \otimes 01^{[t]} /$ $\!10 \otimes 10^{[r]} \otimes 10^{[s]} /$ $\! 01 \otimes 01^{[r]} \otimes 01^{[s]}$ \\

$(10,01^{[r]},01^{[s]},10^{[t]})$ \newline $(0 < r < s < t)$ & $W(11) /$ $\! W(11)^{[r]} /$ $\! W(11)^{[s]} /$ $\! W(11)^{[t]} /$ $\! 01^{[r]} \otimes 01^{[s]} \otimes 10^{[t]} /$ $\! 10^{[r]} \otimes 10^{[s]} \otimes 01^{[t]} /$ $\! 10 \otimes 10^{[s]} \otimes 10^{[t]} /$ $\! 01 \otimes 01^{[s]} \otimes 01^{[t]} /$ $\! 10 \otimes 01^{[r]} \otimes 01^{[t]} /$ $\! 01 \otimes 10^{[r]} \otimes 10^{[t]} /$ $\!10 \otimes 10^{[r]} \otimes 01^{[s]} /$ $\! 01 \otimes 01^{[r]} \otimes 10^{[s]}$ \\

$(10,01^{[r]},01^{[s]},01^{[t]})$ \newline $(0 < r < s < t)$ &  $W(11) /$ $\! W(11)^{[r]} /$ $\! W(11)^{[s]} /$ $\! W(11)^{[t]} /$ $\! 01^{[r]} \otimes 01^{[s]} \otimes 01^{[t]} /$ $\! 10^{[r]} \otimes 10^{[s]} \otimes 10^{[t]} /$ $\! 10 \otimes 10^{[s]} \otimes 01^{[t]} /$ $\! 01 \otimes 01^{[s]} \otimes 10^{[t]} /$ $\! 10 \otimes 01^{[r]} \otimes 10^{[t]} /$ $\! 01 \otimes 10^{[r]} \otimes 01^{[t]} /$ $\!10 \otimes 10^{[r]} \otimes 01^{[s]} /$ $\! 01 \otimes 01^{[r]} \otimes 10^{[s]}$ \\

\hline \noalign{\smallskip}

\multicolumn{2}{l}{$A_2 \hookrightarrow \bar{A}_2 A_2 < \bar{A}_2 E_6$ $(p \geq 3)$ (see \S \ref{bara2y}) via:} \\ \noalign{\smallskip}

$(10,10)$ & $W(11)^2 /$ $\! W(41) /$ $\! W(14) /$ $\! W(32) /$ $\! W(23) /$ $\! W(13) /$ $\! W(31) /$ $\! W(21) /$ $\!  W(12)$ \\

$(10^{[r]},10^{[s]})$ $(rs=0)$ & $W(11)^{[r]} /$ $\! W(11)^{[s]} /$ $\! W(41)^{[s]} /$ $\! W(14)^{[s]} /$ $\! 10^{[r]} \otimes W(22)^{[s]} /$ $\! 01^{[r]} \otimes W(22)^{[s]}$ \\

\hline \noalign{\smallskip}

\multicolumn{2}{l}{$A_2 \hookrightarrow \bar{A}_2 A_2 \tilde{A}_2  < \bar{A}_2 A_2 G_2$ $(p=3)$ (see \S \ref{bara2a2tildea2}) via:} \\ \noalign{\smallskip}

$(10^{[r]},10^{[s]},10^{[t]})$ \newline $(rst=0)$ & $11^{[r]} /$ $\! 11^{[s]} /$ $\! (11^{[t]})^2 /$ $\! 11^{[s]} \otimes 11^{[t]} /$ $\! 30^{[t]} /$ $\! 03^{[t]} /$ $\! 10^{[r]} \otimes 20^{[s]} /$ $\!  01^{[r]} \otimes 02^{[s]} /$ $\! 10^{[r]} \otimes 01^{[s]} \otimes 11^{[t]} /$ $\! 01^{[r]} \otimes 10^{[s]} \otimes 11^{[t]} /$ $\! 00^3$ \\

$(10^{[r]},10,10^{[r]})$ $(r \neq 0)$ & $(11^{[r]})^3 /$ $\! 11 /$ $\! 11 \otimes 11^{[r]} /$ $\! 30^{[r]} /$ $\! 03^{[r]} /$ $\! 10^{[r]} \otimes 20 /$ $\! 01^{[r]} \otimes 02 /$ $\!  21^{[r]} \otimes 01 /$ $\! 02^{[r]} \otimes 01 /$ $\! 12^{[r]} \otimes 10 /$ $\! 20^{[r]} \otimes 10 /$ $\! 00^3$ \\

$(10,10^{[r]},10)$ $(r \neq 0)$ & $11^3 /$ $\! 11^{[r]} /$ $\! 11 \otimes 11^{[r]} /$ $\! 30 /$ $\! 03 /$ $\! 10 \otimes 20^{[r]} /$ $\! 01 \otimes 02^{[r]} /$ $\! 21 \otimes 01^{[r]} /$ $\! 02 \otimes 01^{[r]} /$ $\! 12 \otimes 10^{[r]} /$ $\! 20 \otimes 10^{[r]} /$ $\! 00^3$ \\

$(10^{[r]},01^{[s]},10^{[t]})$ \newline $(rst=0)$ & $11^{[r]} /$ $\! 11^{[s]} /$ $\! (11^{[t]})^2 /$ $\! 11^{[s]} \otimes 11^{[t]} /$ $\! 30^{[t]} /$ $\! 03^{[t]} /$ $\! 10^{[r]} \otimes 02^{[s]} /$ $\! 01^{[r]} \otimes 20^{[s]} /$ $\! 10^{[r]} \otimes 10^{[s]} \otimes 11^{[t]} /$ $\! 01^{[r]} \otimes 01^{[s]} \otimes 11^{[t]} /$ $\! 00^3$ \\

$(10^{[r]},01^{[r]},10)$ $(r \neq 0)$ & $(11^{[r]})^2 /$ $\! 11^2 /$ $\! 11 \otimes 11^{[r]} /$ $\! 30 /$ $\! 03 /$ $\! 12^{[r]} /$ $\! 01^{[r]} /$ $\! 21^{[r]} /$ $\! 10^{[r]} /$ $\! 20^{[r]} \otimes 11 /$ $\! 01^{[r]} \otimes 11 /$ $\! 02^{[r]} \otimes 11 /$ $\! 10^{[r]} \otimes 11 /$ $\! 00^3$ \\

$(10,01,10^{[r]})$ $(r \neq 0)$ & $11^2 /$ $\! (11^{[r]})^2 /$ $\! 11 \otimes 11^{[r]} /$ $\! 30^{[r]} /$ $\! 03^{[r]} /$ $\! 12 /$ $\! 01 /$ $\! 21 /$ $\! 10 /$ $\! 20 \otimes 11^{[r]} /$ $\! 01 \otimes 11^{[r]} /$ $\! 02 \otimes 11^{[r]} /$ $\! 10 \otimes 11^{[r]} /$ $\! 00^3$ \\

$(10^{[r]},01,10^{[r]})$ $(r \neq 0)$ & $(11^{[r]})^3 /$ $\! 11 /$ $\! 11 \otimes 11^{[r]} /$ $\! 30^{[r]} /$ $\! 03^{[r]} /$ $\! 10^{[r]} \otimes 02 /$ $\! 01^{[r]} \otimes 20 /$ $\! 21^{[r]} \otimes 10 /$ $\! 02^{[r]} \otimes 10 /$ $\! 12^{[r]} \otimes 01 /$ $\! 20^{[r]} \otimes 01 /$ $\! 00^3$ \\

$(10,01^{[r]},10)$ $(r \neq 0)$ & $11^3 /$ $\! 11^{[r]} /$ $\! 11 \otimes 11^{[r]} /$ $\! 30 /$ $\! 03 /$ $\! 10 \otimes 02^{[r]} /$ $\! 01 \otimes 20^{[r]} /$ $\! 21 \otimes 10^{[r]} /$ $\! 02 \otimes 10^{[r]} /$ $\! 12 \otimes 01^{[r]} /$ $\! 20 \otimes 01^{[r]} /$ $\! 00^3$ \\

\hline \noalign{\smallskip}

\multicolumn{2}{l}{$G_2 \hookrightarrow G_2 G_2 < G_2 F_4$  $(p=7)$ via:} \\ \noalign{\smallskip}

$(10^{[r]}, 10^{[s]})$ $(rs=0)$ & $01^{[r]} /$ $\! 10^{[r]} \otimes 20^{[s]} /$ $\! 01^{[s]} /$ $\! 11^{[s]}$  \\

\hline \noalign{\smallskip}

$B_2$ $(p \geq 5)$ & $02 /$ $\! W(06) /$ $\! W(32)$ \\

\hline

\end{longtable}

\subsection*{Acknowledgements} 

This paper was prepared as part of the author's Ph.D. qualification under the supervision of \linebreak Prof.\ M.\ Liebeck, with financial support from the EPSRC. The author would like to thank \linebreak Prof.\ M.\ Liebeck for suggesting the problem and his help in solving it. He would also like to thank Dr\ D.\ Stewart and Dr\ A.\ Litterick for many helpful conversations along the way. Finally, he would like to thank the anonymous referee for many helpful suggestions.

\begin{appendices} 
\section{Levi subgroups and reducible Weyl modules} \label{app}

We give tables of composition factors for Levi subgroups with each simple factor of rank at least $2$, for $G= E_6, E_7$ and $E_8$. If $L'$ is simple then these can be found in \cite[Tables 8.1--8.3, 8.6, 8.7]{LS3}. If $L'$ is not simple then the composition factors can be deduced from those of a maximal subsystem subgroup containing $L'$. We also give the composition factors of the reducible Weyl modules (with at least one trivial composition factor) which appear in Tables \ref{G2tab}--\ref{levie8}.

\pagebreak

\begin{longtable}{p{0.1\textwidth - 2\tabcolsep}>{\raggedright\arraybackslash}p{0.3\textwidth-2\tabcolsep}>{\raggedright\arraybackslash}p{0.6\textwidth-\tabcolsep}@{}}

\caption{The composition factors for the action of Levi subgroups (with no rank 1 simple factors) of $E_6$ on $V_{27}$ and $L(E_6)$. \label{levie6}} \\

\hline \noalign{\smallskip}

Levi $L'$ & Comp. factors of $V_{27} \downarrow L'$ & Comp. factors of $L(E_6) \downarrow L'$ \\

\hline \noalign{\smallskip}

$D_5$ & $\lambda_1 /$ $\! \lambda_4 /$ $\! 0 $ & $W(\lambda_2) /$ $\! \lambda_4 /$ $\! \lambda_5 /$ $\! 0$ \\

$D_4$ & $1000 /$ $\! 0010 /$ $\! 0001 /$ $\! 0000^3$  & $W(0100) /$ $\! 1000^2 /$ $\! 0010^2 /$ $\! 0001^2 /$ $\! 0000^2$ \\

$A_5$ & $\lambda_1^2 /$ $\! \lambda_4$ & $W(\lambda_1 + \lambda_5) /$ $\! \lambda_3^2 /$ $\! 0^3$ \\

$A_4$ & $1000^2 /$ $\! 0010 /$ $\! 0001 /$ $\! 0000^2$ & $W(1001) /$ $\! 1000 /$ $\! 0100^2 /$ $\! 0010^2 /$ $\! 0001 /$ $\! 0000^4$ \\

$A_3$ & $100^2 /$ $\! 001^2 /$ $\! 010 /$ $\! 000^5$ & $W(101) /$ $\! 100^4 /$ $\! 001^4 /$ $\! 010^4 /$ $\! 000^7$ \\

$A_2$ & $10^3 /$ $\! 01^3 /$ $\! 00^9$ & $W(11) /$ $\! 10^9 /$ $\! 01^9 /$ $\! 00^{16}$ \\

$A_2 A_2$ & $(10,01) /$ $\! (00,10)^3 /$ $\! (01,00)^3$ & $(W(11),00) /$ $\! (00,W(11)) /$ $\! (10,10)^3 /$ $\! (01,01)^3 /$ $\! (00,00)^8$  \\

\hline

\end{longtable}

\begin{longtable}{p{0.1\textwidth - 2\tabcolsep}>{\raggedright\arraybackslash}p{0.38\textwidth-2\tabcolsep}>{\raggedright\arraybackslash}p{0.52\textwidth-\tabcolsep}@{}}

\caption{The composition factors for the action of Levi subgroups (with no rank 1 simple factors) of $E_7$ on $V_{56}$ and $L(E_7)$. \label{levie7}} \\

\hline \noalign{\smallskip}

Levi $L'$ & Comp. factors of $V_{56} \downarrow L'$ & Comp. factors of $L(E_7) \downarrow L'$ \\

\hline \noalign{\smallskip}

$E_6$ & $\lambda_1 /$ $\! \lambda_6 /$ $\! 0^2$ & $W(\lambda_2) /$ $\! \lambda_1 /$ $\! \lambda_6 /$ $\! 0$  \\

$D_6$ & $\lambda_1^2 /$ $\! \lambda_5$ & $W(\lambda_2) /$ $\! \lambda_6^2 /$ $\! 0^3$ \\

$D_5$ & $\lambda_1^2 /$ $\! \lambda_4 /$ $\! \lambda_5 /$ $\! 0^4$ & $W(\lambda_2) /$ $\! \lambda_1^2 /$ $\! \lambda_4^2 /$ $\! \lambda_5^2 /$ $\! 0^4$  \\

$D_4$ & $1000^2 /$ $\! 0010^2 /$ $\! 0001^2 /$ $\! 0000^8$ & $W(0100) /$ $\! 1000^4 /$ $\! 0010^4 /$ $\! 0001^4 /$ $\! 0000^9$ \\

$A_6$ & $\lambda_1 /$ $\! \lambda_2 /$ $\! \lambda_5 /$ $\! \lambda_6$ & $W(\lambda_1 + \lambda_6) /$ $\! \lambda_1 /$ $\! \lambda_3 /$ $\! \lambda_4 /$ $\! \lambda_6 /$ $\! 0$  \\

$A_5$ & $\lambda_1^3 /$ $\! \lambda_3 /$ $\! \lambda_5^3$  & $W(\lambda_1 + \lambda_5) /$ $\! \lambda_2^3 /$ $\! \lambda_4^3 /$ $\! 0^8$   \\

$A_5'$ & $\lambda_1^2 /$ $\! \lambda_2 /$ $\! \lambda_4 /$ $\! \lambda_5^2 /$ $\! 0^2$ & $W(\lambda_1 + \lambda_5) /$ $\! \lambda_1^2 /$ $\! \lambda_2 /$ $\! \lambda_3^2 /$ $\! \lambda_4 /$ $\! \lambda_5^2 /$ $\! 0^4$ \\
 
$A_4$ & $ 1000^3 /$ $\! 0100 /$ $\! 0010 /$ $\! 0001^3 /$ $\! 0000^6 $ & $W(1001) /$ $\! 1000^4 /$ $\! 0100^3 /$ $\! 0010^3 /$ $\! 0001^4 /$ $\! 0000^9$  \\

$A_3$ & $100^4 /$ $\! 001^4 /$ $\! 010^2 /$ $\! 000^{12}$ & $W(101) /$ $\! 100^8 /$ $\! 001^8 /$ $\! 010^{6} /$ $\! 000^{18}$  \\

$A_2$ & $10^6 /$ $\! 01^6 /$ $\! 00^{20}$ & $W(11) /$ $\! 10^{15} /$ $\! 01^{15} /$ $\! 00^{35}$ \\

$A_4 A_2$ & $(1000,10) /$ $\! (0000,10) /$ $\! (0001,01) /$ $\! (0000,01) /$ $\! (0100,00) /$ $\! (0010,00)$ & $(W(1001),00) /$ $\! (1000,00) /$ $\! (0001,00) /$ $\! (0000,W(11)) /$ $\! (0010,10) /$ $\! (0001,10) /$ $\! (0100,01) /$ $\! (1000,01) /$ $\! (0000,00)$  \\

$A_3 A_2$ & $(100,10) /$ $\! (000,10)^2 /$ $\! (001,01) /$ $\! (000,01)^2 /$ $\! (010,00)^2 /$ $\! (100,00) /$ $\! (001,00)$ & $(W(101),00) /$ $\! (100,00)^2 /$ $\! (001,00)^2 /$ $\! (000,W(11)) /$ $\! (010,10) /$ $\! (001,10)^2 /$ $\! (000,10) /$ $\! (010,01) /$ $\! (100,01)^2 /$ $\! (000,01) /$ $\! (000,00)^4$ \\

$A_2 A_2$ & $(10,10) /$ $\! (00,10)^3 /$ $\! (01,01) /$ $\! (00,01)^3 /$ $\! (10,00)^3 /$ $\! (01,00)^3 /$ $\! (00,00)^2$ & $(W(11),00) /$ $\! (00,W(11)) /$ $\! (10,10)^3 /$ $\! (01,01)^3 /$ $\! (10,01) /$ $\! (01,10) /$ $\! (10,00)^3 /$ $\! (01,00)^3 /$ $\! (00,10)^3 /$ $\! (00,01)^3 /$ $\! (00,00)^{9} $ \\

\hline

\end{longtable}

\begin{longtable}{p{0.1\textwidth - 2\tabcolsep}>{\raggedright\arraybackslash}p{0.9\textwidth-\tabcolsep}@{}}

\caption{The composition factors for the action of Levi subgroups (with no rank 1 simple factors) of $E_8$ on $L(E_8)$. \label{levie8}} \\

\hline \noalign{\smallskip}

Levi $L'$ & Composition factors of $L(E_8) \downarrow L'$ \\

\hline \noalign{\smallskip}

$E_7$ & $W(\lambda_1) /$ $\! \lambda_7^2 /$ $\! 0^3$ \\

$E_6$ & $W(\lambda_2) /$ $\! \lambda_1^3 /$ $\! \lambda_6^3 /$ $\! 0^8$ \\

$D_7$ & $W(\lambda_2) /$ $\! \lambda_1^2 /$ $\! \lambda_6 /$ $\! \lambda_7 /$ $\! 0$  \\

$D_6$ & $W(\lambda_2) /$ $\! \lambda_1^4 /$ $\! \lambda_5^2 /$ $\! \lambda_6^2 /$ $\! 0^6$ \\

$D_5$ &  $W(\lambda_2) /$ $\! \lambda_1^6 /$ $\! \lambda_4^4 /$ $\! \lambda_5^4 /$ $\! 0^{15}$ \\

$D_4$ & $W(0100) /$ $\! 1000^8 /$ $\! 0010^8 /$ $\! 0001^8 /$ $\! 0000^{28}$ \\

$A_7$ & $W(\lambda_1 + \lambda_7) /$ $\! \lambda_1 /$ $\! \lambda_2 /$ $\! \lambda_3 /$ $\! \lambda_5 /$ $\! \lambda_6 /$ $\! \lambda_7 /$ $\! 0$ \\

$A_6$ & $W(\lambda_1 + \lambda_6) /$ $\! \lambda_1^3 /$ $\! \lambda_2^2 /$ $\! \lambda_3 /$ $\! \lambda_4 /$ $\! \lambda_5^2 /$ $\! \lambda_6^3 /$ $\! 0^4$  \\

$A_5$ & $W(\lambda_1 + \lambda_5) /$ $\! \lambda_1^6 /$ $\! \lambda_2^3 /$ $\! \lambda_3^2 /$ $\! \lambda_4^3 /$ $\! \lambda_5^6 /$ $\! 0^{11}$  \\
 
$A_4$ & $W(1001) /$ $\! 1000^{10} /$ $\! 0100^5 /$ $\! 0010^5 /$ $\! 0001^{10} /$ $\! 0000^{24}$  \\

$A_3$ & $W(101) /$ $\! 100^{16} /$ $\! 001^{16} /$ $\! 010^{10} /$ $\! 000^{45}$ \\

$A_2$ & $W(11) /$ $\! 10^{27} /$ $\! 01^{27} /$ $\! 00^{78}$ \\

$D_5 A_2$ & $(W(\lambda_2),00) /$ $\! (0,W(11)) /$ $\! (\lambda_1,10) /$ $\! (\lambda_1,01) /$ $\! (\lambda_4,01) /$ $\! (\lambda_4,00) /$ $\! (\lambda_5,10) /$ $\! (\lambda_5,00) /$ $\! (0,10) /$ $\! (0,01) /$ $\! (0,00)$  \\

$D_4 A_2$ & $(W(0100),00) /$ $\! (0000,W(11)) /$ $\! (1000,10) /$ $\! (0010,10) /$ $\! (0001,10) /$ $\! (1000,01) /$ $\! (0010,01) /$ $\! (0001,01) /$ $\! (1000,00)^2 /$ $\! (0010,00)^2 /$ $\! (0001,00)^2 /$ $\! (0000,10)^3 /$ $\! (0000,01)^3 /$ $\! (0000,00)^2$  \\

$A_4 A_3$ & $(W(1001),000) /$ $\! (0000,W(101)) /$ $\! (1000,100) /$ $\! (1000,010) /$ $\! (0100,001) /$ $\! (0100,000) /$ $\! (0010,100) /$ $\! (0010,000) /$ $\! (0001,001) /$ $\! (0001,010) /$ $\! (0000,100) /$ $\! (0000,001) /$ $\! (0000,000)$  \\

$A_4 A_2$ & $(W(1001),00) /$ $\! (0000,W(11)) /$ $\! (1000,10)^2 /$ $\! (1000,01) /$ $\! (1000,00) /$ $\! (0100,01) /$ $\! (0100,00)^2 /$ $\! (0010,10) /$ $\! (0010,00)^2 /$ $\! (0001,01)^2 /$ $\! (0001,10) /$ $\! (0001,00) /$ $\! (0000,10)^2 /$ $\! (0000,01)^2 /$ $\! (0000,00)^4$ \\

$A_3 A_3$ & $(W(101),000) /$ $\! (000,W(101)) /$ $\! (100,100) /$ $\! (100,010) /$ $\! (100,001) /$ $\! (100,000)^2 /$ $\! (010,100) /$ $\! (010,001) /$ $\! (010,000)^2  /$ $\! (001,001) /$ $\! (001,010) /$ $\! (001,100) /$ $\! (001,000)^2 /$ $\! (000,010)^2 /$ $\! (000,100)^2 /$ $\! (000,001)^2 /$ $\! (000,000)^{2}$ \\

$A_3 A_2$ & $(W(101),000) /$ $\! (000,W(11)) /$ $\! (100,10)^2 /$ $\! (100,01)^2 /$ $\! (100,00)^4 /$ $\! (010,10) /$ $\! (010,01) /$ $\! (010,00)^4  /$ $\! (001,10)^2 /$ $\! (001,01)^2 /$ $\! (001,00)^4 /$ $\! (000,10)^5 /$ $\! (000,01)^5 /$ $\! (000,00)^{7}$ \\

$A_2 A_2$ & $(W(11),00) /$ $\! (00,W(11)) /$ $\! (10,10)^3 /$ $\! (01,01)^3 /$ $\! (10,01)^3 /$ $\! (01,10)^3 /$ $\! (10,00)^9 /$ $\! (01,00)^9 /$ $\! (00,10)^9 /$ $\! (00,01)^9 /$ $\! (00,00)^{16}$  \\

\hline

\end{longtable}

\setlength\LTleft{0.125\textwidth}
\begin{longtable}{p{0.15\textwidth - 2\tabcolsep}p{0.1\textwidth-2\tabcolsep}>{\raggedright\arraybackslash}p{0.18\textwidth-2\tabcolsep}>{\raggedright\arraybackslash}p{0.32\textwidth-\tabcolsep}@{}}

\caption{The composition factors of some reducible Weyl modules that have a trivial composition factor. \label{weyl}} \\

\hline \noalign{\smallskip}

Group & $p$ &  High weight $\lambda$ & Composition factors of $W(\lambda)$ \\

\hline \noalign{\smallskip}

$A_n$ & $p | n+1$ & $\lambda_1 + \lambda_n$ & $\lambda_1 + \lambda_n / 0$ \\

$B_n$ $(n \geq 2)$ & $2$ & $\lambda_1$ & $\lambda_1 / 0$ \\

$B_n$ $(n>2)$ & $2$ & $\lambda_2$ & $\lambda_2 / \lambda_1 / 0^{(n,2)}$ \\

$D_n$ & $2$ & $\lambda_2$ & $\lambda_2 / 0^{(n,2)}$ \\

$A_2$ & $2$ & $3 \lambda_1$ & $3 \lambda_1 / 0$ \\

& $3$ & $4 \lambda_1 + \lambda_2$ & $4 \lambda_1 + \lambda_2 / 3 \lambda_1 / 3 \lambda_2 / \lambda_1 + \lambda_2 / 0$ \\

$B_2$ & $2$ & $2 \lambda_2$ & $2 \lambda_2 / \lambda_1 / 0^2$ \\

$B_4$ & $2$ & $\lambda_3$ & $\lambda_3 /  \lambda_2 / \lambda_1 / 0^2$ \\

& $3$ & $2 \lambda_1$ & $2 \lambda_1 / 0$ \\

$C_4$ & $2$ & $2 \lambda_1$ & $2 \lambda_1 / \lambda_2 / 0^2$ \\

& $2$ & $\lambda_2$ & $\lambda_2 / 0$ \\

& $3$ & $\lambda_4$ & $\lambda_4 / 0$ \\ 

$G_2$ & $7$ & $2 \lambda_1$ & $2 \lambda_1 / 0$ \\

$F_4$ & $3$ & $\lambda_4$ & $\lambda_4 / 0$ \\

$E_6$ & 3 & $\lambda_2$ & $\lambda_2 / 0$ \\

$E_7$ & 2 & $\lambda_1$ & $\lambda_1 / 0$ \\

\hline

\end{longtable}

\end{appendices}

\bibliographystyle{amsplain}

\bibliography{../biblio}

\end{document}